\newcommand{\T}{\Theta}
\newcommand{\blu}{\color{blue}}
\def\geq{\geqslant}
\def\leq{\leqslant}
\def\vep{\varepsilon}
\theoremstyle{plain}
\newtheorem{definition}{Definition}[section]
\newtheorem{proposition}[definition]{Proposition}
\newtheorem{theorem}{Theorem}
\newtheorem{remark}[definition]{Remark}
\newtheorem{lemma}[definition]{Lemma}
\def\geq{\geqslant}
\def\leq{\leqslant}
\def\R{\mathbb{R}}
\def\T{\mathbb{T}}
\def\Z{\mathcal{Z}}
\def\N{\mathbb{N}}
\newcommand{\bea}{\begin{eqnarray}}
  \newcommand{\eea}{\end{eqnarray}}
  \newcommand{\beab}{\begin{eqnarray*}}
  \newcommand{\eeab}{\end{eqnarray*}}
  \newcommand{\be}{\begin{equation}}
  \newcommand{\ee}{\end{equation}}
\newcommand{\cC}{\mathcal T}
\newcommand{\cP}{\mathcal P}
\newcommand{\cT}{\mathcal T}
\newcommand{\norm}[1]{\left\lVert#1\right\rVert}
\def\cC{\mathcal{T}}
\newcommand{\ep}{\varepsilon}
\newcommand{\RL}[1]{Z_{#1}}
\newcommand{\RR}{\mathbb{R}}
\newcommand{\V}{\mathcal{V}}
\newcommand{\RLp}[1]{Z^{(#1)}}
\newcommand{\kei}[1]{{k(I)}}
\newcommand{\ud}{\mathrm{d}}
\newcommand{\Leb}[1]{Leb\left( #1\right)}
\newcommand{\BS}[2]{S_{#2}#1}
\newcommand{\spa}{\operatorname{span}}
\newcommand{\bes}{\begin{equation*}}
\newcommand{\ees}{\end{equation*}}
\newcommand{\st}{\,  | \, \, }
\newcommand{\Log}[1]{\mathcal{L}\textrm{og} \left( #1 \right)}
\newcommand{\SymLog}[1]{\mathcal{S}\textrm{ym}\mathcal{L}\textrm{og} \left( #1 \right)}
\newcommand{\pLog}[1]{\mathcal{L}\textrm{og}^\textrm{p} \left( #1 \right)}
\newcommand{\pSymLog}[1]{\mathcal{S}\textrm{ym}\mathcal{L}\textrm{og}^\textrm{p} \left( #1 \right)}
\newcommand{\SymLogC}[1]{\mathcal{S}\textrm{ym}\mathcal{L}\textrm{og}^\textrm{2+} \left( #1 \right)}
\theoremstyle{definition}
\newtheorem{rem}{Remark}[section]
\title[Singularity of the spectrum of smooth area-preserving flows]{Singularity of the spectrum of typical minimal smooth area-preserving flows in any genus}
\author[K.\ Fr\k{a}czek]{Krzysztof Fr\k{a}czek}
\address{Faculty of Mathematics and Computer Science, Nicolaus
Copernicus University, ul.\ Chopina 12/18, 87-100 Toru\'n, Poland}
\email{fraczek@mat.umk.pl}
\author[A.\ Kanigowski]{Adam Kanigowski}
\address{Department of Mathematics, University of Maryland, College Park, MD USA and Faculty of Mathematics and Computer Science, Jagiellonian University, Lojasiewicza 6, Krakow, Poland}
\email{akanigow@umd.edu}
\author[C.\ Ulcigrai]{Corinna Ulcigrai}
\address{Institut f\"ur Mathematik, Universit\"at Z\"urich, Winterthurerstrasse 190,
CH-8057 Z\"urich, Switzerland}
\email{corinna.ulcigrai@math.uzh.ch}
\subjclass[2000]{37A10, 37E35, 37A30, 37C10, 37D40, 37F30, 37N05}
\keywords{Measure-preserving flows on surfaces, spectral theory of dynamical systems, Birkhoff sums, cylinders on translation surfaces}
\begin{document}

\begin{abstract}
We consider smooth flows preserving a smooth invariant measure, or, equivalently, locally Hamiltonian flows on compact orientable surfaces and show that almost every such locally Hamiltonian flow
with only simple saddles has singular spectrum. Furthermore, we prove that almost every  pair of such flows is spectrally disjoint.
More in general, singularity of the spectrum and pairwise disjointness holds for special flows over a full measure set of interval exchange transformations  under a roof with symmetric logarithmic singularities. The spectral result is proved using a criterion for singularity based on tightness of Birkhoff sums with exponential tails decay
%which was already exploited in the genus two special case
 and the cancellations proved by the last author to prove absence of mixing in this class of flows, by showing that the latter can be combined with rigidity. Disjointness of pairs then follows by producing mixing times (for the second flow), using a new mechanism for shearing based on resonant rigidity times.
 %A novel ingredient in the proof is that these cancellations estimates are compatible with rigidity, in the sense that one can find a sequence of times when they hold along a Rohlin tower whose measure grows to the measure of the space. The proof exploits the presence of a one cylinder rapresentation in any stratum of translation surfaces.
\end{abstract}

\maketitle

\bigskip

This paper pushes  the investigation of the nature of the spectrum for typical smooth area-preserving flows on surfaces of higher genus. Area-preserving flows are one of the most basic examples of dynamical systems, studied since Poincar{\'e} at the  dawn  of the study of dynamical systems. Smooth surface flows preserving a smooth area form are equivalently known as locally Hamiltonian flows and have been studied in connection with the Novikov model in solid state physics and well as pseudo-periodic topology (see e.g.~\cite{No:the} and \cite{Zo:how}).

The study of their ergodic properties, in particular mixing properties has been a very active area of research in the last few dacades. We mention for example results on mixing  \cite{Ul:mix, Sch:abs, Ul:abs, Rav:mix}, weak mixing  \cite{Ul:wea},  multiple mixing \cite{FaKa, KKU}) and deviation phenomena (see e.g.\ \cite{Fr-Ul24,Fr-Ki1} %{\color{blue}ADD}\footnote{{\color{blue}quote ours and yours papers with Minsung }}
on deviations of ergodic averages) and refer the interested reader to the surveys \cite{Ul:ECM,Ul:ICM,Fr-Ul:Enc} %\S~\ref{}\footnote{{\color{blue}Our survey in Encyclopedia of Math sciences, possibly also my ICM proceedings and the older one for ECM on Europ Math Society publications.}}
and reference therein for more details.

Finer spectral and disjointness features, on the other hand, are still largely unknown;  although the question about the nature of the spectrum and in particular the \emph{spectral type} (see \S~\ref{sec:spectral} for definitions) of locally Hamiltonian flows is very natural, only few  results were known until now, only in genus one and two, see \S~\ref{sec:history}. Also results on spectral (or Furstenberg) disjointness (see \S~\ref{sec:otherdis}) are few and concern mostly flows in genus one.

In this paper we show that the \emph{typical} minimal such flow (in measure theoretical sense, see \S~\ref{sec:measure}) on a surface of any genus $g\geq 2$ has \emph{singular} spectrum (see Theorem~\ref{thm:singsp} in \S~\ref{sec:mainthm}).  Furthermore, we show that \emph{within} the class of minimal locally Hamiltonian flows with simple saddles, any two \emph{typically} chosen such flows are \emph{spectrally disjoint} (see Theorem~\ref{thm:main_dj} in  \S~\ref{sec:disjointness thm3} for a precise statement). This shows that from the spectral point of view, these class of dynamical systems is very rich.

% which we now state, provides (to the best of our knowledge) the first spectral result which holds for such flows on surfaces of any genus $g\geq 1$.

In the next introductory Section~\ref{sec:mainresults}, we give some basic definitions in order to state these results
%main results, on spectrum and disjointness of locally Hamiltonian flows  (respectively Theorem~\ref{thm:singsp} in \S\ref{sec:mainthm} and Theorems~\ref{thm:mixing_disj}} and Theorem~\ref{thm:main_dj} \S\ref{sec:disjointness})
 and put them into context. In Section~\ref{sec:sf_results} we will then restate the results in the language of special flows. In \S~\ref{sec:outline} we give an outline of the main ideas in the proof and describe how the rest of the paper is organized (see \S~\ref{sec:organization}).

\section{Main results and background}\label{sec:mainresults}
Let us first introduce in \S~\ref{sec:background} the basic notions needed  to formulate the results in the setting of flows on surfaces. We will then state the result on spectra in \S\ref{sec:mainsingular} and on the disjointness results in \S\ref{sec:maindisjointness}.
%The singularity criterion which is used to prove the first two results is stated  in Section~\ref{sec:singcrit} (as Theorem~\ref{thm:singcrit}).

\subsection{Background material}\label{sec:background}
Let us give in \S~\ref{sec:locHamflows} the definition of locally Hamiltonian flows, and describe type of fixed points and the notion of typical for such flows, see \S~\ref{sec:fixedpoints} and \S~\ref{sec:measure} respectively, we then give basic spectral definitions (in \S~\ref{sec:spectral}) and summarize the classification of locally Hamiltonian flows  and their typical ergodic properties in \S~\ref{sec:classes}.
\subsubsection{Locally Hamiltonian flows.}\label{sec:locHamflows}
Throughout this paper, let $M$ denote a smooth, compact,  connected, orientable closed surface of genus $g\geq 1$.  Let  $(\varphi_t)_{t\in\mathbb R}$ be  a smooth flow preserving a smooth invariant measure. These flows are also known in the literature as \emph{locally Hamiltonian flows},  i.e.\ \emph{locally} one can find coordinates $(x,y)$ on $M$ in which $(\varphi_t)_{t\in\mathbb{R}}$ is given by
%The form $\eta$ determines a flow  $(\varphi_t)_{t\in\mathbb{R}}$ %given by a \emph{multi-valued Hamiltonian} as follows.
 % a closed surface of genus $g\geq 2$ with a fixed area form and a closed differential $1$-form $\eta$ on it.
%since $\eta $ is closed, one can locally write $\eta= \ud H$ for some real-valued function $H$ and
 the solution to the  equations
  $$\dot{x}={\partial H}/{\partial y}, \dot{y}=-{\partial H}/{\partial x}$$ for some smooth  real-valued \emph{local}\footnote{A \emph{global}  Hamiltonian $ H$ cannot be in general defined (see \cite{NZ:flo}, \S1.3.4), but one can think of  $(\varphi_t)_{t\in\mathbb{R}}$ as globally given by a \emph{multi-valued} Hamiltonian function.} Hamiltonian function $H$.
Equivalently, locally Hamiltonian flows can be given and \emph{parametrized} by \emph{closed $1$-forms} as follows.
 Fix   a smooth volume form $\omega$ on $M$, normalized so that the associated measure gives area $1$ to $M$.
 %Conversely, let  $(M, \omega)$ be a 2-dimensional symplectic manifold, where $M$ is a  closed connected orientable smooth surface of genus $g\geq 1$
%\textcolor{magenta}{do we assume connectedness in the first paragraph as well?}
%endowed with the standard area form $\omega$ (obtained as pull-back of the area form ${d} x \wedge {d} y$ on $\mathbb{R}^2$). %Let  $\mu$ denote the measure obtained integrating a smooth density with respect to $\omega$. We will assume that the area is normalized so that  $\mu(S) =1$.
Given  a smooth closed real-valued differential $1$-form $\eta $, let $X$ be the vector field determined by $\eta = \imath_X \omega $ (where $\imath_X$ denotes the contraction of $\omega$ by $\eta$) and consider the flow $(\varphi_t)_{t\in\mathbb{R}}$ on $M$ associated to $X$. Since $\eta$ is closed, the transformations $\varphi_t$, $t \in \mathbb{R}$, are  area-preserving (i.e.\ preserve the area form $\omega$ and the measure given by integrating it), so  $\varphi_t$, $t \in \mathbb{R}$ is a locally Hamiltonian flow. Conversely, one can show that every locally Hamiltonian flow is obtained in this way.\footnote{If $(\varphi_t)_{t\in\mathbb{R}}$ preserves a smooth two form $\omega$
%We will always assume that the form is normalized so that the associated measure gives area $1$ to $M$.
 locally given   on a coordinate chart $U\subset M$ by $V(x,y) d x \wedge d y$ for some smooth positive real valued function  $V:U \to \mathbb{R}$,  letting $\eta:=\imath_X\omega=\omega( X, \,\cdot \,)$,   $\eta$ is a smooth closed 1-form such that $d\eta=0$. } %Therefore, for any $p\in M$ and any simply connected neighbourhood $U$ of $p$ there exists a smooth (local Hamiltonian) map (unique up to additive constant)

% Conversely, Then, letting $\eta:=\imath_X\omega=\omega( \eta, \,\cdot \,)$, where $\imath_X$ denotes the contraction operator,  $\eta$ is a smooth closed 1-form such that $d\eta=0$.  Therefore, for any $p\in M$ and any simply connected neighbourhood $U$ of $p$ there exists a smooth (local Hamiltonian) map (unique up to additive constant)

\smallskip
\subsubsection{Fixed points.}\label{sec:fixedpoints}
When $g\geq 2$, the  (finite) set of fixed points of $(\varphi_t)_{t\in\mathbb{R}}$ is always non-empty.
 We will always assume that the $1$-form $\eta$  is \emph{Morse}, i.e.\ it is locally the differential of a Morse function.  Thus, zeros of $\eta$ are isolated and finite and all  correspond to either centers as in Figure~\ref{island}, or simple saddles as in  Figure~\ref{simplesaddle}, as opposed to degenerate \emph{multi-saddles} which have $2k$ separatrices for $k>2$, see Figure \ref{multisaddle}.
We assume in this paper that $(\varphi_t)_{t\in\mathbb R}$ has only \emph{non-degenerate} fixed points of saddle type and is \emph{minimal}. Thus, all fixed points are  \emph{simple saddles} (i.e.\ four-pronged saddles, with two incoming and two outgoing separatrices), as depicted in Figure~\ref{g3flow} for a surface of genus $3$.

\begin{figure}[h!]
 \subfigure[\label{island} Center]{
  \includegraphics[width=0.20\textwidth]{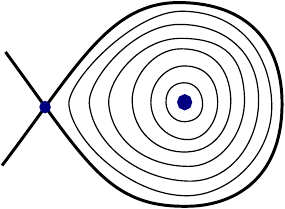} 	} \hspace{9mm}
\subfigure[Saddle\label{simplesaddle}]{ \includegraphics[width=0.14\textwidth]{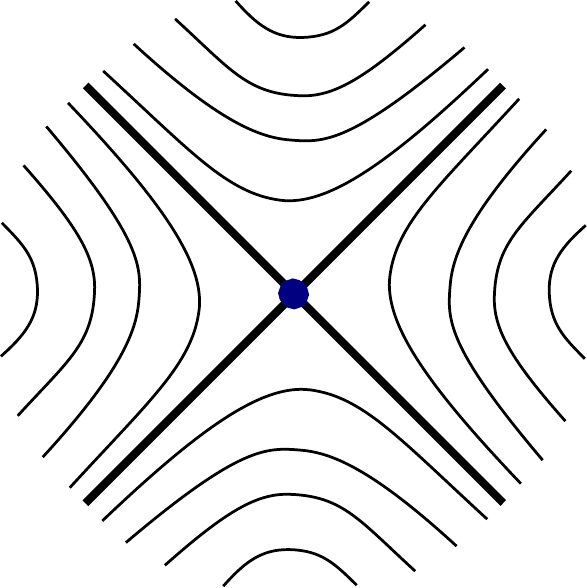}}\hspace{9mm}
\subfigure[Multisaddle\label{multisaddle}]{
\includegraphics[width=0.15\textwidth]{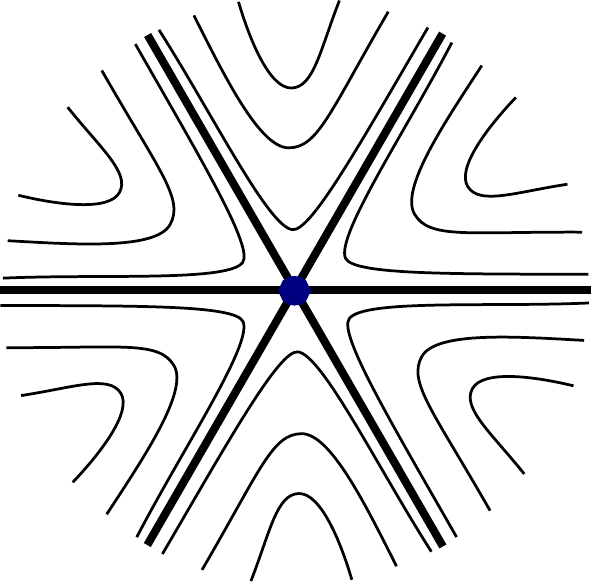} }	
 \caption{Type of fixed points for area-preserving flows.\label{saddles1}}
\end{figure}

%\smallskip

\subsubsection{Measure class and typicality}\label{sec:measure}
Let $\Sigma$ be the set of fixed points of $\eta$  and let $k$ be the cardinality of $\Sigma$. On locally Hamiltonian flows with fixed number and type of singularities, one has a natural \emph{measure theoretic} notion of \emph{typical}, which
refers to a full measure set with respect to a natural measure class\footnote{We recall that two measures belong to the same measure class if they have the same sets of zero mesure (and hence induce the same notion of full measure, or \emph{typical}); thus, a \emph{measure class} is uniquely identified by a collection of sets which have measure zero with respect to all measures in the class.}  known in the literature as \emph{Katok fundamental class} (introduced by Katok in \cite{Ka:inv}), see also \cite{NZ:flo}),  obtained considering  the pull-back $Per_* Leb$ of the Lebesgue measure class by the period map\footnote{To define the period map,  fix  a base  $\gamma_1, \dots, \gamma_n$ the relative homology $H_1(M, \Sigma, \mathbb{R})$, where $n=2g+k-1$ ($k:=\#\Sigma$).  The period map $Per $ then maps the form $\eta$ to  {$Per(\eta) = (\int_{\gamma_1} \eta, \dots, \int_{\gamma_n} \eta) \in \mathbb{R}^{n}$.}}.
% explicitely, the measure zero sets for this measure class are all preimages through $Per$ of measure zero sets in  $\mathbb{R}^{n}$ (with respect to the Lebesgue measure $Leb$ on $\mathbb{R}^{n}$).
 %Let us now defined the \emph{measure-theoretical notion of  typical}, which coincide with the notion of typical induced by the  \emph{Katok fundamental class}
 %, i.e. by the cohomology class of the 1-form $\eta$  which defines the flow,.
We say that a property is \emph{typical}  if it is satisfied for a set of locally Hamiltonian flows whose  complement has measure zero  for this measure class. %spect to (the measure class\footnote{.} of) $Per_* Leb$.}%  The  use of the expression \emph{typical} in the statement of Theorem~\ref{thm:singsp} means a full measure with respect to this measure class.}

\smallskip

 \begin{figure}[h!]
\includegraphics[width=0.6\textwidth]{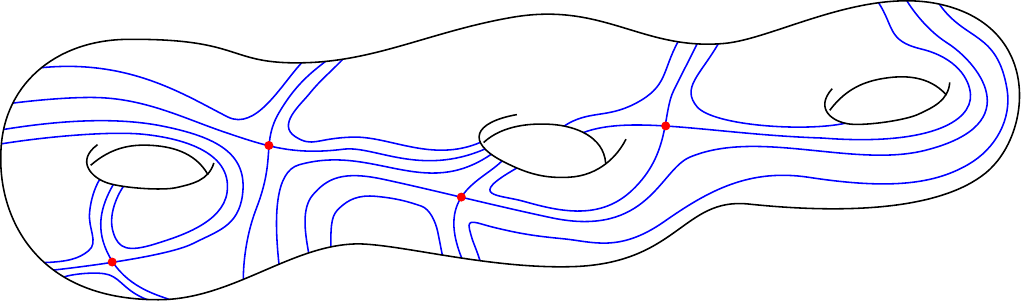}
 \caption{ Trajectories of a locally Hamiltonian flow with four simple saddles on a surface of genus three.\label{g3flow}}
\end{figure}

\subsubsection{Spectral notions}\label{sec:spectral}
The spectrum and the spectral properties of a measure-preserving flow $(T_t)_{t \in \mathbb{R}}$ acting on a probability Borel space $(X,\mathcal{B},\mu)$ are defined in terms of the Koopman (unitary) operators associated to $(T_t)_{t \in \mathbb{R}}$. Let us recall that, for every $t\in\RR$, the \emph{Koopman} \emph{operator} associated to the automorphism  $T_t$, which, abusing the notation,  we will denote also by $T_t$, is the operator
\[T_t:L^2(X,\mu)\to L^2(X,\mu)\quad \text{  given by \ \ }T_t(f)=f\circ T_t.\]
%For any measure-preserving flow $(T_t)_{t \in \mathbb{R}}$ on a probability Borel space $(X,\mathcal{B},\mu)$ we deal with its Koopman (unitary) representation $(T_t)_{t \in \mathbb{R}}$ on $L^2(X,\mu)$ given by
%\[T_t(f)=f\circ T_t\quad\text{ for any }\quad f\in L^2(X,\mu)\text{ and }t\in\R.\]
We denote by $L_0^2(X,\mu)$ the subspace of zero mean maps. We will sometimes restrict the Koopman representation to $L_0^2(X,\mu)$.

\medskip
\noindent {\it Spectral measures.} To every $g\in L^2(X,\mu)$ one can associate  a  \emph{spectral measure}  denoted by  $\sigma_g$, i.e.\  the unique finite Borel measure on $\RR$ such that
\[\langle g\circ T_{t},g\rangle=\int_\RR e^{its}\,d\sigma_g(s)\quad\text{for every}\quad t\in \RR.\]
%For any $g\in L^2(X,\mu)$  denoted by  $\sigma_g=\sigma^T_g$ its one  spectral measure, i.e.\  the unique finite Borel measure on $\R$ such that
%\[\langle T_tg,g\rangle=\int_\R e^{its}\,d\sigma_g(s)\quad\text{for every}\quad t\in \R.\]
Let us denote by $\R(g)\subset  L^2(X,\mu)$ the \emph{cyclic subspace} generated by $g$ which is given by
\[\R(g):=\overline{\spa}\{T_t(g):t\in\R\}\subset  L^2(X,\mu)\]
By the spectral theorem (see e.g.~\cite{CFS:erg}) the Koopman $\R$-representation $(T^f_{t})_{t\in\R}$ restricted to $\R(g)$
is unitarily isomorphic to the $\R$-representation
\begin{equation}\label{def:Vt}
(V_t)_{t\in\R} \quad\text{on}\quad L^2(\R,\sigma_g)\quad \text{given by}\quad
V_t(h)(s)=e^{its}h(s).
\end{equation}

\noindent {\it Singular spectrum.} We say that a measure preserving flow $(T_t)_{t \in \mathbb{R}}$ on $(X,\mu)$ has (\emph{purely}) \emph{singular} spectrum iff for every $g\in L^2(X,\mu)$ the spectral measure $\sigma_g$ is singular with respect to the Lebesgue measure on $\R$.
Two measure preserving flows $(T_t)_{t \in \mathbb{R}}$ on $(X,\mu)$ and $(S_t)_{t \in \mathbb{R}}$ on $(Y,\nu)$ are \emph{spectrally orthogonal} if the spectral measures $\sigma^T_f$ and $\sigma^S_g$ are pairwise orthogonal for all $f\in L^2_0(X,\mu)$ and $g\in L^2_0(Y,\nu)$.

%\subsubsection{Disjointness notions}\label{sec:spectral}
%{\color{blue} ADD definition of spectrally disjoint systems (Krzysztof?}
%Let us recall that two flows are \emph{spectrally disjoint} if their maximal spectral types are mutually singular. %{... \color{blue}ADD  Krzysztof, can you?}.

%{\it \color{blue} Should we also mention Furstenberg disjointness? we don't use it, it's only mentioned in the literature review on disjointness results}

\subsubsection{Classification of  locally Hamiltonian flows}\label{sec:classes}
Mixing properties of locally Hamiltonian flows are known to depend crucially on the type of singularities (i.e.~fixed points) of the flow and therefore this is expected to be the case also for spectral properties. We briefly recall in this subsection the subdivision of locally Hamiltonian flows in  three natural classes and the typical mixing properties within each, in order to put into contexts our main results and to later formulate open questions about spectral and disjointness properties (in  \S~\ref{sec:open} and \S~\ref{sec:open_dj} respectively). The reader may choose to skip this section and go directly to the results formulated in \S~\ref{sec:mainsingular} and \S~\ref{sec:maindisjointness}.

Flows which have \emph{degenerate} singularities, i.e.~saddles of higher order, with $2k$ prongs where $k\geq 2$, are known as \emph{Kocergin flows} are usually those studied first since they are easier to study\footnote{Heuristically, we can explain this as follows. A key role in the study of mixing and spectral properties is played by quantitative estimates on the growth of the Birkhoff sums of the derivative of the roof function of the special representation, see e.g.\ \S~\ref{sec:strategy}.
In the case of Kocergin flows, the derivative is \emph{integrable},
while the derivative of flow with non-degenerate saddles is not in $L^1$. This makes  Birkhoff sums harder  to bound in the non-degenerate case.}, but are not generic (in the topological sense, with respect to the topology given by small perturbations by a a closed $1$-form). Mixing for Kocergin flows in genus one was for example the very first mixing result proved by Kocergin \cite{Ko:mix} already in the $1970s$\footnote{Kocergin was indeed the first to study these class of flows in any genus, by studying special flows over interval exchange transformations under a roof with power-type of singularities, see \cite{Ko:mix}.}.

Locally Hamiltonian flows with only \emph{non-degenerate} (or Morse-type) singularities are divided in two open sets (again with respect to small perturbations by a closed $1$-form). In $\mathcal{U}_{min}$ the typical flow is \emph{minimal} and (as shown by the last author) \emph{weak mixing}, but not \emph{mixing} \cite{Ul:abs}   (although \emph{exceptional} flows which are \emph{mixing} %for $g\geq 2$
can exist in $\mathcal{U}_{min}$,
%with only simple saddles for $g=5$ was proved
as shown by  Chaika~and Wright  \cite{CW} who built an example in $g=5$). As in the case of the proof of singularity of the spectrum, the special case of flows on surfaces of $g=2$ with two isometric saddles is easier to treat and absence of mixing in this case was proven before, by Scheglov in \cite{Sch:abs}. % for a previous result for $g=2$).

In  the open set  $\mathcal{U}_{\neg min}$ on the other hand, the typical flow decomposes into (finitely many) subsurfaces, either foliated by closed orbits of the flow (known as \emph{periodic components}) or on which the restriction of the flow is minimal (the \emph{minimal components}, which are at most $g$, where $g$ is the genus), see e.g.~\cite{Ma:tra}.
The restriction of the typical flow in $\mathcal{U}_{\neg min}$ to each of its minimal components is \emph{mixing} (as shown by Ravotti \cite{Rav:mix}, building on work by the last author \cite{Ul:mix}) as well as mixing of all orders (see \cite{KKU}).
The simplest example of flow in $\mathcal{U}_{\neg min}$ is a flow on the torus ($g=1$) with one simple center and one simple saddle. As remarked already by Arnold in \cite{Ar:top}, the typical such flow has a  disk filled by periodic orbits and the center and is minimal when restricted to the complement of the disk. % (see Figure {\color{blue}ADD}).
The restriction of the flow to this minimal component is nowadays called \emph{Arnold flow}.  Mixing for Arnold flows was conjectured by Arnold in \cite{Ar:top} and proved by Sinai-Khanin in \cite{SK:mix}\footnote{See also further works by Kocergin \cite{Ko:03, Ko:04, Ko:04'} on flows on the torus with many minimal components.}. Similarly, mixing of all orders was proved before the general case in the work \cite{FaKa} by B.~Fayad and the second author.

	%Let us also recall that in the $1970s$ Kochergin \cite{Ko:mix} proved mixing when there are degenerate saddles (that is, in a non-generic case) and  that very recently Chaika and Wright~\cite{ChW:mix} showed the existence of mixing flows in  $\mathscr{U}_{min}$ (which by \cite{Ul:abs} consistute a measure zero exceptional set
%\end{theorem}
%More precisely, the open and dense set of flows with at least one periodic component in the statement is the same set in $\mathscr{U}_{\neg min}$ for which one can also prove that typically minimal components are mixing (see \cite{Rav:mix}).  %%%%
%In particular, since  typical flows on $\mathscr{U}_{min}$ are \emph{not} mixing by \cite{Ul:abs}, %while in the typical flow in $\mathscr{U}_{\neg min}$ has mixing minimal components,
%It follows that for a typical locally Hamiltonian flow mixing (of one of its minimal components) implies mixing of all orders.
%in the open set  $\mathscr{U}_{\neg min}$ have minimal components  which are not only mixi

\subsection{Singularity of the spectrum of locally Hamiltonian flows with simple saddles}\label{sec:mainsingular}
The question of the nature of the spectrum of a typical locally Hamiltonian flow has  been formulated decades ago, e.g.~in the surveys by Katok and Thouvenot (see \cite[Section 6, Problem 6.10]{KT}) and by Lema\'nczyk  (see \cite{L}). Furthermore, A.~Katok  conjectured\footnote{Private communication with the second author, A.~Kanigowski.} the singular nature of the spectrum (when the saddles are all simple).
%\footnote{The result on the singular nature of the spectrum proved in this paper was furthermore explicitly suggested by~in private communication.}

\subsubsection{Singularity of  locally Hamiltonian flows spectra}\label{sec:mainthm}
 The first of our main results, that we can now state, answers in the affirmative this conjecture:
\begin{theorem}[Singular spectrum of typical higher genus flows with simple saddles]\label{thm:singsp}
For any genus $g\geq 2$, a \emph{typical}  locally Hamiltonian flow on a surface $M$ of genus $g$ with only simple saddles has \emph{purely singular} spectrum.
\end{theorem}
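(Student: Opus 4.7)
The plan is to reduce Theorem \ref{thm:singsp} to a singular spectrum statement for special flows over typical interval exchange transformations (IETs), and then combine a spectral singularity criterion with cancellation estimates and rigidity.

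\textbf{Reduction to special flows.} Taking a smooth transverse section $I$ to a typical locally Hamiltonian flow $(\varphi_t)$ with simple saddles, the Poincar\'e return map is an IET $T$ on $I$, and the return time $\varphi \colon I \to \R_{>0}$ is smooth on the continuity intervals of $T$ and has \emph{symmetric logarithmic singularities} at the endpoints of these intervals --- reflecting the local structure near a simple saddle, where a trajectory passing near a saddle spends time $\sim -\log(\dist)$ with matching coefficients on the two sides. The Katok fundamental class pushes forward to Lebesgue-typical IETs with typical roof parameters under this construction, so it is enough to prove singular spectrum for the special flow $T^\varphi$ for a.e.\ such data, as indicated by the paper's own roadmap.

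\textbf{Spectral criterion combined with rigidity and cancellation.} I would apply the criterion announced in the abstract: if along some sequence $t_n \to \infty$ one has partial \emph{rigidity} of $(T^\varphi_{t_n})$ on a dense subspace of $L_0^2$, and moreover the Birkhoff-type sums $S_{q_n}\varphi - t_n$ (for appropriately chosen $q_n$) are \emph{tight with exponentially decaying tails}, then every spectral measure $\sigma_f$ is purely singular. The combinatorial input is a sequence $q_n$ of Rauzy--Veech return times for a Diophantine-typical base IET $T$, producing Rokhlin towers of height $q_n$. One sets $t_n$ to be the average of $S_{q_n}\varphi$ over a tower base; rigidity of $T^\varphi$ at time $t_n$ then follows once the fluctuations of $S_{q_n}\varphi$ across the base are shown to be small. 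This is precisely where the last author's cancellation mechanism from \cite{Ul:abs} enters: the symmetry of the logarithmic singularities yields, for a.e.\ $x$ in the tower base, a bound of the shape $|S_{q_n}\varphi(x) - t_n| \ll \log^{1+\eps} q_n$, which is much sharper than the trivial bound of order $q_n \log q_n$ and is strong enough to propagate smallness of fluctuations uniformly on the tower.

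\textbf{Main obstacle: upgrading to exponential tails.} The key novelty compared to the proof of absence of mixing in \cite{Ul:abs} lies in upgrading the almost-everywhere cancellation to a distributional statement with \emph{exponentially decaying tails}, and this is the step I expect to be the main difficulty. I would attack it via an exponential Chebyshev / moment approach: split $S_{q_n}\varphi$ into contributions from orbit segments at dyadic distance scales from the singularities, exploit symmetric cancellation between matched pairs of such contributions, and reduce the residual to a sum of quasi-independent logarithmic terms whose exponential moments can be controlled through Diophantine bounds on the number of visits of IET orbits to shrinking neighborhoods of the discontinuities. Verifying that the exponential moment generating function of the normalized fluctuation is uniformly bounded in $n$ is the delicate part, since the pointwise cancellation estimates available in the literature do not by themselves yield tail control on the distribution; this will require quantitative Diophantine input on Rauzy--Veech orbits (e.g.\ sub-Gaussian or sub-exponential control of the number of ``bad'' returns), available on a full-measure set of IETs.

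\textbf{From a dense subspace to all of $L^2$.} Once singularity of $\sigma_f$ is verified on a dense subspace of $L_0^2$, Theorem \ref{thm:singsp} follows because the set of $f\in L_0^2$ with $\sigma_f$ singular with respect to Lebesgue is a closed $(T^\varphi_t)$-invariant subspace of $L^2$; density of the constructed subspace forces it to be all of $L_0^2$, giving purely singular spectrum.
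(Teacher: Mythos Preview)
Your reduction to special flows and your identification of the singularity criterion (rigidity of the base plus exponential tails of centered Birkhoff sums) are correct and match the paper. However, you have misidentified the main obstacle, and this leads to a genuine gap.

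You take for granted that the cancellation estimates from \cite{Ul:abs} are available at the rigidity times $q_n$. They are not. The bounds in \cite{Ul:abs} are \emph{linear bounds on trimmed Birkhoff sums of the derivative} $f'$, and they are proved along a sequence of \emph{balanced} Rauzy--Veech return times, where all $d$ Rohlin towers have comparable measure bounded away from both $0$ and $1$. Rigidity, on the other hand, requires a single tower whose measure tends to $1$. These two requirements are a priori incompatible: at balanced times no tower is large, and the centering constant in the cancellation estimate is tower-dependent, so one cannot simply add up the bounds. The actual novelty of the paper (Proposition~\ref{prop:cohexistence} and its proof in \S\ref{sec:coexistence}) is to produce \emph{coexistence} times: one shows that the set in the natural extension $\widehat X^{(1)}$ on which the cancellation estimates hold has the product form $\{\pi\}\times\Delta_B\times Y$ (this requires revisiting the proof in \cite{Ul:abs}, see Appendix~\ref{sec:app}), and then uses the local product structure of Rauzy--Veech induction --- the fact that future iterates depend only on $\lambda$ while the cancellation set depends only on $\tau$ --- together with a Markovian loss-of-memory argument to force a visit to a rigidity neighbourhood $\mathcal N_1(\epsilon)\subset\Delta_d$ a fixed number $N$ of steps \emph{after} a cancellation time. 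A persistence lemma then transfers the derivative bounds to the large tower at time $n_\ell+N$.

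Two further points. First, the bound you quote, $|S_{q_n}\varphi(x)-t_n|\ll\log^{1+\varepsilon}q_n$, is not what \cite{Ul:abs} gives and would in any case not yield tightness. What one actually has, once coexistence is established, is that the \emph{trimmed} difference $|\widetilde S_{h_n}f(x)-\widetilde S_{h_n}f(z_n)|$ is uniformly $O(1)$ (via mean value and the linear derivative bound times the tower width $\sim 1/h_n$), while the untrimmed difference picks up an additional term of size $-\log(h_n\cdot m(x,h_n))$ from the closest visits. Second, this is precisely why the exponential tails are \emph{not} the hard step: the measure of points whose orbit comes within $e^{-t}/h_n$ of a singularity is $O(e^{-t})$, which gives the tail directly (this is the content of Proposition~\ref{lem:expfromderivatives}). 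Your proposed exponential-moment / dyadic-scale attack is therefore aimed at the wrong target; the delicate work is entirely in arranging rigidity and cancellation to occur simultaneously.
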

% Basic spectral notions, and in particular the  definition of \emph{singular spectrum}, are recalled in \S\ref{sec:spectral}.
\noindent Theorem \ref{thm:singsp} is, to the best of our knowledge, the first result on the nature of the spectrum for typical singularity of typical minimal locally Hamiltonian flows in any genus. We recall here below in \S~\ref{sec:history} the results previously known and then put it in \S~\ref{sec:open} in the larger context of the conjectural picture on spectra of locally Hamiltonian flows, mentioning some open questions. Theorem~\ref{thm:singsp} will be reduced to a result on special flows (Proposition~\ref{thm:dm_sf}) in \S~\ref{sec:reduction}. An outline of the proof, highlighting the difficulties and novelties is given after the special flows reformulation, will be given in \S~\ref{sec:strategy}.

\subsubsection{Previous results on spectra of locally Hamiltonian flows}\label{sec:history}
%Let us  now mention some previous related spectral and disjointness results.
%A result on Singularity of the spectrum
%  Results on the spectrum of the operator, though, are very rare.
A special case of the main Theorem~\ref{thm:singsp} was proven by the  authors together with J.~Chaika for surfaces of genus $g=2$ in \cite{Ch-Fr-Ka-Ul} it is shown that a typical locally Hamiltonian flow on a surface of genus $g=2$ with two \emph{isomorphic}\footnote{We refer to  \cite[\S~2.4]{Ch-Fr-Ka-Ul} for the definition of isomorphic saddles.} \emph{simple saddles} has singular spectrum. While all minimal flow in genus two given by a non-degenerate (Morse type) local Hamiltonian have two \emph{simple} saddles, the assumption that the saddles are \emph{isomorphic} (which is necessary for the techniques in \cite{Ch-Fr-Ka-Ul} to be applicable) is an additional restriction on the class of flows.  Thus, Theorem~\ref{thm:singsp} generalizes to any genus $g\geq 2$ the result proved in \cite{Ch-Fr-Ka-Ul} in the special case of  genus two and also covers the more general case of locally Hamiltonian flows in genus two for which the saddles are not isomorphic. %, see \S~\ref{sec:history}.
We comment on the differences between the proof in  the special case of genus two and the general case in \S~\ref{sec:strategy}.

We remark that on a surface of genus \emph{one} no flow with only simple saddles can exist, due to Euler-characteristic restrictions\footnote{This can be seen using Poincar{\'e}-Hopft theorem, which links the genus of the surface and the number and type of singularities: if $s$ is the number of simple saddles and $c$ of centers, while $g$ denotes the genus, $2g-2=s-c$.}. %{\color{blue}CHECK, wrote at random!}}.
Nevertheless, a precursor of this and the previous genus two result in the language of special flows was proved in  an early work by the second author and  %Fr\k{a} czek
and Lema\'nczyk, see \cite{Fr-Le0} (where it is shown that
%spectral properties of
 special flows over rotations under a roof with a symmetric logarithmic singularity  %examined.  In \cite[Theorem 12]{Fr-Le0} it is shown that (
 %) such special flows
  have singular spectrum\footnote{This result is a corollary of  Theorem~12 in \cite{Fr-Le0}.} for a full measure set of rotation numbers, see \S~\ref{sec:main_sf} for definitions).
In the same paper \cite{Fr-Le0} \emph{examples} of locally Hamiltonian flows on surfaces of \emph{any} genus $\geq 1$ with  singular continuous spectrum  (see \cite[Theorem 1]{Fr-Le0}) are constructed. Although these have the merit of being the very first examples with known spectral type in higher genus, they are, by the nature of the construction are highly non-typical and form a measure zero set in the space of locally Hamiltonian flows.
%. \cite{Fr-Le0}

A recent spectral breakthrough in genus one, which goes in the opposite direction, was achieved by Fayad, Forni and the second author in \cite{FFK}, where it is shown that a
%Moreover, the authors show that this class of
smooth flow with a sufficiently strong \emph{stopping point} on surfaces of \emph{genus one}  have absolutely continuous spectrum\footnote{More precisely, the authors show that these flows, which  can be represented as special flows over rotations, under roof functions with a polynomial singularity, have
\emph{countable Lebesgue spectrum}, as long as the power singularity is sufficiently \emph{strong}, i.e.~for a singularity of type $1/x^\alpha$, where $\alpha>1-\frac{1}{1000}$.} (see also \cite{FU, Sim} for results of similar type for time-changes of homogeneous flows). % is {\color{blue}check/ADD}.}. %These flows display a strong form of \emph{shearing} of nearby trajectories and
% were proved to be mixing by Kochergin  in the 70's, see \cite{Ko:mix}.
%Given a smoo flow on the torus (genus one), one can introduce a fake singularity by adding a a degenerated singularity (or stopping point). Singularity of the spectrum is in stark contrast
% with the recent result in \cite{FFK} on flows on tori with , which are shown to have absolutely continuous (and actually countable Lebesgue) spectrum.
 We describe a unifying conjectural picture to reconcile these antipodal results in the \S~\ref{sec:open} below, after recalling the classification of locally Hamiltonian flows.

\subsubsection{Open questions on spectra of  locally Hamiltonian flows} \label{sec:open}
Our main result (Theorem~\ref{thm:singsp}) settles the conjecture about the nature of the spectrum for typical flows in the open class $\mathcal{U}_{min}$ introduced in \S~\ref{sec:classes}.
%We conclude pointing out that
The nature of the spectrum in the complementary set $\mathcal{U}_{\neg min}$ as well as in presence of \emph{degenerate} singularities (namely multi-saddles) is widely open.

In the presence of \emph{degenerate} critical points (i.e.~in the case of \emph{Kocergin flows}),  we remark that  \emph{multi-saddles}, i.e.~saddles as in Figure~\ref{multisaddle} (with $2n$ prongs, $n\neq 2$ positive integer) produce \emph{power-like} singularities in the special flow representation analogous to that appear in the representation of flows on the torus with a stopping point, considered in the work \cite{FFK} by Fayad, Forni and the second author. A \emph{stopping point} can therefore be thought as \emph{degenerate} saddles, with only two separatrices (i.e.~$2n$ prongs, for $n=1$).
%we remark that the
%On a surface of any genus $g\geq 1$, the presence   of either a stopping point, or more in general, of a
 % (which correspond toSince
%If the singularities are \emph{degenerate}, there is indication that one may expect
We remark that in \cite{FFK} the singularity is assumed to be of the form $1/x^\alpha$, where $\alpha$ is sufficiently close to $1$.
It might be therefore conjectured, from their result, that also in higher genus, the spectrum is also absolutely continuous (and even countable Lebesgue), at least  in presence of sufficiently strong \emph{degenerate} singular points, i.e.~when the number of prongs is sufficiently large.\footnote{The exponents of the power type singularities (here we call exponent the number $0<\alpha<1$ such that the singularity is of order $1/x^\alpha$) depend indeed on the number of prongs and approach $1$ (i.e.~gives stronger singularities) as the number of prongs grow, more precisely $\alpha=\frac{n-2}{n}$ if $2n$ is the number of prongs.}  Since,
 the assumption $\alpha>1/2$ is clearly a necessary condition for the    methods of \cite{FFK} to work, the nature of the spectrum in presence of power singularities $1/x^\alpha$ with  $0<\alpha<1/2$ is, even conjecturally, less clear, and its investigation will require new methods.
%Given a smooth flow on the torus (genus one), one can introduce a fake singularity by adding a stopping point. This operation can drastically change the ergodic and spectral properties: as already mentioned in the introduction, Forni, Fayad and Kanigowski recently showed in \cite{FFK} that if the stopping point is sufficiently strong, the resulting flow  % flows over rotations with one (sufficiently strong) power singularity
% We remark that stopping points or non-degenerate fixed points (including centers) are known to produce mixing \cite{Ko:mix} (at rates which are expected to be polynomial, see e.g.~\cite{Fa1}), while typical minimal locally Hamiltonian flows with  \emph{non-degenerate} saddles are  known \emph{not} to be mixing  by the work of Scheglov \cite{Sch:abs} for genus two and Ulcigrai \cite{Ul:abs} for any genus. At the heart of our proof is a strengthening of results on absence of mixing (in particular of the works \cite{Fr-Le0,Fr-Le2} and \cite{Sch:abs}).

Within $\mathcal{U}_{\neg min}$, there are currently no results (neither in genus one nor in other special cases),
% When  the flow is not minimal, %but the locally Hamiltonian flows
% and has non-degenerate singularities it has %but
% \emph{several} minimal components,
 and the nature of the spectrum (for the restriction of a typical flow to a minimal component) is completely open.
 These flows are indeed mixing, but with sub-polynomial rate (see \cite{Rav:mix}, which provides logarithmic upper bounds) and therefore even whether to expect singularity or  absolute continuity of the spectrum is unclear.

Another natural question in spectral theory is the so called \emph{multiplicity} problem, namely what is the multiplicity of the spectrum (see e.g.~the survey \cite{L} for definitions and questions). The only result concerning multiplicity is in the work  \cite{FFK} mentioned above about flows on the torus with \emph{stopping points}, in which it is not only proven that the spectrum is absolutely continuous, but also countable Lebesgue (i.e.~Lebesgue with countable multiplicity). For non-degenerate saddles and higher genus, the multiplicity problem is completely open.

% is for flows with stopping points,
 %conclude by mentioning that a question which is completely open is also
%What is the spectral type of Kochergin flows in higher genus (over IET). I think you work on this with Davide and Przemyk
%(f) What is the multiplicity of the spectrum? (completely open for all surface flows, maybe easier for time-changes of nilflows).

\subsection{Spectral disjointness of locally Hamiltonian flows with simple saddles}\label{sec:maindisjointness}
A \emph{strengthening} of spectral results is often provided by results on \emph{disjointness} phenomena.
Disjointness phenomena (disjointness among typical flows in the class, as well as disjointness of \emph{rescalings} of the flow) appear to be an important feature of parabolic flows (see for example \cite{KLU, FF} for results on time-changes of horocycle flows, \cite{DKW} for unipotent flows  or \cite{FoKa} for Heisenberg nilflows) but are still poorly understood and very much an open problem in the context of locally Hamiltonian flows.
\subsubsection{Disjointness from mixing flows}\label{sec:disjointness thm2}
The first disjointness result which we prove is that a typical minimal locally Hamiltonian flow in $\mathcal{U}_{min}$ is \emph{spectrally disjoint} from all mixing flows, in particular, from typical flows in $\mathcal{U}_{\neg min}$ and typical flows with degenerated fixed points. This disjointness result
%, which is actually a strenghtening of singularity (i.e.~of Theorem~\ref{thm:singsp}, is the following:
%Since  flows with absotutely continuous spectrum are mixing, the result on singular spectrum follows as a corollary of this result. % (see Theorem~12, from which it follows in particular that the spectrum is purely singular).} for a full measure set of rotation numbers, see \S\ref{sec:reduction} for definitions).
% Spectral disjointness from mixing flows
 is also a consequence of the type of criterium for   singularity of the spectrum that we use in this paper (see Theorem~\ref{thm:singcrit}). %{\color{blue} IS IT? Krzysztof?? Shall we state it as Corollary}.
 For this reason, what we actually prove in this paper is the following stronger result:
\begin{theorem}\label{thm:mixing_disj}
For any genus $g\geq 2$, a \emph{typical} minimal  locally Hamiltonian flow on a surface $M$ of genus $g$ with only simple saddles is {spectrally disjoint from all mixing flows}.
\end{theorem}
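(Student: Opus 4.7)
My plan is to deduce Theorem~\ref{thm:mixing_disj} essentially directly from the singularity criterion Theorem~\ref{thm:singcrit}, by reading it in its sharper form. As hinted at in \S~\ref{sec:disjointness thm2}, what Theorem~\ref{thm:singcrit} actually produces for a typical flow is not merely singularity of the spectral measure $\sigma_f$ with respect to Lebesgue, but a \emph{rigidity}-type identity at the level of spectral measures: there is a sequence $t_n\to\infty$ (arising as a rigidity scale for the suspension) such that, for every $f$ in a total subspace of $L^2_0(M,\omega)$,
\[
\lim_{n\to\infty}\widehat{\sigma_f}(t_n)=\|f\|_{L^2}^{2}=\sigma_f(\RR).
\]
After reducing to the special flow representation, as in Proposition~\ref{thm:dm_sf}, my plan is first to derive the theorem from this identity, and only then to sketch the mechanism producing it.

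The disjointness step is a soft spectral-measure argument. Let $(S_t)$ be any mixing flow on $(Y,\nu)$; then for every $g\in L^2_0(Y,\nu)$ the spectral measure $\sigma_g^S$ is Rajchman, i.e.\ $\widehat{\sigma_g^S}(t)\to 0$ as $|t|\to\infty$. Consider the Lebesgue decomposition $\sigma_f=\rho_a+\rho_s$ relative to $\sigma_g^S$, with $\rho_a\ll\sigma_g^S$ and $\rho_s\perp\sigma_g^S$. Rajchman measures form a band, so $\rho_a$ is also Rajchman and $\widehat{\rho_a}(t_n)\to 0$; meanwhile $|\widehat{\rho_s}(t_n)|\leq\rho_s(\RR)$. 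Combined with $\widehat{\sigma_f}(t_n)\to \|f\|^{2}$, this forces $\rho_s(\RR)=\|f\|^{2}$, hence $\rho_a=0$, so $\sigma_f\perp\sigma_g^S$. Letting $f$ range over a total family in $L^2_0$ and $g$ over $L^2_0(Y,\nu)$ yields spectral disjointness.

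The second step, establishing the rigidity identity for a typical flow, is the heart of Theorem~\ref{thm:singcrit} and runs in the special flow setting. The approach combines two ingredients about the IET base $T$ and the logarithmic roof $\varphi$: (i) a \emph{rigidity sequence} $t_n$ for the suspension coming from balanced Rauzy--Veech renormalisation steps of typical IETs, which controls the distortion of the cocycle at times $t_n$; and (ii) quantitative tightness with exponential tails for the Birkhoff sums of the derivative $\varphi'$ of the symmetric logarithmic roof, which is an upgrade of the cancellation mechanism of \cite{Ul:abs}. Together, these let one compare $\langle T_{t_n}f,f\rangle$ with $\|f\|^{2}$ up to an error controlled by the exponential tail, yielding the desired convergence for $f$ in a dense subspace.

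The main obstacle I anticipate is sharpening the qualitative symmetric cancellations from \cite{Ul:abs}, which suffice for absence of mixing, into the quantitative exponential tails required along the rigidity sequence $t_n$. In the isomorphic genus-two case of \cite{Ch-Fr-Ka-Ul} one could exploit the pairing of two symmetric saddles to balance contributions directly; in arbitrary genus this pairing is unavailable, and one must extract all cancellations from the self-symmetry of each logarithmic singularity together with Diophantine-type properties of typical IETs (Oseledets regularity of the Kontsevich--Zorich cocycle and balanced returns to a compact part of moduli space). Assembling these into a statement which is simultaneously (a) a rigidity identity at the spectral-measure level and (b) valid on a full-measure set of flows is the technical crux.
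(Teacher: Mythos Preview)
There is a genuine gap: the identity $\widehat{\sigma_f}(t_n)\to\|f\|^2$ would mean the \emph{suspension flow} is rigid along $(t_n)$, and this is not what Theorem~\ref{thm:singcrit} produces, nor is it true for these flows. What the criterion actually yields (see the proof of Theorem~\ref{thm:spectorth}) is only
\[
T^f_{a_n}\ \longrightarrow\ \int_\R T^f_{-t}\,dP(t)\qquad\text{in the weak operator topology,}
\]
where $P$ is the weak limit of the distributions of $S_{h_n}(f)-a_n$ and is a genuinely non-Dirac probability measure. Indeed, the centred Birkhoff sums retain an irreducible contribution of the form $-\log\big(h_n\, m(x,h_n)\big)$ from the closest visit to each logarithmic singularity (this is precisely the source of the exponential tails in Proposition~\ref{lem:expfromderivatives}), so $P\neq\delta_0$ and $(T^f_t)$ is \emph{not} rigid along $(a_n)$. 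Your Rajchman/Lebesgue-decomposition step, while internally correct, is therefore based on a false premise. The rigidity in the hypotheses of Theorem~\ref{thm:singcrit} is rigidity of the \emph{base} IET $T$, not of the special flow.

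The paper's route to disjointness from mixing flows is different and uses the exponential tails in an essential, not merely auxiliary, way. If $\sigma$ were a non-zero common spectral measure for $(T^f_t)$ and a mixing flow $(U_t)$, then on $L^2(\R,\sigma)$ one has simultaneously $V_{a_n}\to 0$ (from mixing of $U$) and $V_{a_n}\to\int V_{-t}\,dP(t)$ (from the $T^f$ side), hence $\widehat P(s)=0$ for $\sigma$-a.e.\ $s$; since $\sigma$ is continuous this zero set is uncountable. The exponential tails of $P$ make $\widehat P$ real-analytic, forcing $\widehat P\equiv 0$, a contradiction. So it is analyticity of $\widehat P$, not any rigidity identity for $\sigma_f$, that closes the argument. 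Either reproduce this mechanism, or simply note that the conclusion of Theorem~\ref{thm:singcrit} already includes spectral disjointness from all mixing flows and quote it directly after the reduction to Proposition~\ref{thm:dm_sf}.
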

This theorem will be hence proved at the same time than the singularity of the spectrum (i.e.~together with Theorem~\ref{thm:singsp}), as a consequence of the singularity criterium that we use.

\subsubsection{Spectral disjointness of typical pairs of flows}\label{sec:disjointness thm3}
The second and main  disjointness result concern flows \emph{within} the class we consider, namely (minimal, non-degenerate) locally Hamiltonian flows in $\mathcal{U}_{min}$. A natural question to investigate is whether a typical \emph{pair} of such flows is related to each other. The following
result shows that they are not, indeed they are \emph{pairwise} typically \emph{spectrally disjoint}:
\begin{theorem}\label{thm:main_dj} For any genus $g\geq 2$,  for almost every $(\varphi_t)_t$ and almost every $(\psi_t)_t$ typical \emph{pair} of  locally Hamiltonian flows on a surface $M$ of genus $g$ with only simple saddles, the two flows  $(\varphi_t)_t$ and  $(\psi_t)_t$  are spectrally disjoint.
\end{theorem}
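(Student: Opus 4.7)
\textbf{Proof strategy for Theorem~\ref{thm:main_dj}.} The plan is to produce, for typical pairs $((\varphi_t)_t,(\psi_t)_t)$, a single sequence of times $(t_n)$ along which the first flow is \emph{rigid} (so $\varphi_{t_n}\to \mathrm{Id}$ strongly on $L^2$) while the second flow is \emph{partially mixing} (so $\langle g\circ\psi_{t_n},g\rangle\to 0$ for every $g\in L^2_0$). This combination yields spectral disjointness by a classical correlation argument: if a nontrivial measure $\rho$ were simultaneously a component of $\sigma_f^\varphi$ and of $\sigma_g^\psi$ for some $f,g\in L^2_0$, then by the spectral theorem one could find $f_1, g_1$ with $\sigma_{f_1}^\varphi=\sigma_{g_1}^\psi=\rho$, whence $\int_{\R} e^{it_n s}\,d\rho(s)$ would have to converge both to $\rho(\R)>0$ (rigidity) and to $0$ (partial mixing), a contradiction. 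Hence $\sigma_f^\varphi\perp\sigma_g^\psi$ for all $f,g\in L^2_0$.

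\textbf{Rigidity along induction times.} As in the reduction underlying Theorem~\ref{thm:singsp}, one first represents $\varphi$, after fixing a transversal, as a special flow over an interval exchange transformation $T$ with roof having symmetric logarithmic singularities. Rauzy--Veech/Oseledets induction provides, for almost every $T$, a sequence of Rokhlin tower heights $q_n\to\infty$ along which $T^{q_n}$ is very close to the identity on a set of large measure. Combined with the exponential-tails tightness of Birkhoff sums of the roof derivative (the tightness statement already used to prove singularity of the spectrum), the heights $q_n$ lift to flow times $t_n$ at which the special flow itself is rigid in $L^2$.

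\textbf{Shearing at resonant rigidity times (main obstacle).} The new ingredient, and the crux of the proof, is to extract \emph{mixing times} for $\psi$ from the \emph{rigidity times} of $\varphi$. Given $(t_n)$, let $q'_n$ denote the number of returns of a base orbit of $\psi$ during flow time $t_n$ (equivalently, the first index at which the Birkhoff sums of $\psi$'s roof reach $t_n$). One must show that, for almost every $\psi$ chosen independently of $\varphi$, the base IET $T'$ of $\psi$ sits in generic position relative to the renormalization resonance that produced $t_n$, so that the Birkhoff sums of $\psi$'s roof derivative at time $q'_n$ grow to infinity with controlled uniformity across the base: this is the new \emph{resonant rigidity time} mechanism announced in the abstract. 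Once such vertical stretching is in place, the shearing-of-transversals argument used to establish mixing for special flows with logarithmic singularities (cf.\ \cite{Ul:mix,Rav:mix}) delivers $\langle g\circ\psi_{t_n},g\rangle\to 0$ for every $g\in L^2_0$. The difficulty is that $(t_n)$ is dictated by the renormalization of $\varphi$, not of $\psi$, so the standard Kochergin--Sinai--Khanin choice of mixing times along the flow's own tower heights is unavailable; one must instead exploit the independence of $T$ and $T'$ to show that the required growth of Birkhoff sums for $T'$ holds along $q'_n$ on a full-measure set of $T'$. A Fubini argument on the product of the Katok fundamental class with itself then shows that the set of pairs admitting a common rigidity/partial-mixing sequence has full product measure, completing the proof via the disjointness criterion recalled at the outset.
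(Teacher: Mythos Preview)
Your disjointness criterion hinges on the first flow being \emph{rigid} along $(t_n)$, i.e.\ $\varphi_{t_n}\to\mathrm{Id}$ strongly. This is the step that fails. The locally Hamiltonian flows in $\mathcal{U}_{min}$ are typically weakly mixing, and more to the point, the paper never shows (and does not need) that base rigidity of the IET ``lifts'' to rigidity of the special flow. What actually happens is that along the base rigidity sequence $(h_n)$ the centered Birkhoff sums $S_{h_n}f-a_n$ have a nondegenerate weak limit $P$ with exponential tails, so the Koopman operators satisfy $T^f_{a_n}\to\int_\R T^f_{-t}\,dP(t)$ in the weak operator topology (equation~\eqref{eq:Tfan}), not $T^f_{a_n}\to\mathrm{Id}$. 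Your Fourier-transform argument ``$\widehat\rho(t_n)\to\rho(\R)$'' therefore does not go through; the correct replacement (Theorem~\ref{thm:spectorth}) is that if the second flow is mixing along $(a_n)$, then on any common spectral measure $\sigma$ one gets $\int V_{-t}\,dP(t)=0$, hence $\widehat P=0$ $\sigma$-a.e., contradicting the \emph{analyticity} of $\widehat P$ forced by the exponential tails. The exponential-tails hypothesis is doing essential work here, not merely tightness.

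The second gap is in how mixing times for $\psi$ are produced. You propose to track the number of returns $q'_n$ of $\psi$'s base at the $\varphi$-determined flow time $t_n$ and hope shearing kicks in; this is too vague to control. The paper's mechanism is quite different: one takes the centering constants $(a_n)$ of the \emph{first} flow and, using a Kerckhoff-type distortion/Borel--Cantelli argument (Proposition~\ref{prop:3}), shows that for a.e.\ \emph{second} base IET $U$ one can find rigidity times $q_m$ \emph{for $U$} and integers $k_m\to\infty$ with $k_m q_m\le a_{n_m}\le C k_m q_m$. Mixing of the second flow at $a_{n_m}$ then comes from a new shearing mechanism at \emph{resonant multiples of $U$'s own rigid times} (Proposition~\ref{prop:mix'}): the repeated near-returns of $U$ at $q_m,2q_m,\dots,k_m q_m$ pile up close visits to the singularities of the roof of $\psi$, producing controlled growth of $S_r(h')$ and allowing a mixing-via-shearing partition argument. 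So the resonance is internal to the second flow, not a ``generic position of $T'$ relative to the renormalization that produced $t_n$''.
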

This result shows the \emph{richness} of spectral behaviour within locally Hamiltonian flows. It complements the spectral result on singular spectrum, by showing that the actual nature of the singular spectrum is highly dependend on the actual flow and any two flows have singular spectra which are \emph{spectrally orthogonal} to each other.

% are neverthelss poorly . In view of the existing results on disjointness  }
\subsubsection{Other results on disjointness in the literature}\label{sec:otherdis}
A weaker notion of disjointness was introduced by Furstenberg \cite{Fur}: two flows are \emph{disjoint in the sense of Furstenberg} (or simply \emph{Furstenberg disjoint}) if they have no common \emph{joining}, see e.g.~\cite{KLU} or \cite{dlR} for definitions. Disjointness (both spectral and in the sense of Furstenberg) has been investigated not only within a given class of dynamical systems, but  also between \emph{rescalings} of the flow; let us recall that given a flow $(\varphi_{t})_{t\in\mathbb{R}}$, and $\kappa\neq 1$, the $\kappa$-rescaling is the flow
 of the form
 $(\varphi_t^\kappa)_{t\in\mathbb{R}}:= (\varphi_{t\kappa})_{t\in\mathbb{R}}$ obtained rescaling time.

  A phenomenon which has been proved for several classes of \emph{slowly chaotic} flows, possible only in entropy zero, is the so called \emph{disjointness of rescalings}, namely that the rescalings $\varphi_\mathbb{R}^\kappa$ of $\varphi_\mathbb{R}$ are disjoint (spectrally, or in the sense of Furstenberg) for almost every $\kappa\in\mathbb{R}$, unless there is an algebraic reason which prevents it. Results in this direction include  for example \cite{KLU, FF} for time-changes of horocycle flows\footnote{Note that horocycle flows themselves have isomorphic rescalings (where the renormalization is given by the geodesic flow). Joining rigidity of (time-changes of) horocycle and unipotent flows was one of the key discoveries of Marina Ratner, see e.g.~\cite{Rat:Act, Rat:Inv}. See also \cite{FF, AFR, T1, T2} for recent results about rigidity of time-changes of horocycle and unipotent flows.
  However, for many non-trivial \emph{time-changes} (also called~time \emph{reparametrizations}) of horocycle flows, different \emph{rescaling}  (i.e.\ $p\neq q$) are disjoint, see \cite{KLU, FF}. On the other hand, rescalings of (time changes) of horocycle flows  have (countable) Lebesgue spectrum (see \cite{P} for the classical horocycle flow and \cite{FU,Ti} for results on its time-changes), so they are always spectrally isomorphic; in  \cite{KLU}  it is shown that they are nevertheless typically Furstenberg disjoint.},
 \cite{DKW} for disjointness in unipotent flows,
 \cite{CE} for (disjointness of powers of)  $3$-IETs, and  \cite{FoKa} for Heisenberg nilflows.

% \cite{KF} With Giovanni we show that non-trivial time-changes of bounded time Heisenberg nilflows have disjoint rescalings.
%\cite{CE} show that for typical  3IET powers are disjoint.

% Ratner established strong rigidity phenomena for some flows and posed the problem whether there are other classes of measure-preserving flows for which the class of smooth functions is joining rigid, i.e., any joining between any smooth time changes is of algebraic nature. In the paper under review, the authors prove that for a full measure set the class of smooth time changes of flows generated by elements of this set is joining rigid, i.e., every two smooth time changes are either cohomologous or disjoint. This gives a natural class of flows for which the answer to Ratner's problem is positive.

% Hi, for two different unipotent flows or their rescalings we have Ratner's theory-- i.e. two unipotent flows (or rescalings) are disjoint unless there is an algebraic reason for it. For time changes I think the results are (but I'm not sure if we want to quote all this):
%Artigiani, Flaminio, Ravotti     On rigidity properties of time-changes of unipotent flows
%and S. Tang, New time-changes of unipotent flows on quotients of lorentz groupssee you:)

For special flows over rotations, when the roof is a symmetric logarithm,
% locally Hamiltonian flows in genus one, for Arnold %Kocergin
 flows
 %(defined in \S~\ref{sec:classes})
 %as well as special flows over rotations under a roof with symmetric logarithmic singularities
(which can be thought as a toy model for flows in $\mathcal{U}_{min}$),
%(the minimal component of) typical locally Hamiltonian  flows in genus one with one%% saddle and one center (also known as \emph{Kochergin}
P.~Berk and the second author in \cite{BK} proved  spectral disjointness of \emph{rescalings} of the form $(\varphi_t^\kappa)_{t\in\mathbb{R}}$ for any positive rational $\kappa\neq 1$.
 Very few only other known result about \emph{spectral} disjointness are known\footnote{In the same paper \cite{BK}, also IETs and piecewise constant functions with fake discontinuity are shown to be typically disjoint; in \cite{ALL}, the authors give a class of rank one transformations for which all powers are spectrally disjoint.}.

%hehe, right (maybe I undervalue my results;))
%a) in the paper with Wei and Dong, "Rigidity of joinings for some measure preserving systems" we show disjointness results for time-changes of horocycle flows with unipotent flows.
%b) in the paper with Dong, " Rigidity of a class of smooth singular flows on T2" we show that any two different rescalings of a von neumann flow with one jump are disjoint.
%We show that for a full measure set of rotations the class of smooth time changes of  is joining rigid, i.e. every two smooth time changes are either cohomologous or disjoin
% a BV function with  non-zero sum of jumps.
% irrational rotation and under the roof function which is of bounded variation with a non-zero %sum of jumps. The authors obtain some important results concerning joining rigidity of time changes. In particular, they give an
% MR1503078]
%a special class of flows that are now called von Neumann flows. They were the first systems with %continuous spectrum (weakly mixing systems) and were given by special flows over irrational rotation and under the roof function which is of bounded variation with a non-zero sum of jumps.
%(a)
%(b) In my work with Przemek we show spectral disjointness in two cases: i) log sym and almost every rotation and ii). In both cases we only do rational rescalings, i.e. \kappa=p/q with p\neq q.
 % Disjointenss in the sense of Fursterberg, though, does not imply spectral disjointness and hence does not provide direct information on the spectrum.

Another class on surface flows for which disjointness was investigated are so called {\it von Neumann flows}\footnote{This class was introduced in \cite{vN}
%as class of special flows (now called \emph{von Neumann flows} %. They were
as the first systems with continuous spectrum (weakly mixing systems).}. These are
%given  and are
given by special flows over rotations under a piecewise linear roof; one can show that they arise from the study of linear flows on surfaces with boundary, see \cite{Fr-Le3,Co-Fr}.
%\footnote{\color{blue}Kzysztof: can you add a ref for this? do you have a paper in which in an Appendix you explain the relation? if you want you can also make more precise the relation in a footnote...}.
In \cite{DK},
 Dong and the first author
 %, " Rigidity of a class of smooth singular flows on T^2" we
 showed that \emph{any} two different rescalings of a Von Neumann flow are disjoint, unless they are isomorphic. %
% \textcolor{magenta}{I would move the following sentence to the paragraph odegenerate singularities:}
%{\color{blue} are we forgetting some results by Fraczek-Lemancyk? i.e. on Blohin flows...}
%\noindent {\bf Flows with logarithmic singularities over IETs.}
% Disjointenss in the sense of Fursterberg, though, does not imply spectral disjointness and hence does not provide direct information on the spectrum.

%{\color{blue}(considerably shorted)
%Other results on disjointness in the context of locally Hamiltonian flows were the object of recent research (for example, disjointness of  \emph{rescalings} of the flow was proved in for Arnold flows in  \cite{FKK}, see also \cite{BK} for the symmetric counterpart in genus one).}

 Finally, in the specific setting of locally Hamiltonian flows, for typical Arnold flows, i.e.~typical flows in $\mathcal{U}_{\neg min}$ for $g=1$,
   Lema\'nczyk and the last two authors proved~in \cite{KLU} proved some disjointness properties, in particular disjointness of rescalings in the sense of Fusternberg  (see \cite[Theorems 1.2]{KLU}) and disjointness from smooth-time changes of the horocycle flows (see \cite[Theorem 1.3]{KLU}) for {Arnold flows}  (which were defined in \S~\ref{sec:classes}).
 Current work in progress by Berk and the last author seem to indicate that it may be possible to prove Furstenberg disjointness of rescalings for typical flows in $\mathcal{U}_{min}$ on surfaces of $g=2$. Disjointness in the sense of Fursterberg, though, does not imply spectral disjointness and hence does not provide direct information on the spectrum, which, as stated in \S~\ref{sec:open}, remains a completely open problem.

\subsubsection{Open questions on disjointness}\label{sec:open_dj}
Since only few results on pairwise disjointness are known and mostly in low genus, many questions remain open.  We consider here disjointness between \emph{pairs} of  locally Hamiltonian flows in $\mathcal{U}_{min}$ (or equivalently special flows under roofs with logarithmic \emph{symmetric} singularities). It is natural to ask whether also pairs of locally Hamiltonian flows in $\mathcal{U}_{\neg min}$ (or equivalently special flows under roofs with logarithmic \emph{asymmetric} singularities) are typically disjoint, spectrally or at least in the sense of Furstenberg.

As mentioned in \S~\ref{sec:otherdis} above, the only known results on (Furstenberg) disjointness of \emph{rescalings} concern flows over rotations \cite{BK} and low genus cases ($g=1$ in the case of \cite{KLU} or, in progress, $g=2$).
One can  also ask whether disjointness of rescalings is typical  for Kocergin flows (see \S~\ref{sec:classes}).
Even for Fursternberg disjointness, disjointness of rescalings in $\mathcal{U}_{\neg min}$ is an open problem as soon as $g>1$ (it was only proved in \cite{KLU} for Arnold flows, i.e.~for $g=1$). The stronger notion of \emph{spectral} disjointness of rescalings is open even for typical Arnold flows (i.e.~special flows over rotations).
This seems a difficult problem, since already the spectrum of Arnold flows is an open question (see \S~\ref{sec:open}).

  Finally, it may be interesting to study the general problem of isomorphisms and rigidity of joinings in the class of locally Hamiltonian flows. Since locally Hamiltonian flows dynamically share quite a lot of similarities with horocycle flows, in analogy to Ratner's theory for horocycle flows, one could ask whether two locally Hamiltonian flows are non-isomorphic (or even disjoint) \emph{unless} there is an  \emph{arithmetic}\footnote{Here the generalization of the notion of \emph{arithmetic} reason from rotation to IETs may involve the notion of Diophantine-like conditions in higher genus, see the survery \cite{Ul:ICM}.}
  reason on the base IET (or rotation). In this generality, this problem would need new tools especially in higher genus situation.
%(c) Can one generalize Chaika-Eskin to larger number of IET's (probably already asked by them).
%(d) What is the spectral type of time-changes of Heisenberg nilflows?
%(e) Do all IET's have singular spectrum?

%Finally, we conclude by mentioning that a question which is completely open is also
%What is the spectral type of Kochergin flows in higher genus (over IET). I think you work on this with Davide and Przemyk
%(f) What is the multiplicity of the spectrum? (completely open for all surface flows, maybe easier for time-changes of nilflows).

\section{Special flows reformulations}
A classical reduction (through a choice of a suitable Poincar{\'e} section), allows to study ergodic and spectral properties of locally Hamiltonian flows through their \emph{special flow} representation (see definition below), over a rotation when the genus is one, and more generally over interval exchange transformations (IETs) for any $g\geq 2$. We now recall this reduction and restate the main results in the language of special flows.

\subsection{Special flows over  IETs with symmetric logarithmic singularites }\label{sec:main_sf}
%This representation provides a classical tool to To study ergodic and spectral properties of locally Hamiltonian flows.
%Formally, Theorem \ref{thm:singsp}  is deduced from a result for special flows (see below, or \S\ref{sec:sfdef} for formal definitions). It is %indeed
 % well known that  any  minimal (or minimal component of) locally Hamiltonian flow can be represented as \emph{the special  flow} over an  \emph{interval exchange transformations} or, for short, IET (see Section~\ref{sec:locHam_sf} for definitions and for the reduction). Our main result, that {certain} special flows have singular spectrum %The result about singular spectrum for special flows %(formulated precisely in Theorem \ref{sp_specialflows} in \S\ref{sec:ss_\SymLog})
%holds for IETs on \emph{any} number of intervals   in a special class (corresponding to \emph{symmetric} permutations, or hyperelliptic strata).
 Let us first recall the definitions of interval exchange maps (\S~\ref{subsec:IETs}) and of special flows (\S~\ref{sec:sf}). Then we  can formulate our main result in the setting of special flows over IETs.
 \subsubsection{Interval exchange transformations.}\label{subsec:IETs}
An \emph{interval exchange transformation} (IET) of $d$ intervals $T:I\to I$ (also called $d$-IETs for short) on an interval  $I=[0,\ell]$ (often we will consider the unit interval, i.e.~assume $\ell=1$)   \emph{with permutation} $\pi$ (on $\{1,\dots, d\}$)  and \emph{endpoints} (of the continuity intervals)
\[End(T):=\{ \beta_i, \ 0\leq i \leq d\}, \quad \text{where} \  0=:\beta_0 < \beta_1 < \dots \beta_{d-1}< \beta_d:= |I|, \]
is a piecewise isometry which sends the interval $I_i:=[\beta_{i-1},\beta_{i})$, for $1\leq i\leq d$,
%to the interval $I_{\pi(i)}$
by a translation, i.e.\ there exists a real $\delta_i$ such that
\be \label{IETdef}
T(x) = x + \delta_i,
%\quad \text{where}\ \delta_i:= -\beta_i +\beta_{\pi(i)},
\qquad \text{if }\ x \in I_i.
\ee
Thus, a IET is uniquely determined by a permutation $\pi$ on $d$ symbols and the lengths $\lambda_i:=|I_i|=\beta_{i}-\beta_{i-1}$ of the exchanged intervals, described by the length vector $\lambda= (\lambda_i)_{i} \in \mathbb{R}^d_{>0}$.
%whose entries are the $\lambda_i$.
We will write  $T=(\pi, \lambda)$ for the interval exchange transformation with permutation $\pi$ and lengths $\lambda$.

\begin{rem}
Notice for later use that the \emph{displacements} $\delta_i$ in \eqref{IETdef} are a linear function of the lengths, namely if $\delta$ and $\lambda$ are the column vectors in $\mathbb{R}^d$ with entries respectively $\delta_i$ and $\lambda_i$ for $1\leq i\leq d$,
we can write $\delta = \Omega_\pi \lambda$ where, for $1\leq i,j\leq d$, %{\blu check signs of Omega!}
\begin{equation*} \label{def:Omega}
(\Omega_\pi)_{ij}=
\left\{\begin{array}{cl} +1 & \text{ if
}i<j\text{ and }\pi(i)>\pi(j),\\
-1 & \text{ if }i>j\text{ and }\pi(i)<\pi(j),\\
0& \text{ in all other cases.}
\end{array}\right.
\end{equation*}
\end{rem}
\subsubsection{Full measure sets of IET}\label{sec:aeIET}
We will always assume that the permutation $\pi $ is \emph{irreducible}, i.e.~if the set $\{1,\dots, k\} $ is invariant under $\pi$, then $k=d$ (since this is a necessary condition for any $T = (\pi,\lambda)$ with permutation $\pi$ to be minimal).
We denote by $\mathfrak{S}_d^0$ the set of \emph{irreducible} permutations $\pi$ on $d$-elements.
%, i.e.\ such that if the set $\{1,\dots, k\} $ is invariant under $\pi$, then $k=d$.}
For any number of intervals $d\geq 2$, we denote by $\mathcal{I}_d$ the space of all $d$-IETs on $[0,1]$ with irreducible permutations, which is therefore parameterized by $\mathfrak{S}_d^0\times \Delta_d$, where $
\Delta_d\subset \mathbb{R}^d_{>0}$ is the unit simplex of vectors $(\lambda_1,\dots, \lambda_d)\in \mathbb{R}^d_{>0}$ such that $\sum_{i=1}^d\lambda_i=1$.

%$\pi(\{ 1, \dots, k\})=\{ 1, \dots, k\}\Rightarrow$ $k=d$.
We say that a result holds \emph{for almost every IET with permutation} $\pi$ if it holds for almost every choice of the length vector $\lambda$
with respect to the restriction of the Lebesgue measure on $\mathbb{R}^d$ to the simplex $\Delta_d$. Given $d\geq 2$, we say that a property holds for \emph{almost every} IET in $\mathcal{I}_d$ if, for  any (irreducible) permutation $\pi\in \mathfrak{S}_d^0$, the property holds for almost every IET with permutation $\pi$. Equivalently, if it holds for a set $\mathcal{F}\subset \mathcal{I}_d$ with $m(\mathcal{I}_d\setminus\mathcal{F})=0$, where $m$ is the product of the counting measure on $\mathfrak{S}_d^0$ and  the Lebesgue measure on $\Delta_d$.

%\smallskip
%\noindent \textit{Keane condition.} Let $End(T)$ stand for the set of end points of the intervals
%$I_\alpha:\alpha\in\mathcal{A}$.  A pair ${(\pi,\lambda)}$
%satisfies the {\em Keane condition} if $T_{(\pi,\lambda)}^m
%l_{\alpha}\neq l_{\beta}$ for all $m\geq 1$ and for all
%$\alpha,\beta\in\mathcal{A}$ with $\pi_0(\beta)\neq 1$. Keane \cite{Kea:int} showed that an  IET with an irreducible permutation that satisfy the Keane condition is \emph{minimal}.

 \subsubsection{Birkhoff sums and special flows.}\label{sec:sf}
Given a  positive, integrable roof function $f: I\to\mathbb{R}_{>0}$,  so that $\inf_{x\in I} f(x)>0$ let us denote by $S_n(f)(x)$ the  \emph{Birkhoff sum} defined by
\[S_n(f)(x)=\left\{
\begin{array}{rcl}
\sum_{0\leq i<n}f(T^ix)&\text{if}& n\geq 0\\
-\sum_{n\leq i<0}f(T^ix)&\text{if}& n< 0.
\end{array}
\right.\]
The  \emph{special flow} over $T: I\to I$ under the roof function $f$ %(see also \S\ref{sec:sfdef})
 is  the vertical, unit speed flow on the region $I^f$  below the graph of $f$, given by $I^f := \{ (x,y) \in I\times\mathbb{R} :   0\leq y < f(x) \}$, with the identification of each point on the graph, of the form $(x,f(x))$, where $x \in I$, with the base point $ (T(x),0)$, as shown in Figure~\ref{symlog}.
% definition of \emph{special flow}.  Let $T$ be an automorphism of a standard (Borel)  probability space $(X,\mathcal{B},\mu)$. Let
%The \emph{special flow} $(T^f_t)_{t\in\R}$ built \emph{over} the automorphism $T$ and \emph{under} the \emph{roof function} $f$
More precisely,  $(T^f_t)_{t\in\RR}$  acts
on $I^f$
%\[X^f:=\{(x,r)\in X\times\RR: 0\leq r<f(x)\}\]
so that
\[T^f_t(x,r)=(T^nx,r+t-S_n(f)(x)),\]
where $n=n(t,x) \in\mathbb{Z}$ is a unique integer number with $S_n(f)(x)\leq r+t<S_{n+1}(f)(x)$.

 \subsubsection{Roofs with logarithmic singularities}\label{sec:roofs}
We consider special flows under a \emph{roof function} chosen in a class of (positive) functions which have \emph{logarithmic singularities} at the discontinuities $\beta_i$. This is the type of singularities that arise in the special flow representation of locally Hamiltonian flows with simple (i.e.~non-degenerate) saddles. %, see \S~\ref{sec:reduction}.
%The prototype of a (right-side) logarithmic singularity at zero is given by the model function
%\be \label{eq:positive}
%\mathrm{Log}^+(x):= -\log (x)^{pos}, \qquad \text{where}\  (x)^{pos}: =
% \begin{cases}
% x & \text{if}\ x\geq 0, \\   0 & \text{if}\ x<
%%and $(x)^{pos}=0$ if $x<0$, these minimum distances are given by
%\end{cases}
%\ee
The prototype of a (right-side) logarithmic singularity at zero is given by {the model function
\begin{equation} \label{eq:positive}
\mathrm{Log}(x): =
 \begin{cases}|\log x |=
-\log (x) & \text{if}\ 0<x\leq 1, \\   0 & \text{if}\ x\leq 0.
\end{cases}
\end{equation}
Notice that the minus sign is added  so that $\textrm{Log} \geq 0$.
%Then\footnote{{\color{blue} Can we go back to  using  $u(x)$ (or maybe call it $p_1(x)$ for power or $i(x)$ for inverse) so that or was there a issue?, maybe defininig it ad hoc at 0 if it's a problem or even redefining function and IETs so that they are defined on open sets of intervals? I find this notation harder to read, it's nicer if we can write them all as functions...}} %$\mathrm{Log}'(x)=-\frac{1}{(x)^{pos}}$, where
%$\mathrm{Log}'(x)=u(x)$, where
%\begin{equation} \label{eq:udef}
%\mathrm{u}(x): =
% \begin{cases}
%-\frac{1}{ x} & \text{if}\ x> 0, \\   0 & \text{if}\ x\leq 0.
%\end{cases}
%\end{equation}
%(here $(x)^{pos}$ denotes, in a sense, the positive part of $x$)
%\begin{equation} \label{eq:positive+}
% (x)^{pos}: =
 %\begin{cases}
% x & \text{if}\ x> 0, \\   +\infty & \text{if}\ x\leq 0.
%\end{cases}
%\end{equation}
%(here $(x)^{pos}$ denotes, in a sense, the positive part of $x$). }
%  In formulas, denoting by $(x)^{pos}$ the positive part of $x$
%which is defined by
%For later convenience (and to unify the definition of trimmed Birkhoff sums for a function with logarithmic and power-like singularities), let us introduce the auxiliary functions

Let $T$ be an IET with endpoints $End(T)=\{0=\beta_0<\beta_1<\dots< \beta_d=|I|\}$
%continuity intervals $I_i:=(\beta_i,\beta_{i+1})$, $0\leq i<d$
% $\beta_i$, $0\leq i\leq  d$% $0:=\beta_0<\beta_1< \dots \beta_{d-1}<\beta_d:=|I|$
 (see ~\S\ref{sec:main_sf}). Let us introduce the auxiliary function $\textrm{Log}_i^+(x):= \textrm{Log} (x-\beta_i)$  (which describes a logarithmic singularity as  $x\to \beta_i^+$, i.e.\ as $x$ approaches $\beta_i$ from the right) for $0\leq i< d$ and $\textrm{Log}_i^-(x):= \textrm{Log}(\beta_i-x)$ for $1\leq i\leq d$ (which describes a left logarithmic singularity as $x\to \beta_i^-$, i.e.\ as $x$ approaches $\beta_i$ from the left) for $0\leq i< d$.
%\begin{defn}[logarithmic singularities]\label{def:SymLog}
We say that a function $f $ has \emph{pure logarithmic singularities} at the endpoints $\beta_i$ of $T$ and write  $f\in \pLog{T}$ (where the dependence on $T$ is only through the location of the endpoints $\beta_i$ of $T$) if $f$  is a positive real valued function, defined on the disjoint union $\sqcup_{i=0}^{d-1} (\beta_i,\beta_{i+1})$ of the form
%and such that the restriction $f\vert (\beta_i,\beta_{i+1})$ of $f$ to each $(\beta_i,\beta_{i+1})$ has logarithmic singularities at the endpoints of the form
%\begin{equation}\label{eq:formlogsin}
%f\vert (\beta_i,\beta_{i+1}) = \left| C_i^+ \log (x-\beta_i)\right| + \left| C_{i+1}^- \log (\beta_{i+1}-x)\right|,
%\ee
\begin{equation}\label{eq:formlogsing}
f(x)=\sum_{0\leq i<d} C_i^+\mathrm{Log}^+_i(x) + \sum_{1\leq i\leq d}
C_{i}^-\mathrm{Log}_{i}^-(x),
\end{equation}
where $C^\pm_i \geq 0$ are non-negative constants,  % $g_i$ is a function of bounded variation on $[\beta_i,\beta_{i+1}]$   and
not all simultaneously zero. % (see also the definitions in \S\ref{sec:roofs}).
%With this notation  $f$ has the form
%for some constants $C_i^{\pm}\geq 0$, not all simultaneously zero. Notice that the signs are chosen so that $f\geq 0$.
%We remark that we allow some of the $C_i^\pm$ to be zero; so $f$ could have a finite one-sided limit at some $\beta_i$ (but we assume that at least one of the singularities is indeed logarithmic).
We say that $f$ has pure \emph{symmetric} logarithmic singularities if furthermore  %at the endpoints $\beta_i$ of $T$
 $\sum_{i=0}^{d-1}C_i^+= \sum_{i=1}^{d}C_i^-$.
Finally, we say that $f$ has  \emph{symmetric logarithmic singularities} and write %$f \in \Log{\sqcup_{i=0}^{d-1} I_i}$ (resp.\
 $f \in  \SymLog{T}$ %or, for short, $f \in  \SymLog{T}$,
if $f$ can be written as $f=f^p+g$ where $f^p  \in \pSymLog{T}$ has pure logarithmic symmetric singularities and $g=g_f:I\to\RR$ is a function which is absolutely continuous on any interval $(\beta_i,\beta_{i+1})$, $0\leq i<d$.

{For technical reasons (since in some parts we will need additional regularity of the roof to work with second derivatives), we will also consider the subset $\SymLogC{T}$ of the set $\SymLog{T}$, such roof functions $f=f^p+g$  that $g$ is extended to an $C^2$-map on any interval $[\beta_i,\beta_{i+1}]$, $0\leq i<d$ and $f''\geq 0$. Clearly, $f^p\in\SymLogC{T}$.}

 \begin{figure}[h!]
\includegraphics[width=0.5\textwidth]{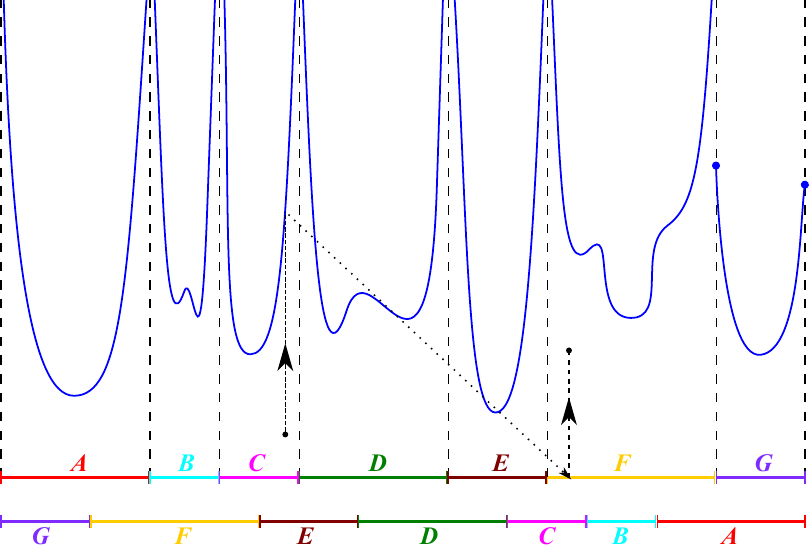}
 \caption{ A special flow over a $7$-IET  under a roof {$f \in \SymLog{T}$} which is a special representation of a locally Hamiltonian flow with two saddles of multiplicity $2$ on a surface of genus $3$.  \label{symlog}}
\end{figure}

\subsection{Spectral and disjointness results for special flows}\label{sec:sf_results}
The main spectral result that we prove in the setting of special flows is the following.
\begin{proposition}\label{thm:dm_sf}
For any $d\geq 2$, let $\pi$ be any irreducible permutation of $d$ symbols. For almost every IET $T$ with permutation $\pi$,
 %and endpoints $\beta_i, 0\leq i \leq d$,
 for any $f\in \SymLog{T}$, the special flow $(T_t^f)$ over $T$ under $f$ has singular spectrum and is spectrally disjoint from all mixing flows.
\end{proposition}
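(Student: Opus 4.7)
The plan is to derive Proposition \ref{thm:dm_sf} from the singularity criterion alluded to in the introduction (Theorem \ref{thm:singcrit}), which asserts that for a measure-preserving flow, both singularity of the spectrum \emph{and} spectral disjointness from all mixing flows follow from the existence of a sequence of times along which one has simultaneously (i) \emph{partial rigidity}, meaning $T^f_{t_n}$ converges to a non-trivial element of the weak closure of the Koopman representation on a set of positive measure, and (ii) \emph{tightness of centered Birkhoff sums with exponential tail decay}, meaning that $S_{q_n}(f) - c_n$, for suitable centering constants $c_n$, is tight along $q_n$ with a tail bound of order $e^{-\alpha r}$ for large $r$. The task thus reduces to producing, for a full-measure set of IETs $T$ with any prescribed irreducible permutation $\pi$ and every $f\in\SymLog{T}$, such a sequence of times for the special flow $(T^f_t)_t$.

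First I would construct the candidate times $q_n$ using Rauzy--Veech renormalization applied to the base IET $T=(\pi,\lambda)$. For a full-measure set of length vectors $\lambda$ satisfying a Roth-type Diophantine condition on the induction (of the type denoted by the macro $\RDC$ throughout the paper), one obtains a sequence of induced sub-IETs on shrinking intervals $I^{(n)}$ whose first-return times $q_n$ produce Rokhlin towers covering all but an arbitrarily small proportion of $I$, with heights that are balanced (in the sense that their ratios stay bounded, via the Kontsevich--Zorich cocycle). Setting $t_n := S_{q_n}(f)(x_n)$ for a well-chosen base point $x_n$, one sees that $T^f_{t_n}$ is close to the identity on the union of the cores of these towers, lifted to $I^f$; this union has $\mu$-measure uniformly bounded away from zero, yielding the desired partial rigidity.

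The technical heart of the proof is step (ii), the tightness with exponential tail decay of $S_{q_n}(f) - c_n$. Here the symmetry assumption $\sum_i C_i^+ = \sum_i C_i^-$ built into $\SymLog{T}$ is essential: it produces the cancellations used by the third author in \cite{Ul:abs} to prove absence of mixing. Writing $f = f^p + g$ with $g$ absolutely continuous, the bounded variation part of $S_{q_n}(g)$ is controlled by classical Denjoy--Koksma inequalities for the Rauzy--Veech towers. For the pure part $f^p$, the contribution of orbit points $T^j x$ falling near the right side of some $\beta_i$ is, to leading order, cancelled by the contribution of orbit points near the left side of some $\beta_{j}$ after matching via the symmetric coefficients; the residual fluctuation of $S_{q_n}(f^p) - c_n$ then reduces to $\log \dist(T^j x, \mathrm{End}(T))$ for the closest approach. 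Exponential decay of the tail of this quantity in turn reduces to showing that the measure of $\{x : \min_{0\leq j < q_n} \dist(T^j x, \mathrm{End}(T)) < e^{-r}\}$ decays exponentially in $r$, which follows from the balance of tower heights together with a Borel--Cantelli argument along the Rauzy--Veech sequence.

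The main obstacle, and the novelty compared to the genus-two case treated in \cite{Ch-Fr-Ka-Ul}, lies in transporting the cancellation argument from the ``isomorphic saddles'' setting (where the left/right balancing holds tower by tower in a particularly clean way) to the general IET case, where the coefficients $C_i^\pm$ and the discontinuities $\beta_i$ are distributed across multiple Rauzy--Veech towers of different heights. Managing this requires an inductive scheme on the Rauzy--Veech levels in which the symmetric cancellation is seen not to be destroyed by passage through an induction step, together with enough uniform control from the Kontsevich--Zorich cocycle to upgrade the cancellation from a mere $o(q_n)$ bound (sufficient for absence of mixing) to an exponential tail estimate on the distribution of $S_{q_n}(f) - c_n$. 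Once this is established, combining with the partial rigidity produced above and feeding the result into Theorem \ref{thm:singcrit} yields both singularity of the spectrum and spectral disjointness from all mixing flows, proving Proposition \ref{thm:dm_sf}.
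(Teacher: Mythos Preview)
Your overall architecture is right --- reduce to the criterion of Theorem~\ref{thm:singcrit} by producing rigidity times along which the centered Birkhoff sums have exponential tails --- but there is a genuine gap in the rigidity part, and you have misidentified the main obstacle.

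First, the criterion of Theorem~\ref{thm:singcrit} requires a \emph{global} rigidity sequence: $(h_n)$ must be a rigidity sequence for $T$ and the sets $A_n$ must satisfy $\mu(A_n)\to 1$. You settle for ``$\mu$-measure uniformly bounded away from zero, yielding the desired partial rigidity.'' The paper explicitly stresses that partial rigidity suffices for absence of mixing but \emph{not} for the spectral conclusion; the tails estimate must hold on sets of measure tending to $1$. Balanced Rauzy--Veech towers, of the type you describe, have measure uniformly bounded away from $1$, so this step as written does not feed into the criterion.

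Second, the obstacle is not transporting the cancellation argument to non-isomorphic saddles: the cancellations of \cite{Ul:abs} (linear bounds on trimmed Birkhoff sums of $f'$) already hold for almost every IET in any genus. The real difficulty is that those bounds are proved along \emph{balanced} Rauzy--Veech times, where no tower is large, while rigidity requires a single tower of measure $\to 1$. The paper resolves this by a \emph{coexistence} argument (Proposition~\ref{prop:cohexistence}): using the local product structure of invertible Rauzy--Veech induction, one shows that the trimmed-derivative bounds depend only on the $\tau$-coordinate, while $\epsilon$-rigidity can be arranged by controlling the $\lambda$-coordinate; ergodicity then produces infinitely many induction times where both hold. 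A short persistence lemma (Lemma~\ref{lemma:finitecombination}) propagates the trimmed bounds from the balanced time to the rigidity time finitely many steps later. Your inductive scheme ``in which the symmetric cancellation is seen not to be destroyed by passage through an induction step'' does not capture this; the issue is not preserving cancellation but locating it on a large tower.

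A smaller point: ``Denjoy--Koksma inequalities for the Rauzy--Veech towers'' is misleading, since Denjoy--Koksma fails for IETs with $d\geq 4$; the bounded-variation part $g$ is instead controlled by Katok's argument (using that $T^{h_n}J'_n\subset J_n$ on the rigidity tower).
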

%As in  the work \cite{Fr-Le0} by Lema\'nczyk and the first author mentioned in the above \S~\ref{sec:history}, this  result implies in particular singularity:
%stronger result proven is that the  flows considered
% show that special flows over rotations under a symmetric logarithm, for a full measure set of frequencies,
%  are \emph{spectrally disjoint} from all mixing flows.
%\begin{corollary}\label{thm:ss_sf}
%For any $d\geq 2$, let $\pi$ be any irreducible permutation of $d$ symbols. For almost every IET $T$ with permutation $\pi$ and endpoints $\beta_i, 0\leq i \leq d$, for any $f\in \SymLog{T}$, the special flow $(T_t^f)$ over $T$ under $f$ has purely singular spectrum.
%\end{corollary}

%\subsection{Disjointness results  for special flows}\label{sec:disjointness}
 The main disjointness result stated as Theorem~\ref{thm:main_dj} follows, in view of the reduction, from the  following theorem:
\begin{proposition}\label{thm:sfdisjointness} For any $d\geq 2$,
let $\pi, \tau$ be any two irreducible permutations of $d$ symbols.
For almost every $IET$ $T$ with permutation $\pi$ and for every roof function $f\in \SymLog{T}$ there exists a full measure set $\mathcal{D}=\mathcal{D}_T$ of IETs with permutation $\tau$ such that for every $S\in \mathcal{D}$ and for every $h\in \SymLogC{S}$, the special  flows $(T^f_t)$ and $(S^h_t)$ are spectrally disjoint.
\end{proposition}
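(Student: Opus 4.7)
The plan is to combine the singularity/rigidity machinery developed to prove Proposition \ref{thm:dm_sf} for the first flow $(T^f_t)$ with a new shearing mechanism that produces mixing times for the second flow $(S^h_t)$, and then invoke the standard rigidity-plus-mixing principle to upgrade this to spectral disjointness.

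\textbf{Step 1: rigidity times for $(T^f_t)$.} Fix a typical $T$ with permutation $\pi$ and $f\in\SymLog{T}$, as provided by Proposition~\ref{thm:dm_sf}. The singularity criterion used in its proof (the tightness of Birkhoff sums with exponential tail decay, combined with the symmetric-logarithmic cancellations of the last author) yields more than singularity: it produces a sequence $t_n=t_n(T,f)\to\infty$ along which $(T^f_t)$ is \emph{weakly rigid}, in the quantitative form that $\|T^f_{t_n}\varphi-\varphi\|_{L^2}\to 0$ on a dense subspace, equivalently, $e^{it_n\lambda}\to 1$ in $L^2(\sigma^T_\varphi)$ for every $\varphi\in L^2_0$. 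The scale $t_n$ is governed by a Rauzy--Veech renormalization stage $q_n=q_n(T)$ of the base IET, specifically $t_n$ is commensurable to $S_{q_n}(f)(x_0)$ for a suitably chosen Birkhoff orbit.

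\textbf{Step 2: resonant rigidity times produce mixing of $(S^h_t)$.} Given the sequence $(t_n)$ attached to $T$ and $f$, I will construct a full-measure set $\mathcal{D}=\mathcal{D}_T\subset\mathcal{I}_d^\tau$ of IETs $S$ with permutation $\tau$ so that, along a subsequence $(t_{n_k})$ depending on $(T,S)$, the Rauzy--Veech structure of $S$ is in a ``resonant'' position with the scale $t_{n_k}$: namely there exist Rauzy--Veech times $m_k=m_k(S)$ such that the dynamical partition of $S$ at stage $m_k$ has width comparable to $t_{n_k}^{-1}$, and such that the Birkhoff sums $S_{N_k}(h')$ of the derivative of any $h\in\SymLogC{S}$ over the first return to this partition grow to infinity at a controlled rate. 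The existence of $\mathcal{D}_T$ will be obtained by a Fubini/Borel--Cantelli argument on the product space $\mathcal{I}_d^\pi\times\mathcal{I}_d^\tau$, using that the singular structure of the roof forces $|S_{N_k}(h')|\to\infty$ on most orbits, and that the resonance condition on $(m_k,q_{n_k})$ is typical. Along such resonant times, a Ravotti/Ulcigrai-type shearing argument (which exploits precisely the $C^2$ regularity of the smooth part of $h$ encoded in $\SymLogC{S}$) shows that horizontal arcs in the special representation of $(S^h_t)$ are sheared across the surface and equidistribute, i.e.\ $(S^h_t)$ is \emph{mixing along $(t_{n_k})$}: $\langle S^h_{t_{n_k}}\psi,\psi\rangle\to 0$ for $\psi\in L^2_0$.

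\textbf{Step 3: conclusion via rigidity $+$ mixing.} Given a non-zero joining of the spectral types of $(T^f_t)$ and $(S^h_t)$, i.e.\ a measure $\mu\neq 0$ with $\mu\ll\sigma^T_\varphi$ and $\mu\ll\sigma^S_\psi$ for some $\varphi,\psi$, rigidity of $(T^f_t)$ along $(t_{n_k})$ would force $e^{it_{n_k}\lambda}\to 1$ in $L^2(\mu)$, while mixing of $(S^h_t)$ along $(t_{n_k})$ would force $\int e^{it_{n_k}\lambda}\,d\mu(\lambda)\to 0$; these are incompatible unless $\mu=0$. Hence $\sigma^T_\varphi\perp\sigma^S_\psi$ for all $\varphi,\psi\in L^2_0$, which is the required spectral disjointness.

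\textbf{Main obstacle.} The crux lies in Step 2. Producing a single rigidity time for $T$ is one thing; producing a subsequence which is \emph{also} a mixing time for a typical second flow is a genuinely new phenomenon since the Rauzy--Veech renormalizations of $T$ and $S$ are a priori unrelated. The mechanism must be strong enough that shearing is achieved \emph{at the specific pre-assigned scale} $t_{n_k}$ dictated by $T$ and $f$, rather than at a scale chosen by $S$ and $h$ alone; and the full-measure set $\mathcal{D}_T$ must be independent of $h\in\SymLogC{S}$ so that the same $S$ works uniformly over all admissible roofs. Establishing both the Diophantine resonance between the two renormalization towers and the uniform shearing estimate on $h'$ is, I expect, the heart of the argument and likely the most technical part of the paper.
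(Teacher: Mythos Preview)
There is a genuine gap in your Step~1, which then undermines Step~3. You claim that the singularity machinery yields a sequence $(t_n)$ along which the special flow $(T^f_t)$ itself is rigid, i.e.\ $\|T^f_{t_n}\varphi-\varphi\|_{L^2}\to 0$. This is not what is established, and there is no reason to expect it. The rigidity sequence $(h_n)$ produced by Proposition~\ref{prop:cohexistence} is a rigidity sequence for the \emph{base IET $T$}, not for the flow. What one actually obtains for the flow is that along the centering constants $a_n$ (Birkhoff sums $S_{h_n}(f)$ at a midpoint of the rigidity tower), the operators $T^f_{a_n}$ converge in the weak operator topology not to the identity but to an integral operator $\int_\R T^f_{-t}\,dP(t)$, where $P$ is a \emph{nontrivial} probability measure arising as a weak limit of the laws of $S_{h_n}(f)-a_n$. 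The exponential-tails estimate is precisely what guarantees that $P$ has exponentially decaying tails, hence that $\widehat P$ is real-analytic. Were the flow genuinely rigid (i.e.\ $P=\delta_0$), singularity of the spectrum would be immediate and the criterion Theorem~\ref{thm:singcrit} superfluous.

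Consequently your Step~3 argument does not go through: you cannot conclude $e^{it_{n_k}\lambda}\to 1$ in $L^2(\mu)$. The actual contradiction (Theorem~\ref{thm:spectorth}) runs as follows. On a putative common spectral type $\sigma\neq 0$, mixing of the second flow along $(a_{n_j})$ gives $V_{a_{n_j}}\to 0$, while the exponential-tails structure of the first flow gives $V_{a_{n_j}}\to\int_\R V_{-t}\,dP(t)$; hence $\widehat P(s)=0$ for $\sigma$-a.e.\ $s$. Since $\sigma$ is continuous (mixing forces this), $\widehat P$ vanishes on an uncountable set, contradicting its analyticity. The analyticity of $\widehat P$ is exactly what the exponential tails buy, and it is the substitute for the unavailable rigidity of the flow. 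Your Step~2 is correct in spirit, though the ``resonance'' is not between the Rauzy--Veech towers of $T$ and $S$: Proposition~\ref{prop:3} (via Kerkhoff's lemma and a Chung--Erd\H{o}s argument on the space of IETs with permutation $\tau$) locates $\epsilon_j^2$-rigidity times $q_j$ for $S$ with $k_jq_j\le a_{n_j}\le Ck_jq_j$, and the shearing (Proposition~\ref{prop:mix'}) is then driven by the resonant multiples $k_jq_j$ of a rigid time of $S$ alone, independently of the renormalization of $T$.
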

Notice that the full measure set $\mathcal{D}$ \emph{depends} on the choice of the (typical) IET $T$. The reason for this will be clear from the proof.  We recall furthermore that a similar result on \emph{pairwise disjointness} was proved by Jon Chaika for IETs themselves, namely Chaika in \cite{Cha} proved that for almost every IET $T$ there exists a full measure set of IETs $S$ %(with the same number of intervals)
such that $T$ and $S$ are spectrally disjoint.  Thus, our result can be seen as a strengthening of Chaika's work, in the same way that the (typical) absence of mixing for flows with symmetric logarithmic singularities can be seen as a far-reaching strengthening of the absence of mixing for IETs proved by Katok \cite{Ka:int}.

\subsection{Reduction to the special flows results}\label{sec:reduction}
It is well know that locally Hamiltonian flows with only simple saddles can be represented as special flows over IETs with symmetric logarithmic singularities. We here briefly explain how, in view of this  representation, Theorems~\ref{thm:singsp} and~\ref{thm:mixing_disj} follow from Proposition~\ref{thm:dm_sf}, and how  Theorem~\ref{thm:main_dj} follows from Proposition~\ref{thm:sfdisjointness}. % in view of this reduction.
%\noindent Following the same reduction sketched in \S~\ref{sec:main_sf}, this result immediately implies Theorem~\ref{thm:mainlH}.
Since the reduction of results on locally Hamiltonian flows to these special flows is by now classical, we provide only a sketch and refer  the interested reader for example to \cite{Fr-Ul,Rav:mix} for details.

\begin{proof}[Proof of Theorem~\ref{thm:singsp} and Theorem~\ref{thm:mixing_disj}]
Let any $(\varphi_t)_{t\in \mathbb{R}}$ locally Hamiltonian flow with only  simple  saddles on $M$ on a surface of genus $g$. Then,  considering Poincar{\'e} section, one can show that $(\varphi_t)_{t\in \mathbb{R}}$  is metrically isomorphic to a special flow over and interval exchange $T$ (where, if the parameterization of the  Poincar{\'e} section is standard (see for example \cite[Section 4.4]{Yo06})).
Thus,  hence has the same ergodic and spectral properties.
By Proposition~\ref{thm:dm_sf}, for almost every choice of the lengths $\lambda_i:= \beta_{i}-\beta_{i-1}$, $1\leq i\leq d$,  of $T$, such special flow has purely singular spectrum and is spectrally disjoint from all mixing flows. By the definition of the Katok fundamental class and a Fubini argument, this gives a  full measure   set of locally Hamiltonian flows with simple saddles  (with respect to the Katok measure class, see~\ref{sec:measure}) with  singular spectrum and with spectral disjointness from all mixing flows.
%(defined in \S\ref{sec:isomorphic}). %Remark~\ref{rem:fullmeasure}, implies
%singularity of the spectrum for a full measure
\end{proof}
\noindent The other reduction is very similar, we briefly describe it now.

\begin{proof}[Proof of Theorem~\ref{thm:main_dj}.]
Given two locally Hamiltonian flows  $(\varphi_t)_{t\in \mathbb{R}}$ and $(\phi_t)_{t\in \mathbb{R}}$ with only  simple  saddles,
%on $M$ on a surface of genus $g$,
consider a representation of each as special flows, so that $(\varphi_t)_{t\in \mathbb{R}}$ is isomorphic to the special flow $T^f_t$ over and IET $T$ and under $f\in \SymLog{T}$, and  $(\phi_t)_{t\in \mathbb{R}}$ is isomorphic to $S^h_t$, where $h\in \SymLog{S}$. In fact, with some additional arguments, we can also show that for almost every $(\phi_t)_{t\in \mathbb{R}}$ we can find its special representation $S^h_t$ for which $h\in \SymLogC{S}$. Indeed, for any standard choice of the  Poincar{\'e} section $J$, the Poincar{\'e} map $S:J\to J$ is an IET and the roof function $h:J\to \mathbb{R}_{>0}$ is in $\SymLog{S}$ with $h=h^p+g_{g}$. Moreover, for a.e.\ IET $S$, by Theorem~6.1 in \cite{Fr-Ul} and Theorem~8.9 in \cite{Fr-Ul24}, $g_h:J\to\mathbb{R}$ is cohomologous to a piecewise linear map $\widetilde g_h:J\to\mathbb{R}$, linear over all intervals exchanged by $S$. Let us consider $\widetilde{h}=h^p+\widetilde g_h$ which is cohomologous to $h$ and belongs to $\SymLogC{S}$, as $\widehat h''= (h^p)''\geq 0$. Unfortunately, the map $\widetilde{h}$ may not be positive. To solve this issue, we take advantage of the unique ergodicity for almost every IET $S$. As the integrals of $\widetilde{h}$ and $h$ are the same, so positive, by the unique ergodicity of $S$, we can find a small enough subinterval $\widehat J\subset J$ such that the induced map $S_{\widehat J}:\widehat J\to\widehat J$ is an IET exchanging the same number of intervals as the IET $S$ and the induced cocycle $\widetilde h_{\widehat J}:\widehat J\to\mathbb{R}$ is positive.  Moreover, as $\widetilde{h}\in \SymLogC{S}$ is cohomologous to $h$, we also have that $\widetilde{h}_{\widehat J}\in \SymLogC{S_{\widehat J}}$ is cohomologous to $h_{\widehat J}$. It follows that the special flows $(S_{\widehat J})^{\widetilde h_{\widehat J}}$ and $(S_{\widehat J})^{h_{\widehat J}}$ are isomorphic. As $(S_{\widehat J})^{h_{\widehat J}}$ is a special representation of the locally Hamiltonian flow $(\phi_t)_{t\in \mathbb{R}}$ given by a Poincar{\'e} section $\widehat J\subset J$, also the flow $(S_{\widehat J})^{\widetilde h_{\widehat J}}$ is a special representation of  $(\phi_t)_{t\in \mathbb{R}}$, for which the roof function $\widetilde{h}_{\widehat J}$ belongs as desired to  $\SymLogC{S_{\widehat J}}$.

Thus, since spectral disjointness is a isomorphism invariant and, as in the proof of Theorem~\ref{thm:singsp}, if a result holds for almost every $T$ and almost every $S$, it holds for almost every pairs of locally Hamiltonian flows in $\mathcal{U}_{min}$ with respect to the product of the Katok fundamental class with itself, the conclusion of Theorem~\ref{thm:main_dj} follows from Proposition~\ref{thm:sfdisjointness}.
\end{proof}
\noindent  In the rest of the paper we will now therefore only prove Propositions~\ref{thm:dm_sf} and \ref{thm:sfdisjointness}.

\section{Strategy and outline}\label{sec:outline}
In this section we explain  the main ideas and techniques used in the proofs. First, in \S~\ref{sec:strategy}, we sketch the ideas behind the proof of singularity of the spectrum and stress,  highlighting the difficulties and differences with respect to the genus one and the genus two case (see \S~\ref{sec:g1g2}). In \S~\ref{sec:strategy2} we then explain the strategy to prove disjointness of typical pairs. We then provide in \S~\ref{sec:organization} a reading guide to the rest of the paper.

%We then provide (in \S~\ref{sec:organization}) a reading guide to the rest of the paper.

\subsection{Strategy of the proof of singularity.}\label{sec:strategy}
Both main results (on singularity and disjointness of spectra) are proved through a careful analysis of \emph{limit measures} arising from the \emph{distribution of Birkhoff sums} of the roof function (see \S~\ref{sec:BS}). Those, in turn, rely heavily on the Markovian properties of the Rauzy-Veech induction.

\subsubsection{Role of Birkhoff sums}\label{sec:BS}
Consider the special flow representation of a minimal locally Hamiltonian flow with only simple saddles as a special flow over an IET $T$ under a roof $f\in \SymLog{T}$ introduced in \S~\ref{sec:main_sf}.
Mixing as well as spectral results for locally Hamiltonian flows can be deduced from a fine understanding of the distribution of Birkhoff sums $S_n(f)= \sum_{k=0}^{n-1} f\circ T^k$ of the roof function $f$.  It is well known (see for example the surveys \cite{Ul:slo} or \cite{Fr-Ul:Enc}) that the growth of these Birkhoff sums provides qualitative information on the speed of \emph{shearing} of transverse arcs, which is the key phenomenon which produces mixing in these type of  flow (see e.g.~the heuristic explanation in \cite{Ul:slo}).
% with logarithmic singularities over the IET $T$ in the special representation.
%The study of the (quantitative)  of $S_n(f)= \sum_{k=0}^{n-1} f\circ T^k$ can be in particular exploited to study mixing (or absence of mixing) and shearing properties.
For this reason,     to prove  \emph{absence} of mixing it is sufficient to prove \emph{lack of growth}, in the form of   \emph{tightness}\footnote{Let us recall that a sequence of real-valued random variables  $(X_n)_n$ on a common probability space  $(\Omega,\mathcal{A}, \mathcal{P})$ is called \emph{tight} if for every $\epsilon >0$ there exists $M>0$ such that $\mathcal{P}(|X_n|<M])\geq 1-\epsilon$ for all $n\in \N$. Thus, there is no escape of mass to infinity for distributions of random variables $(X_n)_n$. Here, the definition is applied to the Birkhoff sums $S_n f$ (after some centering) considered as random variables  with $x$ taken at random with respect to the Lebesgue measure on $[0,1]$.} of Birkhoff sums along a sequence of \emph{rigid} or \emph{partially rigid} times $(h_n)_{n\in \mathbb{N}}$ in the base. This form of tightness is the key ingredient
%Tightness of Birkhoff sums along (partial) rigidity subsequences of the base is in particular at the heart of
for many criteria for absence of mixing, starting from  Katok~\cite{Ka:int} and Kocergin~\cite{Ko:abs} seminal works, up to later higher genus results \cite{Sch:abs, Ul:abs}.
The study of spectral questions is also based on the analysis of Birkhoff sums, but
%The growth of  of the roof function $f$ and its derivatives play a crucial role in the proof of properties such as mixing, weak mixing, multiple mixing, shearing properties and disjointness phenomena among others.
%Spectral behavior is no exception, but
requires  a {much} more delicate understanding of \emph{weak limits} of Birkhoff sums, as we now explain.

\subsubsection{The singularity criterium}
The prototype for the criterion for singular spectrum that we prove in this paper is provided by the work \cite{Fr-Le0} of Lema\'nczyk with the second author,
%  These type of criteria were pushed in two different directions in \cite{Fr-Le0} and \cite{Sch:abs, Ul:abs}. Fr\k{a}czek and Lema\'nczyk in ,
%considering the same example as Kochergin (special flows with one symmetric logarithmic singularity over rotations),
who showed that if, in addition to \emph{tightness}, one can also control the \emph{tails} of the (centralized) distributions of the
%centralized
Birkhoff sums $S_{h_n}f$ at special times $(h_n)_n$, % - c_n$,
one can then deduce (using technical tools such as joinings and Markov operators) spectral disjointness from mixing flows. % and hence that the spectrum is purely singular.
The  precise \emph{criterion}\footnote{The criterium, which follows essentially by the work \cite{Fr-Le0} where a little bit weaker criterion was proved,
%{{\color{blue} Krzysztof, what is exactly the relation? is it the same or there are differences? which one is weaker/stronger?}}
 was already
% is a refinement of this criterium already
 stated and proved in \cite{Ch-Fr-Ka-Ul}, where it was used to prove singularity of the spectrum for
   typical flows in $\mathcal{U}_{min}$  in the special case of surfaces of $g=2$ and flows with isomorphic saddles. } that we use for proving \emph{singularity of the spectrum} of special flows  (stated in \S~\ref{sec:sc}, see Theorem~\ref{thm:singcrit})
%is a version  of this criterium shows that if
states indeed that if one can find a sequence of \emph{rigidity times} $(h_n)_n$ (so that in particular $T^{h_n}$ converges in measure to identity as $n\to \infty$, see the beginning of \S~\ref{sec:sc} for the precise definition) and
 centralizing constants $(c_n)_{n\in\mathbb{N}}$ in $\mathbb{R}$ such that the sequence of centralized Birkhoff sums
% a  sequence of real numbers  ()  such that
 $(S_{h_n} f-c_n)_{n\in \mathbb{N}}$
 has \emph{exponential tails} (in the sense of \eqref{eq:expdecay} in the statement of Theorem~\ref{thm:singsp}) then the flow $(T^f_{t})_{t\in\RR}$ has singular spectrum.
%exploited in various versions in \cite[Corollary 5.2]{Fr-Le1} and \cite[Proposition 11]{Fr-Le0}) and OUR PAPER) %was proved by
%We remark that this type of criterium
%is devised to deal with flows which display \emph{absence of mixing}, since it requires a strong form of \emph{tightness of Birkhoff sums}. Thus, it can be thought of a way to pushing the trend of results on absence of mixing (starting from Katok's and Kochering work \footnote{Katok proved in \cite{Ka:int}  it is  shown that special flows over IETs under roof functions of bounded variation are never mixing, and by Kochergin's, which shows the absence of mixing for special flows over rotations under a roof with a symmetric logarithmic singularity (see \cite{Ko:abs, Ko:07}). Both criteria require as input  \emph{tightness} of Birkhoff sums along some subsequences of \emph{rigidity} (or \emph{partial rigidity}) \emph{times}, i.e.\ one has to show that there exists a sequence $(q_n)$ of times such that $T^{q_n}$ converges to identity on subsets $E_n $ of measure tending to one (if there is rigidity, or measure bounded below in the case of partial rigidity) and at the same time, for some centralizing sequence $(a_n)$ and uniform constant $C$}.

One can think of this criterium
%Theorem~\ref{thm:singcrit} can hence
 as \emph{strengthening} of the criteria for absence of mixing  via tightness; %and  shows that tightness and rigidity (of the base), with the additional information of exponential tails, is sufficient to show singularity of the spectrum.
with the additional information of exponential tails being the extra ingredient needed
 to prove singularity. It is important to remark though that,
 % needed are on one hand
while for
%It is worth to underline that, contrary to proofs of
absence of mixing tightness is only needed at \emph{partial rigidity}  \emph{times} (i.e.\ along a sequence $(h_n)_n$ of times such that $T^{h_n}$ converges to identity (see the definition at the beginning of \S~\ref{sec:sc}) on subsets $E_n $ of measure %tending to one (if there is rigidity, or measure
 bounded below), % in the case of partial rigidity)
  for the spectral conclusion  it is \emph{crucial}
  %\footnote{Indeed, to work with Markov operators, one needs to control all parts of space to exclude the possible presence of other weak limits.
  %{\color{blue} Krzysztof, I may be writing nonsense, can you add a meanigful comment here on why controlling all space is crucial? (we can also drop this footnote of course, it was only to give a hint of motivation...} }
  that the times $(h_n)$ are \emph{global} rigidity times (i.e.\ the measure of the sets $E_n$ tends to $1$ as $n$ grows).

% An important early criterion for absence of mixing appears in Katok's work \cite{Ka:int}, which shows %where it is  shown
% that special flows over IETs under roof functions of bounded variation are never mixing, and by Kochergin's, which shows the absence of mixing for special flows over rotations under a roof with a symmetric logarithmic singularity (see \cite{Ko:abs, Ko:07}). Both criteria require as input  \emph{tightness} of Birkhoff sums along some subsequences of \emph{rigidity} (or \emph{partial rigidity}) \emph{times}, i.e.\ one has to show that there exists a sequence $(q_n)$ of times such that $T^{q_n}$ converges to identity on subsets $E_n $ of measure tending to one (if there is rigidity, or measure bounded below in the case of partial rigidity) and at the same time, for some centralizing sequence $(a_n)$ and uniform constant $C$,
% $Leb\{ x\in E_n | \ |S_{q_n}(f)(x) - a_n|<C\}/Leb(E_n) \to 1 $.
%In the case of rotations and functions of bounded variation, this follows easily from Denjoy-Kosma inequality, while for functions with symmetric logarithmic singularities one has to exploit a \emph{cancellation} phenomenon among contributions coming from the symmetric singularities.

%in \S\ref{sec:ssc} (see Proposition~\ref{prop:ssc}).
%The criterium is a generalization of the criterium which has been used in \cite{FrL} (to prove that special flows over rotations with logarithmic singularities are spectrally disjoint from mixing flows), which in turn is a strong extension

\subsubsection{Rigidity and tightness mechanisms}\label{sec:mechanisms}
Rigidity times are easy to construct for IETs: the existence and abundance of rigidity times was for example showed in the seminal work by Veech \cite{Ve:gau}, in which the renormalization algorithm known as  Rauzy-Veech induction (see \S~\ref{sec:noninvertibleRV}) plays a key role. This algorithm allows to produce
%Using what we call today, one can construct for example construct
 a sequence of representations of an IETs of $d\geq 2$ intervals as $d$ Rohlin-towers (see  Definition~\ref{def:tower} for the notion of Rohlin tower by intervals). Using ergodicity of the algorithm, one can show for example that for a.e.~IET one of the towers has area that tends to $1$ and gives a rigidity set of measure tending to $1$.

\emph{Tightness} of Birkhoff sums along  \emph{partial} rigidity sets
(for special flows over a.e.~$T$ with any $d\geq 2$,  under any $f\in \SymLog{T}$) %symmetric logarithmic singularities over a full measures set of IETs, )
was proved by the last author in   \cite{Ul:abs} (to show absence of mixing for flows in $\mathcal{U}_{min}$). The key technical result in   \cite{Ul:abs} is a control of the Birkhoff sums $S_{h_n}(f')$ of the \emph{derivative}\footnote{In  \cite{Ul:abs} it is shown (exploiting \emph{cancellations} between positive and negative contributions), that the \emph{trimmed} Birkhoff sums  $\widetilde{S}_{h_n}(f')$ obtained removing from  $S_{h_n}(f')$  the contributions of the closest visits to each singularity (see \S~\ref{sec:trimmedBS} for definitions) are controlled on \emph{each} tower corresponding to good times
  given by Rauzy-Veech induction (special times when one has a delicate control of equidistributions of orbits).} $f'$ of the roof (which is a non-integrable function, with singularities of type $1/x$) after \emph{trimming}, i.e.~removing the contribution of the closest visits to the singularities.
  %More precisely, the estimates proved in \cite{Ul:abs} show that \emph{trimmed} Birkhoff sums of the derivative along Rohlin towers at good times satisfies \emph{linear bounds} (in the sense of  Definition \ref{def:bounded}).
  The type of bounds proven, that we call \emph{linear bounds on trimmed derivatives} (see Definition~\ref{def:bounded}  in \S~\ref{sec:linearbounds}
  and Proposition~\ref{boundSf'growthprop}) give, \emph{after centering},  tightness of Birkhoff sums on each tower separately (since here the centering constant \emph{depends} on the tower);  moreover, the contribution of the closest visits can be used to produce an exponential control for the tails of Birkhoff sums (see \S~\ref{sec:sc via Bs}).
  Unfortunately, though (since the towers used to control Birkhoff sums have measure bounded away from $1$), this does \emph{not} give the desired tightness and exponential tails on a set of large measure, but only on each tower separately\footnote{Essentially this happens because the centering constants also grow and display \emph{polynomial deviations},
%  and the towers used in \cite{Ul:abs} have measure far from $1$,
  so tightness after centering in each tower separately does not imply global tightness with a unique centering constant.}.

\subsubsection{Coexistence of rigidity and tightness  in genus one and  two}\label{sec:g1g2}
To combine the  two mechanisms explained in the above \S~\ref{sec:mechanisms} for rigidity on one side and for tightness and exponential tails on the other  (and thus to prove the assumptions of the criterion for singular spectrum), one would like to find a sequence of times where there is a \emph{large} Rohlin tower (given by Rauzy-Veech induction) of measure tending to $1$, on which the linear bounds on trimmed derivatives hold.  We refer to these times are \emph{coexistence} times, since they are times in which rigidity \emph{coexist} with tightness (and exponential tails). For special flows over {rotations} as well as IETs which correspond to flows on surfaces of genus $g=2$, tightness can be proved by some ad-hoc mechanisms that
% that we now briefly recall are clearly
are \emph{compatible} with rigidity can be used to prove tightness bounds.

In  the case of  \emph{irrational rotations}, tightness for  functions of bounded variation  are given by the classical \emph{Denjoy-Koksma inequality}\footnote{Denjoy-Koksma inequality gives that Birkhoff sums of functions of bounded variation are (uniformly) bounded at the rigidity times (more precisely along the sequence $(q_n)_{n\in \mathbb{N}}$, where $q_n$ are the denominators of the continued fraction expansion of the rotation number).}.   Derivatives of functions with symmetric logarithmic singularities have \emph{unbounded} variation, but a truncation and approximation argument can be combined with Denjoy-Koksma to prove tightness and linear bounds on trimmed derivatives on the whole space (see for example \cite{BK,Fr-Le0}).
 %\footnote{{\color{blue} I think this is what you do with Przemek, Adam, no? can you add ref? I assume it's also in Max, or he just quotes you? add ref to prop/lemma?}}
We remark that  Denjoy-Koksma inequality \emph{fails} for IETs with $d\geq 4$ and this difference  is at the heart of the many drastic differences of behaviour\footnote{The absence of Denjoy-Koksma inequality is indeed related to the obstructions to solve the cohomological equation \cite{Fo:coh,Fr-Ki2,Fr-Ki3}, %{\color{blue}here you can also add refs to your last paper with Minsung, Krzysztof!},
to the phenomenon of polynomial deviations of ergodic averages for IETs \cite{Zo:dev, Fo:dev} and, more generally, to the lack of a KAM theory for non-linear IETs in higher genus (see e.g.~\cite{MMY} or \cite{SU} and the references therein).} between rotations and IETs in higher genus.

When a locally Hamiltonian flow on a surfaces of genus two has two simple isomorphic saddles, an intrinsic \emph{symmetry} of the underlying  surface\footnote{Each minimal locally Hamiltonian flow can be seen as a time-change of a linear flow on a translation surface; here by underlying surface we mean this translation surface. In the case of a flow on a surface of genus two with two simple saddles, the translation surface belongs to the stratum $\mathcal{H}(1,1)$ and has a hyperelliptic involution.} (the so-called \emph{hyperelliptic involution}), which provides a simpler mechanism\footnote{The geometric mechanism to prove  linear bounds on trimmed derivatives in presence of this involution exploits the remark that for any symmetric (of equal backward and forward length) trajectory from a fixed point of the hyperelliptic involution, there are perfect cancellations for Birkhoff sums of the derivative of the roof function, see \cite{Ch-Fr-Ka-Ul} for details.} to prove tightness;  the action of this involution is  implicitly\footnote{In  Scheglov's work \cite{Sch:abs}, cancellations for the (trimmed) Birkhoff sums of the  derivative were proved through a careful combinatorial analysis of the substitutions arising from the action of Rauzy-Veech induction on symmetric permutations; implicitly, though, the combinatorial properties exploited by Scheglov (in particular the palindrome nature of some of the words) are a combinatorial trace of the action of the hyperelliptic involution.} also behind the first proof of absence of mixing in genus two by Scheglov \cite{Sch:abs} (shortly before \cite{Ul:abs}). This mechanism is compatible with a  geometric form of rigidity, which exploits cylinders on the translation surface rather than Rohlin towers (see \cite{Ch-Fr-Ka-Ul} for details), and was exploited by the authors together with Chaika in \cite{Ch-Fr-Ka-Ul} to prove singularity of the spectrum
%through the criterion described in the above \S~\ref{sec:sc}
 in the genus two special case; unfortunately locally Hamiltonian flows with only simple saddles (i.e.~in $\mathcal{U}_{min}$) have an underlying hyperelliptic symmetry \emph{only} in genus two\footnote{Translation surfaces with an hyperelliptic involutions (or more precisely \emph{hyperelliptic strata} of translation surfaces) exist in any genus $g\geq 1$. The results on absence of mixing \cite{Sch:abs} or singularity of the spectrum \cite{Ch-Fr-Ka-Ul} at the level of \emph{special flows} can be proved for a.e.~IET $T$ of any $d\geq 2$ and any roof $f\in \SymLog{T}$, but unfortunately these abstract special flows represent a locally Hamiltonian flow only when the IET has $d=5$ exchanged intervals. Indeed, for $g\geq 3$, IETs that belong to hyperelliptic components appear only when the singularities are degenerate and hence give rise to power singularities of the roof, rather than logarithmic singularities.}.
% (or symmetric permutations), in \cite{}, simple  very geometric  mechanism which uses in an essential way the hyperelliptic involution:  Cancellations achieved through the hyperelliptic involution have the advantage of being compatible with a more geometric form of  rigidity\footnote{The existence of good rigidity towers for almost every IET is deduced (in \S\ref{sec:final}) from the abundance of translation surfaces well approximated by single cylinders (i.e.~from Proposition~\ref{prop:cyl}).}
% In \cite{Sch:abs, Ul:abs} IETs were considered on the base (which is required

\subsubsection{Mechanism for coexistence in $g\geq 2$}\label{sec:mechanismhigherg}
To generalize the result to higher genus, in this paper we use the Rauzy-Veech induction. % (rather than geometric mechanisms for tightness
The times where linear bounds on trimmed derivatives hold are constructed in \cite{Ul:abs},  considering a (balanced) \emph{acceleration} of Rauzy-Veech induction (see \S~\ref{sec:derivativesviaRV}, in particular Proposition~\ref{prop:accelerationset}).
Using Rauzy-Veech induction,
% In \S~\ref{sec:rank1periodicexistence},
 one can  construct, for any $\epsilon>0$, an open set of IETs which contain a rigidity tower (see Definition~\ref{def:tower}) which occupies  a proportion $1-\epsilon$ of space.
  Coexistence of these two behaviours  is made possible by the local product structure and the powerful machinery provided by Rauzy-Veech induction: an essential property of (natural extension of) Rauzy-Veech induction which we crucially exploit is that it has a \emph{local product structure}, i.e.~the past is in a sense independent on the future; more precisely, we exploit the coordinates $(\pi, \lambda, \tau)$ introduced by Veech and the independence of the \emph{future} Rauzy-Veech induction iterates from $\tau$ and of the \emph{past} ones from $\lambda$, see \S~\ref{sec:statisticalRV}). Thus, $\epsilon$-rigidity can be produced controlling the $\lambda$ coordinate only, while the times where the linear bounds on trimmed derivatives hold can be shown to depend on $\tau$ only (this is explained in the Appendix~\ref{sec:app}, by revisiting the proof of \cite{Ul:abs}).  %exploiting in addition the local prouduct structure of the induction and the fibered structure of the Rauzy-Veech cocycles).
  Then, times where a fixed $\epsilon$-rigidity coexists with trimmed derivative bounds can be deduced from ergodicity of Rauzy-Veech induction considering visits to a product sets; taking a sequence $\epsilon_k$ tending to zero with $k$, we then get the desired coexistence times.

\subsection{Strategy of the proof of the disjointness result}\label{sec:strategy2}
Let us now present now the main ideas behind the proof
that (the special flow representations of) a typical pair of locally Hamiltonian flows is spectrally disjoint (Theorem~\ref{thm:main_dj}).

\subsubsection{Disjointness mechanism}\label{sec:disjointness strat}
 To prove Theorem~\ref{thm:main_dj}, we prove and exploit a disjointness criterion (see Theorem~\ref{thm:spectorth}) that roughly says that disjointness of the two flows can be proved if one finds a sequence of (common) times which have very different dynamical behaviour for the two flow. More precisely, for the first flow, we exploits the rigid time with exponential tails produced to prove singularity of the spectrum. To show disjointness, it then suffices to find such times for the first flow, that are also \emph{mixing} times for the second flow (see Theorem~\ref{thm:spectorth} for the statement of the criterion).
 The proof of the criterion that we use to deal with the exponential tails (presented in \S~\ref{sec:disjointness_criterium}) exploits the study of weak limits of Birkhoff sums distributions (using Prokorov compactness and Markov operators)  and follows closely techniques previously developed by Lema\'nczyk and the first author, see \cite{Fr-Le0,Fr-Le1,Fr-Le2}.
%The existence of such times with rigid behaaviour on one hand and mixing on the other, can be then exploited to conclude disjointness.

This way to prove disjointness is reminiscent of the mechanism exploited by J.~Chaika in \cite{Cha} to show that almost every pair of interval exchange maps is disjoint. In that case, Chaika shows that one can find rigid times for one IET that are mixing times for the other IETs (whose existence is guaranteed by the fact that IETs are typically weakly mixing, see \cite{AF:wea}, i.e.~mixing along a density one set of times). We stress though that while the strategy of Chaika is to find rigidity times in (any) \emph{given} sequence of mixing times, we exploit a reversed strategy, starting from a given sequence of rigid times (with exponential tails) for the first flow (a type of behaviour that is \emph{rare} and delicate to locate, as explained in the previous sections), and then finding among those a subsequence of mixing times for the second flow.

\subsubsection{Mixing of resonant times}\label{sec:mixingstrategy}
Since a typical locally Hamiltonian flow  is weakly mixing, although not mixing (as recalled in \S~\ref{sec:classes}),  % by the work of the first author, see \cite{Ul:wea} and \cite{Ul:abs} respectively)),
mixing times are abundant.
%Since we want to find mixing times that coincide with the sequence of rigidity times (with exponential tails) for the first flow, we
Since we need to find mixing times for the second flow that are a \emph{subsequence} of the given sequence of rigid times with exponential tails for the first flow (as explained in the previous \S~\ref{sec:disjointness strat}) though, we need to  be able to control the \emph{location} of mixing times.

For this, we use a novel way to produce shearing (and then mixing times), which is based on what we call \emph{resonant rigidity}.
%(different e.g.~than \in the work of the first author in \cite{Ul:mix}, then extended in \cite{Ra:mix}).
The basic mechanism that we use to produce mixing, as for many classes of entropy zero flows, is the so-called \emph{mixing via shearing}\footnote{Mixing in several classes of entropy zero flows is produced through this mechanism:  mixing follows by showing that large families of transversal arcs \emph{shear} under the flow, i.e.~become more and more tilted in the flow direction and shadow a flow trajectory, which is typically uniformly distributed. This mechanisms has been used since the work of Kocergin \cite{Ko:mix} in the context of surface flows  (see \cite{SK:mix, Ko:mix,  Ul:mix, Rav:mix}), as well as for several other entropy zero flows: for the classical horocycle flow, shearing was used in seminar work by Marcus \cite{Marcus}; for time-changes of horocycle flows, by Forni and the last author in \cite{FU}; analytic time-changes of flows on $\mathbb{T}^3$ were build using sharing by Fayad in \cite{Fa:ana}; mixing via shearing was also used to prove mixing of time-changes of nilpotent flows, see \cite{AFU, AFRU, Rav2}.}.
%(and to the best of our knowledge, to all known mixing parabolic flows
In the case of a roof with asymmetric logarithmic singularities, the  shearing required for this mixing mechanism is created by the asymmetry of the roof (see \cite{SK:mix, Ko:03, Ko:04, Ko:04'} for rotations and in  \cite{Ul:mix, Rav:mix} for IETs). For IETs, equidistribution of orbits is controlled through \emph{balanced} renormalization (Rauzy-Veech induction) times to then prove the shearing estimates. In our setting, the logarithmic singularities are symmetric and times with very well controlled equidistribution of orbits are \emph{not} mixing (as shown in \cite{Ul:abs}).

The mixing times that use  in this paper are close to \emph{multiples} of a \emph{rigid} time,  that we call  \emph{resonant} rigid times (see  Definition~\ref{def:resonantrigidity_t}).
% , i.e.~are $q$ where $k$ is an integer
%(which will be required to grow along sequences of resonant times) and $q, 2q, \dots , kq$ are all rigid times (see  Definition~\ref{def:tow}).
Resonance produces many close visits to the singularities and hence amplify their contribution effect, producing a shearing which comes from the asymmetry of the orbit visits rather than the roof\footnote{Another example in which mixing (at all times) is deduced  \cite{CW} were shearing, is also a consequence of the non-uniformity of the orbits distribution. In \cite{CW} the required estimates are more delicate, so can only be proved for very symmetric IETs which are cyclic covers of rotations. The mechanism used here, instead, is very simple and robust and exploit simply resonance of a large rigid tower.}, and hence prove mixing (see Proposition~\ref{prop:mix'} and its proof in \S~\ref{sec:orthogonalityproof} for details).
We believe that this is a new shearing mechanism, which may prove useful also in different settings.

%We say that an $\epsilon$-rigid tower with base $B$ and height $q$ is %$(1-\epsilon)-$large, or simply \emph{large}, if its measure $|B|q\geq 1-\epsilon $.
%\end{definition}
%{\color{blue}But what is the definition of large below? still epsilon? but what is epsilon?}
% a sequence $(r_n)_n$ of $(\underline{\epsilon}, \underline{k})$-resonant %times of the f$r_n = k_n q_n$ with $k_n$ increasing,
% $$ k_m q_m \leq t_m < (k_{m}+1) q_m ,  \qquad \text{for \ all} \ m \in \mathbb{N}$$
%$$t_n\in \bigcup_{m}[k_mq_m, Ck_mq_m], \qquad \text{for \ all} \ n \in \mathbb{N}$$ is a mixing sequence for $(U^g_t)_t$.
%i.e.
%$$\lim_{t_n\to \infty} \mu^g(A\cap S^g_{t_n}B)= \mu^g(A)\mu^g(B)$$
%for any measurable $A,B$.

\subsubsection{Localization of resonant rigid times}
Resonant rigidity times for the second flow for which one can produce shearing (and hence mixing) as just described (in the above \S~\ref{sec:mixingstrategy}) %(which can be produced using Rauzy-Veech induction by degenerating balanced times)
  turn out to be \emph{frequent}; in particular, for a full measure set of IETs, it is possible to control where they occur, so that they can be chosen to coincide (infinitely often) with a given sequence given by the first flow (this is  the content of Proposition~\ref{prop:3}).
 %that one can find resonant rigid times close to a prescribed sequence of times (see
 The proof of this result exploits heavily Rauzy-Veech induction and the probabilistic framework provided by the Markovian coding of the induction.  Inspiration for this part is taken from the work of Chaika \cite{Cha}, as well as Avila-Gou\"ezel-Yoccoz \cite{AGY} for the Markovian formalism and distortion control. The crucial ingredients are the distortion estimate proved by Kerkhoff lemma and Chung-Erd{\"o}s inequality, which allow to prove a zero-one law and deduce full measure of the desired set of IETs (for which we can synchronize resonant rigidity times with given times).

\subsection{Organization of the rest of the paper}\label{sec:organization}
Let us briefly describe the content of the following Sections and of the Appendixes.
In Section~\ref{sec:singcrit}
we state the two criteria, respectively for singularity (Theorem~\ref{thm:singcrit}) and for disjointness (Theorem~\ref{thm:spectorth}) and present their proofs.
%The criterion for singularity from rigidity and tightness with exponential tails is then proved in \,  in \S~\ref{sec:sc}) and for disjointness  in \S!\ref{sec:) and present their proofs.
In Section~\ref{sec:singularity} we prove  singularity of the spectrum from the singularity criterion, assuming the existence of coexistence time when one has rigidity and a linear bounds on trimmed Birkhoff sums (i.e.~assuming Proposition~\ref{prop:cohexistence}). In Section~\ref{sec:orthogonalityproof} we exploit the disjointness criterion to deduce  the disjointness result (Proposition~\ref{thm:sfdisjointness}) from mixing of resonant rigid times (Proposition~\ref{prop:mix'}) and a result which allows to \emph{syncronize}  resontant mixing times with a given sequence for a full measure set of IETs (Proposition~\ref{prop:3}). In the same Section~\ref{sec:orthogonalityproof}  we then prove Proposition~\ref{prop:mix'} giving mixing of resonant rigid times.

The remaining three sections are devoted to the two Propositions left to prove, namely  Proposition~\ref{prop:cohexistence} and Proposition~\ref{prop:3}. To prove these last two technical crucial results, we need to introduce Rauzy-Veech induction, its natural extension and recall several technical results about the induction. We do this in Section~\ref{sec:backgroundRV}. We then prove that there are rigid times with trimmed derivative bounds (i.e.~Proposition~\ref{prop:cohexistence}) in Section~\ref{sec:coexistence} and Proposition~\ref{prop:3}, allowing to locate resonant rigid times, in Section~\ref{sec:resontanttimes2}.

%, that is also proved in the same Section~\ref{sec:orthogonalityproof},
%orthogonality proof
%RV
%coeistence
%resonsant

%then prove that if one can prove the linear bounds on trimmed Birkhoff sums (as defined in \S~\ref{sec:trimmedBS}) then one can use them to prove that the tails are exponential and hence verify the criterion assumptions (this is done in \S~\ref{sec:sc via Bs}). This first section does not require the use of Rauzy-Veech induction and applies in general to any special flows.

%In Section~\ref{sec:coexistence} we describe the two mechanisms for trimmed derivative linear bounds on one hand and for rigidity on the other. We first introduce in \S~\ref{sec:backgroundRV} basic notation and terminology from Rauzy-Veech induction and use it to restate the trimmed derivative linear bounds proved in \cite{Ul:abs} in terms of an acceleration of the induction (in \S~\ref{sec:derivativesviaRV}).  Finally, the coexistence times are built in \S~\ref{sec:coexistenceproof}.

In the three Appendices we provide details of how  some results used in the previous Sections that can be deduced from other works in the literature. The works exploited are respectively a result on deviations of ergodic integrals for locally Hamiltonian flows recent work \cite{Fr-Ul24} of the first and last authors we deduce in  Appendix~\ref{app:deviations}, %  Lemma~\ref
the growth condition of Roth-type IETs proved by Marmi-Moussa-Yoccoz in \cite{MMY:Coh} equivalent characterization of a property
%  we deduce a result about fluctuations of the discrete time of the special flow from .
  in Appendix~\ref{app:Roth}, and in Appendix~\ref{sec:app}, a technical strenghtening of a result
%   the adaptatations one should make to the original proof of trimmed derivatives linear bounds
proved by the last author in \cite{Ul:abs}.
%    to obtain  the stronger formulation which we provide here (se Proposition~\ref{prop:accelerationset}).
% which  is slightly stronger than the original one; the adaptations that one should do to the proof in  \cite{Ul:abs} to get this stronger version are explained in the Appendix \S~\ref{sec:adaptation}.
%we describe the local product structure of Rauzy-Veech induction (see \S~\ref{sec:localproductstructure}) and the fibered structure of Rauzy-Veech cocycles (see \S~\ref{sec:locallyconstantcocycles}). This is used to

\section{Criteria for singularity and disjointness}\label{sec:singcrit}
%Special flows with singular spectrum
In this section we present criteria
%introduce and prove the tools
that we will use to prove the singularity as well as the disjointness results. In \S~\ref{sec:sc}  we  state the criterion for singularity  (Theorem~\ref{thm:singcrit}), while in \S~\ref{sec:disjointness_criterium}   %which requires good rigidity times and exponential tails. In we state and prove
we state and prove the criterium for spectral disjointness (Theorem~\ref{thm:spectorth}).

%In \S~\ref{sec:orthogonalityproof} we state and prove the criterium for spectral disjointness (Theorem~\ref{thm:spectorth}).
%We then show how in the setting of special flows over IETs the assumptions of this singularity criterium can be verified producing rigidity sets which are large Rohlin towers and exploting a bound on the \emph{trimmed} Birkhoff sums of the derivative (see Definition~\ref{def:trimmedBS} and Definition~\ref{def:bounded}).
% we then state the result on absence of mixing

\subsection{Singularity criterion for special flows}\label{sec:sc}
 In this section we present a criterium  that guarantees that $(T^f_t)_{t\in\RR}$ has singular spectrum,  based on rigidity of the base $T:X\to X$ combined with exponential tails of the Birkhoff sums for the roof function $f:X\to\mathbb{R}_{>0}$.
As explained in \S~\ref{sec:strategy} of the introduction, the criterium  stems from the work of Lema\'nczyk and the first author \cite{Fr-Le0,Fr-Le1}, where it was formulated  in a slightly less general form.

 \smallskip
Let  us recall  first the notion of \emph{rigidity}:
\begin{definition}[Rigidity]
An automorphism $T$ of a probability Borel space $(X,\mathcal{B},\mu)$ is called \emph{rigid} if there exists an increasing sequence $(h_n)_{n\in\N}$
of natural numbers  such that
\[\lim_{n\to+\infty}\mu(A\triangle T^{h_n}A)=0\quad\text{for every}\quad A\in\mathcal{B}.\]
The sequence $(h_n)_{n\in\N}$ is then a \emph{rigidity sequence} for $T$.
\end{definition}
%For every $B\in\mathcal B$ with $\mu(B)>0$, we  denote by $\mu_B$ the \emph{conditional measure} given by  $\mu_B(A)=\mu(A|B)=\mu(A\cap B)/\mu(B)$ for every measurable $A$.

%{\color{blue}WE NEED TO WEAKEN THIS TO PARTIAL RIGIDITY ON A SET WITH MEASURE TENDING TO 1... Krysztof can you do it? I guess it's obvious but we should find a way to quote the existing literature..}

We can now formulate the singularity criterium for special flows:
\begin{theorem}[Singularity Criterion via rigidity and exponential tails, see Theorem~3.1 in \cite{Ch-Fr-Ka-Ul}]\label{thm:singcrit}
Let $f:X\to\mathbb{R}_{>0}$ be an integrable roof function with $\inf_{x\in X}f(x)>0$.
Suppose that there exist a {rigidity sequence} $(h_n)_{n\in\N}$ for $T$, a sequence $(A_n)_{n\in\N}$  of Borel sets with
$$\mu(A_n)\to 1\ \text{as} \ n\to+\infty,$$
 and a  sequence of real numbers $(a_n)_{n\in\N}$ (centralizing constants)  such that the sequence of centered Birkhoff sums
 $$(S_{h_n}(f)(x)-a_n)_{n\in \mathbb{N}}$$
 has exponential tails, i.e.\
 there exist two positive constants $C$ and $b$ such that
\begin{equation}\label{eq:expdecay}
\mu(\{x\in A_n:|S_{h_n}(f)(x)-a_n|\geq t\})\leq Ce^{-bt}\text{ for all }t\geq0\text{ and }n\in\N.
\end{equation}
Then the flow $(T^f_{t})_{t\in\RR}$ has singular spectrum  and is spectrally disjoint  from all mixing flows.
\end{theorem}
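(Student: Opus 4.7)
The plan is to follow the joining / Markov operator strategy of Fr\k{a}czek and Lema\'nczyk \cite{Fr-Le0,Fr-Le1} (applied in the genus-two setting of \cite{Ch-Fr-Ka-Ul}). The guiding idea is to extract from the rigidity sequence $(h_n)_n$ and the exponential tails hypothesis \eqref{eq:expdecay} a non-trivial Markov operator $\Phi$ on $L^2(X^f,\mu^f)$ commuting with the Koopman $\mathbb{R}$-representation of $(T^f_t)_t$, such that, in the spectral decomposition of the flow, $\Phi$ acts on the spectral component at frequency $\alpha$ as multiplication by $\overline{\hat P(\alpha)}$, where $P$ is a probability measure on $\mathbb{R}$ with exponential tails. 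The exponential tails will force $\hat P$ to extend to an entire function of exponential type, whose zero set in $\mathbb{R}$ is therefore at most discrete; combined with the Riemann--Lebesgue lemma, this analyticity will simultaneously rule out absolutely continuous spectral components and non-zero Markov intertwinings with mixing flows.

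To construct $P$ and $\Phi$, I would set $\tau_n(x) := S_{h_n}(f)(x) - a_n$ and let $P_n$ be the push-forward of the normalised measure $\mu|_{A_n}/\mu(A_n)$ under $\tau_n$. Hypothesis \eqref{eq:expdecay} gives uniform exponential integrability $\int e^{b'|s|}\,dP_n(s) \leq C'$ for every $0 < b' < b$; Prokhorov's theorem yields a subsequence with $P_n \rightharpoonup P$, and Fatou transfers the exponential tails to $P$, so that $\hat P(z) = \int e^{izt}\,dP(t)$ is entire of exponential type. Along a further subsequence, by weak operator compactness of Markov contractions, the unitaries $\Phi_n := U_{a_n}$ converge in the weak operator topology to a Markov operator $\Phi$ commuting with $(U_t)_t$. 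The identification $\Phi = \int U_{-s}\,dP(s)$ is the technical heart: using the pointwise factorisation $T^f_{a_n}(x,r) = T^f_{-\tau_n(x)}(T^{h_n}x, r)$, the rigidity $T^{h_n}\to\mathrm{Id}$ in measure, and $\mu(A_n)\to 1$, one reduces matrix coefficients of $U_{a_n}$ against test vectors of prescribed spectral frequency $\alpha$ (functions of the form $g_0(x)e^{i\alpha r}$) to integrals $\int_{A_n} e^{-i\alpha\tau_n(x)}|g_0(x)|^2 f(x)\,d\mu(x) + o(1)$; these converge by $P_n \rightharpoonup P$ and the uniform integrability to $\overline{\hat P(\alpha)}\,\|g_0\cdot f^{1/2}\|^2$, giving the claimed spectral multiplier.

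For singular spectrum, suppose for contradiction that some $g\in L^2_0(X^f)$ has a non-zero absolutely continuous spectral component. Projecting onto the ac spectral subspace we may assume $\sigma_g = h_g(s)\,ds$ with $h_g\in L^1(\mathbb{R})$, $h_g\not\equiv 0$. Then for every bounded Borel $\rho$, the vector $\psi := \rho(U)g$ has spectral measure $|\rho|^2 h_g\,ds$, and
\begin{equation*}
\langle U_{a_n}\psi,\psi\rangle \;=\; \int e^{ia_n\alpha}|\rho(\alpha)|^2 h_g(\alpha)\,d\alpha \;\longrightarrow\; 0
\end{equation*}
by Riemann--Lebesgue (note $a_n\to\infty$ since $h_n\to\infty$ and $\inf f>0$). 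On the other hand, the identification of $\Phi$ gives the limit as $\int \overline{\hat P(\alpha)}|\rho(\alpha)|^2 h_g(\alpha)\,d\alpha$, and varying $\rho$ forces $\overline{\hat P(\alpha)}\,h_g(\alpha)=0$ Lebesgue-a.e.; this contradicts $\hat P$ being entire with $\hat P(0)=1$, so its real zero set is at most discrete. Hence every spectral measure is purely singular. For spectral disjointness from mixing, any non-zero Markov intertwining $V: L^2_0(Y,\nu)\to L^2_0(X^f)$ with a mixing flow $(S_t)$ obeys $U_{a_n}V = V S_{a_n}$; taking WOT limits and using $S_{a_n}\to 0$ on $L^2_0$ (by mixing of $(S_t)$ and $a_n\to\infty$) yields $\Phi V = 0$. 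The spectral identification then confines the spectral support of the image of $V$ to the discrete zero set of $\hat P$, so $V=0$.

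The main obstacle is the identification of the weak operator limit $\Phi$ as $\int U_{-s}\,dP(s)$: the factorisation of $T^f_{a_n}$ only approximately decouples the base rigidity from the fibre shift, and the fact that $\tau_n$ is a \emph{function} of the base coordinate $x$ precludes a naive independence argument between $\tau_n$ and the $x$-integration. The cleanest route handles this on the spectral side, working with dense families of pure-frequency test vectors and exploiting the uniform exponential integrability of $(P_n)_n$ to control the oscillatory integrals, ultimately transferring the whole computation to the analytic properties of $\hat P$, which are the ingredient that drives both the singularity argument and the disjointness argument.
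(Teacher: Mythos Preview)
Your proposal is correct and follows essentially the same route as the paper (which in turn defers to \cite{Ch-Fr-Ka-Ul} and \cite{Fr-Le2}). The key identification $U_{a_n}\to\int U_{-s}\,dP(s)$ in the weak operator topology, which you rightly flag as the technical heart and sketch via the pointwise factorisation $T^f_{a_n}(x,r)=T^f_{-\tau_n(x)}(T^{h_n}x,r)$ combined with base rigidity, is exactly what the paper invokes as a black box (Theorem~6 of \cite{Fr-Le2}); the subsequent analytic contradiction---$\hat P$ entire of exponential type with $\hat P(0)=1$, hence with at most a discrete real zero set, forced to vanish on the support of a continuous spectral measure---is identical to the argument written out in the paper for the closely related Theorem~\ref{thm:spectorth}. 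One small point: in your disjointness step you should make explicit that mixing of $(S_t)$ forces any common spectral measure to be \emph{continuous} (the paper does this via the atom test), since only then does ``supported in the discrete zero set of $\hat P$'' yield the contradiction.
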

Both the conclusions of this criterium were already proved in \cite{Ch-Fr-Ka-Ul}, see \cite[Theorem~3.1]{Ch-Fr-Ka-Ul},  for the
 spectral singularity,  and \cite[Remark~3.2]{Ch-Fr-Ka-Ul},   for the spectral orthogonality to mixing flows.
%\begin{rem}
%Furthermore,
%one can show that the assumption of the singularity criterion imply a stronger conclusion, namely that  $(T^f_{t})_{t\in\RR}$ is \emph{spectrally disjoint}  from all mixing flows. This implies in particular that $(T^f_{t})_{t\in\RR}$ has singular spectrum and was the original aim for developing the criterion in the works of Fr\k{a}czek and
%Lema\'nczyk, see  \cite{Fr-Le0,Fr-Le1}.
%\end{rem}
%{\color{blue}Kryszstof: should we state this as part of the conclusion of the criterium, if we keep this as a corollary state in the intro? should we add a proof in an Appendix? it is worth? how long it is? or does it already follow from written papers? }
The proof %of the criterion was
given in  \cite{Ch-Fr-Ka-Ul} in turns uses as a tool a result from \cite{Fr-Le2}, which is a version of Prokhorov weak compactness of tight sequences along rigidity sets.
We remark that
the \emph{exponential tails} assumption, i.e.~\eqref{eq:expdecay}, along rigidity sets implies in particular that the sequence of centralized Birkhoff sums  $(S_{h_n}(f)(x)-a_n)_{n\in \mathbb{N}}$ is \emph{tight}. % and thus can be seen as a   considerable \emph{strengthening} of the criteria for  absence of mixing based on  tightness and rigidity (of the base).

% with the additional information of exponential tails, is sufficient to show singularity of the spectrum. Contrary to proofs of absence of mixing, though, it is crucial for the spectral conclusion that the rigidity here is \emph{global}, i.e.\ the measure of the sets $C_n$ tends to $1$.

\subsection{Spectral orthogonality criterion for special flows}\label{sec:disjointness_criterium}
%\subsection{Spectral orthogonality}
We now state the spectral orthogonality criterium that we will use to prove spectral disjointness: this follows from the existence
% which is a refinement of the previous singularity criterium. The criterion shows that if one can find
of a sequence of times where one flow is \emph{rigid with exponential tails} and the other \emph{mixing}.
Mixing \emph{along a subsequence} is defined as follows:

\begin{definition}[Mixing sequence]\label{def:mixingseq}
A measure preserving flows  $(U_t)_{t \in \mathbb{R}}$ on $(Y,\nu)$ is \emph{mixing along a sequence} $(a_n)_{n\geq 1}$ of real numbers converging to $+\infty$ if
\[\lim_{n\to\infty}\langle U_{a_n}g_1,g_2\rangle=\langle g_1,1\rangle\langle 1,g_2\rangle\quad\text{ for all }\quad g_1,g_2\in L^2_0(Y,\nu),\]
\end{definition}
Equivalently, the sequence of Koopman operators associated to $U_{a_n}$
 converges to zero operator in the weak operator topology on $L^2_0(Y,\nu)$.
%The above theorem will follow from the following propositions:
The criterium for disjointness that we use is the following.

\begin{theorem}\label{thm:spectorth}
Assume that $(T^f_t)_{t\in\R}$ on $(X^f,\mu^f)$ is a special flow as in the assumption of Theorem~\ref{thm:singcrit},
%built over an ergodic measure-preserving transformation $T$ on $(X,\mu)$ and under an integrable roof function $f:X\to\R_{>0}$ with $\inf_{x\in X}f(x)>0$. Suppose
such that there exist %a {rigidity sequence}
% $(h_n)_{n\in\N}$ for $T$, a sequence
 $(A_n)_{n\in\N}$  Borel sets with $\mu(A_n)\to 1$ as $n\to+\infty$,
% and a  sequence of real numbers
and  $(a_n)_{n\in\N}$ %converging to $+\infty$
such that $(S_{h_n}(f)(x)-a_n)_{n\in \mathbb{N}}$ has exponential tails. %, i.e.~
% there exists two positive constants $C$ and $b$ such that
%\begin{equation}\label{eq:expdecay}
%\mu(\{x\in C_n:|S_{h_n}(f)(x)-a_n|\geq t\})\leq Ce^{-bt}\text{ for all }t\geq0\text{ and }%n\in\N.
%\end{equation}
%Then the flow $(T^f_{t})_{t\in\R}$ has singular spectrum and is spectrally orthogonal to all %mixing flows.

If $(U_t)_{t\in\R}$ is a measure-preserving flow on $(Y,\nu)$ which is mixing along the sequence $(a_n)_{n\geq 1}$, then the flows $(T^f_{t})_{t\in\R}$ and $(U_t)_{t\in\R}$ are spectrally orthogonal.
\end{theorem}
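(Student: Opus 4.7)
The plan is to prove the contrapositive: assuming a nonzero bounded linear operator $J\colon L^2_0(Y,\nu)\to L^2_0(X^f,\mu^f)$ satisfying $JU_t=T^f_tJ$ for every $t\in\mathbb{R}$ exists, derive a contradiction (recall that the existence of such a $J$ is equivalent to failure of spectral orthogonality). First extract, from the distributions of $a_n-S_{h_n}(f)$ on $(A_n,\mu|_{A_n})$, which are tight thanks to the exponential-tail hypothesis \eqref{eq:expdecay}, a weakly convergent subsequence with limit $P$, a probability measure on $\mathbb{R}$ having an exponential moment; consequently its characteristic function $\widehat P$ extends to a bounded holomorphic function on a horizontal strip $\{|\Im z|<b\}$, and in particular its zero set $\{t\in\mathbb{R}:\widehat P(t)=0\}$ is discrete. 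Then, by pairing the base rigidity ($T^{h_n}\to\mathrm{Id}$ strongly on $L^2(X,\mu)$) with the weak convergence of the fiber drift $a_n-S_{h_n}(f)$, prove that along the chosen subsequence
\begin{equation*}
T^f_{a_n}\longrightarrow \Phi_P:=\int_{\mathbb{R}} T^f_s\,dP(s)
\end{equation*}
in the weak operator topology on $L^2_0(X^f,\mu^f)$. The key identity here is $T^f_{a_n}(x,r)=T^f_{a_n-S_{h_n}(f)(x)}(T^{h_n}x,r)$, which is valid on the large set $A_n\times[0,\inf f)$ whose complement has vanishing $\mu^f$-mass.

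With this limit in hand, the intertwining relation gives, for any $g_1,g_2\in L^2_0(Y,\nu)$,
\begin{equation*}
\langle T^f_{a_n}Jg_1,Jg_2\rangle_{X^f}=\langle U_{a_n}g_1,J^*Jg_2\rangle_Y\longrightarrow 0,
\end{equation*}
the limit following from the mixing of $(U_t)$ along $(a_n)$ applied after writing $J^*Jg_2=c\cdot 1+h$ with $h\in L^2_0(Y,\nu)$ and using that $U_t$ fixes constants. Letting $n\to\infty$ on the left-hand side via the weak convergence of the previous step yields $\langle\Phi_P Jg_1,Jg_2\rangle=0$ for every $g_2\in L^2_0(Y,\nu)$, so $\Phi_P Jg_1$ is orthogonal to the closed subspace $\overline{J(L^2_0(Y,\nu))}$. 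On the other hand, since $T^f_sJ=JU_s$, the Bochner integral identity $\Phi_P Jg_1=J\Phi^U_P g_1$, with $\Phi^U_P:=\int_{\mathbb{R}} U_s\,dP(s)$, places $\Phi_P Jg_1$ back inside $J(L^2_0(Y,\nu))$. Together these force $\Phi_P Jg_1=0$ for every $g_1\in L^2_0(Y,\nu)$, i.e.\ $J\Phi^U_P\equiv 0$ on $L^2_0(Y,\nu)$.

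It remains to show $\Phi^U_P$ has dense range in $L^2_0(Y,\nu)$: combined with $J\Phi^U_P=0$ and the boundedness of $J$, this forces $J=0$ and yields the desired contradiction. Mixing of $(U_t)$ along $(a_n)$ excludes nonzero eigenfunctions in $L^2_0(Y,\nu)$: if $U_tg=e^{i\lambda t}g$, then $\langle U_{a_n}g,g\rangle=e^{i\lambda a_n}\|g\|^2\to 0$ combined with $|e^{i\lambda a_n}|=1$ forces $g=0$; hence every spectral measure $\sigma^U_g$ on $L^2_0(Y,\nu)$ is atomless. In the spectral representation, $\Phi^U_P$ acts on $L^2(\mathbb{R},\sigma^U_g)$ as multiplication by $\widehat P$, whose zero set in $\mathbb{R}$ is at most countable (by analyticity on the strip) and hence $\sigma^U_g$-null, so multiplication by $\widehat P$ has dense range on each cyclic subspace, giving density of $\Phi^U_P(L^2_0(Y,\nu))$ in $L^2_0(Y,\nu)$.

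The principal obstacle is the weak operator convergence $T^f_{a_n}\to\Phi_P$: one has to reconcile the \emph{strong} rigidity $T^{h_n}\to\mathrm{Id}$ in the base with the \emph{merely distributional} convergence of the vertical drift $a_n-S_{h_n}(f)$, and to control the exceptional set $X^f\setminus(A_n\times[0,\inf f))$ on which the simple decomposition of $T^f_{a_n}$ breaks down; integrability of $f$ together with $\mu(A_n)\to 1$ keep its $\mu^f$-mass negligible and allow the convergence to be verified on a dense class of test functions of the form $F(x,r)=\phi(x)\psi(r)$. The remainder of the argument is a streamlined joining-theoretic variant of the Fr\k{a}czek--Lema\'nczyk machinery developed in \cite{Fr-Le0,Fr-Le1,Fr-Le2}, adapted to fully exploit the analyticity of $\widehat P$ granted by the exponential-tail hypothesis together with the continuity of the spectrum of $U$ provided by the mixing along $(a_n)$.
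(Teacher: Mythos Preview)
Your proof is correct and follows essentially the same mechanism as the paper's: the weak operator convergence $T^f_{a_n}\to\int T^f_s\,dP(s)$ (which the paper obtains by citing Theorem~6 in \cite{Fr-Le2} rather than sketching it), the analyticity of $\widehat P$ from the exponential tails, and the continuity of the relevant spectral measures from mixing along $(a_n)$.

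The packaging differs slightly. The paper negates spectral orthogonality by picking $g_1\in L^2_0(X^f)$, $g_2\in L^2_0(Y)$ with equal spectral measure $\sigma$, transfers both weak limits to $L^2(\R,\sigma)$, and reaches the contradiction directly: $\widehat P=0$ $\sigma$-a.e.\ on an uncountable set forces $\widehat P\equiv 0$, impossible for a probability measure. You instead use the equivalent intertwiner formulation, show $J\Phi^U_P=0$, and deduce $J=0$ from the dense range of $\Phi^U_P$. The two routes are dual reformulations of the same argument; the paper's is marginally shorter since it avoids the intertwiner bookkeeping, while yours makes the operator-theoretic structure more explicit. Your remark that $J^*Jg_2=c\cdot 1+h$ is unnecessary, since $J^*Jg_2$ already lies in $L^2_0(Y,\nu)$.
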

\noindent The rest of this section is dedicated to the proof of this criterium.

\begin{proof}[Proof of Theorem~\ref{thm:spectorth}]
 In the proof, we  need the notion of \emph{integral operator}: for every probability Borel measure $P$ on $\R$ denote by $\int_\R T^f_t\,dP(t):L^2(X^f,\mu^f)\to L^2(X^f,\mu^f)$
the operator such that
\[\langle\int_\R T^f_t\,dP(t)(h_1),h_2\rangle=\int_\R\langle T^f_t(h_1),h_2\rangle\,dP(t)\quad \text{ for all }\quad h_1,h_2\in L^2(X^f,\mu^f).\]
From the assumption, we know that the sequence of random variables $S_{h_n}(f)-a_n:A_n\to\R$, $n\geq 1$ is \emph{tight} and has exponential tails.  This, by classical weak-compactness arguments, implies (as shown for example in one  the steps of the proof of Theorem~3.1 in \cite{Ch-Fr-Ka-Ul})
that there  exists a probability Borel measure $P$   (obtained as a weak limit distribution)
%In fact, $P$ is a limit distribution for that shows the existence of
such that
\begin{equation}\label{eq:tails}
P((-\infty,-t)\cup(t,+\infty))\leq Ce^{-bt}\quad\text{for all}\quad t\geq 0.
\end{equation}
(i.e.~the limit probability also has expontial tails) and, by Theorem~6 in \cite{Fr-Le2}, we have
\begin{equation}\label{eq:Tfan}
T^f_{a_n}\to \int_\R T^f_{-t}\,dP(t)\quad\text{in the weak operator topology on}\quad L^2(X^f,\mu^f).
\end{equation}
\smallskip
Suppose, contrary to the claimed conclusion, that the flow  $(U_t)_{t\in\R}$ on $(Y,\nu)$, which is mixing along $(a_n)_{n\geq 1}$, is not spectrally orthogonal to $(T^f_t)_{t\in\R}$. Then there exist non-zero $g_1\in L^2_0(X^f,\mu^f)$ and $g_2\in L^2_0(Y,\nu)$ such that their spectral measures are equal, i.e.\ $\sigma^{T^f}_{g_1}=\sigma^{U}_{g_2}=\sigma\neq 0$. By assumption, $U_{a_n}\to 0$  in the weak operator topology on $L_0^2(Y,\nu)$. It follows that the unitary representation $(V_t)_{t\in \R}$ on $L^2(\R,\sigma)$, defined by \eqref{def:Vt}, satisfies
\begin{equation}\label{eq:Van}
V_{a_n}\to 0 \quad\text{in the weak operator topology on}\quad L^2(\R,\sigma).
\end{equation}
Hence $\sigma$ is a continuous measure. Indeed, suppose, contrary to our claim, that $s_0\in\R$ is an atom of $\sigma$. Let $h\in L^2(\R,\sigma)$ be the indicator function of the singleton $s_0$. As $V_th(s)=e^{its}h(s)$, we have $\langle V_{t}h,h\rangle=e^{is_0t}\sigma(\{s_0\})$,
so $|\langle V_{a_n}h,h\rangle|=\sigma(\{s_0\})$ for all $n\geq 1$ contrary to \eqref{eq:Van}.
By \eqref{eq:Tfan}, we also have
\[V_{a_n}\to \int_\R V_{-t}\,dP(t) \quad\text{in the weak operator topology on}\quad L^2(\R,\sigma).\]
Together with \eqref{eq:Van}, this gives $\int_\R V_{-t}\,dP(t)=0$ on $L^2(\R,\sigma)$. Therefore for all $h_1,h_2\in L^2(\R,\sigma)$, we have
\begin{align*}
0&=\langle \int_\R V_{-t}\,dP(t)(h_1),h_2\rangle=\int_{\R}\int_\R e^{-its}h_1(s)\overline{h_2}(s)\,d\sigma(s)\,dP(t)
=\int_{\R}\widehat{P}(s)h_1(s)\overline{h_2}(s)\,d\sigma(s),
\end{align*}
where $\widehat{P}$ is the Fourier transform of the measure $P$. Therefore (since $h_1$ and $h_2$ are arbitrary), $\widehat{P}(s)=0$ for $\sigma$ a.e.\ $s\in\R$. As $\sigma$ is continuous, we have $\widehat{P}=0$ on an uncountable subset of $\R$.
On the other hand, as $P$ has exponentially decaying tails (see \eqref{eq:tails}), its Fourier transform $\widehat{P}$ is an analytic function on $\R$.
It follows that $\widehat{P}\equiv 0$, contrary to non-triviality of the measure $P$. This completes the proof.
\end{proof}

%\begin{proposition}\label{prop:2} For almost every IET  $T$ and every roof function $f\in %LogSym$ there exists an increasing sequence $(a_n)$ and a measure $\cP\in P(\R)$ where $\cP$ %has exponential tails and
%$$
%T^f_{a_n}\to \int_\R T_t \;d\cP(t).
%$$
%\end{proposition}
%\begin{proof} This is exactly what we prove in our singular spectrum paper.
%\end{proof}
%\[\R(g):=\overline{\spa}\{T_t(g):t\in\R\}\subset  L^2(X,\mu)\]
%By the spectral theorem (see e.g.~\cite{CFS:erg}) the Koopman $\R$-representation $(T^f_{t})_{t\in\R}$ restricted to $\R(g)$
%is unitarily isomorphic to the $\R$-representation $(V_t)_{t\in\R}$ on $L^2(\R,\sigma_g)$ given by
%$V_t(h)(s)=e^{its}h(s)$.

%Finally, let us recall the notion of \emph{integral operator}. For every probability Borel measure $P$ on $\R$ denote by $\int_\R T_t\,dP(t):L^2(X,\mu)\to L^2(X,\mu)$
%the operator such that
%\[\langle\int_\R T_t\,dP(t)(g_1),g_2\rangle=\int_\R\langle %T_t(g_1),g_2\rangle\,dP(t)\]
%for all $g_1,g_2\in L^2(X,\mu)$.

\section{Singularity from trimmed Birkhoff sums}\label{sec:singularity}
In this section we reduce the verification of the criterion for singularity (Theorem~\ref{thm:singcrit}) for the class of special flows we are interested in, to a result about Birkhoff sums of the \emph{derivative} $f'$ of the roof function $f$.  Both the tightness and exponential tails required for the singularity criterium will be proven through these bounds.  We first  introduce the notion of \emph{trimmed Birkhoff sum}, state the main result on trimmed Birkhoff sums (Proposition~\ref{prop:cohexistence}) which will be proved in \S~\ref{sec:coexistence}, and then deduce from it the proof of singularity (namely of Proposition~\ref{thm:dm_sf}).
%Consider now a roof function $f \in $ ADD.
\subsection{ Birkhoff sums of the derivative trimming}\label{sec:trimmedBS}
Since we assume that $f$ has logarithmic singularities,  the derivative $f'$ of $f$ will have \emph{power-like singularities} of type $1/x$.

\subsubsection{Form of the singularities}
To describe these singularities, analogously to what we did for the logarithmic singularities of $f$,
%if $f$ has the form \eqref{eq:formlogsing}
%, the derivative $f'$ has power-like singularities of type $1/x$, of the following form.
let us use the following auxiliary function $u$: %defined already in \eqref{eq:udef}, also given by% defined on $(0,1)$ as follows %and extended to be 1-periodic on the real line:
\begin{equation}\label{eq:positive+}
u(x) :=- \frac{1}{(x)^{pos}},\qquad
%\quad for x \in (0,1); \qquad u(x)=u(\{x\}), v(x)=v(\{x\})  for x \in \mathbb{R};
\text{where}\ (x)^{pos}: =
 \begin{cases}
 x & \text{if}\ 0<x\leq 1, \\   +\infty & \text{if}\ x\leq 0.
\end{cases}
\end{equation}
(here $(x)^{pos}$ denotes, in a sense, the positive part of $x$). Recalling the definition of $\mathrm{Log}$ in \eqref{eq:positive} one can check that the definition is given so that $u(x)=\mathrm{Log}'(x)$.
%where $(x)^{pos}$ denotes the positive part of $x$ defined in \eqref{eq:positive+}.
%~$(x)^{pos}{=x if x>0$ and $(x)^=:=0$ otherwise.
%and extended to the whole real line so that they are periodic of period 1, i.e. for $x\in \mathbb{R}$, $u(x)=u(\fracpart{x})$ and  $v(x)=v(\fracpart{x})$ where $\fracpart{x}$ denotes the fractional part of $x$. CHANGE TO UNIFORMIZE
%CHANGE NOT TO BETA FOR SINGULARITIES
%Let $0\leq \overline{z}_0^+< \overline{z}_1^+ <\dots <\overline{z}^+_{s_1-1} < 1$ be the $s_1$ points where the roof function is right-singular (i.e.~the right limit is infinite) and
%$0<\overline{z}_0^-< \overline{z}_1^- <\dots <\overline{z}^-_{s_2-1} \leq 1$ the $s_2$  points where the roof function is left-singular (i.e.~the left limit is infinite).
  Let us then consider $u_i^+ (x): = u(x-\beta_i)$, for  $i=0,\dots , d-1$ and $u_i^-(x):= -u(\beta_i-x)$, $i=1,\dots , d$.  Then one sees that if $f$ has pure logarithmic singularities of the form \eqref{eq:formlogsing}, then the derivative $f'$ has the form:
\be \label{eq:form1xsing}
f'(x)=     -  \sum_{i=0}^{d-1}  \frac{C_i^+}{(x-\beta_i)^{pos}} +
 %\chi_{[\beta_i,\beta_{i+1}]}
    \sum_{i=1}^{d} \frac{C_i^-}{(\beta_i-x)^{pos}}   %\chi_{[\beta_i,\beta_{i+1}]}
= \sum_{i=0}^{d-1}    C_i^+  u_i^+(x) + \sum_{i=1}^{d}  %\chi_{[\beta_i,\beta_{i+1}]}
 C_{i}^- u^-_{i}(x)  .
\ee
%where $g$ has bounded variation. CHECK REGULARITY
%for which $f'(x)=\frac{1}{2}\left( \frac{1}{1-x}-\frac{1}{x}\right)$ and $C_0^+=C_1^-=1/2$. More generally, an example of a function which has derivative as in (\ref{logsing}) is
%\bes\label{logsymex}
%f(x) = C_0^+ |\ln(x)| + \sum_{i=1}^{s-1} \left( C_i^+ |\ln \left(\left\{x-\overline{z}_i\right\}\right)| + C_i^-  |\ln \left(\left\{\overline{z}_i - x\right\}\right)| \right) +  C_0^- |\ln(1-x)|.
%\ees

We remark that this function is not in $L^1$ %the derivative $f'$ of a function with symmetric logarithmic singularities is \emph{not} integrable.
(since singularities of type  $1/x$ are \emph{not} integrable). To bound the derivative Birkhoff sums $S_n f'$ it is necessary to consider separately the contributions of the largest terms, which are controlled by the closest visits of an orbit to the singularities.  We therefore introduce the notation for closest visits to singularities and define \emph{trimmed} Birkhoff sums.

%\begin{defn}[Closest visit]

\subsubsection{Closest visits}\label{sec:closests}%
Given an IET $T:I\to I$, a point $x\in I$ and a natural number $r$,   let us consider the orbit segment   of length $r$,
\[
\mathcal{O}_T(x,r) :=\{ x, T(x), \dots , T^{r-1}(x)\}.
\]
For any  discontinuity $\beta_i$ with $0\leq i<d$, denote by $x_i^{+}=x_i^+ (x,r)$ %, for  $i=0, \dots, d-1$,
 the closest visits of the orbit segment to $\beta_i$  \emph{from  the right}, and, correspondingly, by
$m_i^{+}=m_i^+ (x,r)$ %, for  $i=0, \dots, d-1$,
 the minimum distance of the orbit points to  $\beta_i$  \emph{from  the right}, namely:
\bes
m_i^{+}= m_i^{+}(x,r) : =   \min \{ (T^j x - {\beta}_i )^{pos} , \\ 0\leq j < r \},\quad  {x_i^{+} = x_i^{+}(x,r)  :={\beta}_i+ m_i^{+}}, \ \
 i=0, \dots, d-1\footnote{Notice that we exclude here $i=d$ since $\beta_d=1 $ cannot be approached from the right.},
\ees
 where $(x)^{pos}$ the 'positive' part
of $x$ defined in \eqref{eq:positive+}. %
Similarly, denote  by {$x_i^{-}:=x_i^{-}(x,r)$,  for $i=1, \dots, d$,  the closest visit to ${\beta}_i$  among the points of $\mathcal{O}_T(x,r)$ \emph{from the left}, and correspondingly by} $m_i^{-}:=m_i^{-}(x,r)$,  for $i=1, \dots, d$,  the minimum distance from the ${\beta}_i$  of the points of $\mathcal{O}_T(x,r)$ from the left\footnote{We exclude now $i=0$ since $\beta_0=0$ cannot be approached from the left.}:
% from the left and the right,
%which is defined by $(x)^{pos}=x$ if $x\geq 0$ and $(x)^{pos}=0$ if $x<0$,
%these minimum distances are given by
%, denoting by  $|x|^+$  the positive part of $x$ or $|x|^+=x $ if $x>0$ and $0$ otherwise, set
\bes
%m_i^{+}= m_i^{+}(x,r) &: =&   \min \{ (T^i x - \overline{beta}_i )^{pos} , \quad 0\leq i < r \}, \qquad
%\quad i=0, \dots, d-1;  \label{minx} \\ %\qquad %\min_{j=0}^{r_k-1} \left| T^j x_0 - \overline{x}_i \right\|^+
m_i^{-} = m_i^{-}(x,r) : = \min \{  ({\beta}_i  - T^j x )^{pos}, \  0\leq j < r \}, \quad {x_i^{-} = x_i^{-}(x,r) : = \beta_i- m_i^{-}(x,r),} \ \  i=1, \dots, d.
%, \quad y_0^{min}:= y_s^{min} .
\ees
Moreover,  denote by $m(x,r)$ the \emph{{minimum distance} to the singularity set} of $T$ given  by
%the distance  $dist(\mathcal{O}_T(x,n) , D(T))$ namely:
\begin{align*}
m(x,r):=& \min \big\{ \{ m_i^{+}(x,r), \ 0\leq i< d   \} \cup  \{ m_j^{-}(x,r), \ \   1\leq j\leq d \}\big\}  \\
% dist (\mathcal{O}_T(x,n) , D(T))=
=& \min \left\{ \left|T^k(x) - \beta_i\right|, \ \  0\leq k<r, \ 0\leq i\leq d \right\} .
\end{align*}
%\end{defn}
\begin{comment}
\begin{equation}
	\tilde S_{r}(f)(x')=\sum_{j\in \{0,\ldots, r-1\}\setminus \{\ell_{\beta}\mid\beta\in End(T)\}}f(T^j(x')).
\end{equation}
\end{comment}

% (since they  are at the core of the result on absence of mixing for locally Hamiltonian flows %proved in \cite{Ul:abs}):
%More precisely,:
%\begin{proposition}[Prop.~4.2 in \cite{Ul:abs}]\label{boundSf'growthprop}
% For every irreducible $\pi$ and a.e. ${\lambda}\in \Delta_{d-1}$,
%For a.e.~IET $T$ %, if we consider the Birkhoff sums  $\BS{f'}{r}$ over $T$, %the IET $%(\lambda, \pi)$,%
%and any $f\in   \SymLog{T}$, there exists a sequence of nested intervals $( I^{\ell})_{\ell %\in \mathbb{N}}$  and a constant%
% constant $M$
%and sequence of balanced induction  times $\{c_l\}_{l\in \mathbb{N}}$
%such that, if $x \in I^{\ell}$ and $r(x)$ is the first return time of $x$ to $ I^{\ell}$,  for %any $0\leq r< r(x)$
%\be \label{boundSf'growth}
%\left| \widetilde{S}_r{f'} (x) \right|= \left| \BS{f'}{r} (x) + \sum_{i=0}^{d-1}  \frac{C_i^+}%{m_i^+(x,r)} -  \sum_{i=1}^{d} \frac{C_i^-}{m_i^-(x,r)}\right| \leq  M r(x)    .
%\ee
%\end{proposition}

\subsubsection{Definition of trimmed Birkhoff sums}\label{sec:trimmedBS def}%
We can now introduce the notion of  \emph{trimmed} Birkhoff sums. Since we will use it both for the derivative $f'$ of the form \eqref{eq:form1xsing} and for the function $f$, let us define it more generally for functions $v:\bigcup_{i=1}^d I_i \to \mathbb{R}$ of the form  $v=   \sum_{i=0}^{d-1}  C_i^-   v^+_i  + \sum_{i=1}^{d}  C_{i}^+ v_{i}^-$,
where the functions $v_i^+$ are monotonically decreasing, have a right-singularity at $\beta_i$ (i.e.~blow up as $x\to \beta_i^+$) and $ v_i^-$ are monotonically increasing and have a left singularity at $\beta_i$ (i.e.~blow up as $x\to \beta_i^-$).  (Then both $f$ and $f'$ have this form when taking $v_i^\pm := \mathrm{Log}_i^\pm$ for $f$ and, respectively, $v_i^\pm:=  u_i^\pm$ for $f'$).
% of a function $g$ of the form \eqref{eq:form1xsing} (%hich will be applyed to $g=f'$).
A trimmed Birkhoff sum is then defined removing from  Birkhoff sums all contributions given by all these closest visits to singularities, from the right and from the left:
\begin{definition}\label{def:trimmedBS}
 Given an IET $T$ and a function $v$ as above, the \emph{trimmed Birkhoff sum} $\widetilde{S}_r v $ is defined for any $x\in I\backslash End(T)$ as
\[
\widetilde{S}_r v (x):= S_r v (x) -% \max_{0\leq k<n} {g(T^k(x))}.
\sum_{i=0}^{d-1} C^+_iv_i^+(x_i^+(x,r))  -  \sum_{i=1}^{d}  C^-_iv_i^- (x_{i}^- (x,r)).
\]
%$$\widetilde{S}_r g (x):= S_r g (x) -% \max_{0\leq k<n} {g(T^k(x))}.
%\sum_{i=0}^{d-1} C^+_ig_i^+(\beta_i+m_i^+(x,r))  -  \sum_{i=1}^{d}  C^-_ig_i^- (\beta_i-m_{i}^- (x,r)).$$
\end{definition}
\noindent Notice that, since the functions $v_i^\pm$ are monotonic, { $|v_i^\pm(x_i^\pm(x,r))|$}
%$g_i^\pm(\beta\pm m_i^\pm(x,r))$
are the largest values of $|v_i^\pm|$ evaluated along the orbit
segment $\mathcal{O}(x,r)$; this justifies the use of the terminology \emph{trimmed} Birkhoff sums (\emph{trimming} a Birkhoff sums usually refers to removing the largest element from the sum, see e.g.~\cite{DV,KS,Au-Sch}).%\footnote{{\color{blue}Add Max and Tanja paper if available...}}

This definition applies in particular to $f\in   \pSymLog{T}$ with \emph{pure} logarithmic singularities.
%In \S~\ref{sec:trimmedBS} we defined the trimmed Birkhoff sums $\widetilde{S}_nf$ of a function
Let us extend this definition to $f\in   \SymLog{T}$, by writing $f=f^p+g$, where $f^p \in \pSymLog{T}$ and $g$ is piecewise absolutely continuous and setting
\be \label{eq:logtrimming}
 \widetilde{S}_n f: =   \widetilde{S}_n f^p + S_n g.
\ee

\subsection{Bounds of trimmed derivatives along rigidity towers}\label{sec:sc via Bs}
We now state the form of control of trimmed Birkhoff sums of the derivatives of the roof which we will prove and will allow us to verify the assumptions of the singularity criterion.
The estimates that we need give a \emph{linear} control of the trimmed Birkhoff sums of the derivative over a suitable large set which is a rigidity set. Such rigidity set for the IET on the base will be given by large Rohlin towers (namely \emph{rigidity towers}, that we now define).

%For this, we need to recall first the notion of rigid set obtained from a Rohlin tower.
%as well as of rigidity set.
% Talong Rohlin towers which fill the space
%We now show that the assumptions of the singuality criterium, in our setting, can be reduced to estimates which control of the Birkhoff sums of the derivative of  a suitable large set.  %The rigidity sets for the IET on the base will be given by large Rohlin towers:
% tightness ofhe proof of tightness of Birkhoff sums and exponential tails along rigidity times will be achieved through the control of the Birkhoff sums of the derivative $f'$ of the roof function on Rohlin towers which give rigidity sets).
%\subsubsection{Rigidity times and Rohlin towers}\label{sec:sc}

\subsubsection{Rigidity towers} The rigidity sets that we will build for the base IETs will all be given by (Rohlin) towers by intervals:
%%% ADD AGAIN
\begin{definition}[Towers by intervals]   \label{def:tower}
Given an interval $J :=(a,b) \subset I:=[0,1]$ and $h\in \N$ we say that the union
$$\cC :=\bigcup_{i=0}^{h-1}T^i J$$
is a (Rohlin) \emph{tower by intervals} of \emph{base} $J$ and of \emph{height} $h$  for the  IET $T:I\to I$ if and only if the images $T^i J, 0\leq i < h$ are pairwise disjoint intervals which do not contain any discontinuity for $T$.
\end{definition}
\noindent Notice that since $T$ is a piecewise isometry and the floors of a tower are disjoint, the (Lebesgue) \emph{measure} $|\cC|$ of the Rohlin tower is
\begin{equation}\label{hwtower}
0\leq |\cC| = h |J| = h |b-a|\leq 1.
\end{equation}
We recall Rohlin towers by intervals for IETs can be produced naturally by \emph{inducing}: if $J\subset [0,1]$ is a subinterval and $T$ a minimal IET,
%Given a minimal IET $T$ and a subinterval  ,
the first return map of $T$ to $I$ is an interval exchange of at most $d+2$ exchanged intervals,     %with continuity intervals $ I^{\ell}_j$, $1\leq j\leq d$ and
and the  return time on each of them is constant. If $h_j$ is the return time on the continuity interval $J_i\subset J$, then $\mathcal{T}^J_i:=\cup_{k=0}^{h_j-1}T^k (J_i)$ is a Rohlin tower by intervals, that we will call \emph{tower over the inducing interval} $J_i$.
% and equal to some $h_i$. on each of them and equal to the height $h^{\ell}_j$ of a Rohlin tower over $ I^{\ell}_j$ denoted by $\mathcal{T}^{\ell}_j$.

\begin{definition}[Rigidity towers]\label{def:tow}
Given $\epsilon>0$, an $\epsilon$-\emph{rigidity tower} for the  IET $T:I\to I$ is  a  \emph{tower by intervals} of \emph{base} $J$ of \emph{height} $h$  for the  IET $T:I\to I$
if we have that
%\begin{itemize}  $|I|>\frac{1-\epsilon}{q}$;
 $$d( T^h x,x)<{\epsilon}|J|\leq \frac{\epsilon}{h}, \qquad \text{for\ every}\ x\in J,$$
 %, for $x_I$ being the midpoint of $I$.
%\end{itemize}
(the second inequality being automatic since $|J| h\leq 1$).
%% {\color{blue} (which in particular implies that they do not intersect the discontinuities of $T$, since if $T^i(J_n) $ contains a discontinuity, $T^{i+1}(J_n) $ consists of more than one interval)}\footnote{\color{red} It does not work for the last floor}.

 Furthermore, we say that a sequence $(\cC_n)_{n\in\mathbb N}$ of towers by intervals is a sequence of \emph{rigidity towers} for the IET $T$ if there exists a sequence $(\epsilon_n)_{n\in\mathbb N}$ such that each $\cC_n$ is $\epsilon_n$-rigid and $\epsilon_n \to 0$ as $n\to\infty$.

 % there exists a point $z_n\in J_n$ such that for $z_n$  %(and a posteriori for all points in the base)
%%we have, adopting the usual '{little-o}' notation\footnote{We recall that given two positive real valued sequences  $(s_n)_{n\in\mathbb N}$ and $(r_n)_{n\in\mathbb N}$ we write  $s_n=o(r_n)$ iff $\lim_{n\to \infty}s_n/r_n=0$.}, that $d(z_n,T^{h_n}z_n) = o(|b_n-a_n|)$.% as $n\to \infty$.
 \end{definition}

\subsubsection{Linearly bounded trimmed derivatives}\label{sec:linearbounds}
The form of control on Birkhoff sums of derivatives that we need is the following, {which shows that, at special times and for sums inside a Rohlin tower, $\widetilde S_{r} v(x)$ grows \emph{linearly} (as expected for an integrable function, although $v$ is \emph{not} integrable). We stress that here the \emph{symmetry} of the singularities plays a crucial role, since if the singularities are logarithmic but \emph{asymmetric}, the growth is known to be faster than linear\footnote{If $v = 1/x$, one can show that for a.e.~IET $T $ the Birkhoff sums $S_r v$ grown like $r\log r$, i.e.~that $S_r v /r\log r$ converges in probability to $1$ (as random variable in $x$). In \cite{Au-Sch}, when $T$ is a rotation, it shown that $S_r v (x))/r\log r$ converges for almost every $x\in [0,1]$.}, as shown in \cite{Ul:mix, Rav:mix}.% (see also \cite{Max}).\footnote{ {\color{blue} we can add here Max Auer and Tanja Schindler paper(s) ref?}}.
\begin{definition}[Linearly bounded trimmed derivatives]\label{def:bounded}
Given an IET $T$, a function $f\in  \SymLog{T}$ %logarithmic singularities
 and a sequence  $(\mathcal{T}_n)_{n\in\mathbb N}$ of Rohlin towers by intervals with bases $J_n\subset[0,1]$ and heights $h_n$ {for $T$},  we say that $f$ has \emph{linearly bounded trimmed derivatives} along the Rohlin towers $(\mathcal{T}_n)_{n\in\mathbb N}$ {(for $T$)} if there exists  a constant $M>0$ such that
the  trimmed Birkhoff sums of the derivative satisfy
\[
\left|\widetilde{S}_r f'(x)\right| \leq M h_n %+\frac{C'}{m(x,r)}
 \ \  \text{for \ any} \ x\in J_n \text{\ and\ any}  \ 0\leq r \leq h_n.
 \]
In this case,  to specify the constant $M>0$, we say that the sequence  $(\mathcal{T}_n)_{n\in\mathbb N}$ has $M$-bounded trimmed derivatives.
%where $m(x,n) $ denotes the closest visit.
\end{definition}
\noindent Thus, the linearly bounded trimmed derivatives assumption provides uniform  control for all trimmed Birkhoff sums (using the exended definition \eqref{eq:logtrimming} of trimming) of points from the base of any tower up to the height. %When $r=h^n$ is exactly the height of the tower, we  call the Birhoff sum ${S}_r f'(x)$ for $x$ in the base $(a_n,b_n)$ (and the corresponding trimmed Birkhoff sums) a  \emph{Birkhoff sum}s (resp. trimmed Birkhoff sums) \emph{long a tower}, since the orbit segment $\{ T_i z_0 \}_{i=0}^{h^n}$ has exactly one point in each floor of the tower $Z^{(n_k)}_j$.
%Let us first consider Birkhoff sums of the form $\BS{f'}{r_k} (z_0) $ where $z_0 \in I^{(n_k)}_j$ and $r_k= h^{(n_k)}_j$ is exactly the return time.

%  (which are at the core of the result on absence of mixing for locally Hamiltonian flows proved in \cite{Ul:abs}), see \S~\ref{sec:cancellations}. While linear bounds along a sequence of towers whose measure is \emph{bounded below} suffices to prove abscence of mixing, to prove singularity of the spectrum we need to show this type of bounds along  a sequence of \emph{rigidity towers}. We now state the result we need and decude from \cite{Ul:abs} in this paper.

%\subsubsection{}
%\subsection{Cancellations in balanced towers}

\subsubsection{Trimmed derivative bounds along balanced Rohlin towers}
\label{sec:cancellations}
Linear bounds on trimmed  Birkhoff sums of the derivative $S_nf'$ of a function with symmetric logarithmic singularites were proved by  the last author in \cite{Ul:abs},  %C.~U.~
  (since they  are at the core of the result on absence of mixing for locally Hamiltonian flows proved in \cite{Ul:abs}). With the notation for trimmed Birkhoff sums we just introduced in \S~\ref{sec:trimmedBS def}, Proposition 4.2 in \cite{Ul:abs} can indeed be restated as follows:
%More precisely,:
\begin{proposition}[Prop.~4.2 in \cite{Ul:abs}]\label{boundSf'growthprop}
% For every irreducible $\pi$ and a.e. ${\lambda}\in \Delta_{d-1}$,
For a.e.~IET $T$, %, if we consider the Birkhoff sums  $\BS{f'}{r}$ over $T$, %the IET $(\lambda, \pi)$,
%and any $f\in   \SymLog{T}$,
there exists a sequence of nested intervals $( I^{\ell})_{\ell \in \mathbb{N}}$ such that  any $f\in \pSymLog{T}$  has linearly bounded trimmed derivative Birkhoff %sums for the IET $T=(\pi,\lambda)$
 along the  towers $( \mathcal{T}^{\ell}_j)_{\ell\in\N}$, $1\leq j\leq d$ over the inducting intervals $I^{\ell}_j$, $1\leq j\leq d$, i.e.~there exists a constant
  $M>0$ such that:
% and any $x\in I^{(n_l)}_j$ and any $0\leq r<h^{(n_l)}_j$
\begin{equation} \label{eq:propconclusion}
\left| \widetilde{S}_r f'(x)\right| \leq M h^{\ell}_j, \quad \text{for \ all} \ 1\leq j\leq d, \ x\in I^{\ell}_j, \ 0\leq r\leq h^{\ell}_j,
 \end{equation}
  where  $h^{\ell}_j$ is the first return time of $I^{\ell}_j$ to $I^{\ell}$, i.e.~the height of the tower $\mathcal{T}^{\ell}_j$.
\end{proposition}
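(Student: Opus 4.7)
The plan is to deduce the claimed linear bound from two ingredients: quantitative equidistribution of orbit visits to $End(T)$, and cancellations enforced by the symmetry condition $\sum_i C_i^+ = \sum_i C_i^-$ built into the class $\pSymLog{T}$.

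First, I would invoke the Rauzy-Veech renormalization algorithm and select, along a full-measure set of IETs, a sequence of nested inducing intervals $(I^\ell)$ whose induced IETs are \emph{balanced}: all tower heights $h_j^\ell$ are uniformly comparable to $|I^\ell|^{-1}$ and all base lengths $|I_j^\ell|$ uniformly comparable to $|I^\ell|$. The existence of such a sequence for a.e.\ IET follows from ergodicity of the Rauzy-Veech cocycle combined with a Kerckhoff-type distortion estimate, by recording returns of the natural extension of the induction to a compact balanced section.

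Second, I would fix $x \in I_j^\ell$ and $0 \leq r \leq h_j^\ell$, and, using the form of $f'$ from \eqref{eq:form1xsing} and the notation of \S~\ref{sec:closests}, write
\[
S_r f'(x) \;=\; -\sum_{i=0}^{d-1} C_i^+ \sum_{k\geq 1} \frac{1}{m_{i,k}^+(x,r)} \;+\; \sum_{i=1}^{d} C_i^- \sum_{k\geq 1} \frac{1}{m_{i,k}^-(x,r)},
\]
where $m_{i,k}^\pm(x,r)$ denotes the $k$-th smallest distance from $\mathcal{O}_T(x,r)$ to $\beta_i$ from the right (resp.\ left). The balance of the tower $\mathcal{T}_j^\ell$ yields a uniform estimate $m_{i,k}^\pm(x,r) \sim c\,k\,|I^\ell|$, so each individual inner sum grows like $h_j^\ell \log h_j^\ell$: the naive bound is a full logarithmic factor too large, so one must exploit cancellation.

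Third, by the balance and quantitative equidistribution, the scale-$k$ right-aggregate $\sum_i C_i^+/m_{i,k}^+(x,r)$ and scale-$k$ left-aggregate $\sum_i C_i^-/m_{i,k}^-(x,r)$ are, up to errors of order $1/(k^2 |I^\ell|)$, both equal to a common universal rate $\Gamma_k \sim 1/(k|I^\ell|)$ times the respective weight-sum. The symmetry $\sum_i C_i^+ = \sum_i C_i^-$ then cancels the leading $\Gamma_k$-terms term-by-term in $k$, and the residuals sum over $k\geq 2$ to $O(h_j^\ell)$ using $\sum_{k\geq 2} 1/k^2 < \infty$. The $k=1$ terms escape this uniform pairing --- the closest visits depend too sensitively on $x$ and can be anomalously small --- and these are exactly the terms subtracted in the definition of $\widetilde{S}_r f'(x)$. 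Combining the two pieces yields $|\widetilde{S}_r f'(x)| \leq M h_j^\ell$ with $M$ uniform in $\ell$, $j$, $x$, and $r$.

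The main obstacle is making the pairing quantitative and uniform in $x$ and $r$ simultaneously. Small perturbations of the starting point $x$ can reshuffle which orbit point is closest to a given $\beta_i$, so one must quantify the equidistribution of visits at \emph{every} scale $k\geq 2$, with error terms that sum linearly rather than logarithmically in $h_j^\ell$; this is the technical heart of the argument and is where the precise combinatorial control furnished by balanced Rauzy-Veech times is indispensable. It is also where the pure symmetric assumption is critical: for asymmetric singularities the cancellation fails and Birkhoff sums genuinely grow like $h \log h$, as exploited in the mixing results of \cite{Ul:mix, Rav:mix}.
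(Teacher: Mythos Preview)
Your high-level outline is on target: the result does rest on (i) selecting special Rauzy--Veech times with good distribution properties, (ii) a cancellation between the positive and negative contributions to $S_r f'$ driven by the symmetry $\sum C_i^+=\sum C_i^-$, and (iii) removing the closest visits, which are the only terms escaping the cancellation. You also correctly identify that without symmetry the growth is genuinely $h\log h$.

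The gap is in the second and third steps, where you claim that returns to a \emph{single} balanced section (via Kerckhoff-type distortion) already yield equidistribution fine enough to force errors of order $1/(k^2|I^\ell|)$ at every scale $k\geq 2$. This is the crux, and balance alone does not deliver it. Balance guarantees that the orbit points in a tower are $\asymp|I^\ell|$-separated, so $m_{i,k}^\pm(x,r)\asymp k|I^\ell|$, but it gives no control on the \emph{differences} $m_{i,k}^+(x,r)-m_{j,k}^-(x,r)$ beyond the trivial $O(k|I^\ell|)$; with only that, your error terms are $O(1/(k|I^\ell|))$, which sums to $h_j^\ell\log h_j^\ell$, not $h_j^\ell$. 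The paper stresses precisely this point: Denjoy--Koksma fails for IETs with $d\geq 4$ (see \S\ref{sec:g1g2}), so the arithmetic-progression picture that works over rotations breaks down.

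The actual argument in \cite{Ul:abs}, as summarized in Appendix~\ref{sec:app} of the present paper, requires not one but \emph{several nested accelerations} of Rauzy--Veech induction. After first inducing to a balanced compact $\mathcal K$, one applies integrable-cocycle lemmas (Lemmas~\ref{lemma1} and~\ref{lemma2} here, which are Lemmas~2.1--2.2 of \cite{Ul:abs}) to further restrict to subsets $E_B\supset E_C\supset E_D$ on which one controls both the backward growth of cocycle \emph{products} and the subexponential growth of \emph{individual} cocycle matrices. It is this layered control---effectively a Diophantine-type condition on $T$---that yields the quantitative equidistribution needed to make the symmetric cancellations sum to $O(h_j^\ell)$. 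Your proposal collapses these accelerations into ``balanced Rauzy--Veech times,'' and that is where it falls short.
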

\noindent
%as we explain in the next \S~\ref{sec:rigidity_trimmedbounds}, the control of trimmed Birkhoff sums of the derivative along the towers given by
Proposition~\ref{boundSf'growthprop}, cannot be used directly to prove singularity
%verify the assumptions of Proposition~\ref{lem:expfromderivatives}
(although ultimately we will exploit it indirectly, see \S~\ref{sec:coexistence}),  %
%We remark that in the proof Proposition~\ref{boundSf'growthprop} in \cite{Ul:abs} all the towers $\mathcal{T}^\ell_j$ produced have measure uniformly bounded away from $0$ and $1$.
as we explain in the next \S~\ref{sec:rigidity_trimmedbounds}. %We will need a stronger form of control, along rigidity towers, that we now state.
%For this reason,

\subsubsection{Trimmed derivative bounds along rigidity towers}\label{sec:rigidity_trimmedbounds}
The main technical result that we need to prove singularity  is the following.

\begin{proposition}[Derivative bounds along rigid towers]\label{prop:cohexistence}
% For every irreducible $\pi$ and a.e. ${\lambda}\in \Delta_{d-1}$,
For a.e.~IET $T$ %, if we consider the Birkhoff sums  $\BS{f'}{r}$ over $T$, %the IET $(\lambda, \pi)$,
and any $f\in \SymLog{T}$, there exists  a sequence of rigidity towers $(\cC_\ell)_{\ell\in \mathbb{N}}$ with $|\cC_\ell|\to 1$ such that
$f$ has linearly bounded trimmed derivatives along the sequence  $(\cC_\ell)_{\ell\in \mathbb{N}}$.
%a sequence of nested intervals $( I^{(n_\ell)})_{\ell \in \mathbb{N}}$  and a constant
% constant $M$
%and sequence of balanced induction  times $\{c_l\}_{l\in \mathbb{N}}$
%such that, if $x \in I^{(n_\ell)}$ and $r(x)$ is the first return time of $x$ to $ I^{(n_\ell)}$,  for any $0\leq r< r(x)$
%\be \label{boundSf'growth}
%\left| \BS{f'}{r} (x) \right| \leq  M r + \sum_{i=0}^{d-1}  \frac{C_i^+}{m_i^+(x,r)} +   \sum_{i=1}^{d} \frac{C_i^-}{m_i^-(x,r)}   .
%\ee
\end{proposition}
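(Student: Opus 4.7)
The plan is to exploit the local product structure of the natural extension of Rauzy-Veech induction (in Veech's coordinates $(\pi,\lambda,\tau)$) to synchronize, along a sequence of renormalization times, the linear trimmed-derivative bound coming from Proposition~\ref{boundSf'growthprop} with the emergence of a single Rohlin tower of measure tending to $1$ which is also $\varepsilon$-rigid. The key insight, already flagged in \S\ref{sec:mechanismhigherg}, is that the property used to obtain the trimmed-derivative bounds in \cite{Ul:abs} is (up to the strengthening proved in Appendix~\ref{sec:app}) a condition on the \emph{past} coordinate $\tau$, whereas the production of a large rigidity tower of measure $1-\varepsilon$ is a condition on the \emph{future} coordinate $\lambda$; the two may therefore be imposed independently and combined via ergodicity.

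First, I would revisit the proof of Proposition~\ref{boundSf'growthprop} and record, following Appendix~\ref{sec:app}, that the conclusion \eqref{eq:propconclusion} at an acceleration time corresponds to the orbit of the natural extension visiting a positive measure "balanced past" set $\mathcal{B}$ defined only through the $\tau$-coordinate. Second, I would construct, for each $\varepsilon>0$, a positive-measure cylinder $\mathcal{R}_\varepsilon$ in the $\lambda$-variable such that after a controlled number $N_\varepsilon$ of further Rauzy-Veech steps one of the Rohlin towers $\mathcal{T}^{\ell+N_\varepsilon}_{j_0}$ over an inducing subinterval $I^{\ell+N_\varepsilon}_{j_0}\subset I^\ell_{j_0}$ has measure at least $1-\varepsilon$ and the induced translation on its base has length at most $\varepsilon\,|I^{\ell+N_\varepsilon}_{j_0}|$, so that Definition~\ref{def:tow} is satisfied. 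Such a cylinder is produced by iterating sufficiently many times a fixed positive loop in the Rauzy diagram starting and ending at the given permutation so that one column of the resulting non-negative matrix dominates arbitrarily strongly. Third, by the local product structure and the ergodicity of the natural extension, the product event $\mathcal{B}\times \mathcal{R}_\varepsilon$ is visited infinitely often for a.e.~IET. Extracting at each such visit time the subtower $\cC:=\mathcal{T}^{\ell+N_\varepsilon}_{j_0}$ and letting $\varepsilon=\varepsilon_k\to 0$, we obtain the desired sequence $(\cC_\ell)_{\ell\in\mathbb{N}}$ with $|\cC_\ell|\to 1$.

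The main obstacle is the last step: transferring the linear trimmed-derivative bound $|\widetilde S_r f'(x)|\leq M h^\ell_{j_0}$ available on the balanced tower $\mathcal{T}^\ell_{j_0}$ (from Proposition~\ref{boundSf'growthprop}) to the tall, narrow subtower $\mathcal{T}^{\ell+N_\varepsilon}_{j_0}$ arising $N_\varepsilon$ steps later. Since $\mathcal{T}^{\ell+N_\varepsilon}_{j_0}$ sits inside $\mathcal{T}^\ell_{j_0}$ as a concatenation of orbit segments of the latter, one needs to check that the trimming (which removes the closest visits to each discontinuity of $T$) still catches the dominant $1/x$ contributions at the refined time, and that the $N_\varepsilon$ intermediate Rauzy-Veech steps do not destroy the cancellations underlying Proposition~\ref{boundSf'growthprop}. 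The uniform control of the constant $M$ across the bounded number $N_\varepsilon$ of additional induction steps, together with the fact that Birkhoff sums up to the tall height $h^{\ell+N_\varepsilon}_{j_0}$ can be decomposed as sums of Birkhoff sums up to $h^\ell_j$ for varying $j$ (each of which enjoys the linear bound) is exactly what the strengthening of \cite[Prop.~4.2]{Ul:abs} in Appendix~\ref{sec:app} supplies, and is where the bulk of the technical work will be concentrated.
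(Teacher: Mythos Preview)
Your overall strategy is correct and matches the paper's: use the local product structure of the natural extension to combine the $\tau$-dependent trimmed-derivative bounds (Proposition~\ref{prop:accelerationset}) with a $\lambda$-dependent rigidity condition, then transfer the bound from the balanced time to the rigid time by decomposing the tall Birkhoff sums into pieces of balanced height. The paper carries this out exactly, via Lemma~\ref{lemma:loss} (Markovian loss of memory) and Lemma~\ref{lemma:finitecombination} (persistence of the trimmed bound under finite stacking).

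There is, however, a genuine gap in your construction of the rigidity cylinder. You propose to obtain a tower of measure $\geq 1-\varepsilon$ by ``iterating sufficiently many times a fixed positive loop \dots\ so that one column of the resulting non-negative matrix dominates arbitrarily strongly''. First, iterated products of a positive matrix become \emph{more} balanced, not less (Perron--Frobenius), so this description is inverted. More seriously, any construction in which $N_\varepsilon\to\infty$ as $\varepsilon\to 0$ breaks the persistence step: the tower $\mathcal{T}^{(\ell+N_\varepsilon)}_{j_0}$ is a stack of $K_\ell$ towers of level $\ell$ with $K_\ell\to\infty$, and the constant in Lemma~\ref{lemma:finitecombination} is $M'=M+K\,C_{sum}/\delta$, hence not uniform in $\ell$. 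Your later phrase ``the bounded number $N_\varepsilon$'' contradicts your own construction; resolving this tension is exactly the point.

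The paper's resolution is that $N$ is \emph{fixed}, equal to the length $|\gamma|$ of the neat path already appearing in the product set $E=(\{\pi\}\times\Delta_{B_\gamma}\times Y)^{(1)}$ of Proposition~\ref{prop:accelerationset}. By the Markovian structure, $\mathcal{V}^{N}$ maps $\{\pi\}\times\Delta_{B_\gamma}$ \emph{onto} $\{\pi\}\times\Delta_d$, so for every $\varepsilon_\ell$ there is an open $U_\ell\subset\Delta_{B_\gamma}$ with $\mathcal{V}^{N}(\{\pi\}\times U_\ell)\subset\{\pi\}\times\{\lambda_1>1-\varepsilon_\ell/2\}$. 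One then returns, by ergodicity, to $E_\ell:=(\{\pi\}\times U_\ell\times Y)^{(1)}\subset E$. At the shifted time $m_\ell=n_\ell+N$ the heights $h^{(m_\ell)}=B_\gamma\,h^{(n_\ell)}$ are balanced with ratio at most $\nu(B_\gamma)$ \emph{independent of $\ell$}, and the first base interval carries relative length $>1-\varepsilon_\ell/2$; together these force $|\mathcal{T}^{(m_\ell)}_1|\geq 1-\tfrac{\varepsilon_\ell}{2}\nu(B_\gamma)\to 1$. Since every level-$(m_\ell)$ tower is a stack of at most $\|B_\gamma\|$ level-$n_\ell$ towers (a bound independent of $\ell$), Lemma~\ref{lemma:finitecombination} now applies with a uniform $K=\|B_\gamma\|$ and $\delta$ bounded away from $0$, giving a single constant $M'$ for all $\ell$. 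In short: do not let the number of post-balanced induction steps grow; keep it fixed and instead shrink the \emph{starting} cylinder inside $\Delta_{B_\gamma}$.
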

\noindent
Section~\ref{sec:coexistence} is devoted to the proof of Proposition~\ref{prop:cohexistence}.
Although  linear bounds on trimmed Birkhoff sums of the derivative along towers are provide by Proposition~\ref{boundSf'growthprop}, we stress that here we require that the towers are also \emph{rigidity towers}.
%This proposition does not follow directly from Proposition~\ref{boundSf'growthprop}. Indeed
It follows from the proof of Proposition~\ref{boundSf'growthprop}
in \cite{Ul:abs} all the towers $\mathcal{T}^\ell_j$ produced, by the nature of their construction,  are \emph{balanced}\footnote{This balance plays an essentially role in the proof of the cancellations in the Birkhoff sums of the derivative (which are very delicate and depend on a sophisticated Diophantine-type condition for the IET, see \S~\ref{sec:app} or \cite{Ul:abs} for more details).} (see Remark~\ref{rk:cancellations_are_balanced}) and therefore   measure  uniformly bounded away from $0$ and $1$.

\subsection{Proof of singularity from trimmed bounds along rigidity towers}
In this section, assuming the Propostion~\ref{prop:cohexistence} just stated,  we conclude the proof of singularity (Proposition~\ref{thm:dm_sf}). We want in particular to verify the assumptions of the singularity criterion (Theorem~\ref{thm:singcrit}), namely \emph{tightness} and \emph{exponential tails} along a rigidity set.
We first recall how to obtain rigidity sets from rigid towers for IETs (see \S~\ref{sec:rigidtowers}). The heart of the verification, namely that linear bounds from trimmed derivatives along rigid towers allow to prove the exponential tails, is proved in \S~\ref{sec:exp_tails}, the final arguments are then summarized in \S~\ref{sec:final_sing}.
\smallskip

\subsubsection{Rigidity sets in rigid towers.}\label{sec:rigidtowers}
The existence of a rigidity tower in an IET immediately gives a rigidity set.
% The following mechanism for rigidity in IETs is well known.
 We include the short proof of this well known mechanism for rigidity in IETs for convenience for the reader (highlightening the description of the dynamics inside the tower, which will be useful for later proofs).
\begin{lemma}[Almost cylinders inside rigidity towers] \label{lem:Rohlinrigidity}
If $(\cC_n)_{n\in\mathbb N}$
 is a sequence of rigidity towers for an  IET $T$ on $[0,1]$ % and  $|\cC_n|$ is a sequence of Rohlin towers by intervals for $T$
such that $|\cC_n|\to 1 $ as $n\to \infty$, the sequence
 $(h_n)_{n\in\mathbb N}$  of their heights is a rigidity sequence for $T$. Furthermore, if $J_n$ is the base of the tower $\cC_n$, then there exists a sequence of smaller rigidity towers $C_n\subset \cC_n $ of base $J'_n\subset J_n$, still satisfying $|C_n|\to 1$, such that $T^{h_n} J'_n\subset J_n$ and $T^{h_n}$ acts as an isometry on every floor of $C_n$.
 %Thus, if we build Rohlin towers by intervals such that $|\cC|\to 0 $ as $n\to \infty$, then
\end{lemma}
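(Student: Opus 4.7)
The key geometric observation is that, by Definition~\ref{def:tower}, the floors $T^i J_n$ of $\mathcal{C}_n$ contain no discontinuities of $T$, so $T^{h_n}$ restricted to the base $J_n$ is an \emph{isometry} of an interval, hence a translation by some $\delta_n \in \mathbb{R}$. The rigidity assumption then forces $|\delta_n| < \epsilon_n |J_n| \leq \epsilon_n / h_n$. In particular, $\delta_n \to 0$. Furthermore, for each $0\leq i<h_n$, since no discontinuity lies in the floors $T^j J_n$ for $0\leq j<h_n$, the map $T^i$ restricted to $J_n$ is itself a translation; commuting with $T^{h_n}$ then shows that $T^{h_n}$ acts as the same translation by $\delta_n$ on \emph{every} floor $T^i J_n$.

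Using this, I would construct the smaller tower as follows. Let $J'_n\subset J_n$ be the subinterval obtained by removing from $J_n$ a subinterval of length $|\delta_n|$ at the endpoint in the direction of $\delta_n$, so that $T^{h_n}(J'_n) = J'_n + \delta_n \subset J_n$. Set $C_n := \bigcup_{i=0}^{h_n-1} T^i J'_n$; this is still a tower by intervals for $T$ (inheriting the disjointness and the absence of discontinuities in the floors from $\mathcal{C}_n$), and, by \eqref{hwtower},
\[
|C_n| = h_n(|J_n|-|\delta_n|) = |\mathcal{C}_n| - h_n |\delta_n| \;\geq\; |\mathcal{C}_n| - \epsilon_n,
\]
so $|C_n|\to 1$. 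By the previous paragraph, $T^{h_n}$ acts as the isometry $x\mapsto x+\delta_n$ on every floor of $C_n$, and by construction $T^{h_n}(J'_n)\subset J_n$, giving the second assertion.

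It remains to deduce rigidity. I would argue via $L^2$-density: for $f\in C([0,1])$,
\[
\|f\circ T^{h_n} - f\|_{L^2}^2 \leq \int_{C_n}|f(x+\delta_n)-f(x)|^2\,dx \;+\; 4\|f\|_\infty^2 (1-|C_n|),
\]
and both terms tend to zero, the first by uniform continuity of $f$ combined with $\delta_n\to 0$, the second since $|C_n|\to 1$. By density of $C([0,1])$ in $L^2([0,1])$, this yields $\|\mathbf{1}_A\circ T^{h_n}-\mathbf{1}_A\|_{L^2}\to 0$ for every Borel $A$, which is exactly $\mu(A\triangle T^{h_n}A)\to 0$. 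Hence $(h_n)$ is a rigidity sequence.

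There is no serious obstacle in this argument; the only points requiring a little care are (i) verifying that $T^{h_n}$ is a single translation on each floor (which uses crucially that the tower floors are disjoint and avoid discontinuities of $T$), and (ii) estimating the lost measure when passing from $\mathcal{C}_n$ to $C_n$ in terms of the rigidity parameter $\epsilon_n$ rather than $h_n|\delta_n|$ directly, which is the reason the inequality $h_n|\delta_n|\leq \epsilon_n$ built into Definition~\ref{def:tow} is formulated in that specific form.
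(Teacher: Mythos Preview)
Your proposal is correct and follows essentially the same approach as the paper: both identify that $T^{h_n}$ restricted to $J_n$ is a translation by some $\delta_n$ with $|\delta_n|\leq\epsilon_n|J_n|$, then trim one end of $J_n$ by $|\delta_n|$ to obtain $J_n'$ with $T^{h_n}J_n'\subset J_n$, and compute $|C_n|\to 1$ in the same way. The only difference is that the paper treats the deduction that $(h_n)$ is a rigidity sequence as a standard fact and omits the argument, whereas you supply an explicit $L^2$-density proof; this is a welcome addition but not a different route.
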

\begin{definition}[almost cylinders] We will call the sets $(C_n)_{n\in\mathbb N}$ given by this Lemma the \emph{almost cylinders} given by the rigidity towers $(\cC_n)_{n\in\mathbb N}$.
\end{definition}
%{\color{blue}ADD PICTURE OF TOWER COMING BACK AND ALMOST CYLINDER (trimmed) inside}
% know furthermore that
\begin{proof}[Proof of Lemma~\ref{lem:Rohlinrigidity}]
Since by assumption each floor $T^i J_n$, $0\leq i<h_n$  of $\cC_n$ %is an  interval (not split by discontinuities of $T$), $J_n$
is an interval of \emph{continuity} for $T$, $T^i$ acts isometrically on $J_n$ for every $0\leq i \leq h_{n}$. By assumption there exists a sequence $(\epsilon_n)_{n\in\mathbb N}$ with $\epsilon_n\to 0$ such that  $d(T^{h_n}z,z)\leq\delta_n$ for any $z\in J_n$, where $\delta_n:=\epsilon_n |b_n-a_n|$.  Then, we can find a smaller subinterval  $J_n'\subset J_n$ such that $T^{h_n}(J_n')\subset J_n$ and $|J_n'|=  |J_n|-\epsilon_n$, by taking $J_n'=(a_n+\delta_n, b_n)$ if $T^{h_n} a_n\leq  a_n$ or $(a_n, b_n- \delta_n)$ otherwise. %, see Figure ADD. % if $$T^{h_n a_n}> a_n$
Then setting $C_n $ to be the Rohlin tower of height $h_n$ over $J_n'$, we have that\footnote{To see this formally, given $x\in C_n$, write $x=T^k x_0$ for some $x_0\in J_n'$ and $0\leq k< h_n$. Then   $T^{h_n}(x)=T^k T^{h_n-k}(x) $} $T^{h_n} (x) \in \cC_n$ for any $x\in C_n$  and that $|C_n|=|J_n'|h_n = (1-\epsilon_n)|J_n| h_n\to 1$ since $|\cC_n|=|J_n| h_n\to 1$.
%$|C_n|=$ where
%(in view of \eqref{hwtower})
%$\delta_n h_n= \epsilon_n |b_n-a_n| h_n \to 0$.}% (where we used that $h_n|b_n-a_n|\leq 1$ since all floors of $\cC_n$ are disioint and isometric).
\end{proof}
% In particular, $T^{h_n}$ acts as an isometry on $J_n.$. Since by assumption Thus we can consider a smaller base $J_n$ of the form $$
\begin{comment}
Since
Since $J_n, T(J_n), \dots, T^{h_n-1}(J_n) $ are all disjoint, the first return time of any $x\in J_n$ is at least $h_n$. On the other hand, if we write
Let $\delta_n:=|I\backslash \cC_n|$ the measure of the complement, so that by assumption $\delta_n\to 0$.
 In particular, $T^{h_n}$ acts as a shift on the base and, hence, since it is locally an isometry, it acts as a shift on each floor of the Birkhoff sum.  Moreover, if  $|\cC_n|>1-\delta_n$, $d(T^{h_n}(x)- x)\leq \delta_n$. Using this one can see the following standard fact:
\end{comment}
%\begin{rem} \end{rem}
%$$\lim_{n\to \infty} |T^n(x_0)- x_0|=0\quad \text{for\ some}\quad x_0\in (a_n,b_n),$$
%then this is true for all $x\in (a_n,b_n)$ and thus the Rohlin towers  are rigidity sets for $T$ with rigidity times $(h_n)$.e
%Notice that if $T$ is a IET and  $(b_n-a_n)$ is a continuity interval and $T^n(a_n)- a_n$ tends to $0$ as $n$ grows,

\subsubsection{Exponential tails from trimmed derivative bounds}\label{sec:exp_tails}
We now show that this control on large Rohlin towers filling the space suffices to show tightness and exponential bounds.
\begin{proposition}[exponential tails via trimmed derivatives]\label{lem:expfromderivatives}
If $(\mathcal{T}_n)_{n\in\mathbb N}$ is a sequence of rigidity towers for $T$ with $$|\mathcal{T}_n| \to 1\  \text{as} \ n\to \infty$$ and $f\in  \pSymLog{T}$  has \emph{linearly bounded trimmed derivatives} along $(\mathcal{T}_n)_{n\in\mathbb N}$, then there exists  centering constants $(c_n)_{n\in\mathbb N}$
%and (trimmed) Rohlin towers $(A_n)_{n\in\mathbb N}$ with $|A_n|\to 1$
 such that the sequence $(S_{h_n} f(x)-c_n)_{n\in\N}$ has exponential tails. % on the almost cylinder $A_n\subset $ associated,  namely satisfy \eqref{eq:expdecay}.
\end{proposition}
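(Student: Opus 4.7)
The plan is to decompose $S_{h_n}f=\widetilde S_{h_n}f+\mathrm{sing}$ with
\[
\mathrm{sing}(x):=-\sum_{i=0}^{d-1}C_i^+\log m_i^+(x,h_n)-\sum_{i=1}^{d}C_i^-\log m_i^-(x,h_n),
\]
to show that the trimmed piece is uniformly bounded on the almost cylinder, and then to analyze the distribution of $\mathrm{sing}$ directly. Let $C_n\subset \mathcal T_n$ be the almost cylinder provided by Lemma~\ref{lem:Rohlinrigidity}, with base $J_n'\subset J_n$ and $|J_n'|\le |J_n|\le 1/h_n$, and note that each floor $T^k J_n'$ ($0\le k<h_n$) is a continuity interval, so $T^k$ acts as a translation on $J_n'$.

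First, I would show that $\widetilde S_{h_n}f$ is uniformly bounded on $J_n'$. The map $x\mapsto \widetilde S_{h_n}f(x)$ is continuous (the logarithmic trimmed terms agree at the breakpoints where two orbit points are equidistant from some $\beta_i$, i.e.\ where the closest-visit indices $k_i^{\pm}(x,h_n)$ switch) and piecewise $C^1$. A direct computation using $\tfrac{d}{dx}\mathrm{Log}(T^{k_i^{\pm}}x-\beta_i)=u_i^{\pm}(x_i^{\pm}(x,h_n))$ shows that the derivative equals $\widetilde S_{h_n}f'(x)$ wherever $k_i^{\pm}$ are locally constant. Combining the hypothesis $|\widetilde S_{h_n}f'|\le Mh_n$ with $|J_n'|\le 1/h_n$ then gives $|\widetilde S_{h_n}f(x)-V_n|\le M$ on $J_n'$, where $V_n:=\widetilde S_{h_n}f(x^{*})$ for any fixed reference point $x^{*}\in J_n'$.

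Second, I would analyze $\mathrm{sing}$ on $J_n'$. Writing $J_n'=[a_n,a_n+L_n]$ and $r_i^{\pm}$ for the distance from $\beta_i$ to the closest floor $T^{k_i^{\pm}}J_n'$, the fact that $T^k$ is a translation on $J_n'$ gives $m_i^+(x,h_n)=(x-a_n)+r_i^+$ (and symmetrically for $m_i^-$). Hence under the uniform law on $J_n'$, the random variable $-\log m_i^{\pm}(x,h_n)$ has density proportional to $e^{-z}$ on $[-\log(r_i^{\pm}+L_n),-\log r_i^{\pm}]$. Choosing the centering $c_n^{i,\pm}:=-\log\max(L_n,r_i^{\pm})$, an elementary computation yields the upper exponential tail $\mu_{J_n'}\bigl(-\log m_i^{\pm}(\cdot,h_n)-c_n^{i,\pm}\ge t\bigr)\le e^{-t}$ for $t\ge 0$, while the lower tail is supported in $[-\log 2,0]$ and is therefore bounded. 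Setting
\[
c_n:=V_n+\sum_{i=0}^{d-1}C_i^+c_n^{i,+}+\sum_{i=1}^{d}C_i^-c_n^{i,-}
\]
and summing the $2d$ log-terms via the union bound yields exponential tails for $S_{h_n}f-c_n$ on $J_n'$.

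Third, I would extend the tail bound from $J_n'$ to the whole tower $C_n$ using rigidity. For $x=T^m x_0\in C_n$ with $x_0\in J_n'$ and $0\le m<h_n$, the orbit segment of length $h_n$ from $x$ coincides with the segment from $x_0$ apart from $m$ points, which are replaced by their $T^{h_n}$-shifts at distance $\le\epsilon_n|J_n|$; hence
\[
|S_{h_n}f(T^m x_0)-S_{h_n}f(x_0)|\le\epsilon_n|J_n|\sum_{j=0}^{m-1}\sup_{T^j J_n}|f'|.
\]
After removing from $C_n$ a set of measure $o(1)$ consisting of those $x$ whose orbit encounters some $\beta_i$ anomalously closely---controlled by the same distributional analysis as in the previous step---this error is uniformly $O(1)$, so on the resulting set $A_n\subset C_n$ with $\mu(A_n)\to 1$, $|S_{h_n}f(x)-c_n|$ inherits the exponential tails obtained on the base. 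The main obstacle is this last rigidity-extension step: controlling the contribution of floors $T^j J_n$ that lie pathologically close to a singularity relies crucially on the balanced structure of the Rauzy-Veech towers underlying Proposition~\ref{prop:cohexistence}, ensuring that only a set of measure $o(1)$ of orbits visit the singular set too closely.
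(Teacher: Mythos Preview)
Your Steps~1--3 are correct and give a pleasant alternative to the paper's argument on the base $J_n'$: rather than comparing to a fixed reference point, you compute the law of each $-\log m_i^{\pm}(\cdot,h_n)$ explicitly and combine by a union bound. This is fine.

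The gap is in Step~4. Your displayed mean-value bound
\[
|S_{h_n}f(T^m x_0)-S_{h_n}f(x_0)|\le \epsilon_n|J_n|\sum_{j=0}^{m-1}\sup_{T^jJ_n}|f'|
\]
is \emph{not} $O(1)$ under the stated hypotheses. Since the floors $T^jJ_n$ are disjoint intervals of length $|J_n|\sim 1/h_n$, a standard harmonic-progression estimate gives
\[
\sum_{j=0}^{h_n-1}\sup_{T^jJ_n}\frac{1}{|x-\beta_i|}\ \gtrsim\ \frac{1}{r_i}+\frac{\log h_n}{|J_n|},
\]
where $r_i$ is the distance from $\beta_i$ to the nearest floor endpoint. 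After multiplying by $\epsilon_n|J_n|$ you are left with a term of order $\epsilon_n\log h_n$, and nothing in the definition of rigidity tower relates $\epsilon_n$ to $h_n$, so this can diverge. Moreover, the right-hand side above depends only on the tower, not on $x_0$; removing a small set of $x$'s therefore cannot help, and your appeal to ``the balanced structure of the Rauzy--Veech towers underlying Proposition~\ref{prop:cohexistence}'' invokes information that is not part of the hypotheses of the present proposition.

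The paper sidesteps this completely by using the \emph{trimmed derivative hypothesis itself} a second time in the extension step. Writing $x=T^k x_n$ with $x_n\in J_n'$ and using the cocycle identity gives two differences of the form $S_r f(y)-S_r f(y')$ with $y,y'\in J_n$ and $0\le r\le h_n$; for each, the mean value theorem applied to the trimmed sum yields $|\widetilde S_r f(y)-\widetilde S_r f(y')|\le M h_n|J_n|\le M$ (linear growth, not $h_n\log h_n$), while the residual closest-visit contributions are controlled solely by $m(x_n,h_n)$ and $m(T^{h_n}x_n,k)$, whose sublevel sets have exponentially small measure. No relation between $\epsilon_n$ and $h_n$ is needed. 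If you replace your crude sup-bound by this use of the hypothesis, your scheme goes through; as written, Step~4 does not.
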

%in a  Rohlin tower which fills most space, via a mechanism that we know explain. Let us first define Rohlin towers and trimmed Birkhoff sums.
The rest of this subsubsection is devoted to the proof of this Proposition. In essence, the uniform bounds on the trimmed sums gives tightness, while  the contribution of closest visits are responsible for the exponential tails of the distribution.

\begin{proof}[Proof of Proposition~\ref{lem:expfromderivatives}]
To prove that $S_{h_n} f(x)-c_n$ have exponential tails we need to find sets  $(A_n)_{n\in\mathbb N}$ with $|A_n|\to 1$ for which the exponential tail bound \eqref{eq:expdecay} holds on $A_n$. As sets  $(A_n)_{n\in\mathbb N}$ we will take the almost cylinders $(C_n)_{n\in\mathbb N}$ given by the rigidity towers $ \cC_n $  constructed in the proof of  Lemma \ref{lem:Rohlinrigidity}, i.e. set     $A_n:=C_n\subset \cC_n $. % for any $n$.
% such that  % on the almost cylinder $A_n\subset $ associated,  namely satisfy \eqref{eq:expdecay}.the
%By Lemma \ref{lem:Rohlinrigidity}, the heights $(h_n)_{n\in\mathbb N}$ of the Rohlin towers $(\cC_n)_{n\in\mathbb N}$ are a rigidity sequence since $|\cC_n|\to 1$.  Let $\delta_n:= 1- |\cC_n|$.  For each $n$, we define $A_n$ to be given by a \emph{trimmed} version of $\cC_n$, namely to be the tower of the same height $h_n$ over the narrower base
%\be \label{def:basen}
%J_n:= (a_n+\delta_n/h_n, b_n-\delta_n/h_n)
%\ee Then, since all the floors os $A_n$ have the same lenght, $|A_n|= |\cC_n|-2\delta $ so we also  have that  $|_n|\to 1$. Notice that % since the floors of $\cC_n$ are disjoint and fill $1-\delta_n$ of the interval,
%$|T^{h_n}(J_n)\cap J_n|>1-\delta$
 % by construction, $T^{h_n}(J_n)\subset (a_n,b_n)$. % (since d(T^{h_n}(a_n).
We then want  to prove tightness and exponential tails of $S_{h_n} f$ on $(A_n)_{n\in\mathbb N}$ after a suitable centering.  For defining the  \emph{centering constants} $(c_n)$, let $z_n:=(b_n-a_n)/2$ be the midpoint of the base interval $J_n=(a_n,b_n)$ (of the tower $\cC_n$) and set
$$
c_n:= S_{h_n} f (z_n), \qquad n\in \mathbb{N}.
$$
Given any $x\in C_n$, to study the difference $S_{h_n} f(x)-c_n$  we therefore want to compare
 $S_{h_n} f(x)$ and $S_{h_n} f(z_n)$.  A standard way to do so in a tower is
% compare  $S_n f(x)$ and $S_n f(z_n)$ is to
to write $x = T^{k}x_n$, where $x_n \in J'_n$ is  a point in the  base of $C_n$ and $0\leq k < h_{n}$ the height of the floor of the tower $C_n$ which contains $x$. Then, using the cocycle properties of Birkhoff sums, we can add and subtract $\BS{f}{k}(x_n)$ and  split the Birkhoff sums difference\footnote{To do so, using the cocycle properties we see that  $ \BS{f}{h_n} (x)
 = \BS{f}{h_n-k}(x)+ \BS{f}{k}(T^{h_n-k}x)$ and  $ \BS{f}{h_n} (x_n)
 = \BS{f}{k}(x_n)+ \BS{f}{h_n-k}(T^{k}x_n)$. After subtracting, we get the sum of $\BS{f}{h_n} (x)- \BS{f}{h_n} (x_n)=\BS{f}{k}(T^{h_n}x_n)-\BS{f}{k}(x_n)$
(where we used that $x=T^k (x_n)$).}
\be % gin{align*}
\label{eq:BSsplitting}
 \BS{f}{h_n} (x) -c_n % &= \BS{f}{h_n} (x) - \BS{f}{h_n} (z_n)
%=  \left(\BS{f}{h_n-k}(x)+ \BS{f}{k}(T^{h_n-k}x)\right) - \BS{f}{h_n} (z_n)
%\\ & =  \left(\BS{f}{h_n-k}(x)+ \BS{f}{k}(T^{h_n-k}x)\right)  \left(\BS{f}{k} (z_n) + \BS{f}{h_n-k}(T^k z_n)\right) \pm \BS{f}{k}(x_n)\\ &
=\left( \BS{f}{h_n}(x_n)  - \BS{f}{h_n}(z_n)\right) - \left(   \BS{f}{k}(T^{h_n} x_n) - \BS{f}{k} (z_n) \right).
\ee
Notice that both terms are now differences of Birkhoff sums along orbits that travel together inside a Rohlin tower, so we can apply to each difference the mean value theorem. We want, though, to first to set aside the contributions given by the closest singularities (which will produce the exponential tails) and then apply mean value only to the corresponding
%We want now to  apply the mean value theorem twice, to the corresponding
 \emph{trimmed} sums (to which we can apply the trimmed derivative bounds). By the definition of trimmed sums (see Definition~\ref{def:trimmedBS}) and recalling  the notation introduced in \S~\ref{sec:closests}, for any $y\in I$ and $r\in \mathbb{N}$,
 %\footnote{{\color{blue} I see that before I was mixing up the closest visits with the closest distance... m was always supposed to be the minimum of the distances (actually I still call it closest visit somewhere!) but here it would be nicer to have a notation for the closest visit indeed (the notation gets heavier otherwise... maybe it's better to introduce it? actually in my paper I had something like $x_i^{\pm}$, but they were around a clear tower, while here we would need them for a segment of orbit... shall we also define closest visits and then closest distances as difference? maybe that it's easier to read here (although one has to remember one more definition... }}
\begin{align*}
 \BS{f}{r}(y)& = \widetilde{S}_r f (y) + \sum_{i=0}^{d-1}C_i^+ \mathrm{Log}_i^+ (x_i^+(y,r))  + \sum_{i=1}^{d}C_i^- \mathrm{Log}_i^- (x_i^-(y,r))\\
% \BS{f}{r}(y)& = \widetilde{S}_r f (y) + \sum_{i=0}^{d-1}C_i^+ \mathrm{Log}_i^+ (\beta_i+m_i^+%(y,r))  + \sum_{i=1}^{d}C_i^- \mathrm{Log}_i^- (\beta_i-m_i^-(y,r))\\
& = \widetilde{S}_r f (y) {-} \sum_{i=0}^{d-1}C_i^+ \log m_i^+(y,r)   {-} \sum_{i=1}^{d}C_i^- \log m_i^-(y,r) .
\end{align*}
Thus we can estimate the  first difference of Birkhoff sums in  \eqref{eq:BSsplitting} by:
\begin{align}\label{1BSestimate}
\left| \BS{f}{h_n}(x_n)  -\right.  \left. \BS{f}{h_n}(z_n)\right| \ & \leq  \left| \widetilde{S}_{h_n} f (x_n) -   \widetilde{S}_{h_n} f (z_n) \right| \\ & +\left| \sum_{i=0}^{d-1}C_i^+ \log \frac{m_i^+(x_n,h_n)}{m_i^+(z_n,h_n)}  + \sum_{i=1}^{d}C_i^- \log \frac{m_i^-(x_n,h_n)}{m_i^-(z_n,h_n)} \right|. \nonumber
\end{align}
Let us first estimate the difference of trimmed Birkhoff sums in \eqref{1BSestimate} using the mean value theorem. Notice that  for any $x$ in $J_n$ we can differentiate and get $\frac{\mathrm{d}}{{\mathrm{d}x}}\widetilde{S}_{h_n} f (x)= \widetilde{S}_{h_n} f' (x)$ (since  $T^k$ acts as an isometry on $(a_n,b_n)$  for any $0\leq k< h_n$).     Thus, by mean value first and then the linearly bounded trimmed derivative assumption, for some $\xi$ between $x_n$ and $z_n$
$$
\left| \widetilde{S}_{h_n} f (x_n) -   \widetilde{S}_{h_n} f (z_n) \right| = \left|  \widetilde{S}_{h_n} f' (\xi)  \right| |x_n-z_n|\leq M h_n |b_n-a_n|\leq M.
$$
Let us now estimate  the sum in the RHS of \eqref{1BSestimate} by estimating the contribution coming from closest visits. We claim that for any $i$ for which they are defined and any $0<l\leq h_n$,
\begin{equation}\label{logbounds}
\left| \log \frac{m_i^\pm(x_n,l)}{m_i^\pm(z_n,l)} \right|\leq \max \left\{ \log 2, -\log \left( {h_n m(x_n, l)}\right) \right\},
\end{equation}
where  $m(x_n,l)$ is the distance of the closest visit, which by definition satisfies
$m(x_n,l)\leq m_i^\pm(x_n,l)$  (see \S~\ref{sec:closests}).
  %Recalling the definition of the closest singularities visit, we have that $m(y,r) \leq m_i^\pm(y,r)$ for any orbit segment and any $i$ involved.
For this, remark that %Moreover, since
 $\cC_n$ does not contain any endpoint of $End(T)$ in its interior, and $T^j(z_n)$ are all midpoints of the corresponding floor $T^j(J_n)$ for all $0\leq j<h_n$ (since
 $z_n$ is the midpoint of $J_n$ and each of these $T^j$ acts as an isometry on $J_n$). Thus, this gives (since $T^jz_n$ are all at least $|b_n-a_n|/2$ far from $End(T)$, and $T^j x_n$ belongs to the same floor of the tower $\cC_n$ as $T^j z_n$) that
 $$
\frac{|b_n-a_n|}{2} \leq m_i^\pm(z_n,l) \leq  m_i^\pm(x_n, l)  + \frac{|b_n-a_n|}{2}, \quad m_i^\pm(x_n,l) \leq  m_i^\pm(z_n, l)  + \frac{|b_n-a_n|}{2}. %\leq  m_i^\pm(x_n, h_n)+\frac{1}{2h_n}
% \leq \frac{|b_n-a_n|}{2} + \delta_n , \quad \text{for\ all} \ i\ \text{and} \ \ 0\leq r< h_n.
$$
%(where in the last inequality we used \eqref{hwtower}).
%Since $\delta_n=o(|b_n-a_n|)$, choosing $n$ sufficiently large so that $\delta_n\leq |b_n-a_n|/2\leq 1/2n_n$, we get that $m_i^\pm(z_n,r)\leq 1/(4h_n)$.
 %Finally, according to which half side of the floors $T^i(J_n)$ the points $T^i(x_n)$ belong, for any $0\leq r\leq  h_n$, we either have that $m_i^\pm (x_n, r)\leq m_i^\pm (z_n, r)$ (if $T^ix_n$ is closer to the singularity than $T^i z_n$) or $m_i^\pm (x_n, r)\geq m_i^\pm (z_n, r)\geq |b_n-a_n|/2$ (if $T^ix_n$ is further to the singularity than the midpoint $T^i (z_n)$).
Using these estimates and recalling that $|b_n-a_n|\leq 1/h_n$ by \eqref{hwtower}, we can get an upper and lower bound for the ratios:
$$
\frac{1}{2}\leq \frac{m_i^\pm(z_n, l)}{m_i^\pm(x_n, l)}\leq 1+\frac{1}{2h_n  m(x_n, l)}\leq \max \left\{ 2, \frac{1}{h_n m(x_n, l)}\right\},
$$
%$$
%\frac{m_i^\pm(x_n, h_n)}{m_i^\pm(z_n, h_n)}\leq  1+\frac{1}{2h_n m(z_n, h_n)} \leq 1+\frac{1}{2h_n\frac{1}{2h_n}}=2,
%$$
%and, as a lower bound for the same ratio, the upper bound for the inverse:
%$$
%\frac{m_i^\pm(z_n, h_n)}{m_i^\pm(x_n, h_n)}\leq \frac{m_i^\pm(x_n, h_n)+\frac{1}%{2h_n}}{m_i^\pm(z_n, h_n)}\leq
 % 1+\frac{1}{2h_n m_i^\pm(x_n, h_n)} \leq 1+\frac{1}{2h_n  m(z_n, h_n)}=2,
%(the maximum coming from splitting the estimates according whether $1/2h_nm(z_n, h_n) <1$ or $>1$)
(the last inequality follows from $a+b\leq 2\max\{a,b\}$). These upper and lower bounds imply  the bounds \eqref{logbounds} for the absolute value of their logarithms.
% together we see that, for any $i$ for which they are defined and any $0\leq r\leq h_n$,
%$$
%\frac{m_i^\pm (x_n, r)}{m_i^\pm (z_n, r)}\geq \frac{{2 m(x_n, r)}{|b_n-a_n|}\leq 2h_n m(x_n,r), \qquad \frac{m_i^\pm (x_n, r)}{m_i^\pm (z_n, r)}\leq \max{ 1,\frac{ |b_n-a_n|}}
%$$
%weave $|\mathrm{Log}_i^\pm (m_i^\pm(y,r))|\leq  |\log m(y,r)|$.
%Notice also that %, since $\cC_n$ does not contain any endpoint of $End(T)$ in its interior, and
% $T^i(z_n)$ are all midpoints of the corresponding floor $T^i(J_n)$ for all $0\leq i<h_n$ (since
 %$z_n$ is the midpoint of $J_n$ and each of these $T^i$ acts as an isometry on $J_n$), thus  we also have
 %Thus
% \bes\label{centerfar}
%m(z_n, r) \geq m(z_n, h_n)\geq \frac{|b_n-a_n|}{2}=|J_n|/2, \qquad \text{for \ all}\ 0\leq r\leq  h_n.
 %\ees
% where $C=\max_i C_i^\pm$,
 Thus, using this and the difference of trimmed Birhoff sums bounds to estimate  \eqref{1BSestimate},
 we get
% combining this estimate with \eqref{} we can estimate the
\begin{align*}
\left| \BS{f}{h_n}(x_n)  - \BS{f}{h_n}(z_n)\right|  \leq  M + C \max\{\log 2 ,-\log h_n m(x_n ,h_n)|\},%\max\{  \log 2,  |\log h_n m(x_n ,h_n)|\} , %\quad \text{where}\ C:= \sum_{i=0}^{d-1} C_i^+ +  \sum_{i=1}^d C_i^- .
\end{align*}
where $C:= \sum_{i=0}^{d-1} C_i^+ +  \sum_{i=1}^d C_i^- $.
%where $C=\max_i C_i^\pm$.
The same reasoning can be carried out also for the second difference in \eqref{eq:BSsplitting} (to which mean value and then the linearly bounded trimmed derivative assumption also applies, since $T^{h_n} x_n$ and $z_n$ also both belong to the  base $J_n =(a_n,b_n)$ and $0\leq k< h_n$).  Thus, adding up both analogous estimates, we get %denoting by $C:= \sum_{i=0}^{d-1} C_i^+  \sum_{i=1}^d C_i^- $ we get
\be\label{estimatebulk}
|\BS{f}{h_n} (x) -c_n|\leq 2M+ 2 C \max\left\{\log 2, -\log {h_n \, m(x_n,h_n)} ,  -\log h_n\, {m(T^{h_n}x_n,k)}\right\}.
\ee
To conclude, we want now use this estimate to show that the tails are exponential. Indeed, \eqref{estimatebulk} shows that  for any $\alpha>M':=2 M + 2C\log 2$,
\be\label{levelset}
\min\{ m(T^{h_n}x_n,k), m(x_n, h_n)\} \geq  \varepsilon_n(\alpha):=\frac{e^{{-(\alpha-M')}/{2C}}}{h_n}\quad \Rightarrow \quad |\BS{f}{h_n} (x) -c_n|\leq \alpha.
\ee
%$|\BS{f}{h_n} (x) -c_n|\leq \alpha$ if  both  $m(x_n, h_n)$  and $ m(T^{h_n}x_n,r)$ are bounded below by $  {e^{\frac{-(\alpha-M')}{2C}}}/{h_n}$.
If we denote by $\mathcal{N}_\alpha$ the $\varepsilon_n(\alpha) $-neighbourhood of the endpoints, i.e.
$\mathcal{N}_\alpha := \{ x\in [0,1]:\ d(x, End(T))< \varepsilon_n(\alpha)\}$,
we then claim that
\be \label{containement}
 \{x\in C_n : |S_{h_n}f(x)-c_n|\geq \alpha \}  \subset \bigcup_{-h_n<  i<  h_n } T^i
\mathcal{N}_\alpha .
\ee
Indeed, if $x\in C_n$ is such that $|S_{h_n}f(x)-c_n|\geq \alpha$,  by \eqref{levelset} either   $m(x_n, h_n)$ or  $ m(T^{h_n}x_n,k)$ are less than  $\varepsilon_n(\alpha) $, i.e.~either the orbit segment $\mathcal{O}(x_n,h_n)$ or $\mathcal{O}(T^{h_n }x_n,k)$ intersect $\mathcal{N}_\alpha$.
%then  $m(x_n, h_n)$ or  $ m(T^{h_n}x_n,r)$ are less than that the $\varepsilon_n(\alpha) $
%  above.
% {e^{\frac{-(\alpha-M')}{2C}}}/{h_n}$.
Recalling that $x_n$ is the projection to the base of the tower $C_n$ of $x$, i.e.~$x=T^k x_n$, one can see that  the following inclusion of orbits holds: %%%% (see also Figure {\blu ADD picture of orbits in tower}):
$$\mathcal{O}(x_n,h_n)\cup \mathcal{O}(T^{h_n }x_n,k)\subset \mathcal{O}(T^{-(h_n-1)}x_n,2 h_n-1)  = \{T^{-(h_n-1)}x, \dots, T^{-1}x, x,Tx  \dots  T^{h_n-1}x \},$$  so if either the orbit segment $\mathcal{O}(x_n,h_n)$ or $\mathcal{O}(T^{h_n }x_n,k)$ intersect $\mathcal{N}_\alpha$, there exists $h_n<i<h_n$ such that $T^i (x) \in \mathcal{N}_\alpha$
%also $ \{T^{-(h_n-1)}x, \dots, T^{h_n-1}x \}$ intersects $\mathcal{N}_\alpha$
and  \eqref{containement} follows. Then, recalling the definition of $\varepsilon_n(\alpha)$ in \eqref{levelset}, we get
\[
\left| \{x\in C_n : |S_{h_n}f(x)-c_n| \geq \alpha \} \right|  \leq 2h_n |\mathcal{N}_\alpha| = 4 dh_n\varepsilon_n (\alpha)={4d}{e^{\frac{M'-\alpha}{2C}}} =  K  e^{-\alpha/2C}
\]
where $K:=4d e^{M'/2C}$. This
 shows that the tails are exponential and hence finishes the proof.
%K\cdot e^{-\alpha/2{C}}, \qquad \text{where} \ K:= 2 d \cdot e^{M'/2{C}},
\end{proof}

\subsubsection{Concluding the proof of singularity}\label{sec:final_sing}
We finish this section summarizing why with the proof of the proposition we also conclude the proof of the singularity result for special flows, i.e.~Proposition~\ref{thm:dm_sf}.
\begin{proof}[Proof of Proposition~\ref{thm:dm_sf}]
Let us verify the assumptions of the singularity criterion (Theorem~\ref{thm:singcrit}),
%Let us remark first that, by assumption, $T$ is a rank $1$ transformation and hence it is ergodic, see \cite[Theorem~2]{Fe}.
in particular the exponential tails assumption.
%, recall first that $f=$
%by definition, $f=f^p+g$ where $f^p  \in \pSymLog{T}$ has pure logarithmic symmetric singularities and $g:I\to\RR$ is a function which is absolutely continuous on any interval $(\beta_i,\beta_{i+1})$, $0\leq i<d$.
By Proposition~\ref{prop:cohexistence}, there exists  a sequence of rigidity towers $(\cC_n)_{n\in \mathbb{N}}$ with $|\cC_n|\to 1$ with $(h_n)_{n\in\N}$ as a rigidity sequence such that $f$ has linearly bounded trimmed derivatives along the sequence  $(\cC_n)_{n\in \mathbb{N}}$.

By Lemma~\ref{lem:Rohlinrigidity} and Proposition~\ref{lem:expfromderivatives}, there exists a sequence of trimmed towers $(C_n)_{n\in \mathbb{N}}$
with $|C_n|\to 1$ and such that $T^{h_n}J'_n\subset J_n$, where $J_n$ is the base of $\cC_n$ and $J'_n$ is the base of $C_n$, and the sequence $(S_{h_n}f^p(x)-c_n)_{n\in\N}$ has exponential tails with $c_n=S_{h_n}f^p(z_n)$, where $z_n$ is the midpoint of $J_n$, i.e.\
%More precisely,
there are positive constants $C,b$  such that
\[Leb\left(\left\{x\in C_n:|S_{h_n}f^p(x)-c_n|\geq t\right\}\right)\leq Ce^{-bt}.\]
As $g=g_f:I\to\RR$ is of bounded variation and $T^{h_n}J'_n\subset J_n$, standard arguments first used for IETs by Katok in \cite{Ka:int}, show that
\[|S_{h_n}g(x)-v_n|\leq V\quad\text{for}\quad x\in C_n,\quad\text{where}\quad v_n=S_{h_n}g(z_n)\quad\text{and}\quad V=\textrm{Var}(g).\]
As $f=f^p+g$, taking $A_n:=C_n$ and $a_n:=c_n+v_n$, it follows that
$$Leb \left( \left\{  x\in A_n  : |S_{h_n}f(x)-a_n | \geq t \right\} \right)\leq Ce^{-b(t-V)},$$
%\geq t \right\} \right) \leq Ce^{-b(t-V)},$$
so the sequence $(S_{h_n}f(x)-a_n)_{n\in\N}$ has exponential tails.
 Since by assumption $(h_n)_{n\in\N}$ is a rigidity sequence and the previous equation gives the exponential tails assumption \eqref{eq:expdecay}, the singularity criterion given by Theorem~\ref{thm:singcrit} implies that the special flow $(T_t^f)_{t \in \RR}$ has singular spectrum and is spectral disjoint form all mixing flows.
%This shows that all assumptions of Theorem~\ref{thm:singcrit} hold for the special flow $(T^f_t)_{t \in \RR}$  and hence (by Theorem~\ref{thm:singcrit}), that $(T^f_t)_{t \in \RR}$ has purely singular spectrum.
\end{proof}

%{\section{Resonant mixing times}\label{mixing}
\section{Resonant mixing times and disjointness}\label{sec:orthogonalityproof}
In this section we present the main arguments towards the proof of  the disjointness result for special flows (i.e.~Proposition~\ref{thm:sfdisjointness}). %which in turns implies Theorem~\ref{thm:main_dj} (namely disjointness of a typical pair of locally Hamiltonina flows in $\mathcal{U}_{min}$).
In \S~\ref{sec:mainsteps_dj} we state the two results (Proposition~\ref{prop:mix'} and Proposition~\ref{prop:3}) from which (together with the orthogonality criterion given by Theorem~\ref{thm:spectorth}) we can deduce the proof of Proposition~\ref{thm:sfdisjointness}. We use them to prove pairwise disjointness in \S~\ref{sec:final_disj}.

The rest of this section is then devoted to prove one of these two results (Proposition~\ref{prop:mix'}),  namely show that certain \emph{resonant rigid times} (see Definition~\ref{def:resonantrigidity_t})
are \emph{mixing times}. After having introduced several auxiliary results in the subsections   \S~\ref{sec:mixingviashearing} to \S~\ref{sec:Rothestimtes}, the  proof of this proposition is given in \S~\ref{sec:proof_resonantmixing}.
%The proof is given in
% assuming two results stated in \S~\ref{sec:}
The other result (Proposition~\ref{prop:3}),
%One of this results namely
that gives the existence of  infinitely many such \emph{mixing resontant times} %(see Definition~\ref{def:resonantrigidity})
within a given sequence, will be  %(stated hereaPropostion~\ref{thm:Omega}) is
proved in Section~\ref{sec:resontanttimes2}.

\subsection{Main Steps in the proof of disjointness}\label{sec:mainsteps_dj} % Proposition~\ref{thm:sfdisjointness}}
We present here the main results to prove disjointness, which will be used to verify the assumptions of  the spectral orthogonality criterium (Theorem~\ref{thm:spectorth}).
For almost every IET $T$, to prove singularity of the spectrum of $(T^f_t)$, we have already shown in the previous section the existence of a sequence of centering constants $(a_n)_{n\in\mathbb N}$ such that the distribution of Birkhoff sums $(S_n f - a_n)_{n\in\mathbb N}$  for $T$ is tight and has exponential tails.

%We now state the spectral orthogonality criterium that we will use to prove spectral disjointness: which is a refinement of the previous singularity criterium. The criterion shows that if one can find a sequence of times where one flow is rigid with exponential tails and the other mixing, the two flows are spectrally disjoint. Mixing along a subsequence is defined as follows:

The assumptions of the  orthogonality criterium hold if there  existence such a centering  sequence $(a_n)_{n\in\mathbb N}$ %for the first flow
which is also a \emph{mixing} sequence for the second flow (in the sense of Definition~\ref{def:mixingseq}). The type of times along we want to prove mixing (resonant rigid times) is defined in \S~\ref{sec:resonantrigid}. We state the two auxiliary results in \S~\ref{sec:mixingresontant_prop} and \S~\ref{sec:resontanttimes1}}
% ( respectively that resontant rigid times are mixing
 (Proposition~\ref{prop:mix'} and
 %that one can find resonant rigid times close to a prescribed sequence of times (see
 Proposition~\ref{prop:3} respectively), from which, in the next \S~\ref{sec:final_disj},
%Assuming these two results,
we  deduce Proposition~\ref{thm:sfdisjointness}.
% in the final \S~\ref{sec:final_sing}.

\subsubsection{Resonant rigid times}\label{sec:resonantrigid}
The mixing times for the second flow that we will use are \emph{resonant} rigid times for the IET in the base, i.e.~are multiples $kq $ of a rigid time $q$ where $k$ is an integer
%(which will be required to grow along sequences of resonant times)
 and $q, 2q, \dots , kq$ are all rigid times. We formalize this idea through the following definition. Recall that the notion of an $\epsilon$-\emph{rigid tower by intervals} was defined in Definition~\ref{def:tow}.

\begin{definition}
Given an IET $U$ and $0<\epsilon<1$, we say that a natural number $q$ is an $\epsilon$-\emph{rigidity time} for $U$ if $q$ is the height of an $\epsilon$-rigid tower $\cC$ such that the measure of $\cT$ is greater than $1-\epsilon$.
\end{definition}

\begin{definition}[resonant rigidity time]\label{def:resonantrigidity_t}
Given an IET $U$, we say that $r>0$ is a $(\epsilon,k)$-\emph{resonant time}, or simply a \emph{resonant time}, if we can write $r=k q$ where:

\begin{itemize}
%\item[(i)]Given  $\epsilon>0$, a Rohlin tower by intervals $Z$ is called a {\emph{$k$-resonant}} (or simply resonant) $\epsilon-
\item[(i)] $\epsilon>0$ and $k$ is an integer such that $1\leq k \leq \frac{1}{\epsilon}$;
\item[(ii)] $q$ is the height of an $\epsilon^2$-rigid tower by intervals $\cC$ for $U$;
\item[(iii)] if $B$ is the base of the tower $\cC$, the measure $|B| q > 1-\epsilon$.
\end{itemize}
\noindent Given a sequences $\underline{\epsilon}:=(\epsilon_m)_{m\in\mathbb N}$ and $\underline{k}:=(k_m)_{m\in\mathbb N}$, where
 $\epsilon_m>0$ and $k_m\in \mathbb{N}$ for any $m\in\mathbb{N}$, we say that $\underline{r}:=(r_m)_{m\in\mathbb N}$ is a \emph{sequence} of $(\underline{\epsilon}, \underline{k})$-\emph{resonant times} if, for every $m\in \mathbb{N}$, $r_m$ is an
%$k_m$
 $(\epsilon_m, k_m)$-resonant time.
% there exists an integer sequence $(k_m)_{m\in\mathbb N}$ such that, for every $m\in %\mathbb{N}$:%
%\begin{itemize}
%\item[(i)] $q_m$ is the height of a resonant $\epsilon_m$-rigid towers by %intervals  for $U$;
%\item[(ii)] $k_m$  is an integer such that $1\leq k_m\leq 1/\epsilon_m^2$.
%\item[(iii)] $\mathcal{T}_m$  is \emph{large}, i.e.~$|Z_m|\geq 1-\epsilon$.
% height $(q_m$
%$q_m$ is a $1/k_m^2$-rigidity time on a large tower $\mathcal{T}_m$ for $S$ for every $m\geq 1$.
%\end{itemize}
\end{definition}
\noindent Notice that we request that $\mathcal{T}$ is $\epsilon^2$-rigid (rather than only $\epsilon$-rigid). Since $k\leq 1/\epsilon$, this guarantees that not only the height $q$ of $\mathcal{T}$ is an $\epsilon$-rigidity time, but also its multiples $q, 2q, \dots, kq$ are all $\epsilon$-rigidity times. %(with a corresponding rigidity tower of measure $$

%\begin{definition}
%We say that an $\epsilon$-rigid tower with base $B$ and height $q$ is %$(1-\epsilon)-$large, or simply \emph{large}, if its measure $|B|q\geq 1-\epsilon $.
%\end{definition}
%{\color{blue}But what is the definition of large below? still epsilon? but what is epsilon?}

\subsubsection{Mixing of resontant rigid times}\label{sec:mixingresontant_prop}
We will show that resonant rigidity times (under suitable conditions) give mixing times (and so do times which are close to them).
%We will  show that resonant rigidity  times {\color{blue} with sufficiently large frequency are mixing times (and so are all sequences which are trapped between consecutive such resonant times)}.
Recall that a sequence $(t_n)_{n\in\mathbb N}$ is a mixing sequence for the flow $(U^h_t)_t$ if for any measurable $A,B$
$$\lim_{n\to \infty} Leb^h(A\cap U^h_{t_n}B)= Leb^h(A)Leb^h(B).$$

%\begin{proposition}[Mixing of resonant rigid times]\label{prop:4}
%For almost every IET $U$ and any  roof function  {\color{magenta}
%%$$\in \SymLog{U}$, {such that}\ $\inf_{x\in [0,1]}g''(x)>c>0$,
%$h\in \SymLogC{U}$,} %\quad \text{and}\ \  \inf_{x\in [0,1]}%g''(x)>c>0,$$}
%any sequence $(r_n)_{n\in\mathbb N}$ of $(\underline{\epsilon}, %\underline{k})$-resonant times%
%%of the form $r_n = k_n q_n$
% with $k_n\to \infty$ is a mixing sequence for the special flow $%(U^h_t)_t$.
%Moreover, if for $m\in \mathbb{N}$ we write $r_m=k_m q_m$, then %also any   real valued sequence $(t_n)_{n\in\mathbb N}$  such %that
%%if there exists
%% a sequence $(r_n)_{n\in\mathbb N}$ of $(\underline{\epsilon}, %\underline{k})$-resonant %times of the form $r_n = k_n q_n$ with %$k_n$ increasing,
% $$ k_m q_m \leq t_m < (k_{m}+1) q_m ,  \qquad \text{for \ all} \ %m \in \mathbb{N}$$
%%$$t_n\in \bigcup_{m}[k_mq_m, Ck_mq_m], \qquad \text{for \ all} \ %n \in \mathbb{N}$$
%is a mixing sequence for $(U^h_t)_t$.
%%i.e.
%%$$\lim_{t_n\to \infty} \mu^g(A\cap S^g_{t_n}B)= \mu^g(A)\mu^g(B)%$$
%%for any measurable $A,B$.
%\end{proposition}

%In this section we prove Proposition~\ref{prop:mix'}, i.e.~we show that for a full measure set of IETs, times comparable with resonant rigid times, i.e.~times of the form $k_m q_m$, where $q_m$ is a rigidity time for the IET and $k_m$ sufficiently large, are mixing times for logarithmic special flows over the IET.

Let us recall that we denote by $\mathcal{I}_d$ the space of $d$-IETs on $[0,1]$ and by $m$ the natural Lebesgue measure on $\mathcal{I}_d$ (see \S~\ref{sec:aeIET}).
%The main result is the following:
\begin{proposition}\label{prop:mix'}
For any $d\geq 2$, there exists a full measure set $\mathcal{F}_d\subset \mathcal{I}_d$ of the set $\mathcal{I}_d$ of $d$-IETs such that
%For any Rauzy class of combinatorial data, there exist a full measure set $Y(\mathcal R)$ of IETs such that
if $T\in \mathcal{F}_d$ %Y(\mathcal R)$
 and $(k_mq_m)_{m\in\mathbb N}$ is a sequence of $(\underline{\epsilon},\underline{k})$-resonant times for $T$ with
 %$\log\log q_m\geq k_m$,   for all $m\in\mathbb N$,
 $$\log\log q_m\geq k_m,\quad \text{for\ all}\ m\in\mathbb N,$$
   then,  for any roof function $f\in \SymLogC{T}$,  the sequence $(k_mq_m)_{m\in\mathbb N}$ is a mixing sequence for the flow $(T_t^f)_{t\in\R}$.
Furthermore, %and any $C>1$,
if  $(t_m)_{m\geq 1}$ is a sequence such that, for some $C>1$,
$$t_m\in [k_mq_m, Ck_mq_m], \qquad \forall m\geq 1, $$
then also  $(t_m)_{m\geq 1}$ is a \emph{mixing sequence} for the flow $(T_t^f)_{t\in\R}$.
\end{proposition}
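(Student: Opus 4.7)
The plan is to prove mixing at resonant rigid times $t_m = k_m q_m$ via the classical \emph{mixing-by-shearing} paradigm for special flows over IETs: it will suffice to show that the derivatives $(S_n f)'(x)$, evaluated at return counts $n$ of size $\asymp k_m q_m$, tend to infinity on a set of measure going to one, from which mixing follows by a standard argument (in the spirit of \cite{SK:mix,Ul:mix,Rav:mix,FaKa}). The novelty, as previewed in \S~\ref{sec:mixingstrategy}, is that this divergent shearing is produced not from an asymmetry of the roof (there is none in our setting) but from the resonance between $t_m$ and the rigidity time $q_m$.

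First, I would decompose $(S_{k_m q_m}f)'$ using the cocycle identity and the fact that $T$ is a piecewise isometry: on the base of the $\epsilon_m^2$-rigidity tower,
\begin{equation*}
(S_{k_m q_m} f)'(x) = \sum_{j=0}^{k_m-1}(S_{q_m} f')(T^{jq_m}x).
\end{equation*}
Splitting each summand into its trimmed part (Definition~\ref{def:trimmedBS}) and the $2d$ closest-visit contributions, the trimmed parts are bounded by $M q_m$ by Proposition~\ref{boundSf'growthprop}, and therefore contribute $O(k_m q_m)$ in total. The main term is
\begin{equation*}
\mathcal R_m(x) := \sum_{j=0}^{k_m-1}\Bigl(-\sum_{i=0}^{d-1}\frac{C_i^+}{m_i^+(T^{jq_m}x,q_m)} + \sum_{i=1}^{d}\frac{C_i^-}{m_i^-(T^{jq_m}x,q_m)}\Bigr),
\end{equation*}
and I need to show $|\mathcal R_m(x)| \gg k_m q_m$ on a set of measure tending to one.

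This is the main obstacle, and the core of the novel mechanism. The $\epsilon_m^2$-rigidity forces $\{T^{jq_m}x\}_{j=0}^{k_m-1}$ to form a near-arithmetic progression with a tiny drift $|\eta_m|\leq \epsilon_m^2/q_m$, so each orbit $\mathcal O_T(T^{jq_m}x,q_m)$ is a near-translate of $\mathcal O_T(x,q_m)$ and their closest-visit positions to any singularity $\beta_i$ are shifted copies of the closest-visit position from $\mathcal O_T(x,q_m)$. The closest-visit contributions from the $k_m$ segments then form discrete harmonic-type sums of the shape $\sum_{j}1/(\mu\pm j\eta_m)$, with opposite-sign drifts on the left and right of each singularity. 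At $k_m=1$ these would be the balanced contributions responsible for absence of mixing in \cite{Ul:abs}; for $k_m>1$ the resonance spreads the contributions asymmetrically across the $k_m$ segments, breaking the cancellation. A careful accounting, exploiting the Roth-type Diophantine properties of $T\in\mathcal F_d$ to control the alignment of internal indices and the uniformity of the drift, together with the hypothesis $k_m\leq\log\log q_m$ (which bounds how much drift can accumulate before the approximation breaks), should show that on a good set $A_m$ the sum $\mathcal R_m(x)$ grows strictly faster than $k_m q_m$.

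To conclude, I would define $A_m$ as the intersection of: (i)~the almost cylinder of the $\epsilon_m^2$-rigidity tower (of measure $\geq 1-\epsilon_m$ by Lemma~\ref{lem:Rohlinrigidity}); (ii)~points for which the closest-visit distances $m(x,q_m)$ are not anomalously small (removed set of measure $\lesssim e^{-k_m}$ using $k_m\leq\log\log q_m$); (iii)~points where the trimmed-derivative bound of Proposition~\ref{prop:cohexistence} holds. Then $\mathrm{Leb}(A_m)\to 1$, and on $A_m$ the above estimate gives $|(S_{k_m q_m}f)'(x)|\to\infty$ uniformly. The classical shearing-based mixing criterion for special flows then implies that $(k_m q_m)$ is a mixing sequence for $(T^f_t)$. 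Finally, for a sequence $t_m\in[k_m q_m, C k_m q_m]$, the corresponding return count $n(t_m,x)$ remains $\asymp k_m q_m$; it can be written as $K_m q_m + r_m$ with $K_m\leq C k_m/\inf f$ and $0\leq r_m<q_m$, and the same decomposition and shearing lower bound apply (the extra $r_m$ steps add at most an $O(q_m)$ term, negligible compared with $|\mathcal R_m|$), so $(t_m)$ is also a mixing sequence.
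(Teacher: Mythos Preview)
Your overall framework (mixing via shearing, with resonance as the source of shear) is correct, but the concrete mechanism you propose is not the one that works, and two of the key steps have genuine gaps.

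First, the invocation of Proposition~\ref{boundSf'growthprop} to bound $\widetilde S_{q_m}f'(T^{jq_m}x)$ by $Mq_m$ is not justified: that proposition gives linear trimmed bounds only along a \emph{specific} sequence of inducing intervals $(I^{(\ell)})_\ell$, and there is no reason the $\epsilon_m^2$-rigidity times $q_m$ of Definition~\ref{def:resonantrigidity_t} coincide with (or are comparable to) those times. Likewise Proposition~\ref{prop:cohexistence} produces its \emph{own} sequence of rigidity towers with trimmed bounds; it says nothing about an arbitrary externally given $q_m$. The paper does not use trimmed bounds here at all; it uses instead the Roth-type estimate (Lemma~\ref{lem:upb}), which holds for \emph{all} $r$ and gives $|S_r f'(x)|\le M'r^{1+3\epsilon}+M'/m(x,r)$.

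Second, and more seriously, your claim that resonance ``spreads the closest-visit contributions asymmetrically and breaks the cancellation'', forcing $|\mathcal R_m(x)|\gg k_mq_m$ on a set of measure $\to 1$, is not substantiated and in fact cannot hold as stated: $S_r f'$ is a continuous function on each tower floor and can (and generically does) vanish at one point per floor. The paper's mechanism is different and more robust. Resonance guarantees a uniform \emph{lower} bound on the second derivative: since the orbit up to time $\asymp k_mq_m$ visits each singularity $\asymp k_m$ times at distance $\lesssim 1/q_m$, one has $S_r f''(x)\ge \underline c\,k_mq_m^2$ on the (trimmed) tower (Lemma~\ref{lem:der}). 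Thus on each floor $J$ the map $x\mapsto S_r f'(x)$ is strictly monotone with slope $\ge \underline c\,k_mq_m^2$; if it is small at some $x_J$, one removes a neighbourhood of $x_J$ of length $O(\delta_m/q_m)$ and on the remainder $|S_r f'|\ge \overline c\,\delta_m k_mq_m$ by Taylor expansion. This is how conditions~\eqref{S1} and~\eqref{S2} of the mixing-via-shearing criterion (Lemma~\ref{lemma:mixingviashearing}) are verified on a partition $\mathcal P_m$ of measure $\to 1$.

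Finally, for times $t_m\in[k_mq_m,Ck_mq_m]$ you cannot simply write $n(t_m,x)=K_mq_m+r_m$ with $K_m$ independent of $x$: the discrete return count $N(x,t_m)$ varies with $x$. The paper controls this fluctuation via the deviation estimate of Lemma~\ref{thm:crossingsbounds} (valid for $T$ in the full-measure set $\mathcal F_d$), which gives $N(x,t_m)\in[t_m-t_m^\gamma,t_m+t_m^\gamma]$ on a set of measure $\to 1$, and then Lemma~\ref{lem:cont} propagates this across each floor. Without this ingredient your extension to $t_m$ in the interval does not go through.
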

%\subsubsection{The full measure set of IETs $U$.}
%We first describe the full measure set of $U$ for which the result holds.
%Assume that $0<\gamma<1$ is an exponent  arising from Proposition~\ref{lem:FU}. Let $0<\epsilon_0<\gamma/6$  such that $\gamma(1+3\epsilon_0)<1$. We consider the set of IETs $U$ such that $U$ is a $\epsilon_0$-Roth type IET and there are two increasing sequences $(q_m)_{m\geq 1}$ and $(k_m)_{m\geq 1}$ (cf.\ Proposition~\ref{prop:3}) such that $\log\log q_m\geq k_m$ and $q_m$ is an $\epsilon$-rigidity time for $T$ with $\epsilon_m=k_m^{-2}$.
%By definition, for every $m\geq 1$ there exists an $\epsilon^2_m$-rigid tower of intervals $\mathcal{T}_m$ such that $q_m$ is its height and $Leb(\mathcal{T}_m)\geq 1-\epsilon_m$.
% and of height $q_m$. Let moreover $C>0$ be any fixed constant. Let $k_m=\epsilon_m^{-1/4}$ and
%Denote the base of $\mathcal{T}_m$ by $I_m=[a_m,b_m]$.
%The main result of this section is:
%For any Rauzy class $\mathcal R$ let any $\lambda_2/\lambda_1<\gamma<1$ and $0<\epsilon<\gamma/6$  such that $\gamma(1+3\epsilon)<1$.
\noindent We will prove this Proposition later in this section, see \S~\ref{sec:proof_resonantmixing}.% the proof in Subsection \ref{sec:4}.

\subsubsection{Prescribing resonant rigid times}\label{sec:resontanttimes1}
This type of mixing times, which are based on (resonant) rigidity  %(which can be produced using Rauzy-Veech induction by degenerating balanced times)
  turns out to be \emph{frequent}. In particular, it is possible to control where they occur, so that they can be chosen to coincide (infinitely often) with the a given sequence (which will be later taken to be the sequence of centralizing constants for the Birkhoff sums of the first flow).
 %{\color{magenta} For any  Rauzy class $\mathcal{R}$, denote by $X(\mathcal{R})$ the set of all IETs $(\pi,\lambda)$ on $I=[0,1]$ such that $\pi\in \mathcal{R}$, see Section~\ref{sec:backgroundRV} for definitions and details. The set $X(\mathcal{R})$ is equipped with the Lebesgue measure $m$.}
%the first key step toward the proof of  orthogonality:
This is the content of the second result needed for disjointness:
%, which will be proved in \S~\ref{sec:rigiditytimes}:

\begin{proposition}[Control of resonant rigid times]\label{prop:3}
{\color{blue}}
For any  $d\geq 2$, there exists a constant
%\footnote{One can show that the constant $C$ a constant $C=C_\mathcal{R}>1$ depending only on $\mathcal{R}$ such }
$C=C_d>1$ such that the following holds.
%Rauzy class $\mathcal{R}$,  there exists a constant $C=C_\mathcal{R}>1$ depending only on $\mathcal{R}$ such that the following holds.}
Fix $0<\epsilon <1$ and
\begin{itemize}
\item[-] a increasing sequence $\underline{s}:=(s_n)_{n\in\mathbb N}$ of real positive numbers converging to $\infty$.
%\vspace{1mm}
%\item[-]
% a decreasing sequence $\underline{\epsilon}:=(\epsilon_n)_{n\in\mathbb N}$  of positive numbers converging to $0$.
 \end{itemize}
Consider the subset  $\Omega:= \Omega \left(\underline{s}, {\epsilon}\right)\subset \mathcal{I}_d$
%,  depending on the sequences $\underline{r}=(r_n)_{n\in\mathbb N} $ and $\underline{\epsilon}= (\epsilon_j)_{k\in\mathbb N}\right) $,
% $\Omega  \subset X(\mathcal R)$
 % of $X(\mathcal R)$ %(whichdepending on $(r_n)_{n\in\mathbb N}$ and $ (\epsilon_j)_{k\in\mathbb N}$)
consisting  of all $d$-IETs on $[0,1]$ for which
%there exist subsequences $(r_{n_m})_{m\geq 1}$ and $(\epsilon_{k_m})_{m\geq 1}$ such that
there exists an increasing sequence $(q_j)_{k\geq 1}$ such that:
\begin{itemize}
\item[($\Omega$1)] $q_j$ is a $\epsilon$-rigid time
%for any $m\in \mathbb{N}$;
%%%???$\epsilon_{k_m}$-rigidity time
%$(\epsilon_{m})_{m\in\mathbb N}$-rigidity time
for any $j\geq 1$,
\item[($\Omega$2)]  for all $j\in \mathbb{N}$ there exists $n_j\in \mathbb{N}$ such that
%we can extract a subsequence $(r_{n_j})_j$ of $(r_n)_{n\in\mathbb N}$ such that
$$s_{n_j}\leq q_j<Cs_{n_j} .$$%\quad \text{for\  every}  \ j\geq 1.$$
\end{itemize}
Then the set $\Omega$ %:= \Omega \left((r_n)_{n\in\mathbb N} , (\epsilon_j)_{k\in\mathbb N}\right) $
has full measure in $\mathcal{I}_d$, i.e. $m(\mathcal{I}_d\setminus \Omega)=0$.
%$X:=X(\mathcal R)$, i.e.~$m(\Omega)=m(X)$.
\end{proposition}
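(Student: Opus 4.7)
The plan is to use Rauzy--Veech induction to produce, for a.e.\ IET $T$, an abundant supply of $\epsilon$-rigid times at controllable scales, and then to employ a Borel--Cantelli type argument (via Chung--Erd\H{o}s and a zero--one law coming from ergodicity of Rauzy--Veech) to show that these rigid times hit each target window $[s_n,Cs_n)$ for infinitely many $n$.

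First, I would fix a ``good'' Rauzy cylinder $\mathcal{W}\subset\mathcal{I}_d$ of positive measure on which the Rauzy--Veech matrix is a positive matrix, and accelerate the induction to first returns to $\mathcal{W}$. Each such return produces a balanced Rohlin-tower decomposition of $[0,1]$: bases $J^{(n)}_i$ of comparable length and heights $h^{(n)}_i$ of comparable size. Standard arguments (see e.g.\ Veech's ergodicity theorem, or the construction used for Proposition~\ref{prop:cohexistence} in Section~\ref{sec:coexistence}) show that a bounded number of additional induction steps turns such a balanced partition into an $\epsilon$-rigid tower of some height $q^{(n)}$ whose total measure exceeds $1-\epsilon$, producing a growing sequence $q^{(1)}<q^{(2)}<\ldots$ of $\epsilon$-rigid times for $T$. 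By Kerkhoff's bounded-distortion lemma for the Gauss-type map associated to the acceleration (in the framework of \cite{AGY}), the distribution of the logarithmic gap $\log q^{(n+1)}-\log q^{(n)}$ is uniformly tight on $\mathcal{W}$. In particular, I expect $C=C_d>1$ and $p_0=p_0(d)>0$ such that, for all sufficiently large $n$,
\begin{equation*}
m(A_n)\geq p_0,\qquad\text{where}\qquad A_n:=\bigl\{T\in\mathcal{I}_d : \exists\, j\text{ with }q^{(j)}(T)\in[s_n,Cs_n)\bigr\}.
\end{equation*}

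Second, to deduce $m(\limsup_n A_n)>0$ I would apply the Chung--Erd\H{o}s inequality: it suffices to prove the near-independence estimate
\begin{equation*}
m(A_n\cap A_m)\leq C'\,m(A_n)\,m(A_m)\qquad\text{for all }n,m\text{ with }|n-m|\text{ sufficiently large.}
\end{equation*}
This near-independence follows from Kerkhoff's lemma combined with exponential decay of correlations for the Markov--Gauss renormalization on $\mathcal{W}$, in the same spirit as Chaika's argument in \cite{Cha}. Putting it together with the scale-hitting bound yields $m(\limsup_n A_n)>0$. Finally, the event $\limsup_n A_n$ is invariant modulo measure zero under the Rauzy--Veech map, since adding or removing finitely many iterates only perturbs finitely many of the $q^{(j)}$'s and cannot affect whether infinitely many of them fall in the prescribed windows; by ergodicity of Rauzy--Veech, such an invariant event has measure $0$ or $1$, and positivity forces full measure. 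The desired $\Omega$ contains $\limsup_n A_n$, whence $m(\mathcal{I}_d\setminus\Omega)=0$.

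The main obstacle in this program is the pair of probabilistic estimates $m(A_n)\geq p_0$ and $m(A_n\cap A_m)\leq C'\,m(A_n)\,m(A_m)$. Both rest on uniform bounded-distortion for the accelerated Rauzy--Veech Gauss map on $\mathcal{W}$ (Kerkhoff's lemma) together with a precise quantitative control of how the tower heights $q^{(n)}$ grow along induction steps; these ingredients are available in the literature but must be carefully assembled so that the constants depend only on $d$ and on the cylinder $\mathcal{W}$, and in particular are uniform in the prescribed sequence $\underline{s}$.
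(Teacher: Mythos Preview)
Your overall strategy---build events $A_n=\{T:\text{some }\epsilon\text{-rigid time lies in }[s_n,Cs_n)\}$, bound their measure from below via Kerkhoff's lemma, prove quasi-independence, and apply Chung--Erd\H{o}s---matches the paper's approach. However, the final passage from positive measure to full measure has a genuine gap.

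You claim that $\limsup_n A_n$ is Rauzy--Veech invariant because ``adding or removing finitely many iterates only perturbs finitely many of the $q^{(j)}$'s''. This is false. An $\epsilon$-rigid time $q$ for $T$ is the \emph{height} of a Rohlin tower for $T$. Under one step of Rauzy--Veech, $\mathcal{V}(T)$ is (after rescaling) the first-return map of $T$ to a subinterval; the height vectors transform as $h^{(n)}=Z_0^{*}\,\tilde h^{(n-1)}$, so the rigid times for $\mathcal{V}(T)$ are genuinely different integers from those of $T$---not merely a shift of the sequence $(q^{(j)})$. Since the target windows $[s_n,Cs_n)$ are fixed externally, membership in $\Omega$ (or in $\limsup_n A_n$) is not preserved by $\mathcal{V}$, and ergodicity of Rauzy--Veech cannot be invoked.

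The paper circumvents this by \emph{localizing} the entire construction to an arbitrary Rauzy--Veech cylinder $\Delta_\gamma$: the sets $A^{\gamma}_{s_n}(\epsilon)\subset\Delta_\gamma$ are built from paths with prescribed prefix $\gamma$, and both the lower bound on $m(A^{\gamma}_{s_n})$ and the quasi-independence estimate are proved with constants depending only on $d$ (via Kerkhoff's lemma), \emph{not} on $\gamma$. Chung--Erd\H{o}s then gives $m(\Omega\mid\Delta_\gamma)\geq\widehat C^{-1}$ uniformly for every cylinder $\Delta_\gamma$. The upgrade to full measure uses L\'evy's zero--one law: with $(\mathcal F_n)$ the filtration generated by length-$n$ cylinders (which generates the Borel $\sigma$-algebra by unique ergodicity), one has $\mathbb E(\mathbf 1_\Omega\mid\mathcal F_n)\geq\widehat C^{-1}$ for all $n$, and martingale convergence forces $\mathbf 1_\Omega\geq\widehat C^{-1}>0$ a.e., hence $\mathbf 1_\Omega=1$ a.e. This uniform localization over \emph{all} cylinders is the essential ingredient your outline is missing; working on a single fixed cylinder $\mathcal W$ does not suffice.
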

%{\color{blue}Check if $C$ or $K$}
\noindent The proof of this proposition, which is heavily based on Rauzy-Veech induction properties (in particular on Kerkhoff's Lemma) and close in spirit to some of the technical arguments used by Chaika in \cite{Cha} to show that almost every pair of IETs is disjoint,  is given in Section~\ref{sec:resontanttimes2}.

%\begin{proposition}
%For any Rauzy graph $\mathcal R$ there exists a constant $K>1$ such that for any increasing f{sequence $(r_n)_{n\geq 1}$ of real positive numbers converging to $\infty$ and any $0<\kappa<1$ for almost every  IET $S\in X(\mathcal R)$  there exist subsequences $(r_{n(m)})_{m\geq 1}$ and an increasing sequence $(q_m)_{m\geq 1}$ of natural numbers such that
%$q_m\in [r_{n(m)}, Kr_{n(m)}]$ and $q_m$ is a $\kappa$-rigidity time for $S$ for every $m\geq 1$.
%\end{proposition}
%The second result, which will be proved in this section, is the following:
% and this result, which will be proved in \S~\ref{sec:resontantmixing} and \S~\ref{sec:coexistenceproof}.

\subsection{Final arguments in the proof of disjointness}\label{sec:final_disj}
Let us now show that  Propositions \ref{prop:mix'} and \ref{prop:3} together imply the main disjointness result for special flows (namely Proposition~\ref{thm:sfdisjointness}).
%Let us first state a Lemma which can be deduced as Corollary of Proposition~\ref{prop:3}:

%We can now prove Proposition~\ref{thm:dm_sf}.
\begin{proof}[Proof of Proposition~\ref{thm:sfdisjointness}]
Let $T$ be in the %(full measure) intersection of the
full measure sets of IETs for which the conclusion of Proposition \ref{prop:cohexistence} (i.e.~the existence of a tight subsequence of Birkhoff sums of $f\in \SymLog{T}$ with exponential tails). Let $(a_n)_{n\in\mathbb N}$ be the sequence of centering constants (i.e.~the sequence such that $(S_{h_n}f -a_n)$ is tight with exponential tails).

Fix any increasing sequence $\underline{k}=(k_j)_{n\in\mathbb N}$ of natural number and take the sequence $\underline{\epsilon}=(\epsilon_j)_{j\in\mathbb N}$ given by $\epsilon_j:=k_j^{-1}$, so that $\epsilon_j\to 0$ as $j$ grows. Consider now the sequences $\underline{s}^j=(s^j_n)_{n\in\mathbb N}$ defined by
\begin{equation*}\label{def:sequence}
s_n^j:= C^{-1} a_n \epsilon_j=C^{-1} a_n k_j^{-1} , \qquad \forall\ n\in \mathbb{N}.
\end{equation*}
Let ${U}$ be an IET in the intersection of the full measure set of IETs $\mathcal{F}_d$ (for which the conclusion of Proposition~\ref{prop:mix'} holds) and
 the conclusion of Proposition \ref{prop:3} holds for all pair  $(\underline{s}^j,\epsilon^2_j)$, $j\in\N$.
By Proposition \ref{prop:3}, for any such $U$, for any $j\in\N$ there exist infinitely many  $\epsilon^2_j$-rigidity times $q$ for $U$  such that
\[
C^{-1}a_n k_j^{-1}=s^j_{n}\leq q   \leq  C s^j_{n}=a_n k_j^{-1} \quad \text{for some} \quad n\in \mathbb{N}.
\]
It follows that there exist increasing sequences of natural numbers $(n_j)_{j\in\N}$ and $(q_j)_{j\in\N}$ such that $q_j$ is an  $\epsilon^2_j$-rigidity time, $\log\log q_j\geq k_j$ and
\[k_jq_j\leq a_{n_j}\leq Ck_jq_j.\]
Hence, $(k_jq_j)_{j\in\mathbb N}$ is a sequence of $(\underline{\epsilon},\underline{k})$-resonant times.

%
%Recalling the  definition \eqref{def:sequence} of $\underline{s}$, this implies that
%\begin{equation}\label{eq:trapped}
%\frac{q_j }{C\sqrt{\epsilon_{n_j}}}\leq
%a_{n_j}\leq \frac{q_j }{\sqrt{\epsilon_{n_j}}}, \qquad \forall \ j\in \mathbb{N}.
%\end{equation}
%%Denoting by  $[x]$  the integer part of $x>0$,
%Define now the integer valued sequence $(k_j)_j$ given for any $j\in \mathbb{N}$ by
%$k_j:= \left[a_{n_j}/q_j \right]$,
%%\left[1/{\sqrt{\epsilon_{n_j}}}\right]+1$,
%where $[x]$ denotes the integer part of $x>0$, so that by definition  $k_j q_j\leq a_{n_j}< (k_j+1) q_j$.
%% (since trivially
%%$$k_j-1= \left[ \frac{a_{n_j}}{q_j}\right] \leq \frac{a_{n_j}}{q_j}  \leq  \left[ %\frac{a_{n_j}}{q_j}\right] +1 =k_j +1 $$
%%so multiplying by $q_j$ we get the desired inequalities).
%By \eqref{eq:trapped} it then follows that $k_j\leq \frac{1}{\sqrt{\epsilon_j}}$
%(so that, since $q_j$ is by construction an $\epsilon_j$-rigidity time,  $k_j q_j$
% is a $(k_j, \sqrt{\epsilon_j})$-resonant time, see Definition~\ref{def:resonantrigidity_t}) and that $k_j\geq 1/C\sqrt{\epsilon_j} -1$, so  that
%%%$(k_{n_j}-1) q_j\leq a_{n_j}\leq k_{n_j}q_j$ for any $j\in \mathbb{N}$ and hence that
%%%$$k_j:= \left[\frac{1}{C\sqrt{\epsilon_{n_j}}}\right]+1,\qquad \forall \ j\in \mathbb{N}. $$
%%Notice that
%$k_j\to \infty$ (since $\epsilon_j\to 0$). % and that \eqref{eq:trapped} implies (by  definition of $k_j$ and since $C>1$)  that $(k_{n_j}-1) q_j\leq a_{n_j}\leq k_{n_j}q_j$ for any $j\in \mathbb{N}$.
 Then by Proposition \ref{prop:mix'},  the sequence $(a_{n_j})_j$ (that is trapped between successive resonant rigid times with $k_j\to \infty$)  is a mixing subsequence for $(U_t^g)$.
Furthermore, (by the choice of $(a_n)_{n\in\mathbb N}$ at the beginning of the proof) we also have that $(S_{h_{n_j}}f  - a_{n_j})_{j\in\N}$ is tight with exponential tails. Thus, the assumptions of disjointness criterion (Theorem~\ref{thm:spectorth}) hold and the criterion implies that the special flows $T^f_t$ and $U^g_t$ are spectrally disjoint. % of the The proof is then finished by the
\end{proof}
The rest of this section will be devoted to the proof of Proposition~\ref{prop:mix'}. We will first present some auxiliary results, then give the proof in \S~\ref{sec:proof_resonantmixing}.

\subsection{Mixing via shearing criterion}\label{sec:mixingviashearing}
This Proposition uses the standard mixing mechanism called \emph{mixing via shearing} which is at the heart of virtually every proof of mixing in parabolic flows, see for example \cite{Ko:mix, Fa:ana, Ul:mix, Rav:mix}. We will use the following classical criterion that axiomatises the properties of a (partial) partition (into curves on which the shearing phenomenon happens and is controlled) needed to prove mixing via this mechanism.

% (the version that we need is not stated anywhere, although similar stuff appears in Kochergin, Fayad, ....)

Let us consider a special flow $(T^f_t)_{t\in\R}$ on $I^f$ built over an ergodic IET $T:I\to I$ and under a roof function $f
\in \mathcal{L}og(T)$.  For any $x\in I$ and $t>0$ denote by $N(x,t)$ the number of crossings of the roof of $I^f$ (or, equivalently, the number of discrete iterations of the base transformation) undergone by the orbit segment starting from $(x,0)\in I^f$ and flowing up to time $t$.
%For $t\geq 0$, let $N(x,t)$ denote the number of hits to the roof up to time $t$.
Let us also introduce the notation
$$
\underline{N}(I,t):= \min_{x\in I} N(x,t), \quad \textrm{and} \quad
\overline{N}(I,t):= \max_{x\in I} N(x,t)
$$
for the maximum and minumum of the function $x \mapsto N(x,t)$ on the interval $I$ respectively.
\begin{lemma}[Mixing via shearing criterion] \label{lemma:mixingviashearing}
Let $(T_t^f)_{t\in\R}$ be a special flow over an IET $T$ and with the roof function $f\in \SymLogC{T}$. Assume that for a sequence $t_n\to \infty$ we have that there exists a sequence $(\cP_n)_{n\geq 1}$ of finite collections of disjoint intervals (called partitions) satisfying:
\begin{gather}
\tag{M}Leb(\cP_n)\to 1\quad\text{as}\quad n\to\infty;\label{M}\vspace{1.3mm}\\
\tag{D}T^jI\cap End(T)=\emptyset\quad\text{for every}\ I\in \cP_n\ \text{and}\  0\leq j\leq \overline{N}(I,t_n);\label{D}\vspace{3mm}\\
\tag{S1} \min_{I\in \cP_n}\min_{r\in [\underline{N}(I,t_n),\overline{N}(I,t_n)]}\min_{x\in I}|S_r(f')(x)||I| \to \infty\text{ as }n\to \infty;\label{S1}\vspace{6mm} \\
\tag{S2} \max_{I\in \cP_n}\frac{\max_{0\leq r\leq 2t_n}\max_{x\in I} |S_{r}(f'')(x)||I|}{\min_{r\in [\min_{x\in I} \underline{N}(I,t_n),\overline{N}(I,t_n)]}\min_{x\in I}|S_r(f')(x)|}\to 0
\text{ as }n\to \infty. \vspace{1.3mm}\label{S2}
\end{gather}
Then $(T_t^f)_{t\in\R}$ is mixing along $(t_n)_{n\geq 1}$.
\end{lemma}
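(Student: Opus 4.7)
The plan is to execute the classical \emph{mixing via shearing} scheme. By a standard density argument, it suffices to prove
\[
\int_{I^f} F\cdot (G\circ T^f_{t_n})\,d\mu^f \longrightarrow \int F\,d\mu^f\int G\,d\mu^f
\]
for Lipschitz product test functions $F(x,s)=\phi(x)\chi(s)$ and $G(x,s)=\psi(x)\xi(s)$. Using (M) to localize the outer $x$-integration to $\bigcup_{I\in \cP_n}I$ at a cost $o(1)$, the problem reduces to showing, for each $I\in \cP_n$ and each $s$ with $I\times\{s\}\subset I^f$, that the one-dimensional average $\tfrac{1}{|I|}\int_I \phi(x)(G\circ T^f_{t_n})(x,s)\,dx$ approaches the expected product after $s$-integration and summation on $\cP_n$.

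Fix such an $I\in \cP_n$ and $s$. From the definition of the special flow,
\[
T^f_{t_n}(x,s)=\bigl(T^{N(x,s,t_n)}(x),\; s+t_n-S_{N(x,s,t_n)}(f)(x)\bigr),
\]
and by (D) the discrete index $N=N(x,s,t_n)$ takes, as $x$ varies over $I$, only values in $[\underline{N}(I,t_n),\overline{N}(I,t_n)]$, while $x\mapsto T^N(x)$ is piecewise an isometry. On each continuity piece of $N$, the vertical coordinate has $x$-derivative $-S_N(f')(x)$ and second derivative $-S_N(f'')(x)$. Condition (S1) thus ensures that as $x$ crosses $I$, this vertical coordinate sweeps a range $L_n(I)\gtrsim \min_{x,r}|S_r(f')(x)|\cdot|I|\to\infty$; condition (S2) ensures the shear is affine to leading order, with relative second-order error $o(1)$; and the $f''\geq 0$ clause built into $\SymLogC{T}$ guarantees monotonicity on each piece. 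Changing variable from $x$ to the vertical coordinate $y:=s+t_n-S_N(f)(x)$ then identifies the average above, up to a Jacobian factor $1+o(1)$ and boundary contributions at the jump loci of $N$, with a weighted integral of $G$ along an orbit segment of $(T^f_t)_{t\in\R}$ of length $\asymp L_n(I)\to\infty$.

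The final step is to invoke unique ergodicity of $(T^f_t)_{t\in\R}$, which holds for almost every IET $T$ and any positive integrable roof: for any Lipschitz $G$, one has $\tfrac{1}{L}\int_0^L G(T^f_t y_0)\,dt\to \int G\,d\mu^f/\int f\,dx$ uniformly in the base point $y_0$ as $L\to\infty$. Since by (S1) the orbit-segment lengths tend to infinity uniformly in $I\in \cP_n$ and $s$, the one-dimensional averages converge to the required product, and summing over $\cP_n$ and integrating in $s$ (using (M) once more to reconstitute $\int F\,d\mu^f$) yields the desired mixing identity.

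The main technical obstacle will be the bookkeeping of the $\overline{N}(I,t_n)-\underline{N}(I,t_n)+1$ jumps of $x\mapsto N(x,s,t_n)$ inside $I\in \cP_n$, which subdivide $I$ into continuity pieces treated separately. One must verify that the total boundary contributions at the jump loci, together with the non-affine corrections to the change of variable on each piece, are dominated by the ratio appearing in (S2) and therefore vanish in the limit. This is precisely the balance that (S2) is designed to produce: the shear provided by the lower bound in (S1) on $|S_r(f')|\cdot|I|$ must overwhelm the second-order variation bounded in (S2) by $|S_r(f'')|\cdot|I|$, and it is here that the quantitative form of the ratio in (S2) is used, rather than merely the qualitative growth of $|S_r(f')|$.
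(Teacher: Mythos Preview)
The paper does not give a self-contained proof of this lemma; immediately after the statement it defers to the literature (Kochergin, Fayad \cite{Fa:ana}, Ulcigrai \cite{Ul:mix}), noting only that those sources carry out the argument under essentially the same hypotheses (M), (D), (S1), (S2). Your outline reproduces precisely that classical scheme---localize via (M), use (D) and (S1) to see that the image of each $I\times\{s\}$ under $T^f_{t_n}$ is a long, nearly vertical curve, use (S2) to control the Jacobian of the change of variable $x\mapsto y=s+t_n-S_N(f)(x)$, and finish with equidistribution of long orbit segments---so you are on the same track as the cited references.

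Two small points. First, you invoke unique ergodicity of $(T^f_t)$, which the lemma as written does not hypothesize; this is how the criterion is actually applied both in the references and in the present paper (only to IETs in a full-measure set), so your reading is the intended one, but it is an implicit extra assumption. Second, your sentence ``identifies the average \dots\ with a weighted integral of $G$ along an orbit segment'' compresses one standard step: on each continuity piece of $N$, the image curve still has horizontal extent $|I|$, so one must first replace $G$ on that curve by $G$ on a genuinely vertical segment at a fixed base point (error $O(\mathrm{Lip}(G)\cdot|I|)=o(1)$ by (S1)) before the pieces concatenate into a flow orbit and equidistribution applies. Also, monotonicity of $y(x)$ on each piece follows already from (S1) (nonvanishing of $S_N(f')$), not from $f''\geq 0$; the convexity hypothesis in $\SymLogC{T}$ is used rather to keep $S_N(f'')$ nonnegative so that the estimates in (S2) are one-sided. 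Your final paragraph correctly identifies the jump-locus bookkeeping and the (S1)/(S2) balance as the genuine technical content.
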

Even though our formulation is slightly different, this proposition has already been used in the literature, in particular by Kochergin, Fayad and Ulcigrai. In \cite{Fa:ana} indeed the control in \eqref{S1} and \eqref{S2} is formally required for all $t_n/2\leq r \leq 2t_n$, although in the proof  it only used for all $\underline{N}(I,t_n) \leq r \leq \overline{N}(I,t_n)$. We refer also to \cite{Ul:mix}, where, although the criterion is not explicitly stated at such, the proof of mixing of $(T_t^f)_{t\in\R}$ is mixing along $(t_n)_{n\geq 1}$ is deduced from the existence of partitions which satisfy exactly these assumptions (M), (D), (S1) and (S4).

%{\color{blue}Explan explanations? give refs to parts of papers.}

\subsection{Deviations of the number of discrete iterations in time $t$.}
 Fluctuations of the function $x\mapsto N(x,t)$ defined in the previous \S~\ref{sec:mixingviashearing} can be reduced to the study of  ergodic integrals of the special flow (or, equivalently, of  locally Hamiltonian flows).  Ergodic integrals of smooth functions over translation flows are well known to display \emph{power deviations} of ergodic averages, a phenomenon discovered by Zorich (see \cite{Zo:dev}) and proved by Forni \cite{Fo:dev}. For locally Hamiltonian flows, a different proof (which uses a \emph{correction} method inspired by \cite{MMY:Coh}) which yields this result for smooth observables was proved in  \cite{Fr-Ul24}.

In  the proof of mixing at resonant times we will need
%The main result proved in this section (which is needed for the ), is
the following quantitative estimate on the values of $N(x,t)$, which shows that, on sets of arbitrarily large measure of initial points, they display  power deviations from the mean:
\begin{lemma}\label{thm:crossingsbounds}
For any irreducible permutation $\pi \in \mathfrak{S}_d^0$
 there exists $0\leq \lambda_2<\lambda_1$
such that, for almost every IET $T$
 with permutation $\pi$ and any $f\in \mathcal{L}og(T)$ be roof function with mean value equal $1$ and any $\frac{\lambda_2}{\lambda_1}<\gamma<1$, we have
\[\lim_{t\to+\infty}Leb\{x\in I: N(x,t)\in [t-t^\gamma, t+t^\gamma]\}=1.\]
\end{lemma}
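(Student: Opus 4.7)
\smallskip

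The plan is to reduce the control of the number of crossings $N(x,t)$ to the deviation theory of Birkhoff sums of the roof function, since $N(x,\cdot)$ is, by definition, the inverse of the staircase $n\mapsto S_n f(x)$. More precisely, by the very definition of $N(x,t)$,
\[
S_{N(x,t)}(f)(x)\leq t < S_{N(x,t)+1}(f)(x)=S_{N(x,t)}(f)(x)+f(T^{N(x,t)}x),
\]
so that $|t-S_n(f)(x)|\leq f(T^n x)$ with $n:=N(x,t)$. Since $\int f\,\mathrm{d}Leb=1$, by unique ergodicity of $T$ we already have $N(x,t)/t\to 1$ in measure, and the content of the lemma is a quantitative strengthening of this first-order asymptotic.

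The exponents $\lambda_1>\lambda_2\geq 0$ are the top two Lyapunov exponents of the (restriction to an appropriate subspace of the) Kontsevich--Zorich/Zorich--Rauzy cocycle over the Rauzy class of $\pi$; they are positive and distinct for almost every $T$ by the Forni--Avila--Viana theorem. The crucial input is the deviation result for Birkhoff sums of functions in $\mathcal{L}og(T)$ proved in \cite{Fr-Ul24} (and recalled in Appendix~\ref{app:deviations}): for any $\gamma'\in(\lambda_2/\lambda_1,1)$, for a.e.\ $T$ with permutation $\pi$ and every $f\in\mathcal{L}og(T)$ with $\int f\,\mathrm{d}Leb=1$, there exists a constant $C=C(T,f,\gamma')$ and, for each $L\in\N$, a set $E_L\subset I$ with $Leb(E_L)\to 1$ as $L\to\infty$, on which
\[
\bigl|S_n(f)(x)-n\bigr|\leq C\,n^{\gamma'}\qquad\text{for all }x\in E_L,\ n\geq L.
\]

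Combining the two observations, for $x\in E_L$ and $t$ large enough so that $n=N(x,t)\geq L$,
\[
|N(x,t)-t|\leq \bigl|n-S_n(f)(x)\bigr|+\bigl|S_n(f)(x)-t\bigr|\leq C\,n^{\gamma'}+f(T^n x).
\]
Using $n\leq 2t$ for $t$ large enough (a consequence of the same deviation estimate bootstrapped once), the first term is dominated by $2C\,t^{\gamma'}$, which is bounded by $\tfrac12 t^\gamma$ for any $\gamma>\gamma'$, provided $t$ is large. It remains to control the stray term $f(T^n x)$, which is the only real obstacle: since $f$ has logarithmic singularities, $f(y)\leq C'\log(1/\mathrm{dist}(y,End(T)))$, and for a large-measure set of $x$ one must rule out the possibility that the orbit $\{T^j x:0\leq j\leq 2t\}$ approaches $End(T)$ too fast. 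A standard Borel--Cantelli argument, combined with the distribution of closest visits controlled by Rauzy--Veech induction (in the form already used in Section~\ref{sec:exp_tails}), shows that for every $\eta>0$ the set $F_L$ of $x$ satisfying $\mathrm{dist}(T^j x,End(T))\geq j^{-1-\eta}$ for all $j\geq L$ has measure tending to $1$ as $L\to\infty$; on such $x$ one has $f(T^n x)=O(\log t)$, which is negligible with respect to $t^\gamma$.

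Setting $\gamma':=(\gamma+\lambda_2/\lambda_1)/2$ and intersecting the sets $E_L\cap F_L$, we obtain a nested family whose measure tends to $1$ and on which $|N(x,t)-t|\leq t^\gamma$ for all $t$ sufficiently large. The main technical step is therefore the invocation of the \cite{Fr-Ul24} deviation bound for $\mathcal{L}og(T)$ observables (which requires the correction procedure à la Marmi--Moussa--Yoccoz to handle the logarithmic singularities); once that is in place, the inversion of $N(x,t)$ against $S_n(f)(x)$ and the Borel--Cantelli control of closest returns are standard.
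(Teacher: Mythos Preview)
Your approach is correct but follows a genuinely different route from the paper's. You stay on the base and invert the relation $S_{N(x,t)}(f)(x)\le t< S_{N(x,t)+1}(f)(x)$, invoking a pointwise Birkhoff-sum deviation estimate $|S_n(f)(x)-n|\le C n^{\gamma'}$ for $f\in\mathcal Log(T)$, and then handle the stray term $f(T^{N(x,t)}x)$ by a Borel--Cantelli argument on closest visits. The paper instead lifts to the special flow and applies the ergodic-integral deviation (Proposition~\ref{thm:asxi}) to the \emph{bounded} test function $\xi_\varepsilon=\tfrac{1}{\varepsilon}\mathbf 1_{I\times[0,\varepsilon]}$, a thickened cross-section. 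Because $\xi_\varepsilon$ is bounded, the discrepancy $|N(x,t)-\int_0^t\xi_\varepsilon|$ is at most $1$, and the deviation bound on $\int_0^t(\xi_\varepsilon-1)$ finishes immediately; no Borel--Cantelli step is needed and the logarithmic singularity of $f$ never enters. In short, the paper trades the unbounded observable $f$ for a bounded one at the cost of lifting to the flow, which makes the argument shorter; your route is more hands-on but requires the extra closest-visit control.

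Two small points. First, the crude bound $N(x,t)\le t/\inf f$ is immediate and already suffices for your bootstrap; you do not need the deviation estimate to get $n\lesssim t$. Second, what Appendix~\ref{app:deviations} actually recalls from \cite{Fr-Ul24} is the \emph{ergodic integral} deviation (Proposition~\ref{thm:asxi}), not the pointwise Birkhoff-sum deviation you invoke. The Birkhoff-sum statement you need is indeed part of the machinery in \cite{Fr-Ul24}, but it would have to be extracted from that paper directly rather than from the appendix here.
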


\begin{remark}\label{rem:Lyapunov}
Here $\lambda_1$ and $ \lambda_2$  depend only on the \emph{Rauzy class}  $\mathcal{R}=\mathcal{R} (\pi)$ of the permutation $\pi$ (see \S~\ref{sec:parameterspaceIETs})  and are the two top Lyapunov exponents of the Kontsevich-Zorich cocycle on the connected component of the stratum of Abelian differentials corresponding to  $\mathcal{R} $.
\end{remark}
The estimate can be deduced by the recent work  \cite{Fr-Ul24} by the first and last author on deviation of ergodic averages of locally Hamiltonian flows by a simple smoothening argument, as we now explain.
%In the rest of this subsection (see also Appendix~\ref{app:deviations}) we expain the reduction.
%The proof of this Proposition will take the rest of this section (which is independent on the rest of the paper).
\medskip

%\noindent {\it Cocycle correction.}

For any measurable bounded map $\xi:I^f\to\R$ let us consider $\varphi_\xi:I\to\R$ given by
\[\varphi_\xi(x)=\int_0^{f(x)}\xi(x,r)dr\quad\text{for}\quad x\in I.\]
The following  result %(Proposition~\ref{thm:asxi})
 that can be deduced from the results in \cite{Fr-Ul24}.
\begin{proposition}\label{thm:asxi}
For any irreducible $\pi\in \mathfrak{S}^0_d$ and $0\leq \lambda_2<\lambda_1$ as in Lemma~\ref{thm:crossingsbounds} and Remark~\ref{rem:Lyapunov},  for almost every IET $T$
 with permutation $\pi$
%Let $\mathcal R$ be a Rauzy class and let $0\leq \lambda_2<\lambda_1$ be the two top Lyapunov exponents of the Kontsevich-Zorich cocycle related to $X(\mathcal R)$.For almost every IET $T:I\to I$ in $X(\mathcal R)$}
and any $\xi:I^f\to\R$ measurable bounded map with zero mean such that
$\varphi_\xi\in \Log{T}$,
for a.e.\ $(x,r)\in I^f$, we have
\begin{equation}\label{eq:la2/la1}
\limsup_{t\to+\infty}\frac{\log|\int_{0}^t\xi(T^f_s(x,r))ds|}{\log t}\leq \frac{\lambda_2}{\lambda_1}<1.
\end{equation}
\end{proposition}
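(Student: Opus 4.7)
The plan is to reduce the ergodic integral of $\xi$ along the special flow to a Birkhoff sum of $\varphi_\xi$ under the base IET $T$, up to manageable error terms, and then to invoke the polynomial deviation results for Birkhoff sums of $\mathcal{L}$og$(T)$ observables proved by the first and last author in \cite{Fr-Ul24}. The target exponent $\lambda_2/\lambda_1$ will come from the deviation estimate for $S_N\varphi_\xi$ combined with the linear relation $N(x,t)\sim t$.

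First, I would perform the reduction. For $(x,r)\in I^f$ and $t>0$, split the orbit segment $\{T^f_s(x,r):0\leq s\leq t\}$ into an initial incomplete crossing of the roof, $N=N(x,t)$ complete crossings, and a final incomplete crossing. Using the definition of the special flow and Fubini, one obtains
\[
\int_0^t \xi(T^f_s(x,r))\,ds \;=\; S_N\varphi_\xi(x) \;+\; R(x,r,t),
\]
where $R(x,r,t)$ gathers the two partial crossings and satisfies $|R(x,r,t)|\leq \|\xi\|_\infty\bigl(f(x)+f(T^N x)\bigr)$. Because $f\in \mathcal{L}\textrm{og}(T)$ lies in every $L^p$, by Birkhoff $f(T^N x)=o(N^\eta)$ for every $\eta>0$ on a full measure set of $(x,r)$, and in particular $\limsup_{t\to\infty}\log|R(x,r,t)|/\log t\leq 0$. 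Since $\inf f>0$ and $\int f=1$, the ergodic theorem applied to the counting function $N(x,t)$ also yields $N(x,t)/t\to 1$ a.e., hence $\log N(x,t)/\log t\to 1$.

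Second, I would verify the hypotheses of the deviation theorem from \cite{Fr-Ul24} for the observable $\varphi_\xi$ and apply it. The assumption $\varphi_\xi\in \mathcal{L}\textrm{og}(T)$ is given. The zero-mean condition for $\xi$ with respect to $\mathrm{Leb}^f$ translates into $\int_I \varphi_\xi\,dx = \int_{I^f}\xi\,d\mathrm{Leb}^f=0$ by Fubini, so $\varphi_\xi$ is a zero-mean $\mathcal{L}$og$(T)$ observable. The main deviation theorem in \cite{Fr-Ul24} (obtained by adapting the correction scheme of Marmi–Moussa–Yoccoz and Forni's analysis of the Kontsevich–Zorich cocycle to roof functions with logarithmic singularities) then yields, for a.e.\ IET $T$ with permutation $\pi$ and a.e.\ $x\in I$,
\[
\limsup_{N\to\infty}\frac{\log|S_N\varphi_\xi(x)|}{\log N}\;\leq\;\frac{\lambda_2}{\lambda_1},
\]
where $\lambda_1>\lambda_2\geq 0$ are the top two Lyapunov exponents of the Kontsevich–Zorich cocycle over the Rauzy class of $\pi$. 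Combining with the asymptotic $\log N(x,t)\sim \log t$ and the logarithmic bound on $R$, one obtains
\[
\limsup_{t\to\infty}\frac{\log\bigl|\int_0^t\xi(T^f_s(x,r))\,ds\bigr|}{\log t}\;\leq\;\max\Bigl\{\tfrac{\lambda_2}{\lambda_1},\;0\Bigr\}\cdot\lim\tfrac{\log N(x,t)}{\log t}\;=\;\tfrac{\lambda_2}{\lambda_1},
\]
which is the desired bound, strictly less than $1$ by the simplicity of the top Lyapunov exponent.

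The main obstacle is purely bookkeeping: one must quote the deviation result from \cite{Fr-Ul24} in exactly the formulation needed for integrable observables with logarithmic singularities (rather than for smooth observables on the surface), and check that the ratio $\lambda_2/\lambda_1$ really emerges for Birkhoff sums over the \emph{IET} (the clock discrepancy between Teichmüller time and IET time being absorbed by the division by $\lambda_1$). This will be carried out in Appendix~\ref{app:deviations}, where the precise statement from \cite{Fr-Ul24} is recast as an estimate for $S_N\varphi$ with $\varphi\in \mathcal{L}\textrm{og}(T)$. The remaining steps above (the reduction to Birkhoff sums, the control of incomplete crossings, and the asymptotics of $N(x,t)$) are then straightforward.
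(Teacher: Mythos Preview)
Your reduction to an IET Birkhoff sum plus boundary terms is sound, and the outline is correct (one minor point: the control $f(T^Nx)=o(N^\eta)$ comes from Borel--Cantelli via the $L^p$ tails of $f$, not from Birkhoff's theorem). However, the paper takes a different route. Rather than reducing to base Birkhoff sums and quoting a pointwise deviation estimate for $S_N\varphi_\xi$, it works directly at the flow level using three results from \cite{Fr-Ul24}: a correction statement (Proposition~\ref{prop:FU1}) producing functionals $d_1,\dots,d_g$ and piecewise constant $h_1,\dots,h_g$; a result (Proposition~\ref{prop:FU2}) that subexponential $L^1$ growth of the renormalized cocycle $S(k)\varphi$ implies exponent~$0$ for the \emph{flow} integral of the corresponding $\xi$; and a result (Proposition~\ref{prop:FU3}) giving exponent exactly $\lambda_i/\lambda_1$ for the flow integral of each auxiliary $\xi_i$ with $\varphi_{\xi_i}=h_i$. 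The zero-mean hypothesis is used to force $d_1(\varphi_\xi)=0$, after which one writes $\xi=(\xi-\sum_{i\geq 2}d_i(\varphi_\xi)\xi_i)+\sum_{i\geq 2}d_i(\varphi_\xi)\xi_i$ and applies the two propositions to the two pieces. This sidesteps both your reduction step and the endpoint control of $f(T^Nx)$. The trade-off: your route is conceptually transparent, but the ``main deviation theorem'' you invoke for $S_N\varphi_\xi$ is not among the statements the paper actually extracts from \cite{Fr-Ul24} (those are either $L^1$ renormalization bounds or already flow-integral bounds), so carrying your plan through would mean reproving the interpolation from renormalization times to arbitrary $N$ that Proposition~\ref{prop:FU2} already packages at the flow level.
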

\noindent This result shows that indeed, for $(x,r)$ in a large measure  set, the ergodic integral $\int_{0}^t\xi(T^f_s(x,r))ds$ has oscillations of size bounded  by  $O(t^{\alpha})$ for any power $0<\alpha<1$ such that $\lambda_2/\lambda_1<\alpha$.
%(Here $\lambda_1, \lambda_2$ are Lypaunov exponents of the so called Kontsevich-Zorich cocycle.)
We include the proof of this Proposition deducing it from the results proven in \cite{Fr-Ul24} in the Appendix~\ref{app:deviations}.

Let us show that the above Proposition~\ref{thm:asxi} implies the desired control of oscillations of $N(x,t)$ by a simple \emph{thickening} argument.
\begin{proof}[Proof of Lemma~\ref{thm:crossingsbounds}]
Consider an IET $T$ in the full measure set of IETs with permutation $\pi$ for which Proposition~\ref{thm:asxi} holds.
%$T:I\to I$ satisfies the   UDC Diophantine condition  (defined in \cite[\S 3.2]{Fr-Ul24}).
%Let us consider the set of IETs satisfying the UDC condition defined in almost every IET satisfies the UDC condition.
%Such IETs form a full measure set (by   \cite[Theorem~3.8]{Fr-Ul24}).
Fix $0<\epsilon<\inf_{x\in I}f(x)$ and take $\xi = \xi_\epsilon:I^f\to\R$  to be the (normalized) indicator function of a thickened cross section, given by
$$\xi_\epsilon(x,y):=\frac{1}{\epsilon}\, \mathbf{1}_{I\times[0,\epsilon]}(x,y), \qquad (x,y)\in I^f.$$
Then for any $x\in I$ and $t>0$, we have
\[\int_{0}^t\xi(T^f_s(x,0))ds-1\leq N(x,t)\leq \int_{0}^t\xi(T^f_s(x,0))ds+1.\]
By Proposition~\ref{thm:asxi}, for a.e.\ $x\in I$, we have
\[\limsup_{t\to+\infty}\frac{\log|\int_{0}^t(\xi(T^f_s(x,0))-1)ds|}{\log t}\leq \frac{\lambda_2}{\lambda_1}<\gamma.\]
It follows that
for a.e.\ $x\in I$, we have
\[\limsup_{t\to+\infty}\frac{\log|N(x,t)-t|}{\log t}<\gamma.\]
This shows our claim.
\end{proof}

\subsection{Estimates of the Birkhoff sums of the first derivative}\label{sec:Rothestimtes}
Let us first estimate Birkhoff sums of $S_r(f')$ of the \emph{derivative} $f'$ of the roof function. The goal of this subsection is to prove the following estimates. Let us recall that with the notation introduced in \S~\ref{sec:closests}, given the orbit segment $\mathcal{O}(x,r)$ of $x$ under $T$ of length $r$, we denote by $m(x,r)$ is the  minimal distance of $\mathcal{O}(x,r)$ from the discontinuities of $T$.
\begin{lemma}\label{lem:upb}
For almost every IET $T$ and any $f\in \SymLogC{T}$, for any $\epsilon>0$ there is a constant $M'>0$ such that for any $r\in \N$ and any $x\in [0,1)$, we have
\[
|S_r(f')(x)|\leq M' r^{1+3\epsilon} +\frac{M'}{m(x,r)}.
\]
\end{lemma}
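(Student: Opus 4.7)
The plan is to separate the bounded and singular parts of $f'$ and control the singular part using two quantitative Diophantine-type properties that hold for a.e.\ IET.

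First, I would write $f = f^p + g$ with $f^p \in \pSymLog{T}$ and $g$ piecewise $C^2$, so that $f' = (f^p)' + g'$ with $g'$ bounded. The contribution $|S_r g'(x)| \leq \|g'\|_\infty r$ is trivially absorbed into the $M' r^{1+3\epsilon}$ term. By the explicit form \eqref{eq:form1xsing} of $(f^p)'$ and linearity, it remains to bound, for each singular endpoint $\beta = \beta_i$, sums of the shape
\[
\Sigma(x,r,\beta) := \sum_{j=0}^{r-1} \frac{1}{(T^j x - \beta)^{pos}},
\]
together with their symmetric left-sided analogues.

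Second, I would invoke, for a.e.\ IET $T$ of Roth-type (see Appendix~\ref{app:Roth}), the following two estimates: for every $\epsilon > 0$ there exist $C, c > 0$ (depending on $T$ and $\epsilon$) such that for all $x \in [0,1)$, all $r \in \N$, and all intervals $J \subset [0,1]$,
\[
\bigl|\#\{0 \leq j < r : T^j x \in J\} - r|J|\bigr| \leq C r^{\epsilon},\qquad \min_{0 \leq j < k < r} |T^k x - T^j x| \geq c\, r^{-1-\epsilon}.
\]
The first is the standard power-law discrepancy bound for Roth-type IETs. The second (minimum orbit-gap) reduces, via the Rauzy-Veech tower structure, to a lower bound on the length of the continuity intervals of $T^n$, which the Roth-type growth condition on the Rauzy-Veech heights $h_{\max}^{\ell+1} \leq C_\epsilon (h_{\max}^{\ell})^{1+\epsilon}$ guarantees is at least of order $n^{-1-\epsilon}$ when $n \asymp h_{\max}^{\ell}$.

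Third, given these two estimates, I would bound $\Sigma(x,r,\beta)$ as follows. The closest visit to $\beta$ from the right contributes exactly $1/m_i^{+}(x,r) \leq 1/m(x,r)$. Setting $\delta_0 := c\, r^{-1-\epsilon}$, the minimum-gap estimate guarantees that no orbit point other than this closest one lies in $(\beta, \beta + \delta_0)$. Partitioning the remaining range dyadically, the shell $(\beta + 2^k\delta_0, \beta + 2^{k+1}\delta_0]$ contains, by the discrepancy estimate, at most $2r \cdot 2^{k+1}\delta_0 + C r^\epsilon$ orbit points, each contributing at most $(2^k \delta_0)^{-1}$ to $\Sigma(x,r,\beta)$. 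Summing over $k$ up to $\log_2(1/\delta_0)$ yields
\[
\Sigma(x,r,\beta) \leq \frac{1}{m(x,r)} + C' r \log(1/\delta_0) + \frac{C' r^\epsilon}{\delta_0} \leq \frac{1}{m(x,r)} + C'' r^{1+3\epsilon},
\]
since both $r\log r$ and $r^\epsilon/\delta_0 \asymp r^{1+2\epsilon}$ are bounded by $r^{1+3\epsilon}$ for $r$ large (and small $r$ is trivial). Summing over the finitely many singularities of $(f^p)'$ gives the claim. The main obstacle is justifying the uniform-in-$x$ minimum-gap estimate: without it, the innermost shell could a priori absorb several anomalously close visits, creating an extra term of the form $r^{\epsilon}/m(x,r)$ that cannot be folded into $M'/m(x,r)$; the Roth-type condition, and specifically the balance of Rauzy-Veech inducing heights it implies, is precisely what rules this out.
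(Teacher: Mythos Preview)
There is a genuine gap: your claimed discrepancy bound $\bigl|\#\{j<r:T^jx\in J\}-r|J|\bigr|\le Cr^\epsilon$ for every $\epsilon>0$ is \emph{false} for typical IETs on $d\ge 4$ intervals. This is exactly the failure of Denjoy--Koksma in higher genus alluded to in \S\ref{sec:g1g2}: by the Zorich--Forni deviation theory, the discrepancy for a.e.\ such IET is of exact order $r^{\lambda_2/\lambda_1}$, where $\lambda_2/\lambda_1\in(0,1)$ is a \emph{fixed positive} constant depending only on the Rauzy class (cf.\ Remark~\ref{rem:Lyapunov}). With the true exponent, the term $Cr^\epsilon/\delta_0$ in your shell estimate becomes $Cr^{\lambda_2/\lambda_1}/\delta_0\asymp r^{1+\lambda_2/\lambda_1+\epsilon}$, which is not $O(r^{1+3\epsilon})$ for small $\epsilon$, and the argument collapses.

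The good news is that the discrepancy input is unnecessary: the separation estimate alone already bounds the number of orbit points in each dyadic shell of width $2^k\delta_0$ by $O(2^k)$, giving $\Sigma(x,r,\beta)\le 1/m(x,r)+O(\delta_0^{-1}\log(1/\delta_0))\le 1/m(x,r)+O(r^{1+2\epsilon})$. This is precisely the paper's route. The paper picks the Rauzy--Veech level $k$ with $r<\min_j h^{(k)}_j$, observes that the orbit lies in at most two level-$k$ towers, and within each tower the points occupy distinct floors of common width $\ge\min_j\lambda^{(k)}_j$, hence are $\min_j\lambda^{(k)}_j$-separated; Lemma~\ref{lemma:Roth_distortion} then gives $\min_j\lambda^{(k)}_j\gtrsim r^{-(1+\epsilon)/(1-\epsilon)}$, and the elementary Kochergin inequality $\sum_i 1/x_i\le 1/\min_i x_i+(1+\log N)/\delta$ for $\delta$-separated positive reals does the rest. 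Note also that the paper does \emph{not} assert a global min-gap for the full orbit---when the orbit straddles two towers the relative positions within the floors need not agree, so points from different pieces could in principle be close---it simply applies the separation bound to each of the (at most two) pieces and sums, picking up only a harmless factor of~$2$.
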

\noindent The set of IETs for which this Lemma holds contains  the full measure set of IETs which are called \emph{Roth type} in the work by Marmi, Moussa and Yoccoz \cite{MMY:Coh}. We will not give here the definition of Roth IETs (which we do not use directly), but we recall one of the consequences of being Roth type, which is proved in \cite{MMY:Coh}.

%Let us recall a definition of Roth type which is useful in this context.

\smallskip
Given an IET $T$ and a sequence  $(I^{(k)})_{k\in\mathbb N}$ of subintervals of $I=[0,1)$, recall that the first return map of $T$ to $I^{(k)}$ is again an IET, of the same number of intervals if the sequence is chosen properly (in particular if it is the sequence given by Rauzy-Veech induction, see \S~\ref{sec:RV}). We denote by $I^{(k)}_j$ the continuity intervals of the induced map and  by $\lambda^{(k)}_j=|I^{(k)}_j|$ their lengths. Let us denote by
% we denote by $\lambda^{(k)}= (\lambda^{(k)}_j)_j$
% the vector whose $j^{th} $ entry is  $\lambda^{(k)}_j$. Let $h^{(k)}= (h^{(k)}_j)_j$ be the vector whose entry
 $h^{(k)}_j$ is the first return time of a (and hence any) point in $I^{(k)}_j$ to $I^{(k)}$ under $T$.

% let $h_{max,n}$ be the maximal height after $n$-inductions and $h_{min,n}$ be the minimal height. Analogously let $\ell_{max,n}$ be the maximal length and $\ell_{min,n}$ be the minimal length of the IET after $n$-inductions.
\begin{lemma}[Roth type distortion control]\label{lemma:Roth_distortion} If $T$ is Roth type, for every $0<\epsilon<1$  there exists a constant $M:=M(\epsilon)>0$
%For a.e.~IET $T$ %, if we consider the Birkhoff sums  $\BS{f'}{r}$ over $T$, %the IET $(\lambda, \pi)$,
%and any $f\in   \SymLog{T}$, there exists
and  there exists an inducing sequence of nested intervals $( I^{(k)})_{k \in \mathbb{N}}$, such that for any $k\in \mathbb{N}$ the lengths $ \lambda^{(k)}_j$ and the return times $ h^{(k)}_j$ of the continuity intervals $I^{(k)}_j$ of the induced IET on $I^{(k)}$
%the corresponding vectors  $( \lambda^{(k)})_{k \in \mathbb{N}}$ and $(q^k)_{k \in \mathbb{N}}$
satisfy the following controlled distortion bound:
%and sequence of balanced induction  times $\{c_l\}_{l\in \mathbb{N}}$
$$
\max_j h^{(k+1)}_j \leq M (\min_j h^{(k)}_j)^{1+\epsilon}\;\; \text{ and }\;\; \max_j \lambda^{(k)}_j \leq M (\min_j \lambda_j^{(k+1)})^{1-\epsilon}.
$$
%If the above holds for a fixed $\epsilon>0$, we call $T$ $\epsilon$-Roth type.
\end{lemma}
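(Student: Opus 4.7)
The plan is to extract this controlled distortion bound from the structural results of \cite{MMY:Coh} by carefully accelerating Rauzy–Veech induction along a subsequence of times where the transition matrices are \emph{balanced}. Recall that an IET $T$ is of Roth type if, among other conditions, the Rauzy–Veech matrices $Z(n)$ satisfy the sub-exponential growth bound: for every $\tau>0$ there is a constant $C_\tau>0$ such that $\|Z(n+1)\|\leq C_\tau \|Z(n)\|^{1+\tau}$ for every $n$. This will be the main input.

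First, I would build the inducing sequence $(I^{(k)})_{k\in\mathbb{N}}$ as an acceleration of the standard Rauzy–Veech sequence: since for any minimal IET the Rauzy–Veech matrices become strictly positive after a bounded (in fact, uniformly positive frequency) number of steps, one can select a subsequence of Rauzy–Veech times $(n_k)$ so that each transition matrix $Z(n_k,n_{k+1})=Z(n_{k+1})Z(n_k)^{-1}$ is strictly positive and, more importantly, \emph{uniformly balanced}, meaning its entries are comparable up to a fixed multiplicative constant. Setting $I^{(k)}:=I^{(n_k)}$, this balancedness property propagates to the heights and lengths of the induced IET, giving a constant $K$ such that $\max_j h^{(k)}_j\leq K\,\min_j h^{(k)}_j$ and $\max_j \lambda^{(k)}_j\leq K\,\min_j \lambda^{(k)}_j$ for every $k$. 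In particular, $\max_j h^{(k)}_j\asymp \|Z(n_k)\|$.

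Second, I would combine balancedness with the sub-exponential growth condition. Since $Z(n_{k+1})=Z(n_k,n_{k+1})Z(n_k)$ and $Z(n_k,n_{k+1})$ has entries of order at most a polynomial in $\|Z(n_k)\|$ (this follows from Roth condition (a) iterated over the bounded or at least controlled gap $n_{k+1}-n_k$; the frequency lower bound on positive-matrix times, combined with Roth's condition (a) applied telescopically, gives $\|Z(n_{k+1})\|\leq C_\tau \|Z(n_k)\|^{1+\tau}$ after adjusting $\tau$). Using balancedness $\max_j h^{(k)}_j\asymp \min_j h^{(k)}_j\asymp \|Z(n_k)\|$, choosing $\tau=\epsilon/2$ and absorbing constants into $M$, this yields the desired inequality $\max_j h^{(k+1)}_j\leq M(\min_j h^{(k)}_j)^{1+\epsilon}$. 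The length inequality then follows by duality: the fundamental identity $\sum_j \lambda^{(k)}_j h^{(k)}_j = |I^{(0)}|$ combined with balancedness gives $\min_j \lambda^{(k+1)}_j \asymp 1/\max_j h^{(k+1)}_j$ and $\max_j \lambda^{(k)}_j \asymp 1/\min_j h^{(k)}_j$, so that inverting the height bound (and observing that $1/(1+\epsilon)\geq 1-\epsilon$) produces $\max_j \lambda^{(k)}_j\leq M(\min_j \lambda^{(k+1)}_j)^{1-\epsilon}$ after a small adjustment of the parameter $\epsilon$ and the constant $M$.

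The main obstacle I expect is the accelerated extraction step: one must show that the subsequence of \emph{strictly positive balanced} Rauzy–Veech times can be selected with gaps $n_{k+1}-n_k$ controlled in a way that preserves the Roth sub-exponential estimate when transferred from the full sequence to the accelerated one. This is a delicate but standard point in the Roth-type literature (it amounts to the fact that the positive-matrix return times form a set of positive density along which the Roth estimates survive, with constants depending only on the Rauzy class). Once this accelerated structure is in place, the balancedness translates matrix growth into scalar growth of heights and lengths in a transparent way, and the rest of the argument is essentially bookkeeping of constants and exponents, replacing $\tau$ by a slightly smaller value at each reduction.
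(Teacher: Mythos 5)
Your proposal hinges on the claim that one can accelerate to a subsequence $(n_k)$ along which the transition matrices $Z(n_k,n_{k+1})$ are \emph{uniformly balanced}, so that the induced heights and lengths satisfy $\max_j h^{(k)}_j \leq K \min_j h^{(k)}_j$ and $\max_j \lambda^{(k)}_j \leq K \min_j \lambda^{(k)}_j$ with a fixed constant $K$. This is too strong: for a general Roth-type IET there is no such acceleration. Uniform balancedness with a fixed $K$ is the defining feature of \emph{bounded-type} IETs, which form a set of measure zero; the Roth-type condition is precisely weaker. What Marmi--Moussa--Yoccoz actually prove, in the proposition of \S~1.3.1 of \cite{MMY:Coh}, is that along their specific acceleration (maximal blocks in which at most $d-1$ names win) the distortion of lengths satisfies $\max_\alpha \lambda^{(k)}_\alpha \leq C_\epsilon \|Q(k)\|^\epsilon \min_\alpha \lambda^{(k)}_\alpha$, and the analogous bound holds for heights: the balance constant is \emph{not} uniform but grows like a polynomial in $\|Q(k)\|$ of arbitrarily small exponent. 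The whole content of the lemma is in how this slowly growing balance constant interacts with the sub-exponential step growth $\|Z(k+1)\|\leq C_\epsilon \|Q(k)\|^\epsilon$ from condition (a); once you substitute your claimed uniform bound the statement becomes almost tautological, which is a sign that the difficulty has been assumed away rather than addressed.

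There is a secondary issue of the same flavour: you propose to prove the lengths bound by ``duality'' via $\sum_j \lambda^{(k)}_j h^{(k)}_j = |I^{(0)}|$, using $\min_j \lambda^{(k+1)}_j \asymp 1/\max_j h^{(k+1)}_j$ and $\max_j \lambda^{(k)}_j \asymp 1/\min_j h^{(k)}_j$. These $\asymp$ symbols again hide a multiplicative factor of order $\|Q(k)\|^{\epsilon}$, which is not a constant; transferring the height bound to the lengths bound through this duality would require tracking that factor carefully, and it is cleaner (as the paper's argument does) to run the argument directly on lengths using the relation $\lambda^{(k)} = Z(k+1)\lambda^{(k+1)}$ together with the MMY distortion control and the elementary inequality $\min_j \lambda^{(k+1)}_j \|Q(k+1)\| \leq 1$. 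Finally, note that condition (a) of Roth-type is stated in \cite{MMY:Coh} for their specific combinatorial acceleration; the positive-matrix acceleration you introduce is a different subsequence, and you would owe an argument that the Roth estimates transfer to it, which is exactly the ``delicate but standard point'' you flag at the end. Using the MMY acceleration as given avoids this entirely.
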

We include the proof of this Lemma deducing it from the results proven in \cite{MMY:Coh} in Appendix~\ref{app:Roth}.
Let us recall that one of the central results proved by Marmi-Moussa-Yoccoz in \cite{MMY:Coh} is that Roth-type IETs have full measure.
%The estimate given by Lemma \ref{lem:upb}  is then a easy consequence of the above property Roth-type IETs.
\begin{proof}[Proof of Lemma~\ref{lem:upb}]
Assume that $T$ belongs to  set of IETs which are of Roth type, which has full measure by  \cite{MMY:Coh}. Let $(I^{(k)})_{k\in\mathbb N}$ be the sequence of inducing intervals given by  Lemma~\ref{lemma:Roth_distortion} and denote as above by $\lambda^{(k)}_j$ and $h^{(k)}_j$, for each $1\leq j\leq d$ respectively, the length and the (constant) return time of the continuity interval $I^{(k)}_j$.

\smallskip
% $(h^n)_{n\in\mathbb N}$ be the sequence of times given by
Choose  $k$ be the smallest natural number such that $r< \min_j h^{(k)}_j$.

\textbf{Case 1.} First suppose that the points in the  orbit $\mathcal{O}(x,r)=\{T^ix\}_{i=0}^{r-1}$ belong to $r$ subsequent floors of the tower
over $I^{(k)}_j$ with height $h^{(k)}_j$, whose floors are  $\{T^i I^{(k)}_j:\ 0\leq i<h^{(k)}_j\}$.
 Then (since $T$ is an isometry), the points  $\mathcal{O}(x,r)$
  are $\lambda^{(k)}_j$-separated (i.e.~the minimum distance between any two of them is bounded below by $\lambda^{(k)}_j$.
   Using classical Kochergin estimates based on the sum of inverses of an arithmetic progression, we have that if the points $(x_i)_{i=0}^N\subset [0,1]$ are $\delta$-separated for some $\delta>0$, then
\[
\sum_{i=0}^N \frac{1}{x_i} \leq \frac{1}{\min_{0\leq i\leq N} x_i} + \sum_{j=1}^N \frac{1}{j\, \delta}  \leq \frac{1}{\min_{0\leq i\leq N} x_i} + \frac{1+\log N}{\delta} .
\]
%(cf.\ Remark~5.9 in \cite{Fr-Ul24})\footnote{{\color{blue} I would add statement... it's easy to write and useful to quote if someone doesn't know it...}}
This shows that
\[|S_r(f')(x)|\leq C_{sum}\frac{1+\log r}{\min_j \lambda^{(k)}_j} +\frac{C_{sum}}{m(x,r)}+r \Vert g'\Vert_\infty,\]
where $C_{sum}=\sum_{i=0}^{d-1}C^+_i+\sum_{i=1}^{d}C^-_i$.
% and $G=\Vert g'\Vert_\infty$ is the sup-norm of the difference $g$ between $f'$ and its pure part. {\color{blue}check notation, it was  different than earlier so I am trying to uniformize but didn't check... Also we only %assume $g$ BV, so we should use variation here, not sup norm of derivative!}
Choose
%\footnote{{\color{blue} is't there here a missing $d$? I don't understand one estimate... maybe it's not max, but the index such that the tower has biggest area and there is a 1/d? I changed it this way, hope it's OK...}}
$j_k$ such that $\lambda^{k-1}_{j_k}h^{k-1}_{j_k}\geq 1/d$ (such $j_k$ always exists since there are $d$ towers whose total measure is $1$). Then,  by using both inequalities of Lemma~\ref{lemma:Roth_distortion} (for lengths first and heights later),  we have  that
\[\min_j \lambda^{(k)}_j \geq \Big(\frac{\lambda^{k-1}_{j_k}}{M}\Big)^{\frac{1}{1-\epsilon}}\geq \frac{1}{{(dM  h^{k-1}_{j_k})}^{\frac{1}{1-\epsilon}}}\geq
\frac{1}{M^{\frac{2}{1-\epsilon}} d^{\frac{1}{1-\epsilon}} {(\min_j h^{k-2}_j})^{\frac{1+\epsilon}{1-\epsilon}}}.\]
Since $r\geq \min_jh_j^{k-1}\geq \min_j h^{k-2}_j$,  {(choosing $\epsilon<1/3$ so that  $\frac{1+\epsilon}{1-\epsilon}<1+3\epsilon$)} we have   that
$$|S_r(f')(x)|\leq C_{sum} (dM^2)^{\frac{1}{1-\epsilon}}(1+\log r)r^{\frac{1+\epsilon}{1-\epsilon}} +\frac{C_{sum}}{m(x,r)}
+r \Vert g'\Vert_\infty\leq M_0  r^{1+3\epsilon}+\frac{M_0}{m(x,r)}$$
for a suitable constant $M_0>0$.
 % {\color{blue}as long as $\epsilon<2/3$} ({\color{blue}so that $\frac{1+\epsilon}{1-\epsilon}<1+3\epsilon$}).

\medskip
\noindent
\textbf{Case 2.} Suppose that the orbit $\{T^ix\}_{i=0}^{r-1}$ is not contained in any tower $\{T^iI^{(k)}_{j}:0\leq i<h^{(k)}_{j}\}$ for $1\leq j\leq d$. Since by choice of $k$, $r\leq \min_j h^{(k)}_j$, it follows that $\mathcal{O}(x,r)$  can be decomposed into at most two towers over intervals of $I^{(k)}$, %, so that  we can split it into two segments for
 to each of which we can apply the arguments from Case~1. This gives
\[|S_r(f')(x)|\leq 2M_0 r^{1+3\epsilon}+\frac{2M_0}{m(x,r)},\]
which, setting $M':=2M_0$, completes the proof.
%
%We will then show the corresponding estimates for $f$ of the form $\frac{1}{x-\beta_i}$ and then combine the estimates together. Let $\cT(n,i)$ be the tower containing $x$ and let $z<r$ be such that $T^zx\in \cT(n,j)$ for some $j$ (such $z$ might not exist). Let $z'$ be such that $T^{-z'}(x)\in I_{n,i}$.
%Then
%$$
%|S_r(f')(x)|\leq |S_{h_{n,i}}(f')(T^{-z'}x)|+|S_{h_{n,j}}(f')(T^zx)|.
%$$
%We then use the fact that any two points in the orbit $\{T^s(T^{-z'}x)\}_{s\leq h_{n_i}}$ are $\lambda_{n,i}$ separated and use a rough estimate. The same for $\{T^s(T^{z}x)\}_{s\leq h_{n_j}}$. We then use the diophantine condition of $\epsilon$-Roth type.
\end{proof}

\subsection{Mixing at resonant times proof}\label{sec:proof_resonantmixing}
%\subsubsection{Estimates of the Birkhoff sums of the second derivative}
In this section we prove Proposition~\ref{prop:mix'}, i.e.~we show that for a full measure set of IETs, times comparable with resonant rigid times, i.e.~times of the form $k_m q_m$, where $q_m$ is a rigidity time for the IET and $k_m$ sufficiently large, are mixing times for logarithmic special flows over the IET.

\subsubsection{The full measure set  of IETs}\label{sec:fullmeasure}
 Denote by $\mathcal{F}^0_d\subset \mathcal{I}_d$ the union over permutations $\pi\in \frak{S}^0_d$ of the full measure sets of IETs with permutation $\pi $ %full measure subset of IETs
for which Lemma~\ref{thm:crossingsbounds} and  Proposition~\ref{thm:asxi} controlling fluctuations of $N(x,t)$ and of ergodic integrals hold.   By construction,  $\mathcal{F}^0_d$ has full measure in $ \mathcal{I}_d$.
The full measure set $\mathcal{F}_d$ of IETs in $\mathcal{I}_d$ for which we will prove Proposition~\ref{prop:mix'} is given by intersecting $\mathcal{F}^0_d$ with the full measure set  of $\mathcal{I}_d$  (containing Roth type IETs) for which Lemma~\ref{lem:upb} hold.

\subsubsection{Choices of parameters and notation}\label{sec:notation}
 If for each $\pi\in \mathfrak{S}^0_d$ we denote by $\lambda_1(\pi), \lambda_2 (\pi)$ the exponents in Lemma~\ref{thm:crossingsbounds} and Proposition~\ref{thm:asxi} (namely the Lyapunov exponents of the Rauzy class $\mathcal{R}$ containing $\pi$, see Remark~\ref{rem:Lyapunov}),  since   $\lambda_2(\pi)/ \lambda_1 (\pi)<1$ for any  $\pi \in \mathfrak{S}^0_d$, we can choose  $0<\gamma<1$ and $\epsilon>0$ such that for each $\pi \in \mathfrak{S}^0_d$
%$\lambda^d_2/\lambda_1<\gamma<1$
%Finally, for any Rauzy class $\mathcal R$
%let any $\lambda_2/\lambda_1<\gamma<1$  }
$$ \max_{\pi \in \mathfrak{S}^0_d}\lambda_2(\pi) /\lambda_1 (\pi)<\gamma<1, \qquad  0<\epsilon<\gamma/6, \qquad \gamma(1+3\epsilon)<1. $$
% Choose such that $\lambda_2/\lambda_1<\gamma<1$ $0<\epsilon<\gamma/6$  such that $\gamma(1+3\epsilon)<1$. }
%The main result is the following:
We will for simplicity assume throughout this section (without loss of generality) that $\int_0^1 f(x)dx=1$. We will also denote by $\delta_m$
\begin{equation}\delta_m:=\frac{1}{\log k_m}\to 0,\end{equation}
where $k_m$ is the sequence such that $t_m=k_mq_m$.
%\subsubsection{The full measure set of IETs $U$.}
%We first describe the full measure set of $U$ for which the result holds.
%Assume that $0<\gamma<1$ is an exponent  arising from Proposition~\ref{lem:FU}. Let $0<\epsilon_0<\gamma/6$  such that $\gamma(1+3\epsilon_0)<1$. We consider the set of IETs $U$ such that $U$ is a $\epsilon_0$-Roth type IET and there are two increasing sequences $(q_m)_{m\geq 1}$ and $(k_m)_{m\geq 1}$ (cf.\ Proposition~\ref{prop:3}) such that $\log\log q_m\geq k_m$ and $q_m$ is an $\epsilon$-rigidity time for $T$ with $\epsilon_m=k_m^{-2}$.
By definition, for every $m\geq 1$ there exists an $\epsilon^2_m$-rigid tower of intervals $\mathcal{T}_m$ such that $q_m$ is its height and $Leb(\mathcal{T}_m)\geq 1-\epsilon_m$.
% and of height $q_m$. Let moreover $C>0$ be any fixed constant. Let $k_m=\epsilon_m^{-1/4}$ and
Denote the base of $\mathcal{T}_m$ by $I_m=[a_m,b_m]$.
%The main result of this section is:

\subsubsection{Variation of $N(x,t_m)$ in the large tower.}
For the proof of the proposition we will first need a lemma which gives good control on $N(x,t_m)$ for most of the points in the tower $\mathcal{T}_m$:
\begin{lemma}[controlled deviations floors]\label{lem:cont}
Suppose that $t\in [k_mq_m, Ck_mq_m]$.
Let $I$ be a floor of the tower
$$\cC'_m=\bigcup_{j=0}^{q_m-1} T^j\left[a_m+ \frac{\delta_m}{q_m}, b_m- \frac{\delta_m}{q_m}\right]$$ for which we have that $N(x_0,t)\in [t-t^{\gamma},t+t^{\gamma}]$ for some $x_0\in I$. Then
\begin{equation}\label{eq:goodcontrol}
N(x,t)\in [t-2t^{\gamma},t+2t^{\gamma}]\quad \text{for\ all} \ x\in I.
\end{equation}
\end{lemma}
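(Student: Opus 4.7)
The plan is to reduce \eqref{eq:goodcontrol} to a uniform bound on the difference of Birkhoff sums $|S_n(f)(x) - S_n(f)(x_0)|$ for $n$ in a small neighbourhood of $N(x_0, t)$, which can then be converted into the desired bound on $N(x, t)$ using $\inf f > 0$. Explicitly, once one shows $|S_n(f)(x) - S_n(f)(x_0)| = o(t^\gamma)$ uniformly for $n \in [N(x_0, t) - t^\gamma, N(x_0, t) + t^\gamma]$ and every $x \in I$, the standard observation that an increase in $N(\cdot, t)$ by $k$ forces the corresponding Birkhoff sum to increase by at least $k \inf f$ yields $|N(x, t) - N(x_0, t)| \leq t^\gamma$ for $m$ sufficiently large, and combining with the hypothesis on $N(x_0, t)$ gives the inclusion \eqref{eq:goodcontrol}.

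To prove the Birkhoff sum estimate, I would write $x = T^\ell y$ and $x_0 = T^\ell y_0$ with $y, y_0 \in J'_m := [a_m + \delta_m/q_m, b_m - \delta_m/q_m]$ and $0 \leq \ell < q_m$ the floor index of $I$, so that via the cocycle relation the comparison reduces to $S_N(f)(y)$ versus $S_N(f)(y_0)$ for $N$ near $t$. Decomposing $N = K q_m + r$ with $0 \leq r < q_m$ and setting $y_i := T^{i q_m} y$, $y'_i := T^{i q_m} y_0$, the Birkhoff sum splits as
$$S_N(f)(y) = \sum_{i=0}^{K-1} S_{q_m}(f)(y_i) + S_r(f)(y_K),$$
and analogously for $y_0$. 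The $\epsilon_m^2$-rigidity of $\mathcal{T}_m$ gives $|y_i - y_0| \leq i \epsilon_m^2 |I_m|$, which for $i \leq K = O(k_m)$ is bounded by $\epsilon_m |I_m|$; since $\delta_m = 1/\log k_m \gg \epsilon_m = 1/k_m$ for $m$ large, the buffer built into $J'_m$ ensures that every $y_i, y'_i$ remains inside $I_m$ at distance at least $c \delta_m/q_m$ from its endpoints. Because each floor $T^j(I_m)$, $0 \leq j < q_m$, contains no discontinuities of $f$, the map $S_{q_m}(f)$ is smooth on the interior of $I_m$, and the mean value theorem combined with Lemma~\ref{lem:upb} yields
$$|S_{q_m}(f)(y_i) - S_{q_m}(f)(y'_i)| \leq |I_m| \cdot \sup_{\xi} |S_{q_m}(f')(\xi)| \leq \frac{1}{q_m} \cdot O\!\left( q_m^{1+3\epsilon} + \frac{q_m}{\delta_m} \right) = O(q_m^{3\epsilon}),$$
where the supremum runs over $\xi$ between $y_i$ and $y'_i$, for which $m(\xi, q_m) \geq c \delta_m/q_m$. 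Analogous arguments handle the remainder $S_r(f)(y_K) - S_r(f)(y'_K)$ and the correction $S_\ell(f)(y) - S_\ell(f)(y_0)$ arising from the change of base.

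Summing over the $K = O(k_m)$ blocks produces total error $O(k_m q_m^{3\epsilon})$, and the required estimate $k_m q_m^{3\epsilon} = o(t^\gamma)$ follows from $t \asymp k_m q_m$ together with $3\epsilon < \gamma$: indeed, it amounts to $k_m^{1-\gamma} \ll q_m^{\gamma - 3\epsilon}$, which is guaranteed by the hypothesis $k_m \leq \log\log q_m$, since the left-hand side then grows more slowly than any power of $\log q_m$ while the right-hand side is a positive power of $q_m$. The main technical point is the simultaneous control of three competing scales: the rigidity displacement $\epsilon_m^2 |I_m|$ must be small relative to the buffer $\delta_m/q_m$ so that the orbits never leave the region where Lemma~\ref{lem:upb} is sharp; the derivative bound $q_m^{1+3\epsilon}$ must dominate the harmonic term $q_m/\delta_m$ coming from the proximity to singularities; and the error accumulated over $K$ blocks must remain below $t^\gamma$. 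The precise choice $\delta_m = 1/\log k_m$ together with the growth condition $\log\log q_m \geq k_m$ is exactly what reconciles these three constraints.
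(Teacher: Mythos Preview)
Your argument is correct and rests on the same ingredients as the paper's proof: the $\epsilon_m^2$-rigidity keeps the orbit of length $O(k_mq_m)$ inside $\mathcal{T}_m$ at distance $\gtrsim \delta_m/q_m$ from the level endpoints, Lemma~\ref{lem:upb} then bounds $S_r(f')$, and $\inf f>0$ converts the Birkhoff-sum comparison into a bound on $|N(x,t)-N(x_0,t)|$. The organisation, however, differs. You decompose the sum of length $N\approx k_mq_m$ into $O(k_m)$ blocks of length $q_m$ and apply Lemma~\ref{lem:upb} to each block. The paper instead applies mean value and Lemma~\ref{lem:upb} \emph{once} to the full sum of length $N(x_0,t)$, obtaining directly
\[
|S_{N(x_0,t)}(f)(x)-S_{N(x_0,t)}(f)(x_0)|\le q_m^{-1}\,M'\bigl((c^{-1}Ck_mq_m)^{1+3\epsilon}+2q_m/\delta_m\bigr)<t^{\gamma/2},
\]
and then uses the elementary identity
\[
c\,|N(x,t)-N(x_0,t)|\le f(T^{N(x,t)}x)+f(T^{N(x_0,t)}x_0)+|S_{N(x_0,t)}(f)(x)-S_{N(x_0,t)}(f)(x_0)|
\]
together with the bound $f(T^Ny)\le C_{sum}\log t$ (again from the fact that $T^Ny$ stays well inside the tower). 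So the paper needs the Birkhoff comparison only at the single value $n=N(x_0,t)$ plus an endpoint estimate on $f$, whereas you trade the endpoint estimate for checking the comparison on a range of $n$. Your block decomposition yields a slightly sharper constant but is unnecessary, since Lemma~\ref{lem:upb} already handles arbitrary $r$; the paper's direct route avoids the base projection $x=T^\ell y$ and the associated remainder bookkeeping. One small notational slip: you write $|y_i-y_0|\le i\epsilon_m^2|I_m|$, but $y_0$ was already introduced as the base projection of $x_0$; the rigidity drift bound you intend is $|y_i-y|\le i\epsilon_m^2|I_m|$.
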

\noindent The rest of this subsection is devoted to the proof of the Lemma.
\begin{proof}[Proof of Lemma~\ref{lem:cont}]
Let $c:=\inf_{x\in[0,1)}f(x)>0$. By definition,
\begin{align*}
c|N(x,t)-N(x_0,t)| \leq |S_{N(x,t)-N(x_0,t)}(f)(T^{N(x_0,t)}x)|= |S_{N(x,t)}(f)(x)-S_{N(x_0,t)}(f)(x)|.
\end{align*}
By the definition of $N$, for any $y\in[0,1)$,  we have
\[0\leq t-S_{N(y,t)}f(y)<f(T^{N(y,t)}y).\]
It follows that
\[|S_{N(x,t)}f(x)-S_{N(x_0,t)}f(x_0)|\leq f(T^{N(x,t)}(x))+f(T^{N(x_0,t)}(x_0)).\]
Therefore,
\begin{equation}\label{eq:N1}
c|N(x,t)-N(x_0,t)| \leq f(T^{N(x,t)}(x))+f(T^{N(x_0,t)}(x_0))+|S_{N(x_0,t)}(f)(x_0)-S_{N(x_0,t)}(f)(x)|.
\end{equation}
Note that by the rough bound $N(y,t)\leq c^{-1} t\leq c^{-1}Ck_mq_m$ for any $y\in I$. Moreover, as the tower $\mathcal{T}_m$ is $\epsilon^2_m$-rigid and $\epsilon_m\leq \frac{1}{k_m}$, for any $y\in I$ we have $|T^{q_m}y-y|<\frac{\epsilon^2_m}{q_m}\leq \frac{1}{k^2_mq_m}$. Repeating it $\frac{C}{c}k_m$-times,
we have that for any $0\leq N\leq \frac{C}{c}k_mq_n$, $T^Ny$ is in the tower $\mathcal{T}_m$ and at least $\frac{\delta_m}{q_m}-\frac{C}{ck_mq_m}$ distant from the ends of the levels of $\mathcal{T}_m$. Since $\delta_m=\frac{1}{\log k_m}$, we have  $\frac{\delta_m}{q_m}-\frac{C}{ck_mq_m}\geq \frac{\delta_m}{2q_m}$.  It follows that
\begin{align}\label{eq:N2}
\begin{aligned}
f(T^{N}(y))&\leq  C_{sum} \log(\frac{2q_m}{\delta_m})+\Vert g\Vert_{sup}\leq   C_{sum} \log(2q_m\log k_m)+\Vert g\Vert_{sup}\\
&< C_{sum}\log(k_mq_m)\leq  C_{sum}\log t
\end{aligned}
\end{align}
uniformly for $y\in I$. Moreover, by Lemma~\ref{lem:upb} (which we can apply since we are assuming that $T$ belongs to the set $ \mathcal{F}_d$ for which the conclusion of the Lemma applies, see \S~\ref{sec:fullmeasure}),
\begin{align*}
|S_{N(x_0,t)}(f)(x_0)-S_{N(x_0,t)}(f)(x)|&\leq |S_{N(x_0,t)}(f')(\theta)| q_m^{-1}
\leq \frac{M'}{q_m}\left((c^{-1}Ck_mq_m)^{1+3\epsilon}+\frac{2q_m}{\delta_m}\right) \\
&\leq C'((k_mq_m)^{3\epsilon}\log\log q_m+\log k_m)<(k_mq_m)^{\gamma/2}\leq t^{\gamma/2} ,
\end{align*}
where  we use $\epsilon$-Roth type, $3\epsilon<\gamma/2$, the fact that $\theta\in I$ and $k_m\leq \log\log q_m$.
Finally, in view of \eqref{eq:N1} and \eqref{eq:N2}, this gives
\begin{align*}
|N(x,t)-N(x_0,t)| &\leq c^{-1}(f(T^{N(x,t)}(x))+f(T^{N(x_0,t)}(x_0))+|S_{N(x_0,t)}(f)(x_0)-S_{N(x_0,t)}(f)(x)|)\\
&\leq 2C_{sum}c^{-1}\log t+c^{-1}t^{\gamma/2}<t^\gamma.
\end{align*}
 This finishes the proof.
\end{proof}

\subsubsection{Estimates of Birkhoff sums of the second derivative.}
We will also need one more lemma which gives good control for  the Birkhoff sums of the second (and hence the first) derivative.

\begin{lemma}\label{lem:der}
Let $\ell\in [\frac{1}{2}k_mq_m,2Ck_mq_m]$. Then for any $x\in \cT'_m$,
\begin{equation}\label{eq:sec}
\underline{c}k_mq_m^2\leq S_\ell(f'')(x)\leq \overline{c}k_m\delta_m^{-2}q_m^2,
\end{equation}
where $\underline{c}:=S_{sum}/4>0$ and $\overline{c}:=20CC_{sum}$.
\end{lemma}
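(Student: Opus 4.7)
I would split $f=f^p+g$ with $f^p\in\pSymLog{T}$ and $g$ of class $C^2$ on each continuity interval $[\beta_i,\beta_{i+1}]$, so that $f''=(f^p)''+g''$, where
\[
(f^p)''(y)=\sum_{i=0}^{d-1}\frac{C_i^+}{(y-\beta_i)_{pos}^{2}}+\sum_{i=1}^{d}\frac{C_i^-}{(\beta_i-y)_{pos}^{2}}\geq 0
\]
has $x^{-2}$-type singularities at each $\beta_i$. Since $\|g''\|_\infty<\infty$ on each $[\beta_i,\beta_{i+1}]$, we have $|S_\ell(g'')(x)|\leq \ell\|g''\|_\infty =O(k_mq_m)$, which is negligible compared to both $k_mq_m^2$ and $k_m\delta_m^{-2}q_m^2$; so the task reduces to proving the two-sided bounds for $S_\ell((f^p)'')(x)$.

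I would then write $\ell=Nq_m+r$ with $N\in[k_m/2-1,\,2Ck_m]$ and $0\leq r<q_m$, and use the $\epsilon_m^2$-rigidity (with $\epsilon_m\leq 1/k_m$) to control the drift of $T^{q_m}$: after $N$ iterates it is at most $N\epsilon_m^2/q_m\leq 2C/(k_m^2q_m)\leq \delta_m/(2q_m)$ for $m$ large. Hence every intermediate point $T^{nq_m}x$ with $0\leq n<N$ lies in a slightly thinner tower $\mathcal{T}''_m$ whose orbit points are at distance $\geq\delta_m/(2q_m)$ from $End(T)$, and
\[
S_\ell((f^p)'')(x)=\sum_{n=0}^{N-1}S_{q_m}((f^p)'')(T^{nq_m}x)+S_r((f^p)'')(T^{Nq_m}x),
\]
with the remainder negligible after division by $N\sim k_m$. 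It is thus enough to prove, for every $z\in\mathcal{T}''_m$, the per-loop bounds
\[
\tfrac{C_{sum}}{2}\,q_m^2\;\leq\; S_{q_m}((f^p)'')(z)\;\leq\; \tfrac{\pi^2}{2}\,C_{sum}\,\frac{q_m^2}{\delta_m^2},
\]
and multiply by $N$.

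The orbit $\{T^jz:0\leq j<q_m\}$ visits each floor of $\mathcal{T}_m$ exactly once. Since each orbit point is at distance $\geq\delta_m/(2q_m)$ from the floor edges and distinct floors are disjoint, these $q_m$ points are $\delta_m/q_m$-separated. Sorting them by distance from each $\beta_i$ from each side as $\tau_1<\tau_2<\cdots$ gives $\tau_k\geq (k-\tfrac12)\delta_m/q_m$, so $\sum_k\tau_k^{-2}\leq (q_m/\delta_m)^2\sum_{k\geq 1}(k-\tfrac12)^{-2}=\pi^2q_m^2/(2\delta_m^2)$; weighting by the coefficients $C_i^\pm$ and summing over all $(i,\pm)$ gives the upper bound. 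For the lower bound, the density $|\mathcal{T}_m|\geq 1-\epsilon_m$ forces the floors to tile $[0,1]$ with total gap $\leq\epsilon_m$, so for each $(i,\pm)$ the $k$-th closest floor lies within $k|I_m|+\epsilon_m$ of $\beta_i$; bounding $\tau_k^{-2}\geq q_m^2/(k+\epsilon_m q_m)^2$ and summing for $k$ in a suitable range, together with the symmetry $\sum_iC_i^+=\sum_iC_i^-$ to aggregate contributions across $i$, yields $S_{q_m}((f^p)'')(z)\geq (C_{sum}/2)q_m^2$.

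The main obstacle is the per-loop lower bound: in the adversarial case where the complement of $\mathcal{T}_m$ clusters near a single singularity $\beta_{i_0}$, that singularity alone yields only an $O(C_{i_0}^\pm\,k_mq_m)$ contribution per loop rather than $O(C_{i_0}^\pm q_m^2)$. Overcoming this requires a pigeonhole argument over the $\leq 2d$ possible gap locations together with the symmetry $\sum_iC_i^+=\sum_iC_i^-$ to guarantee that the remaining singularities with nonzero weight retain nearby floors contributing in aggregate an order $q_m^2$ bound; multiplying the two per-loop inequalities by $N\in[k_m/2-1,\,2Ck_m]$, absorbing the $g''$ remainder and cleanup constants into $\underline{c}=C_{sum}/4$ and $\overline{c}=20CC_{sum}$, then delivers the lemma.
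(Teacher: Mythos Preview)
Your overall architecture matches the paper: split $f=f^p+g$, absorb $S_\ell(g'')=O(k_mq_m)$, decompose $S_\ell$ into $\sim k_m$ blocks of length $q_m$, and use $\epsilon_m^2$-rigidity (with $\epsilon_m\le 1/k_m$) to keep every intermediate starting point $T^{jq_m}x$ inside the tower at distance $\ge \delta_m/(2q_m)$ from $End(T)$. For the upper bound your sorted-$\tau_k$ sum is equivalent to the paper's formulation: the paper writes each block as
\[
S_{q_m}(f'')(z)\le \frac{C_{sum}}{m(z,q_m)^2}+\frac{2C_{sum}}{\delta^2}+q_m\|g''\|_\infty
\]
with $\delta=\tfrac{1}{2q_m}$ (separation of orbit points within a tower of base width $|I_m|\ge (1-\epsilon_m)/q_m$) and $m(z,q_m)\ge \delta_m/(2q_m)$, and then multiplies by $\le 2Ck_m+1$ blocks. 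This is the same estimate in different clothing.

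Where you diverge is the lower bound, and you are making it much harder than the paper does. You try to \emph{sum} many $\tau_k^{-2}$ contributions near each singularity and then worry (correctly, for that strategy) that if the $\le\epsilon_m$ complement clusters near one $\beta_{i_0}$ the aggregated sum from that side drops to $O(k_mq_m)$, forcing a pigeonhole/symmetry rescue. The paper sidesteps this entirely: since $(f^p)''\ge 0$ it keeps only the \emph{single} largest term per block, namely the closest visit. It asserts $m(T^{jq_m}x,q_m)\le 2/q_m$, so one term alone already gives $\gtrsim q_m^2$ per block, and multiplying by $\lfloor k_m/2\rfloor$ blocks finishes. No summation, no adversarial case, no use of the symmetry $\sum C_i^+=\sum C_i^-$. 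Your ``main obstacle'' is an artifact of summing where one term suffices; once you use only the closest visit, the gap-clustering scenario is irrelevant because you need a close approach to \emph{some} singularity, not good approach to all of them.
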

\begin{proof}
First note that if the points $(x_i)_{i=0}^N\subset [0,1]$ are $\delta$-separated for some $\delta>0$, %i.e.\  $ |x_i-x_j|\geq \delta$ for every pair of $i\neq j$,
 then
\[
\frac{1}{\min_{0\leq i\leq N} x^2_i}\leq \sum_{i=0}^N \frac{1}{x^2_i} \leq \frac{1}{\min_{0\leq i\leq N} x^2_i} + \sum_{j=1}^N \frac{1}{j^2\, \delta^2}  \leq \frac{1}{\min_{0\leq i\leq N} x^2_i} + \frac{2}{\delta^2} .
\]
It follows that for any $x\in [0,1)$ and any natural $r$ if the orbit $\{T^ix\}_{i=0}^{r-1}$ is $\delta$-separated then
\begin{equation}\label{eq:Srddf}
\frac{C_{sum}}{m(x,r)^2}-r\Vert g''\Vert_{sup}\leq S_r(f'')(x)\leq \frac{C_{sum}}{m(x,r)^2}+\frac{2C_{sum}}{\delta^2}+r\Vert g''\Vert_{sup},
\end{equation}
where $m(x,r)$ is the closest visit (see \S~\ref{sec:closests}).%,{\color{blue} $M_1=\sum_{i=0}^{d-1}C^+_i+\sum_{i=1}^{d}C^-_i$ and $M_2$ is the sup-norm of the difference between $f''$ and its pure part.\footnote{{\color{blue} Notation not consisten with earlier part... to adjust}}}

\smallskip

\noindent {\it Lower bound.}
As $f''\geq 0$,
the lower bound follows from the fact that
\[S_\ell(f'')(x)\geq S_{[\frac{k_m}{2}]q_m}(f'')(x)= \sum_{j=0}^{[\frac{k_m}{2}]-1}S_{q_m}(f'')(T^{jq_m}x)\]
and $T^{jq_m}x$ belong to $\mathcal{T}_m$ for any $0\leq j<k_m/2$. Then $m(T^{jq_m}x, q_m)\leq \frac{2}{q_m}$ and, by \eqref{eq:Srddf}, we have $S_{q_m}(f'')(T^{jq_m}x)\geq C_{sum} q_m^2-\Vert g''\Vert_{sup}q_m\geq 2\underline{c}q^2_m$, which gives the lower bound.

%  We then use the fact that for every $j$, there is an $s<q_m$ such that $d(T^s(T^{jq_m}x),\beta_i)<\frac{2}{q_m}$ for some $i\leq d$.

\smallskip
\noindent {\it Upper bound.}
As $f''\geq 0$, for the upper bound we similarly write
\[S_\ell(f'')(x)\leq S_{(2Ck_m+1)q_m}(f'')(x_0)= \sum_{j=0}^{2Ck_m}S_{q_m}(f'')(T^{jq_m}x_0),\]
where $x_0$ is the first backward visit (to the base of $\cT'_m$) point of the orbit of $x$.
We then use the fact that points $\{T^i(T^{jq_m}x_0)\}_{0\leq i<q_m}$ are $\frac{1}{2q_m}$ separated and by the assumption $x\in \cT'_m$ and the rigidity condition it follows that $m(T^{jq_m}x_0,q_m)\geq\frac{\delta_m}{2q_m}$ for any  $0\leq j\leq 2Ck_m$. In view of \eqref{eq:Srddf}, this gives
\begin{align*}
S_\ell(f'')(x)&\leq\sum_{j=0}^{2Ck_m}S_{q_m}(f'')(T^{jq_m}x_0)\\&
\leq 3Ck_m\left(4C_{sum}q^2_m \delta^{-2}_m+8C_{sum}q_m^2+\|g''\|_{sup}q_m\right)\leq \overline{c}k_m\delta_m^{-2}q_m^2.
\end{align*}
 This finishes the proof.
\end{proof}

\subsection{Construction of mixing partitions}\label{sec:proof_resonantmixing1}
We can now conclude the proof that resonant rigid times are mixing (i.e.~prove Proposition \ref{prop:mix'}), using the estimates proved and the \emph{mixing via shearing} criterion (i.e.~of Lemma~\ref{lemma:mixingviashearing}).
\begin{proof}[Proof of Proposition \ref{prop:mix'}]
 Let us assume that $T\in \mathcal{I}_d$ belongs to the full measure set $\mathcal{F}_d$ defined in \S~\ref{sec:fullmeasure}.
Let us define a partition of $[0,1]$ for which we can verify the assumptions of the \emph{mixing via shearing}  (Lemma~\ref{lemma:mixingviashearing}) criterion.
 For this, given $t_m\in [k_m q_m, Ck_mq_m]$ let $\cP'_m$ be the partition  into floors of the tower $\mathcal{T}'_m$, namely into intervals  $\{T^j(I'_m)\}_{0\leq j<q_m}$, where $I'_m$ is the base of $\cC'_m$. We will only consider the floors  $F_j^m:= T^j(I'_m)$ which have a non-empty intersection with the set given by Lemma~\ref{thm:crossingsbounds}, namely such that
\begin{equation}\label{eq:good}
F_j^m \cap \{x\in [0,1)\;:\; N(x,t_m)\in [t_m-t_m^{\gamma}, t_m+t_m^\gamma]\}\neq \emptyset.\end{equation}
We will call these floors \emph{good} and we denote by  $\mathcal{T}''_m$  the partial partitions obtained keeping only these good intervals. We claim that good intervals constitute the majority of the intervals, or in other words,   $Leb(\cT''_m)\to 1$ as $m\to \infty$.
 Indeed, since by construction the union of all (good) intervals in $\mathcal{T}''_m$ covers the set $\{x\in [0,1)\;:\; N(x,t_m)\in [t_m-t_m^{\gamma}, t_m+t_m^\gamma]\}$,
this follows
% (i.e.~intervals which satisfy \eqref{eq:good})
from the estimate given  by  Lemma~\ref{thm:crossingsbounds}.  %  together with Lemma~\ref{lem:cont}  such
%Denote by $\cP''_m$ the corresponding partition.
   Note that then by Lemma~\ref{lem:cont}, if a floor $F^m_j$ is good, then for all $x\in F^m_j$ we have a similar control of $N(x,t_m)$, see  \eqref{eq:goodcontrol}.
%\begin{equation}\label{good:control}
%N(x,t_m)\in [t_m-2t_m^{\gamma}, t_m+2t_m^\gamma] \quad \texxt{for\ all}\ x\in F^m_j.$
%\end{equation}
  % implies that we have an analogous control for all $x\in T^j(I'_m)$.
Moreover, by the construction of $I'_m$, it follows that  for any interval $J\in\cP''_m$, no discontinuity lies inside $T^j J$ for $j\leq 2Ck_mq_m$. This together with the bound
\[\overline{N}(J,t_m):= \max_{x\in  J} N(x,t_m)\leq 2t_m\leq 2Ck_mq_m,\]
gives the condition \eqref{D}.
Our partition $\cP_m$ will be a refinement of the partition $\cP''_m$ and note that the property \eqref{D} is valid also for any such refinement.
By Lemma~\ref{lem:cont}, the good floors not only intersect, but are fully contained in the above sets. We now consider three separate cases.

\medskip
\noindent \textbf{Case 0.} Suppose that $J$ is a subinterval of an atom of $\cP'_m$ such that it length $\in [\frac{\delta_m^4}{q_m},\frac{2\delta_m^4}{q_m}]$ and
\[\min_{r\in [ \underline{N}(J,t_m),\overline{N}(J,t_m)]}\min_{x\in J}|S_r(f')(x)|\geq \overline{c}\delta_m k_mq_m.\]
Then, we have
\[\min_{r\in [\underline{N}(J,t_m),\overline{N}(J,t_m)]}\min_{x\in J}|S_r(f')(x)| \cdot |J|\geq \overline{c}\delta_m^5 k_m\to \infty,\]
as $\delta_m=(\log k_m)^{-1}$. This gives \eqref{S1}.  Moreover, by Lemma \ref{lem:der},
\[
 \frac{\max_{r\leq 2t_n}\max_{x\in J} |S_{r}(f'')(x)||J|}{\min_{r\in [ \underline{N}(J,t_m),\overline{N}(J,t_m)]}\min_{x\in J}|S_r(f')(x)|}\leq \frac{\overline{c}k_m\delta_m^{-2}q_m^2\cdot \delta_m^4 q_m^{-1}}{\overline{c}\delta_m k_mq_m}=\delta_m\to 0,
\]
and so \eqref{S2} also holds.

\smallskip
\noindent In the remaining cases,  we will  construct  $\cP_n$ as a refinement of $\cP''_n$.
Let $J$ be an atom of the partition $\cP''_n$. We will consider two cases depending on the size of the first derivative on $J$:

\medskip
\noindent
\textbf{Case I:} Suppose now that
$$\min_{r\in [\underline{N}(J,t_m),\overline{N}(J,t_m)]}\min_{x\in J}|S_r(f')(x)|\geq \overline{c}\delta_m k_mq_m.$$ In this case we further divide $J$ (in an arbitrary way) into intervals $\{J_i\}$ of length $\in [\frac{\delta_m^4}{q_m},\frac{2\delta_m^4}{q_m}]$.
In this case the atoms of the partition $\cP_n$ coming from $J$ will be the intervals $\{J_i\}$.
In view of Case~0, all these new atoms satisfy \eqref{S1} and \eqref{S2}.

%Note that on each such $J_i$, we have
%$$\min_{r\in [\min_{x\in J_i} N(x,t),\max_{x\in J_i}N(x,t)]}\min_{x\in J_i}|S_r(f')(x)| \cdot |J_i|\geq c^{-1}\delta_m^5 k_m\to \infty,$$
%as $\delta_m=(\log k_m)^{-1}$. This gives S1 for each $J_i$.  Moreover, by Lemma \ref{lem:der},
% $$
% \frac{\sup_{r<2t_n}\sup_{x\in J_i} |S_{r}(f'')(x)||J_i|}{\min_{r\in [\min_{x\in J_i} N(x,t),\max_{x\in I}N(x,t)]}\min_{x\in J_i}|S_r(f')(x)|}\leq \frac{c^{-1}k_m\delta_m^{-2}q_m^2\cdot \delta_m^4 q_m^{-1}}{c^{-1}\delta_m k_mq_m}=\delta_m\to 0,
%  $$
%and so S2 also holds on $J_i$.

\medskip
\noindent
\textbf{Case II:} Suppose finally that
$$\min_{r\in [\underline{N}(J,t_m),\overline{N}(J,t_m)]}\min_{x\in J}|S_r(f')(x)|\leq \overline{c}\delta_m k_mq_m.$$
\noindent Let $r_J\in  [\underline{N}(J,t_m),\overline{N}(J,t_m)]$ and $x_J\in J$ be such that
\[\min_{r\in [\underline{N}(J,t_m),\overline{N}(J,t_m)]}\min_{x\in J}|S_r(f')(x)|=|S_{r_J}(f')(x_J)|\leq \overline{c}\delta_m k_mq_m.\]
Take any $r\in  [\underline{N}(J,t_m),\overline{N}(J,t_m)]$. By the definition of $\cP''_n$ and $t_m\leq Ck_mq_m$, we have $|r-r_J|<4t_m^\gamma\leq 4C(k_mq_m)^\gamma$. Then, by  the fact that $T$ is $\epsilon$-Roth  and using Lemma~\ref{lem:upb}, we get
\[|S_{r-r_J}{f'}(T^{r_J}x_J)|\leq M'|r-r_J|^{1+3\epsilon}+M'\frac{2q_m}{\delta_m}\leq 16C^2M'(k_mq_m)^{\gamma(1+3\epsilon)}+2M'q_m\log k_m.\]
As $\gamma(1+3\epsilon)<1$, this gives
\[|S_{r-r_J}{f'}(T^{r_J}x_J)|\leq \overline{c} \frac{k_m}{\log k_m}q_m=\overline{c}\delta_m k_mq_m,\]
so
\[|S_{r}(f')(x_J)|\leq |S_{r_J}(f')(x_J)|+|S_{r-r_J}{f'}(T^{r_J}x_J)|\leq 2\overline{c}\delta_m k_mq_m.
\]
We remove from $J$ the $\frac{3\overline{c}\delta_m}{\underline{c}q_m}$ neighborhood  of $x_J$. We call the remaining intervals in $J$ by $J_1$ and $J_2$. The following holds for both $\bar{J}= J_1, J_2$. Using Taylor expansion and lower bounds on second derivative in Lemma \ref{lem:der}, we get that for any $x\in \bar{J}$ and any
\[r\in [\underline{N}(\bar{J},t_m),\overline{N}(\bar{J},t_m)]\subset[t_m/2,2t_m]\subset[\frac{1}{2}k_mq_m,2Ck_mq_m],\]
we have
\begin{align*}
|S_r(f')(x)|&= |S_r{f'}(x_J)+S_r(f'')(\theta)(x-x_J)|\geq  |S_r(f'')(\theta)||x-x_J|-|S_r{f'}(x_J)|\\
&\geq \underline{c}k_mq_m^2\frac{3\overline{c}\delta_m}{\underline{c}q_m}  -2\overline{c}\delta_m k_mq_m = \overline{c}k_mq_m \delta_m.
\end{align*}
In this case we further divide $\bar J$ (in an arbitrary way) into intervals $\{\bar J_i\}$ of length $\in [\frac{\delta_m^4}{q_m},\frac{2\delta_m^4}{q_m}]$.
In this case the atoms of the partition $\cP_n$ coming from $\bar J$ will be the intervals $\{\bar J_i\}$.
In view of Case~0, all these new atoms satisfy \eqref{S1} and \eqref{S2}.

%We subdivide $\bar{J}$ into intervals $\{\bar{J}_i\}$ of size $\sim \delta_m^4/q_m$. Then S1 is satisfied on $\bar{J}_i$ since $c k_m\delta_m^5\to \infty$.  On the other hand S2 is also satisfied as
% $$
% \frac{\sup_{r<2t_n}\sup_{x\in \bar{J}_i} |S_{r}(f'')(x)||\bar{J}_i|}{\min_{r\in [\min_{x\in \bar{J}_i} N(x,t),\max_{x\in I}N(x,t)]}\min_{x\in J_i}|S_r(f')(x)|}\leq \frac{2c^{-1}k_m\delta_m^{-2}q_m^2\cdot \delta_m^4 q_m^{-1}}{c^{-1}\delta_m k_mq_m}=\delta_m\to 0.
%  $$
% In this case the atoms of the partition $\cP_n$ coming from $J$ will be the intervals $\{\bar{J}_i\}$.

\medskip
\noindent
{\it Final estimate.} In both cases (Case I and Case II), for the atoms of the partition $\cP_m$ that we defined in each case, we get
\[Leb(\cP_m)\geq Leb(\cP''_m)- q_m\frac{6\overline{c}\delta_m}{\underline{c}q_m}=Leb(\cP''_m)- \frac{6\overline{c}}{\underline{c}}\delta_m\to 1\text{ as }m\to\infty.\]
Therefore, the condition \eqref{M}  of the \emph{mixing via shearing} criterium (Lemma~\ref{lemma:mixingviashearing}) holds. Thus we can apply the criterion, which concludes the proof that resonant times are mixing.
\end{proof}

\section{Properties of Rauzy-Veech induction}\label{sec:backgroundRV}
To prove the two
technical propositions that we left to prove (Proposition~\ref{prop:cohexistence} to conclude the proof of singularity of the spectrum and Proportion~\ref{prop:3} to conclude the proof of disjointness),   we will use as a tool Rauzy-Veech induction and exploit some of the stochastic properties that the induction features.
In \S~\ref{sec:noninvertibleRV} and \S~\ref{sec:invertibleRV}, we  first recall  some basic terminology and  features of Rauzy-Veech induction and  its invertible extension. In \S~\ref{sec:statisticalRV}, we  then recall some features  which are at the heart of some of the statistical and ergodic properties  of the induction (in particular, the \emph{local product structure} given by the coordinates $\lambda$ and $\tau$ in \S~\ref{sec:localproductstructure} and  the Markovian structure of the partitions given by prescribing finitely many iterates types, see  \S~\ref{sec:Markov}).
 %the history
%of the iterates given by the coordinates $\lambda$ and $\tau$ in
%see \S~\ref{sec:localproductstructure}).
% and the Markovian structure  \S~\ref{sec:Markov} we show that  are \emph{Markovian}.
Finally in \S~\ref{sec:distortion_background}, we recall an estimate known as \emph{Kerkhoff lemma} and some of its consequences on distortion properties of Markov partitions.

\subsection{Rauzy-Veech induction for IETs}\label{sec:noninvertibleRV}
The Rauzy-Veech algorithm, by now a classical and powerful tool to study IETs,  % and the associated Rauzy-Veech cocycle were
 was originally introduced and developed in the works by Rauzy  and Veech \cite{Rau, Ve:gau}. We recall here only some basic definitions %and properties % needed to and properties
needed in the rest of this paper.  For the more details on the definition of the algorithm as well as a different notation for permutations (using an \emph{alphabet} $\mathcal{A}$ and a \emph{pair} of permutations to keep track of the \emph{labels} of intervals, which is important for some of the properties of the induction), we refer e.g.\ to  lecture notes by Yoccoz \cite{Yo} or Viana \cite{Vi}.

\subsubsection{Basic algorithm on IETs.} \label{sec:basic}
The Rauzy-Veech induction algorithm associates to %$ and proved since then to be a powerful tool to study IETs.   For
a.e.~IET, % $T=({\lambda}, \pi)$  satisfies  the Keane condition recalled in Section \ref{IETsdefsec}, which holds  for a.e.~IET by \cite{Ke:int},  the Rauzy-Veech
 a sequence of IETs  which are induced maps of $T$ onto a sequence of nested subintervals $I^{(n)}$ contained in $[0,1]$. These are chosen so that the induced maps are again  IETs of the same number $d$ of exchanged intervals.
 If $I'\subset I:=[0,1]$ denotes the subinterval associated to one step of the algorithm, $I'$ is defined as follows: let $I^{t}_d:=I_d =[\beta_{d-1},1]$ (where $t$ stays for \emph{top} last) be the last (i.e.~right-most) exchanged subinterval and let $j_d:=\pi^{-1}(d)$ be the index such that $T(I_{j_d})$ has $1$ as endpoint, i.e.~$I_d':=I_{j_d}$ is the interval which becomes last after the exchange; equivalently, $I^b_d:=T(I_{j_d})$ is the last (i.e.~rightmost) exchanged interval for the inverse $T^{-1}$. Then, we set:
 $$
I':= \begin{cases}
[0,1]\backslash I^b_d, & \text{if}\ |I^t_d|>|I^b_d|\   \qquad
\text{(case\ top\ or\ type\ 0),}
\\ [0,1]\backslash I^t_d, &
\text{if}\ |I^b_d|>|I^t_d|   \qquad
\text{(case\ bottom\ or\ type\ 1).}
\end{cases}
 $$
 % if $|I_d|>|I_{j_d}|$ (called case \emph{top} or \emph{type} $0$), $[0,1]\backslash I_d =[0,\beta_{d-1})$, while  if $|I_{j_d}|>|I_d|$ (called case \emph{bottom}, or \emph{type} $1$), $[0,1]\backslash T(I_{j_d})$.  . We denote by $T'$ is the induced IET obtained as first return map on $I'$.
 The \emph{Rauzy-Veech map}  $\V $ then associates to $T=(\pi,\lambda)$ such that $\lambda_d\neq \lambda_{j_d}$ (so that $|I^t_d|\neq |I^b_d|$)
  the IET $\V (T)=\V (\pi,\lambda)$  obtained by renormalizing $T'$ by $|I'|$ so that the renormalized IET is again defined on an unit interval. If $|I^t_d|=|I^b_d|$, the induction is not defined.

%Let us denote by $|\cdot |$  the vector norm $|{\lambda}|=\sum_{\alpha\in \cA}\lambda_\alpha$.
\subsubsection{Parameter spaces of the Rauzy-Veech map.}\label{sec:parameterspaceIETs}
%after one step of the algorithm including the following renormalization:
%\bes \R \left ( ({\lambda}^{(0)},\pi^{(0)}) \right) \doteqdot \left( \frac{{\lambda}^{(1)}}{|{\lambda}^{(1)}|},\pi^{(1)} \right).
%\ees
%Let us call it the \emph{Rauzy-Veech map}.
Interval exchanges $T=(\pi, \lambda)$ on the unit interval $I=[0,1]$ can be parametrized by the permutation $\pi$ and a length vector $\lambda$ which belongs to the simplex
% to the simplex \be\label{eq:simplex}
\be \Delta_{d} := \left\{(\lambda_1,\dots  \lambda_d)\ \Big| \quad  \lambda_i\geq 0, \sum_{i=0}^{d-1} \lambda_i=1\right\}.\ee
%as in \eqref{Deltadef}
\noindent  Recall that  a \emph{Rauzy class} is a non-empty minimal (with respect to inclusion) subset $\mathcal {R}\subset\mathfrak{S}_d^0$
%is  if $\mathcal {R}$ is a minimal subset of
%the set  $\mathfrak{S}_d^0$
of irreducible permutations  (see \S~\ref{subsec:IETs} for the definition of irreducible)
such that $\Delta_d\times\mathcal{R}$ is invariant under the Rauzy-Veech map $\mathcal{V}$.  For any irreducible $\pi_0$, we denote by $\mathcal{R}(\pi_0)$ the Rauzy class containing $\pi_0$. Thus,  $\mathcal{R}(\pi_0) $
%denote the \emph{Rauzy class} of an irreducible $\pi_0$, i.e.\
is the subset  of all permutations $\pi'$ %=(\pi'_t,\pi'_b)$ from $\cA$ to $\{1,\dots,d\}$
which appear as permutations of an IET $T'=(\pi',{\lambda}')$ in the orbit under $\V$ of some IET $(\pi_0,{\lambda})$  with initial permutation $\pi_0$. The associated Rauzy-\emph{graph}, also denoted by  $\mathcal{R}(\pi_0)$, has these permutations as vertices, with an arrow $\gamma$ starting at $\pi$ and ending at $\pi'$ and  labeled top, or $0$ (resp.~bottom or $1$) iff there is a $\lambda$ of type $0$ (resp.~type $1$) such that  $\mathcal{V}(\pi,{\lambda})=(\pi',{\lambda}')$.

The domain of definition of the Rauzy-Veech map $\V$ is a full Lebesgue measure subset of the space
\begin{equation}\label{eq:defX}
X:= X(\mathcal R) =\mathcal R\times  \Delta_{d}  = \{ (\pi, \lambda)\ | \ \pi \in \mathcal R, \lambda \in \Delta_d \},
\end{equation}
corresponding to those IETs for which the algorithm is defined for all $n\in \N$ (more precisely, the full measure set of IETs which satisfy the Keane condition\footnote{
An IET $T={(\pi,\lambda)}$ satisfies the {\em Keane condition} if (recalling that $End(T)$ denotes  the set of end points of the IET $T$, see \S~\ref{subsec:IETs}),
  $T^m
\beta_{i}\neq {\beta}_j$ for all $m\geq 1$ and for all $i,j\in \{0,\dots, d-1\}$ except $j=0$ with $m=1$.
%$\alpha,\beta\in\mathcal{A}$ with $\pi_0(\beta)\neq 1$.
Keane \cite{Keane} showed that an  IET with an irreducible permutation that satisfy the Keane condition is \emph{minimal}.}.
Veech proved in \cite{Ve:gau} that $\V$ admits an invariant measure $\mu_{\V}$ on $X(\mathcal{R})$ which %. % with respect to which it is conservative. The measure $\mu_{\V}$, which
 is equivalent to the Lebesgue measure, but infinite.
Zorich showed in \cite{Zo:fin} that one can accelerate\footnote{The acceleration of a map is obtained a.e.-defining  an integer valued function $z(T)$ which gives the return time to an appropriate section. The accelerated map is then given by $\mathcal{Z} (T) := \V^{z(T)} (T )$.} the map  $\V $ in order to obtain a map $\mathcal{Z}$, which we call \emph{Zorich map}, that admits a  \emph{finite} invariant measure $\mu_{\mathcal{Z}}$. Let us also recall that both $\V$ and its acceleration $\Z$ are \emph{ergodic} %\footnote{Since $\R$ preserves an infinite invariant measure, ergodicity means here that if $A$}
 with respect to $\mu_{\V}$ and $\mu_{\Z}$ respectively \cite{Ve:gau, Zo:fin}.

\subsubsection{Matrix products indexed by paths}\label{sec:products}
Given an IET $T=(\pi,\lambda)$ in the domain of $\V$, write $\mathcal{V}(\pi,\lambda)=(\pi',\lambda')$, let $\epsilon\in\{0,1\}$ be the \emph{type} of $(\pi,\lambda)$ (see \S~\ref{sec:basic}), then denote by $\gamma$ the corresponding arrow in the Rauzy graph $\mathcal R$ staring from $\pi$ and ending at $\pi'$. Then
$$\lambda'=\frac{(B^*_\gamma)^{-1}\lambda}{|(B^*_\gamma)^{-1}\lambda|}, \quad \text{where}\  B_\gamma\in SL(d,\mathbb Z)$$
is a non-negative matrix with entries $0$ or $1$ and with $1$ on the diagonal and only one $1$ off the diagonal and $B^*$ denotes the transpose of the matrix $B$. If $\gamma = \gamma_1\gamma_2\ldots\gamma_n$ is a path  in the Rauzy graph $\mathcal R$ which starts at $\pi_0$ and ends at $\pi_n$ obtained concatenating the arrows $\gamma_i$,  %
% %We follow the same notation used in \cite{Ul:mix, Ul:abs}.
% If the arrows $\gamma_1,\ldots,\gamma_n$ in $\mathcal R$ indicate the types of induction in the consecutive $n$ steps of the Rauzy-Veech induction, then
 associate to  $\gamma$
 %path $\gamma=\gamma_1\gamma_2\ldots\gamma_n$ is the ending of $\gamma_n$.
 the product matrix  $B_\gamma=B_{\gamma_n} B_{\gamma_{n-1}}\cdots B_{\gamma_1}$. Then

\begin{equation}\label{eq:Bgamma}
\mathcal V^n\left(\pi_0,\frac{B^*_\gamma\lambda}{\left| B^*_\gamma\lambda\right|}\right)=(\pi_n,\lambda) %(\pi_n,\lambda_n)=
,\quad \text{where}\ B^*_\gamma=(B_{\gamma_n} B_{\gamma_{n-1}} \cdots B_{\gamma_1} )^*=B^*_{\gamma_1} \cdots  B^*_{\gamma_{n-1}} B^*_{\gamma_n}.\end{equation}
\noindent % and %$B^*_\gamma=B^*_{\gamma_1}\cdot B^*_{\gamma_2}\cdots B^*_{\gamma_n}$.
%Then $\gamma=\gamma_1\gamma_2\ldots\gamma_n$
If $\gamma = \gamma_1\gamma_2\ldots\gamma_n$ is a path  in the Rauzy graph $\mathcal R$ encoding the first $n$ steps of the Rauzy-Veech induction for an IET $(\pi_0,\lambda)$, then
\begin{equation}\label{eq:Bgamma1}
\mathcal V^n(\pi_0,\lambda)=\left(\pi_n,\frac{(B^*_\gamma)^{-1}\lambda}{\left| (B^*_\gamma)^{-1}\lambda\right|}\right).
\end{equation}

\subsubsection{Lengths cylinder symplexes}\label{sec:lambdacylinder}
Given a non-negative matrix $B$ in $SL(d,\mathbb Z)$, let us denote by % {\color{blue}Check transposte and inverses here!!}
\begin{equation}\label{def:simplex}
\Delta_B:= B^* \Delta_d= \left\{ \frac{B^* \lambda}{|B^* \lambda|}, \quad \lambda \in \Delta_d\right\}\subset \Delta_d
\end{equation}
the simplex image of $\Delta_d$ by the projective action of $B$. For short, if $B=B_\gamma$ for some path $\gamma$ in the Rauzy graph starting at $\pi$, we write
\begin{equation}\label{eq:Delta_gamma}
 \Delta_\gamma:=\{\pi\}\times\Delta_{B_\gamma} =
 \{\pi\}\times B^*_\gamma(\Delta_{d}).
 \end{equation}
%\begin{remark}\label{rk:RVcylinders}
Then one can show that if $(\pi,\lambda)\in \Delta_\gamma$, then the types of the first $n$ steps of the Rauzy-Veech induction are given by  $\gamma_1,\ldots,\gamma_n$ and
%$\mathcal{V}^n(\pi,\lambda)$
$B_\gamma^*$
 is given by \eqref{eq:Bgamma}.
%=(\pi', (B^*_\gamma)^{-1}\lambda/|(B^*_\gamma)^{-1}\lambda|)$.

\subsection{Invertible Rauzy-Veech induction}\label{sec:invertibleRV}
The \emph{natural extensions} $\widehat{\V}$ and $\widehat{\Z}$, respectively of the Rauzy-Veech map $\V$ and  Zorich map $\Z$, also known as \emph{invertible Rauzy-Veech} and \emph{intertible Zorich} induction, are invertible  maps $\widehat{\V}$ and $\widehat{\Z}$ defined on a domain $\widehat{X}$ % (which admits a geometric interpretation in terms of the space of zippered rectangles,  see for example  \cite{Yo:con, Vi:IET})
which are extensions of $\V$ and $\Z$ respectively, i.e.~such that there exists a projection $p \colon \widehat{X} \rightarrow X$ for which $p \widehat{\V} = \V p$ and $p \widehat{\Z} = \Z p$.
\subsubsection{Parameter spaces of the invertible Rauzy-Veech and Zorich map}\label{sec:RV}
To define the domain of the \emph{natural extensions} of $\widehat{\V}$ and $\widehat{\Z}$,  for each $\pi \in \mathcal R$ let $\Theta_{\pi}\subset \mathbb{R}^{d}$ be the polyhedral cones  given by the inequalities %where $\pi \in \mathscr{R}$.% and ${\tau}=(\tau_1,\dots ,\tau_d) \in \mathbb{R}^{d}$:
\bes
\Theta_{\pi} := \left\{ {\tau}=(\tau_1,\dots ,\tau_d)  \in   \mathbb{R}^{d}\ \Big| \quad \sum_{i=1}^{k} \tau_i >0 , \, \sum_{i=1}^{k} \tau_{\pi^{-1}i} <0, \, k=1,\dots, d-1 \right\},
\ees
and let $\widehat{X}$ be the following space of triples  (that admits a geometric interpretation in terms of the space of zippered rectangles,  see for example  \cite{Yo, Vi}, but the latter will play no role in the present paper):

\bes
\widehat{X} := \widehat{X} (\mathcal R)=\bigcup_{\pi\in \mathcal{R} }\{ \pi\} \times \Delta_d\times \Theta_\pi  = \{ \, (\pi, {\lambda}, {\tau} ) \ | \quad \pi \in \mathcal{R}, \, {\lambda}\in \Delta_d, \, {\tau} \in \Theta_{\pi}   \}.
%\\& \quad \text{where} \ := \sum_{k=1}^{d} \lambda_k \, ( \sum_{i=1}^{k-1} \tau_i  - \sum_{i=1}^{\pi(k)-1} \tau_{\pi^{-1}} ).
\ees
The action of the invertible extension $\widehat\V$  on triples $(\pi,\lambda, \tau) \in \widehat{X}$ is obtained by extending the projective action of $\V$ as follows. If we write $\widehat{\mathcal{V}}(\pi,\lambda, \tau)=(\pi',\lambda', \tau')$, we have that $(\pi',\lambda')=\V (\pi,\lambda)$ (so that $\widehat{\V}$ is an extension of $\V$) and
$$
\tau'=\frac{(B^*_\gamma)^{-1}\tau}{|(B^*_\gamma)^{-1}\lambda|}, \quad \text{where}\  B^*_\gamma\in SL(d,\mathbb Z)
$$
is the transpose of the matrix $B_\gamma$ associated to the arrow in the Rauzy graph $\mathcal R$ staring from $\pi$ and ending at $\pi'$ (see \S~\ref{sec:products}).

\subsubsection{Invariant measures}\label{sec:invariantextensions}
One can show  that  $\widehat{\Z}$ preserves a natural (infinite) invariant measure $\widehat{\mu}_\Z$ on $\widehat{X}$.  In fact, $\widehat{\mu}_\Z$ restricted to $\{\pi\}\times\Delta_d\times \Theta_\pi$ is the product of ${\mu}_\Z$ and the Lebesgue measure restricted to the open cone $\Theta_\pi$. Furthermore, on $\widehat{X}$ one can define the real valued function $Area(\cdot)$ associates to $(\pi,{\lambda},  {\tau}) $ given by
$Area(\pi,{\lambda}, {\tau} ) := \sum_{k=1}^{d} \lambda_k \, ( \sum_{i=1}^{k-1} \tau_i   - \sum_{i=1}^{\pi(k)-1} \tau_{\pi^{-1}i} ) $
 (called this way since it gives geometrically the area of the corresponding zippered rectangle).
  Given any measurable subset $E \subset \widehat{X}$,
 we will denote by $E^{(1)}$ the intersection of $E$ with the level set  $Area^{-1}(1)$ of the area function, namely
$$E^{(1)}=  \{ \, (\pi, {\lambda}, {\tau} ) \in E\ | \ \ Area (\pi,\lambda,\tau)= 1   \}.
 $$
In particular, this notation applies to $\widehat{X}$, so %$\widehat{X}^{(1)} = (\pi, {\lambda}, {\tau} ) \in E, \, \text{s.t.}\ Area (\pi,\lambda,\tau)= 1  \}.$
% In particular, we can apply this notation to the whole space $\widehat{X}$, so that  }
 % on $\Delta_{d-1} \times \{\pi\}\R^{d}$
%$Area({\lambda}, \pi, {\tau} ) := \sum_{k=1}^{d} \lambda_k \, \left( \sum_{i=1}^{k-1} \tau_i  - \sum_{i=1}^{\pi(k)-1} \tau_{\pi^{-1}.
%$Area(\cdot)$
%(The triples in $\widehat{X}$ The function Area has then a geometric interpretation as the area of the zippered rectangle associated to the datas $({\lambda}, \pi, {\tau})$. This will play no role in this paper, in which we only need the parameter space structure).
\bes
\widehat{X}^{(1)} :=\widehat{X}\cap Area^{-1}(1)= \{ \, (\pi, {\lambda}, {\tau} ) \, | \quad \pi \in \mathcal{R}, \, {\lambda}\in \Delta_d, \, {\tau} \in \Theta_{\pi} ,\,  Area(\pi,{\lambda}, {\tau}  )= 1   \},
%\\& \quad \text{where} \ := \sum_{k=1}^{d} \lambda_k \, ( \sum_{i=1}^{k-1} \tau_i  - \sum_{i=1}^{\pi(k)-1} \tau_{\pi^{-1}} ).
\ees
The projection $p$ is then simply the forgetful map
$p:\widehat{X} \rightarrow X$ given by $ p  ( \pi,{\lambda}, {\tau} )  = (\pi,{\lambda})$.

Since the renormalization $\widehat{\Z}$ preserves $Area$, we can restrict it to $\widehat{X}^{(1)} $ and get a well defined map
 $\widehat{\Z}: \widehat{X}^{(1)}\to \widehat{X}^{(1)}$.
Moreover,  the projectivization of the measure $\widehat{\mu}_\Z$, denoted by $\widehat{\mu}^{(1)}_\Z$, is a finite $\widehat{\Z}$-invariant measure on $ \widehat{X}^{(1)}$ and is also ergodic (see e.g.~\cite{Vi} for details).
In fact, the projectivization $\widehat{\mu}^{(1)}_\Z$ is defined as follows
\begin{equation}\label{def:mu}
\widehat{\mu}^{(1)}_\Z(B):=\frac{\widehat{\mu}_\Z(\bigcup_{t\in{[0,1]}}tB)}{\widehat{\mu}_\Z(\bigcup_{t\in{[0,1]}}t\widehat{X}^{(1)})}\quad\text{ for any measurable}\quad B\subset\widehat{X}^{(1)},
\end{equation}
where $t(\pi,\lambda,\tau):=(\pi,\lambda,t\tau)$.
% $\widehat{\Z}$ induces a (finite) measure on  $ \widehat{X}^{(1)}$, which we will denote $\widehat{\mu}^{(1)}_\Z$ (see e.g.~\cite{Vi} for details).
%Then
% $\widehat{\Z}$ restricted to $ \widehat{X}^{(1)}$ preserves the invariant measure $\widehat{\mu}^{(1)}_\Z$;
The push-forward $p_*{\widehat{\mu}^{(1)}_\Z}$  by the projection $p$ (i.e.\ the measure such that $p_*{\widehat{\mu}}^{(1)}_{\Z}(A)= {\widehat{\mu}}^{(1)}_Z (p^{-1}A)$ for any measurable set on $A\subset X$) equals $\mu_{\Z}$ (see again~\cite{Vi}).

\subsubsection{Accelerations of the Rauzy-Veech map.}\label{sec:acc}
One can consider \emph{accelerations} of the (invertible)  Zorich  map $\widehat{\mathcal{Z}}$ by
 considering Poincar{\'e} first return map as follows.  Fix a subset $E\subset \widehat{X}^{(1)}$
%$Y \subset X = \Delta_d \times \mathcal{R}(\pi)$
of positive $\widehat{\mu}_\Z^{(1)}$-measure. By the ergodicity of $\widehat{\Z}$, for %$Leb_{\Delta_d}$
  the orbit of $\widehat{\mu}_\Z^{(1)}$-almost every  $(\pi, {\lambda}, \tau )$ under $\widehat{\Z}$ visits $E$ infinitely often.
  The corresponding acceleration of $\widehat{\mathcal{Z}}$ is denoted by $\widehat{\mathcal{Z}}_E$ and it is a map $\widehat{\Z}_E : \widehat{X}^{(1)} \to E$ defined a.e. on $\widehat{X}^{(1)}$ as follows:
 $\widehat{\Z}_E(\pi, {\lambda}, \tau )=\widehat{\Z}^{n_1}(\pi, {\lambda}, \tau )$ if $n_1\geq 1$ is the first times when the forward orbit  of $(\pi, {\lambda}, \tau )$ hits $E$, i.e.\ $\widehat{\Z}^{n_1}(\pi, {\lambda}, \tau )\in E$.
 If %By conservativity, for a.e.~$T$ in $Y$ if
   $\{n_\ell\}_{\ell\in\N}$ denotes the  sequence of subsequent visits to $E$ for a typical $(\pi, {\lambda}, \tau )$   (so that  $\widehat\Z^{n_\ell}(\pi, {\lambda}, \tau )\in E$ for all $\ell$ and no other iterate  $\widehat\Z^n(\pi, {\lambda}, \tau)$ belongs to $E$), then
%Given an increasing sequence $\{n_\ell\}_{\ell\in\N}$ of natural numbers, we can consider the corresponding \emph{acceleration} $\tilde{\R}$ of the Rauzy-Veech map, defined on $\{T^{(n_\ell)} : \ell\in\N\}$ by Then $\tilde{\R}(T^{(n_\ell)})=T^{(n_{\ell+1})}$, $\ell\in\N$. In other words,
$\widehat{\Z}_E^{\ell}(\pi, {\lambda}, \tau):=\widehat\Z^{n_\ell}(\pi, {\lambda}, \tau)$, $\ell\in\N$.
%We will refer to $\{n_\ell\}_{\ell\in\N}$ as a \emph{sequence of induction times} for $T$.

We say that a sequence $\{n_l\}_{l\in \mathbb{N}}$ is a sequence of \emph{(lengths) balanced times} for $T$  if there exists  $\nu>1$
 such that  the lengths satisfy
  \[\frac{1}{\nu} \leq\frac{\lambda_i^{(n_l)}}{\lambda_j^{(n_l)}} \leq \nu\quad \text{ for\ all\ } 1\leq i,j\leq d, \ \text{ and\  all\ } l \in \mathbb{N}.\]
  One can see that if the subset  $E $ is of the form $E= \{\pi\}\times K\times Y$, where $K\subset \Delta_d$ is precompact and $Y\subset \Theta_{\pi}$, then the acceleration $\widehat\Z_E$ is balanced. Recall that a subset $K\subset \Delta_d$ is \emph{precompact} if its closure $\overline{K} \subset \operatorname{Int}\Delta_d$ (then $\overline{K}$ is indeed compact with respect to the Hilbert metric on $\Delta_d$, which we will not use in this paper).

\subsection{Symbolic cylinders and Markovian structure}\label{sec:statisticalRV}
We describe now some features of the Rauzy-Veech and Zorich maps on parameter space, which are manifestation of the intrinsic hyperbolicity of the algorithm.
\subsubsection{Local product structure}\label{sec:localproductstructure}
%If ${\gamma}$ is a path on $\mathscr{R}$, starting at $\pi$, denote by
%\bes
%\Theta({\gamma}) \doteqdot \left\{ {A(\gamma)}^{-1}\, {\tau}\, | \quad  {\tau} \in \Theta_{\pi}  \right\} \subset \mathbb{R}^{d}.
%\ees
A key observation that makes the coordinates $\pi, \lambda,\tau$ on the domain $\widehat{X} $ of the natural extension $\widehat{\Z}$ of Rauzy-Veech induction particularly useful is the following form of (\emph{local}) \emph{product structure}.
Notice first that by definition of the elementary step of Rauzy-Veech induction, only the permutation $\pi$ and the vector $\lambda$ determine whether the basic step of the (invertible) Rauzy-Veech induction is of type \emph{top} or \emph{bottom}  (and hence the formula which gives the image of $(\pi, \lambda,\tau)$).
Similarly, the \emph{backward} basic step of the \emph{inverse} $\widehat{\mathcal{V}}^{-1}$ of the  (invertible) Rauzy-Veech induction $\widehat{\mathcal{V}}$ is determined by $\pi$ and $\tau$ only:

\begin{rem}\label{rk:backwardstep} If $(\pi', \lambda', \tau')= \widehat{\mathcal{V}}^{-1}(\pi, \lambda, \tau)$, then one can know whether
$(\pi', \lambda', \tau')$ and hence the Rauzy-Veech operation performed by $\widehat{\mathcal{V}}$ on $(\pi', \lambda', \tau')$ was of type \emph{top} (resp.~\emph{bottom})  from the vector $\tau$ only. More precisely, if
\begin{equation}\label{eq:taurel}
\tau_{1}+ \dots + \tau_{d}<0 \qquad \text{(respectively \ $ >0$)},
\end{equation}
the previous move was of type \emph{top} (resp.~\emph{bottom}), we refer e.g.~to \cite{Vi}.  Thus, $\pi'$ can be determined from $\pi$ and $\tau$ only (independently on $\lambda$) and
 $\lambda'$ and $\tau'$ are proportional to $B_\gamma \lambda$ and $B_\gamma \tau$ respectively, where $B_\gamma$ is an elementary matrix which is fully determined by $\pi$ and $\tau$.
 %Furthermore, notice that by definition
 %\begin{equation}\label{eq:backcocycle}
 %( V\circ \widehat{\mathcal{Z}}^{-1}) (\pi, \lambda, \tau) = V\left(\widehat{\mathcal{Z}}^{-1} (\pi, \lambda, \tau)\right) = %V(\pi', \lambda', \tau').
% \end{equation}
%we can write
%It follows that, for the acceleration $\widehat{\mathcal{Z}}^{-1}$, we can write
%$$
%\mathcal{Z}^{-1}\left( \right)(\pi, \lambda, \tau) = \left( \pi',\frac{ V  \lambda }{\Vert V\lambda \Vert } , \frac{V \tau}    \right), \qquad \text{where}\ V=V(\pi,\tau) .
%$$
\end{rem}

\subsubsection{Markovian structure}\label{sec:Markov}
One can furthermore show that $\widehat{\mathcal{Z}}$ has a \emph{Markovian structure} in the following sense.  For any $\pi \in \mathcal{R}$, if we denote by $\Delta_\pi^{t}$ (resp.~$\Delta_\pi^{b}$) the set of $\lambda\in \Delta_d$ such that $(\pi, \lambda)$ is of type top (resp.~bottom), and by $\Theta_\pi^t$ (resp.~ $\Theta_\pi^b$) the set of $\tau \in \Theta_\pi$ such that \eqref{eq:taurel} holds, then, if $\pi_0$ (resp.~$\pi_1$) is obtained from $\pi$ performing a top (resp.~bottom) operation, then
\begin{align}\label{backwardMarkov1}
& \widehat{\mathcal{Z}} \left( \{\pi\}\times \Delta_\pi^t \times \Theta_{\pi}\right) =
 \{\pi_0\}\times \Delta_d \times \Theta^t_{\pi_0 } &\ \  \Leftrightarrow \ \ \widehat{\mathcal{Z}}^{-1}
 \left(  \{\pi_0\}\times \Delta_d \times \Theta^t_{\pi_0}\right) =
\{\pi\} \times \Delta^t_\pi \times \Theta_{\pi},
\\ & \widehat{\mathcal{Z}} \left( \{\pi\}\times \Delta_{\pi}^b \times \Theta_{\pi}\right) =
 \{\pi_1\}\times \Delta_d \times \Theta^b_{\pi_1} & \ \  \Leftrightarrow \ \
  \widehat{\mathcal{Z}}^{-1} \left( \{\pi_1\}\times \Delta_d \times \Theta^b_{\pi_1}\right) = \{\pi\}\times \Delta_\pi^b \times \Theta_{\pi}.\label{eq:backwardMarkov2}
\end{align}

Let us recall that $p: \widehat{X} \to X$ denote the projection $p(\pi,\lambda, \tau ) = (\pi, \lambda)$.  Let  $p'$ denotes the projection on the $\tau$-coordinate, namely
\begin{gather}
p':  \widehat{X}(\mathcal R)  \to   \bigcup_{\pi \in \mathcal{R}} \{ \pi\} \times \Theta_\pi, %, \qquad & \text{where} \ \Theta=\bigcup_{\pi \in \mathcal{R}} \{ \pi\} \times \Theta_\pi,
\\ \label{eq:p2def}
 (\pi,\lambda, \tau )   \mapsto  p' (\pi,\lambda, \tau )  \doteqdot  (\pi, \tau) .  \nonumber
\end{gather}
 %and denote their orbits by:
%$$
% ({\lambda}^{(i)}, \pi^{(i)}, {\tau}^{(i))} ): = \widehat{\mathcal{Z}}^i \left(\pi, \lambda, \tau\right), \qquad  ({\overline{\lambda}}^{(i)}, \overline{\pi}^{(i)}, {\overline{\tau}}^{(i))} ): = \widehat{\mathcal{Z}}^i \left(\overline{\pi}, \overline{\lambda}, \overline{\tau}\right),  \qquad \forall  i\in \mathbb{Z}.$$
% which follows from the following simple observation:
From Remark~\ref{rk:backwardstep},
%given any two triples  $(\pi, \lambda, \tau)$ and  $(\pi', \lambda', \tau')$
the following two observations about the orbits of $\widehat{\mathcal{Z}}$ (where  (F) stays for \emph{future}, (P) for \emph{past} iterates) follow and describe the local product structure.\footnote{We remark that these properties show that the invertible map $\widehat{\mathcal{Z}}:\widehat{X}\to \widehat{X}$  is  a \emph{fibred system} in the sense of Schweiger, see
\cite{Sch95}. }
% $(F)$ and $(P)$ about the future and past orbits of of $\widehat{\mathcal{Z}}$ and
%$(FC)$ and $(PC)$ about the  associated cocycles  follow.
Assume for simplicity that $ (\pi, \lambda, \tau)$ and  $({\pi}, \overline{\lambda}, \overline{\tau})$ are two triples in $\widehat{X}$ with the same combinatorial datum $\pi\in \mathcal{R}$, for which the orbit $\widehat{\mathcal{Z}}^n (T)$ is defined for all $n\in \mathbb{Z}$.

\begin{itemize}
\item[(F)] If  $\lambda= \overline{\lambda}$, then for any $n>0$, the \emph{future} orbits of $ (\pi, \lambda, \tau)$ and $ (\pi, {\lambda}, \overline{\tau})$ both lift the orbit  $\mathcal{Z}^n (\pi,\lambda)$ of the IET $(\pi,\lambda)$, i.e.~they both belong to the same $p$-fiber:
$$
p \, (\widehat{\mathcal{Z}}^n \left(\pi, \lambda, \tau\right))= p\, \widehat{\mathcal{Z}}^n \left(\overline{\pi}, \overline{\lambda}, \overline{\tau}\right)=  (\pi^{(n)},{\lambda}^{(n)})    = \mathcal{Z}^n(\pi,\lambda).
$$
\item[(P)] If  $\tau= \overline{\tau}$, then for any $n< 0$, the \emph{past} orbits of $ (\pi,\lambda , \tau)$ and $ (\pi,  \overline{\lambda}, {\tau})$ both belong to the same $p'$-fiber:
$$
p' (\widehat{\mathcal{Z}}^n \left(\pi, \lambda, \tau\right))  = p' \widehat{\mathcal{Z}}^n \left(\overline{\pi}, \overline{\lambda} , \overline{\tau}\right)= (\pi^{(n)},{\tau}^{(n)}) .
$$
\end{itemize}
Let us mention that  it is possible (although we do not want to do it here)  to define a map  $\mathcal{Z}^\ast$ on the space $ \cup_{\pi\in \mathcal{R}} (\{\pi\} \times \Theta_\pi)$ such that
$$
\pi' \left( \widehat{\mathcal{Z}}^{-1} (\pi,\lambda, \tau) \right)= \mathcal{Z}^\ast (\pi, \tau), \qquad \text{which gives} \qquad p' \widehat{\mathcal{Z}}^{-n} \left(\pi, \lambda, \tau\right))  =( \mathcal{Z}^\ast)^n  (\pi, \tau),
$$
so that (P) can has a similar form to $(F)$. The map  $\mathcal{Z}^\ast$ is then \emph{dual} to Zorich induction\footnote{The curious reader may be interested in the paper \cite{In-Na} by Inoue and Nakada,  where it is shown that the dual of Rauzy-Veech induction can be realized by an algorithm known in the literature as \emph{da Rocha induction}.} in the sense of Schweiger (see
\cite{Sch95}).
\subsubsection{Two sided-cylinders and Markov shifts}\label{sec:Markov2}
Let $\gamma$ be a path on $\mathcal{R}$ starting from $\pi$ and ending at $\pi'$. % obtained concatenating $n$ arrows.
The simplex $\Delta_\gamma$ defined in \S~\ref{sec:lambdacylinder} by \eqref{eq:Delta_gamma} consists of all lengths parameters $\lambda$ of all triples $(\pi,\lambda, \tau)$  which share the first (future) $n$ steps of the (invertible) Rauzy-Veech induction. Similarly, by the local product structure,
there exists a set $\Theta_\gamma\subset \Theta_{\pi'}$ which consists of all parameters $\tau'$ of  the triples $(\pi', \lambda', \tau')$ which share the \emph{past} $n$ steps of $\widehat{\V}$ (i.e.~the first $n$ iterates of $\widehat{\V}^{-1}$). This set is a simplicial cone defined by
\begin{equation}\label{def:Theta}
 \Theta_\gamma:=\Theta_{B_\gamma} =
 (B^*_\gamma)^{-1}(\Theta_{\pi})=
\left\{ (B_\gamma^*)^{-1} \tau\ : \ \tau \in \Theta_\pi\right\}\subset \Theta_{\pi'} .\end{equation}
 The Markovian structure extends to these sets: if $\gamma=\gamma_1\cdots \gamma_n$ is obtained concatenating $n$ arrows,
 iterating $n$ times the Markovian structure of one Rauzy-Veech induction move given by  \eqref{backwardMarkov1} and \eqref{eq:backwardMarkov2}, we get by induction that:
%One can furthermore show that $\widehat{\mathcal{Z}}$ has a \emph{Markovian structure} in the following sense:
% If $\gamma =\beta_1\cdots \beta_n$ is obtained concatenating $n$ arrows and ends at $\pi'$,
%For any $\pi \in \mathcal{R}$, if we denote by $\Delta_\pi^{t}$ (resp.~$\Delta_\pi^{b}$) the set of $\lambda\in \Delta_d$ such that $(\pi, \lambda)$ is of type top (resp.~bottom), and by $\Theta_\pi^t$ (resp.~ $\Theta_\pi^b$) the set of $\tau \in \Theta_\pi$ such that \eqref{eq:taurel} holds, then, if $\pi_0$ (resp.~$\pi_1$) is obtained from $\pi$ performing a top (resp.~bottom) operation, then
\begin{equation}\label{bisided_Markov}
 \widehat{\mathcal{Z}}^n \left( \{\pi\}\times \Delta_{B_\gamma} \times \Theta_{\pi}\right) =
 \{ \pi' \}\times \Delta_d \times \Theta_{B_\gamma} .
\end{equation}
The product sets $\{\pi\} \times \Delta_\alpha \times \Theta_\beta$ where $\alpha$ and $\beta$ are finite paths %of lenght $n$
 respectively starting and ending at $\pi$ play the role of \emph{bi-sided cylinders}
%$\Delta_\gamma$ associated to paths $\gamma$ of lenght $|\gamma|=n$ play the role of \emph{cylinders} of lenght $n$
 for the symbolic  coding of the induction (since they consist of all triples in $\widehat{X}$ which share the (finitely many) backward and forward steps determined by $\alpha$ and $\beta$ respectively.

\subsection{Kerkhoff lemma and distortion bounds}\label{sec:distortion_background}
Finally, we want to recall some of the \emph{distortion} properties of the iterates of the induction, which relate distortion of the matrices which describe the algorithm (given by ratios of norms of columns) to distortion of volumes of cylinder sets for the induction.

\subsubsection{Notation for Rauzy graph paths}
%{\color{blue} Tjhe notation that it's used in this part and in the other part is completely different!!! Not sure what we should do...}
Throughout this \S~\ref{sec:distortion_background},  we refer mostly to the notation and the results used in \cite{AGY}, that we now recall. % as well as some of the notation used by Chaika in \cite{Chaika}.

Let $\mathcal{R}$ be a Rauzy class.  Denote by $\Pi(\mathcal R)$ the set of pathes in $\mathcal{R}$ and for any $\gamma\in \Pi(\mathcal R)$ let $|\gamma|$ be the length of $\gamma$.
For any $\pi\in\mathcal R$, let us denote by $\Delta_\pi=\{\pi\}\times\Delta_d$ the copy of the simplex indexed by $\pi$.
Thus we can write the phase space of (non-invertible) Rauzy induction (defined by \eqref{eq:defX} in \S~\ref{sec:noninvertibleRV}) as $X(\mathcal R)=\bigcup_{\pi\in\mathcal R}\Delta_\pi$.

We use the notation introduced in \S~\ref{sec:products} to  index the products of matrices $B_\gamma$ which give iterates of $\mathcal{V}$ through the \emph{path}  $\gamma$ in the Rauzy-graph which describe the iterate.
% Let us recall that sequences of moves of Rauzy-Veech induction sare labeled by paths in the Rauzy-class (see \S~\ref{sec:paths}).
If $\gamma$ is a path on $\mathcal{R}$ obtained by concatenation $\gamma=\alpha\beta$ of two paths, set $B_\gamma:= B_\beta B_\alpha $.
  For any path $\gamma\in\Pi(\mathcal R)$ starting at $\pi$ and ending at $\pi'$ we write for short $B^*_\gamma:\Delta_{\pi'}\to\Delta_{\pi}$ for the map which is given on the lengths coordinates
%   $B^*_\gamma$ acts on the second coordinate
   by the projectivization of the linear map $B^*_\gamma$, as in \eqref{eq:Bgamma}.

%and we use the notation introduced in \S~\ref{sec:products} to  index the products of matrices $B_\gamma$ which give iterates of $\mathcal{V}$ through the \emph{path}  $\gamma$ in the Rauzy-diagrams which describe the iterate.

\subsubsection{Columns balance}
For any non-negative matrix $M\in SL(d,\mathbb Z)$ and $1\leq i\leq d$ denote (using the notation introduced by Chaika in \cite{Cha}) by $C_i(M)$ its $i$-column and by $|C_i(M)|$ the sum of its entries. Denote by $C_{\max}(M)$ and $C_{\min}(M)$ the columns for which the sum of entries are maximal or minimal respectively. We say that $M$ is $C$-\emph{balanced} for some $C>1$ if
$$\frac{|C_{\max}(M)|}{|C_{\min}(M)|}\leq C.$$
Remark that, if $B^*_\gamma$  is the transpose of one of the matrices $B_\gamma$ associated to an arrow $\gamma$ by a step of Rauzy-Veech induction (see \S~\ref{sec:basic}), since
% any elementary matrix $B^*_\gamma$
%for any arrow $\gamma$ the
$B_\gamma$ has $1$ on the diagonal and only one $1$ off the diagonal, we have that
\begin{equation}\label{eq:MB}
|C_{\max}(M\cdot B^*_\gamma)|\leq 2 |C_{\max}(M)|.
\end{equation}
It follows that
\begin{equation}\label{eq:growthB}
|C_{\max}(B^*_\gamma)|\leq 2^{|\gamma|}\text{ for any }\gamma\in \Pi(\mathcal R).
\end{equation}

\subsubsection{Unique ergodicity contraction}
For any $(\pi,\lambda)\in X(\mathcal R)$ satisfying Keane's condition let $\gamma$ be the infinite path in $\mathcal R$ that indicated successively the types of all steps of the Rauzy-Veech induction and for any natural $n$ let $\gamma_{[1,n]}$ be its prefix of length $n$, i.e.~$\gamma_{[1,n]}=\gamma_1\gamma_2\cdots \gamma_n$ where $\gamma_1,\cdots ,\gamma_n$ are the first $n$ arrows of $\gamma$.
%Recall that for any finite path $\alpha$ on $\mathcal{R}$, we denote by $\Delta_\alpha$ the symplex of data whose paths start with $\alpha$, see \S~\ref{sec:Gamma}.
Consider the simplexes  $ \Delta_{\gamma_{[1,n]}}$, defined as  in \S~\ref{sec:lambdacylinder}, see \eqref{eq:Delta_gamma}.
Then $ \Delta_{\gamma_{[1,n]}}$, for $ n\geq 1$ form a decreasing sequence of sets (simplices) containing $(\pi,\lambda)$.

As shown by Veech in \cite{Ve:gau}, for a.e.\ $(\pi,\lambda)$, we have
\begin{equation}\label{eq:decay}
\bigcap_{n\geq 1}\Delta_{\gamma_{[1,n]}}=\{(\pi,\lambda)\}\quad\text{and}\quad \lim_{n\to\infty} |C_{\max}(B^*_{\gamma_{[1,n]}})|=+\infty.
\end{equation}
The above result  is actually equivalent to unique ergodicity of almost every $T=(\pi,\lambda)$ (since the intersection in \eqref{eq:decay} can be shown to be in one-to-one correspondence with the cone of invariant measures for $T$).

\subsubsection{Volumes of cylinders}
Let us denote by $m$ the Lebesgue measure on $X(\mathcal R)=\bigcup_{\pi\in\mathcal R}\Delta_\pi$, which we recall, for any measurable $A\subset \Delta_d$ and any $\pi\in\mathcal R$,  is given by
$$m(\{\pi\}\times A)=d! \, Leb(\{tx:x\in A,\ t\in[0,1]\}).$$
%\text{ for any measurable }A\subset \Delta_d\text{ and any }\pi\in\mathcal R.\]
One can relate the Lebesgue measure of this simplex in terms of columns using the following formula (proved as Proposition~5.4 in \cite{Ve78}):
\[m(\Delta_\gamma)=\frac{1}{|C_1(B^*_\gamma)|\cdots|C_d(B^*_\gamma)|},\]
so $m(\Delta_\pi)=1$ for any $\pi\in\mathcal R$.
Moreover, by Corollary~1.2 in \cite{Ker} (see also \cite{Ve78} for details), if $B^*_\gamma$ is $C$-balanced then
\begin{equation}\label{eq:balmeas}
C^{-d}\frac{m(A_1)}{m(A_2)}\leq \frac{m(B^*_\gamma A_1)}{m(B^*_\gamma A_2)}\leq C^d\frac{m(A_1)}{m(A_2)}\quad\text{ for all measurable }\quad A_1,A_2\subset \Delta_{\pi'}.
\end{equation}

\subsubsection{Kerkhoff's lemma}
We recall now a key estimate which will be used later in \S~\ref{sec:resontanttimes2}. The statement was first proved by Kerkhoff in \cite{Ker} and is known as \emph{Kerkhoff's lemma}. The proof of this estimate is also presented by Avila, Gou\"ezel, and Yoccoz in \cite{AGY}. We here state it in the version using the notation in \cite{AGY}.
\begin{proposition}[Kerkhoff's Lemma, Corollary~1.7 in \cite{Ker} and Theorem~A.2 in \cite{AGY}]\label{prop:AGY}
For any Rauzy graph $\mathcal R$ there exists a constant $C>1$ such that for any $\gamma\in \Pi(\mathcal R)$ there exists a finite\footnote{The claim that the set of paths is \emph{finite} is not part of the statement of  Theorem~A.2 in \cite{AGY}, but since the inequality in \eqref{eq:inDC} is sharp, we can restrict to a smaller finite subset of $\Gamma(\gamma)$, without violating all its properties described in Proposition~\ref{prop:AGY}. Therefore, %from now on, we will always choose
 the set $\Gamma(\gamma)$ can always be choisen to be finite.} subset $\Gamma(\gamma)\subset \Pi(\mathcal R)$ such that for any $\tilde\gamma\in \Gamma(\gamma)$ the following properties hold:
\begin{itemize}
\item[(K1)] $\gamma$ is a prefix of $\tilde\gamma$;\vspace{1mm}
\item[(K2)]  $B^*_{\tilde\gamma}$ is $C$-balanced;\vspace{1mm}
\item[(K3)] $|C_{\max}(B^*_{\tilde\gamma})|\leq C|C_{\max}(B^*_\gamma)|$.
\end{itemize}
Moreover, $\Gamma(\gamma)$ is prefix-free, this is no path in $\Gamma(\gamma)$ is a prefix of any other path, and
\begin{equation}\label{eq:inDC}
\sum_{\tilde\gamma\in \Gamma(\gamma)}m(\Delta_{\tilde\gamma}|\Delta_\gamma)>C^{-1}.
\end{equation}
\end{proposition}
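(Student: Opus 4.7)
The proof plan is to obtain $\Gamma(\gamma)$ as the collection of extensions of $\gamma$ by a cleverly chosen stopping rule and to establish the lower bound on its total conditional measure via a renewal-type argument that exploits the contractivity of the Rauzy-Veech cocycle on the cone of length vectors. I would work conditionally on the cylinder $\Delta_\gamma$, using the fact that the renormalization $(B^*_\gamma)^{-1}\colon \Delta_\gamma\to\Delta_{\pi'}$ (where $\pi'$ is the endpoint of $\gamma$) is a bijection with bounded distortion controlled by \eqref{eq:balmeas} up to the balance constant of $B^*_\gamma$.

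The main step is to exhibit, for each permutation $\pi'\in\mathcal R$, a \emph{finite} family $\Sigma(\pi')$ of paths $\delta$ in $\mathcal R$ starting at $\pi'$, all of bounded length $\leq L_0$ depending only on $\mathcal R$, such that (i) $B^*_\delta$ is $C_0$-balanced for a uniform $C_0$, (ii) the column vectors $C_i(B^*_\delta)$ all lie in a common narrow subcone of $\mathbb R^d_{\geq 0}$, and (iii) the total Lebesgue measure $\sum_{\delta\in\Sigma(\pi')} m(\Delta_\delta)$ is bounded below by some $c_0>0$. The existence of such $\Sigma(\pi')$ follows from the Hilbert-metric contractivity of positive Rauzy-Veech matrices: a single ``large'' combinatorially positive path already forces the columns to be nearly proportional, and such positive paths occur with positive measure from every vertex of $\mathcal R$ by strong connectivity of the Rauzy graph. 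The key algebraic observation is then that for \emph{any} $\gamma$ ending at $\pi'$ and any $\delta\in\Sigma(\pi')$, the product $B^*_{\gamma\delta}=B^*_\gamma B^*_\delta$ is automatically $C$-balanced for some universal $C$: indeed $C_i(B^*_{\gamma\delta})=B^*_\gamma C_i(B^*_\delta)$, and since all $C_i(B^*_\delta)$ lie in the same narrow subcone, their images under the (non-negative) matrix $B^*_\gamma$ have $\ell^1$-norms with uniformly bounded ratios, regardless of how unbalanced $B^*_\gamma$ itself is.

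With this in hand, I would define $\Gamma(\gamma)$ as the prefix-free family obtained by a stopping time construction: for each $(\pi',\lambda')\in\Delta_{\pi'}$, let $\tau(\lambda')$ be the first time $n\geq 1$ at which the first $n$ RV steps form a path $\delta$ such that $B^*_\gamma B^*_\delta$ is $C$-balanced \emph{and} the extension $\gamma\delta$ is a prefix of some $\gamma\delta'$ with $\delta'\in\Sigma(\pi')$ (i.e.~we commit to finishing along a good path in $\Sigma(\pi')$). This yields (K1)–(K3): (K1) is by construction, (K2) is by the above algebraic fact, and (K3) follows from \eqref{eq:MB} iterated $|\delta|\leq L_0$ times, giving $|C_{\max}(B^*_{\gamma\delta})|\leq 2^{L_0}|C_{\max}(B^*_\gamma)|$, so we may take $C\geq 2^{L_0}$. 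For the measure estimate, the definition forces $\bigcup_{\tilde\gamma\in\Gamma(\gamma)}\Delta_{\tilde\gamma}\supseteq \bigcup_{\delta\in\Sigma(\pi')} \gamma\cdot\Delta_\delta$, so by the distortion bound \eqref{eq:balmeas} applied to $B^*_\gamma$ (whose entries are relevant only via the universal balance constant of $\Sigma(\pi')$, \emph{not} the global balance of $B^*_\gamma$ — this is what makes the argument work uniformly in $\gamma$) one has $\sum_{\tilde\gamma\in\Gamma(\gamma)} m(\Delta_{\tilde\gamma}\mid\Delta_\gamma)\geq c_0$, and we may absorb this into the choice of $C$. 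Finally, finiteness of $\Gamma(\gamma)$ is automatic since each member has $|\tilde\gamma|\leq |\gamma|+L_0$.

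The main obstacle is the quantitative contraction statement in the first step: producing a finite, uniform-length family $\Sigma(\pi')$ whose associated matrices are simultaneously balanced and have columns confined to a common narrow subcone. Qualitatively this is Veech's contraction \eqref{eq:decay}, but here it must be made uniform in the starting simplex $\Delta_{\pi'}$ and achieved in bounded time, which requires a combinatorial construction of a single positive ``master path'' through $\mathcal R$ (or a finite family of them) rather than relying only on the ergodicity of $\Z$. Once this is established, everything else is bookkeeping with \eqref{eq:MB} and \eqref{eq:balmeas}.
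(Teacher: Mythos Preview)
Your approach has a genuine gap in the measure estimate \eqref{eq:inDC}. The observation that appending a fixed positive path $\delta$ to any $\gamma$ yields a $C$-balanced $B^*_{\gamma\delta}$ is correct and elegant --- your column computation $|C_i(B^*_{\gamma\delta})|=\sum_k (B^*_\delta)_{ki}\,|C_k(B^*_\gamma)|$ does show that the balance constant is controlled by $\max_{i,j,k}(B^*_\delta)_{ki}/(B^*_\delta)_{kj}$, independently of $\gamma$. But the conditional measure $m(\Delta_{\gamma\delta}\mid\Delta_\gamma)$ is \emph{not} bounded below uniformly in $\gamma$. A $d=2$ example makes this transparent: with $B^*_\gamma=\left(\begin{smallmatrix}1&0\\mathbb{N}&1\end{smallmatrix}\right)$ and any fixed positive $B^*_\delta$, one computes $m(\Delta_{\gamma\delta})\asymp N^{-2}$ while $m(\Delta_\gamma)\asymp N^{-1}$, so the ratio is $\asymp N^{-1}\to 0$. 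Your parenthetical claim that \eqref{eq:balmeas} can be applied ``via the universal balance constant of $\Sigma(\pi')$, not the global balance of $B^*_\gamma$'' is precisely where the argument breaks: the distortion bound \eqref{eq:balmeas} genuinely requires $B^*_\gamma$ to be balanced, and what \emph{is} true without balance --- that the Jacobian of $B^*_\gamma$ has bounded oscillation over the compact $\Delta_\delta$ --- only compares $m(\Delta_{\gamma\delta})$ to $m(B^*_\gamma K)$ for a fixed compact $K$, not to $m(\Delta_\gamma)$.

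The actual Kerkhoff--AGY argument (which the paper cites rather than reproves) is fundamentally different for exactly this reason: it does \emph{not} use a family of extensions that is fixed in advance. Instead it runs an iterative ``catch-up'' procedure adapted to the specific imbalance of $B^*_\gamma$. The key sublemma is that for any $\gamma$ and any index $\alpha$ with $|C_\alpha(B^*_\gamma)|$ small, the set of extensions along which column $\alpha$ grows to a fixed fraction of $|C_{\max}(B^*_\gamma)|$ \emph{before} $|C_{\max}|$ increases by more than a bounded factor has conditional measure bounded below by a constant depending only on $d$. Iterating this over the $d-1$ small columns yields $\Gamma(\gamma)$ with the required uniform lower bound. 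The extensions so produced have length depending on $\gamma$, and finiteness of $\Gamma(\gamma)$ is obtained only a posteriori from the strict inequality in \eqref{eq:inDC}, as the paper's footnote indicates.
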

\noindent Kerkhoff lemma is the key ingredient in the proofs of exponential mixing of Rauzy-Veech induction and of the Teichmueller geodesic flow (see \cite{AGY} and \cite{AB}). It is also used by Chaika  in the proof in \cite{Cha} that almost every pair of IETs is disjoint. We will use it in Section~\ref{sec:resontanttimes2}.
%\begin{remark}\label{rk:finite}
%Since the inequality in \eqref{eq:inDC} is sharp, we can restrict to a smaller finite subset of $\Gamma(\gamma)$, without violating all its properties described in Proposition~\ref{prop:AGY}. Therefore, %from now on, we will always choose
% the set $\Gamma(\gamma)$ can always be choisen to be finite.
%\end{remark}

\section{Rigidity times with trimmed derivative control}\label{sec:coexistence}

 In this section we prove Proposition~\ref{prop:cohexistence}, showing the existence of \emph{rigid towers} (as defined by Definition~\ref{def:tower}) with \emph{trimmed derivative linear bounds} (in the sense of Definition~\ref{def:bounded} in \S~\ref{sec:linearbounds}).  In \S~\ref{sec:derivativesviaRV}, we first provide a more precise version of the result on trimmed Birkhoff sums  proved by the third author in   \cite{Ul:abs}, which provides the desired trimmed Birkhoff sums linear bounds, but along sequences of towers which are \emph{balanced}.
 % rigidity towers

To produce a rigid time which still has the trimmed linear derivative bounds, we will use that linear bounds in \emph{large} towers persist for a finite number of steps of the induction (see \S~\ref{sec:persistence}) and, crucially, that the Markovian structure of Rauzy-Veech induction (see \S~\ref{sec:Markov})  allows to find rigid times finitely many steps after a balanced one (as shown in \S~\ref{sec:Markovloss}).
  In the final subsection \S~\ref{sec:coexistenceproof} we explain how to combine these results to conclude the proof of Proposition~\ref{prop:cohexistence}.% to build the desired coexistence times.

%Nevertheless, we can exploit these estimates along balanced rigidity towers to build times (which will be obtained by \emph{stacking up} these towers to get a unique rigidity tower), to prove the following result, which shows that rigidity times can  \emph{cohexist} which trimmed derivative bounds:
%result, in which we find rigidity towers which fill the space and on which we have trimmed derivative bounds:

\subsection{Trimmed derivative bounds  using Rauzy-Veech induction}\label{sec:derivativesviaRV}
Let us first state
technical
version of a result proved by the last author in  \cite{Ul:abs}, which allows to construct  towers
% the Proposition~\ref{boundSf'growthprop} proved in \cite{Ul:abs}, which describes the sequence of intervals $(I^\ell)_{\ell\in\N}$
 where the linearly bounded trimmed derivative bounds hold (see Definition~\ref{def:bounded}) using suitable return times of Rauzy-Veech induction.

%The symplex $ \Delta_d$ is a metric space with the
%Let us say that a subset $K\subset \Delta_d$ is \emph{precompact} if its closure $\overline{K} \subset \Delta_d$ (then $\overline{K}$ is indeed compact with respect to the Hilbert metric on $\Delta_d$, which we will not use in this paper).
\begin{proposition}[Acceleration for bounded  trimmed derivative sums]\label{prop:accelerationset}
For any irreducible permutation $\pi$ in any Rauzy class $\mathcal{R}$, and any
positive matrix $B=B_\gamma$ associated to a neat\footnote{A \emph{neat} path $\gamma$, according to the definition given  in \cite{AGY}, is a path $\gamma$ starting and ending in $\pi$ such that $B_\gamma$ is positive, $(B^*_\gamma)^{-1}(\overline{\Theta}_\pi\setminus \{0\})\subset \Theta_\pi$ and  if $\gamma=\gamma_0\gamma_e=\gamma_s\gamma_0$, then either $\gamma_0$ is trivial, or $\gamma_0=\gamma$.  Assuming that a path is neat is a technical condition which simplifies some proofs by ensuring that each branch of the first return map on $\Delta_\gamma$ starts with the matrix $B_\gamma$.} path $\gamma$ in $\mathcal{R}$, % and  any %precompact set $K\subset \Delta_d$, % and any $\epsilon>0$,
there exists a subset $Y % =Y(\epsilon)
\subset \Theta_\pi$, that we can choose precompact and  invariant under rescaling, i.e.~such that $tY= Y $ for all real $t>0$,  and with $Leb(Y)>0$, such that the set
$$
E%= E(K)%,\epsilon)
:= (\{ \pi\}\times \Delta_B \times Y ) ^{{(1)}}= \{ (\pi,\lambda, \tau)\in \widehat{X}^{{(1)}}\, | \,\, \lambda\in \Delta_B, \, \tau \in Y \}
$$
(where $\Delta_B$ is defined as in \eqref{def:simplex})
has measure $\mu_{\widehat{\Z}}^{(1)}(E)>0$ and has the following property:
 if
% $X^{reg}\subset X$ of IETs such that for any $T= (\pi,\lambda)\in X^{reg}$, there is a \emph{lift}
 %$\widehat{T}=
 $(\pi,\lambda,\tau)$ %\in p^{-1}(\pi, \lambda) \subset \widehat{X}^{(1)}$
% given any $IET$  $T$
 is recurrent to $E$  under $\widehat{\Z}$ and $(n_\ell)_{\ell\in \N}$ is the sequence of  visits, i.e.~$$\widehat{\mathcal{Z}}^{(n_\ell)}(\pi,\lambda,\tau)\in E \quad \textrm{for\ every}\ \ell\in \N,$$ then any $f\in \pSymLog{T}$
 has linearly bounded trimmed derivative Birkhoff sums for the IET $T=(\pi,\lambda)$  along the  towers $( \mathcal{T}^{(n_\ell)}_j)_{\ell\in\N}$, $1\leq j\leq d$ over the inducting intervals $I^{(n_\ell)}_j$, $1\leq j\leq d$.
 \end{proposition}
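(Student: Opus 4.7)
\emph{Plan of proof.} The starting point is the result of \cite{Ul:abs} recalled here as Proposition~\ref{boundSf'growthprop}, which provides, for a full-measure set of IETs, a sequence of inducing intervals $(I^{(k)})_{k\in\mathbb{N}}$ along which linearly bounded trimmed derivative estimates hold. Inspecting the construction, the sequence $(I^{(k)})_{k\in\mathbb{N}}$ can be described as the returns of the Rauzy-Veech orbit of $T$ to some set $\cG \subset \widehat X^{(1)}$ of ``good times''. My plan is to show that one may replace $\cG$ by a smaller set of the product form $\{\pi\}\times \Delta_B \times Y$ required by the statement, where $Y\subset \Theta_\pi$ is precompact, rescaling-invariant and of positive Lebesgue measure, and then exploit Poincar\'e recurrence for the $\widehat{\mu}^{(1)}_\Z$-ergodic system $(\widehat X^{(1)}, \widehat{\mathcal Z})$ to produce the required sequence of visit times $(n_\ell)_{\ell\in\mathbb{N}}$.

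The characterisation of ``good times'' in \cite{Ul:abs} involves two kinds of requirements: a \emph{future/balance} condition that all tower heights at time $n$ be comparable (so that the Kochergin-type sum-of-inverse-distances bound can be applied to the trimmed part of $S_r f'$), and a \emph{past/cancellation} condition demanding that orbit segments visit both sides of each singularity in a sufficiently equilibrated way, so that the cancellations that keep $\widetilde S_h f'$ linear in $h$ actually take place. My first step is to encode balance via the Markov cylinder identity \eqref{bisided_Markov}: since $\gamma$ is neat and $B=B_\gamma$ is positive, whenever $\lambda^{(n)}\in \Delta_B$, the next $|\gamma|$ induction steps follow $\gamma$, and after these steps the lengths and heights are automatically balanced in a precompact region of $\Delta_d$. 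Dually, by the local product structure recorded in \S~\ref{sec:localproductstructure} (properties (F) and (P)) together with Remark~\ref{rk:backwardstep}, the past-cancellation condition can be rewritten as a condition on the coordinate $\tau^{(n)}$ alone; I would thus collect it into an open, rescaling-invariant cone $Y\subset \Theta_\pi$ cut out by finitely many past Rauzy-Veech symbols, i.e.\ of the form $Y=\Theta_{B_{\gamma_p}}$ for an appropriate path $\gamma_p$ terminating at $\pi$.

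The main obstacle is the rigorous verification that the cancellation mechanism in \cite{Ul:abs} genuinely factorises through the $\tau$-coordinate alone, i.e.\ that, once $\pi$ is fixed, it is measurable with respect to $\tau^{(n)}$ and does not mix with $\lambda^{(n)}$. This reduces to checking that the relative positions of the singularities within each tower at time $n$, together with the partial-quotient-type information needed to control the closest visits, is determined by the sequence of past elementary matrices, which by Remark~\ref{rk:backwardstep} is read off from $\tau^{(n)}$. The bookkeeping needed to make this precise (and in particular to allow a \emph{finite} past window to suffice, as opposed to the full two-sided history) will be the content of Appendix~\ref{sec:app}; it amounts to a careful rerun of the proof of \cite[Proposition~4.2]{Ul:abs} tracking the dependence of each condition on $\lambda$ versus $\tau$.

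Finally, I will check $\widehat\mu^{(1)}_\Z(E)>0$ for the resulting product set $E=(\{\pi\}\times \Delta_B\times Y)^{(1)}$. Recall from \eqref{def:mu} that $\widehat\mu^{(1)}_\Z$ is, up to normalisation, the projectivisation of $\mu_\Z\otimes Leb_{\Theta_\pi}$ restricted to the hypersurface $\{Area=1\}$, so by Fubini it is enough to know that $\mu_\Z(\Delta_B)>0$ and $Leb_{\Theta_\pi}(Y)>0$; both are immediate since $\Delta_B$ has non-empty interior in $\Delta_d$ and $Y$ is an open cone. Ergodicity of $\widehat{\mathcal Z}$ with respect to $\widehat\mu^{(1)}_\Z$ then gives, for $\widehat\mu^{(1)}_\Z$-a.e.\ $(\pi,\lambda,\tau)$, an infinite sequence of return times $(n_\ell)_{\ell\in\mathbb{N}}$ to $E$; by construction each such $n_\ell$ is a ``good time'' in the sense of \cite{Ul:abs}, so the conclusion of Proposition~\ref{boundSf'growthprop} applies and yields the linear bounds on $\widetilde S_r f'$ over the tower decomposition above $I^{(n_\ell)}$ required by the statement.
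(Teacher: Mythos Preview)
Your high-level architecture matches the paper's: split the ``good time'' conditions into a future/balance part encoded by $\lambda^{(n)}\in\Delta_B$ and a past part depending only on $\tau^{(n)}$, then take the product. The Fubini check for positive measure and the final appeal to ergodicity are also correct. However, your identification of the $\tau$-part contains a real gap.

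You propose that $Y$ be a \emph{cylinder} $\Theta_{B_{\gamma_p}}$ cut out by finitely many past symbols, and you flag ``allow a finite past window to suffice'' as the main obstacle. But the cancellation conditions from \cite{Ul:abs} are not combinatorial conditions on a finite block of past arrows; they are \emph{asymptotic growth-rate} conditions on the backward cocycle, of the form $\|A^{(n,n_k)}\|\le e^{\overline C_1(n_k-n)}$ and $\|A_{n_k-n}\|\le \overline C_2 e^{\epsilon n}$ for all $0\le n\le n_k$, obtained in \cite{Ul:abs} by Egorov from Oseledets convergence. There is no reason such conditions should be captured by a finite cylinder, and the paper does not claim this. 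What the paper actually does in Appendix~\ref{sec:app} is different and simpler: it observes that the backward cocycle $W:=Z^*\circ\widehat{\mathcal Z}_E^{-1}$ is $\lambda$-constant (by Remark~\ref{rk:backwardstep}, backward steps are determined by $(\pi,\tau)$ alone), and that this property survives inducing to $p'$-saturated subsets (Lemma~\ref{lemma:induced}). Hence when one reruns the Egorov argument inside the growth lemmas (Lemmas~\ref{expgrowthlemmaRV} and~\ref{subexpgrowthlemmaRV}), the convergent functions are already constant on $p'$-fibers, so the Egorov sets are automatically of the form $\{\pi\}\times E^+\times Y_i$ with $Y_i\subset\Theta_\pi$ merely measurable. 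Three successive applications give $Y_B\supset Y_C\supset Y_D$, and one sets $Y:=B_\gamma^*(Y_D)$; precompactness comes for free because $Y_D\subset\Theta_{B_\gamma B_\gamma}$. So the point is not to shrink to a finite past, but to track $\lambda$-constancy of the backward cocycle through the three inducing steps---the estimates of \cite{Ul:abs} then go through verbatim.
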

\noindent For the definition of linearly bounded trimmed Birkhoff sums along the towers, we refer the reader to Proposition~\ref{boundSf'growthprop}, see \eqref{eq:propconclusion}.
% i.e.~there exists a constant $M>0$ such that
%% and any $x\in I^{(n_l)}_j$ and any $0\leq r<h^{(n_l)}_j$
%\begin{equation} \label{eq:propconclusion}
%\left| \widetilde{S}_r f'(x)\right| \leq M h^{(n_l)}_j, \quad \text{for \ all} \ 1\leq j\leq d, \ x\in I^{(n_l)}_j, \ 0\leq r\leq h^{(n_l)}_j,
% \end{equation}
%  where  $h^{(n_l)}_j$ is the first return time of $I^{(n_l)}_j$ to $I^{(n_l)}$, i.e.~the %height of the tower $Z^{(n_l)}_j$.

\begin{remark}\label{rk:cancellations_are_balanced}
Notice that since the length vectors $\lambda^{(n_\ell)}$ belong to a precompact $\Delta_B$ by definition of $E$, the induction times $(n_\ell)_{\ell\in\N}$ are \emph{balanced}, i.e. the ratios $\lambda^{(n_\ell)}_i/\lambda^{(n_\ell)}_j$ are all uniformly bounded.
\end{remark}
\noindent This Proposition is implicitly proved in \cite{Ul:abs}: in Proposition~4.2 of \cite{Ul:abs} the existence of a set $E\subset \widehat{X}^{(1)}$ with the above property is proved. The particular form of $E$ in Proposition~\ref{prop:accelerationset}, though, namely that $E$ has the product structure $E=(\{ \pi\} \times \Delta_B \times Y )^{(1)}$ for some $Y\subset \Theta_\pi$,  plays a crucial role in the following arguments. The fact
 that $E$ \emph{can} be chosen to be of the form is implicit in the proof of Proposition~4.2 of \cite{Ul:abs}. We provide some details and more comments in Appendix \ref{sec:app}.
We will use this proposition together with the Markovian loss of memory described in the next subsection to prove existence of rigid times in Proposition~\ref{prop:cohexistence}.
\subsection{Markovian loss of memory}\label{sec:Markovloss}
We now show that the Markovian structure described in \S~\ref{sec:statisticalRV} allows to find a rigid Rauzy-Veech time finitely many steps after one of the times given by Proposition~\ref{prop:accelerationset}.
\begin{lemma}\label{lemma:loss}
Let $B:=B_\gamma$ be the matrix associated to a path $\gamma$ in $\mathcal{R}$ of lenght $|\gamma|=n$, starting at $\pi$ and ending at $\pi'$. Given any (open and) non-empty subset $\mathcal{N} \subset \Delta_d$, there exists a non-empty subset $\mathcal{U}$ of IETs in $\{\pi\}\times \Delta_B$, of the form $\mathcal{U} = \{\pi\} \times U$ for $U\subset \Delta_B$ (open and) non-empty  such that, for any
%   open and non-empty subset of  $\Delta_d$, %if we set  $O_{\pi'}:= \{\pi'\}\times O$,
% where  $O \subset \Delta_d$ be any non-empty open subset. Then
%there exists an IET $T$ such that
%$$\mathcal{U}:=\mathcal{V}^{-n}(\{\pi'\}\times O) \cap \{\pi\}\times \Delta_B$$ non-empty. Thus that there exists an open set $\mathcal{U}$ in $ \{\pi\}\times \Delta$ of IETs of the form $T = (\pi, \lambda)$ such that for any
$$
T=(\pi,\lambda )\in \mathcal{U}% \{\pi\}\times \Delta_B,
 \quad \Rightarrow \quad \mathcal{V}^n(T) \in \{\pi'\}\times \mathcal N.
$$
%$$
%\lambda \in \Delta_B, \ \text{and}\ \lambda^{(n)}\in  O,
%$$
%where $\lambda^{(n)}$ is such that $\mathcal{V}^n(T)=(\pi', \lambda^{(n)})$.
\end{lemma}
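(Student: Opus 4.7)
The plan is to exploit directly the formulas from \S\ref{sec:products}, which describe Rauzy--Veech iterates as the projective action of a matrix $B_\gamma^*$. The key observation is that the map $\mathcal V^n$, when restricted to the cylinder $\{\pi\}\times \Delta_B$, is conjugated (via projective action) to the restriction of $(B^*)^{-1}$ to $\Delta_d$, and in particular is a homeomorphism onto $\{\pi'\}\times \Delta_d$. Once this is in place, producing a non-empty open set $U$ whose image lies in $\mathcal N$ is essentially tautological.

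More concretely, my plan is as follows. First I would recall that by the definition of $\Delta_B$ in \eqref{def:simplex} and \eqref{eq:Delta_gamma}, every $\lambda\in\Delta_B$ can be written uniquely as $\lambda = B^*\mu / |B^*\mu|$ for some $\mu\in\Delta_d$, and for any such $\lambda$ the first $n$ steps of Rauzy--Veech induction on $(\pi,\lambda)$ follow the prescribed path $\gamma$. In particular, by formula \eqref{eq:Bgamma} (or equivalently \eqref{eq:Bgamma1}), one has
\[
\mathcal V^n(\pi,\lambda) \;=\; \Bigl(\pi',\, \tfrac{(B^*)^{-1}\lambda}{|(B^*)^{-1}\lambda|}\Bigr)\;=\;(\pi',\mu).
\]

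Second, I would simply define
\[
U \;:=\; \Bigl\{\,\tfrac{B^*\mu}{|B^*\mu|}\ : \ \mu\in\mathcal N\,\Bigr\}\;\subset\;\Delta_B,
\qquad \mathcal U := \{\pi\}\times U,
\]
and verify the stated properties. Since $B=B_\gamma\in SL(d,\mathbb Z)$ is invertible with nonnegative entries, the projective map $\mu\mapsto B^*\mu/|B^*\mu|$ is a homeomorphism from $\Delta_d$ onto $\Delta_B$; hence $U$ is the image of the non-empty open set $\mathcal N$ under a homeomorphism, so $U$ is open in $\Delta_B$ and non-empty. For any $T=(\pi,\lambda)\in\mathcal U$ we have by construction $\lambda=B^*\mu/|B^*\mu|$ for some $\mu\in\mathcal N$, and the formula recalled above yields $\mathcal V^n(T)=(\pi',\mu)\in\{\pi'\}\times\mathcal N$, as required.

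There is no real obstacle here: the lemma is essentially a restatement of the Markovian/local-product description of $\widehat{\mathcal V}$ from \S\ref{sec:statisticalRV}, saying that after $n$ steps of induction the future ($\lambda'$-coordinate) has been reset to an arbitrary point of $\Delta_d$, independently of the past. The only minor subtlety to highlight in the write-up is that the map $\mathcal V^n$ is defined only on a full-measure set of $\{\pi\}\times\Delta_B$ (those IETs satisfying the Keane condition), but since $U$ is open and the Keane locus is of full measure, $U$ still contains plenty of IETs where the identity $\mathcal V^n(T)\in\{\pi'\}\times\mathcal N$ holds; if one wants an honest inclusion on \emph{all} of $\mathcal U$, one observes that the formula $\mathcal V^n(\pi,\lambda)=(\pi',(B^*)^{-1}\lambda/|(B^*)^{-1}\lambda|)$ makes sense and is correct for \emph{every} $\lambda\in\Delta_B$, since the first $n$ Rauzy--Veech steps are determined purely by the simplex $\Delta_B=\Delta_\gamma$ and do not require Keane's condition.
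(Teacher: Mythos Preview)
Your proof is correct and arrives at exactly the same explicit description of $U$ as the paper, namely $U=\{B^*\mu/|B^*\mu|:\mu\in\mathcal N\}$. The only cosmetic difference is that the paper routes the argument through the natural extension $\widehat{\mathcal V}$ and the Markovian relation \eqref{bisided_Markov} before projecting down, whereas you work directly with the non-invertible map $\mathcal V$ via formula \eqref{eq:Bgamma}; your route is slightly more direct since the $\tau$-coordinate plays no role in the statement.
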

\begin{proof}
The statement follows from the Markovian structure described in  \S~\ref{sec:Markov2}. Indeed, if we denote by $B_\gamma  = B_1 \dots B_n$ the Rauzy-Veech matrix corresponding to the $n$ top/bottom moves prescribed by the path $\gamma$ and denote by
$\Delta_{B_\gamma}$ and $\Theta_{B_\gamma}$ the simplices defined in \S~\ref{sec:lambdacylinder} and \S~\ref{sec:Markov2} respectively,
by the Markovian relation \eqref{bisided_Markov} for $B:=B_\gamma$, %which can be applied
%times we can see that, since $\gamma$ starts in $\pi $ and ends in $\pi'$,
$$
 \widehat{\mathcal{V}}^n\left(  \{\pi\}\times \Delta_B \times \Theta_{\pi}\right) =
 \{\pi'\}\times \Delta_d \times \Theta_{B} ,
$$
i.e.~$ \widehat{\mathcal{V}}^n$ maps injectively and surjectively $\{\pi\}\times \Delta_B \times \Theta_{\pi}$ to $ \{\pi'\}\times \Delta_d \times \Theta_{B}$.  Therefore,  given any non-empty (open) $\mathcal N\subset \Delta_{d}$, the
set $ \{\pi'\}\times \mathcal{N} \times \Theta_{B}$ has a non-empty (open) preimage.
%$$
%\widehat{\mathcal{U}}:=\widehat{\mathcal{V}}^{-n}  \left( \{\pi'\}\times \mathcal{N} \times \Theta_{B}\right)\subset  %\{\pi\}\times \Delta_B \times \Theta_{\pi}
%$$
%is non-empty
%and open, since the considered branch of $ \widehat{\mathcal{V}}^{-n}$ is given by a projective linear, hence %continuous, map.
%%$$\mathcal{U}:=\mathcal{V}^{-n}(\{\pi'\}\times O) \cap \{\pi\}\times \Delta_B$$ non-empty. Thus that there exists %an open set $\mathcal{U}$ in $ \{\pi\}\times \Delta$ of IETs of the form $T = (\pi, \lambda)$ such that
%  Since the map $\widehat{\mathcal{V}}^{n} $ is $\tau$-constant and the target set $ \{\pi'\}\times \mathcal N \times \Theta_{B}$ is saturated by $\tau$-fibers,
%   the set $\widehat{\mathcal{U}}$ has the product  form $\{\pi\} \times U \times \Theta_\pi$ for some  open subset $U\subset \Delta_B$ (indeed if $\mathcal{V}^n(\pi,\lambda, \tau) \in \{\pi'\}\times \mathcal{N}\times \Theta_{B}$ for one $\tau$, then
%   for any $\tau'\in \Theta_\pi$ we also have that
%$\mathcal{V}^n(\pi,\lambda, \tau') \in \{\pi'\}\times \mathcal{N}\times \Theta_{B}$).
 Explicitely, one can check that the preimage has the form
$$ \{\pi\}\times \mathcal{U}\times \Theta_\pi, \quad \text{where}\ \
\mathcal{U}=\left\{\frac{B^*\lambda}{|B^*\lambda|}:\lambda\in\mathcal N\right\}.$$
%  is the desired set.
   %is the desired open non-empty set such that if $\lambda\in V$, then $\mathcal{V} (\pi, \lambda) \in \{\pi'\} \times O$.
   This concludes the proof.
\end{proof}
%Applying this Lemma to the set $E$ we get
%\begin{corollary}
%There exists  a sequence $(n_k)_{k\in\mathbb N}$
%\end{corollary}
% First, for any $\epsilon>0$, we construct an open set $\mathcal{U}_\epsilon$ of IETs which are \emph{balanced}, but at the same time contain a tower which occupies  a proportion $1-\epsilon$ of space, see Lemma~\ref{lemma:opensets}. These open sets are obtained by perturbation of the rank-one periodic IETs built in the previous subsection. Then using the acceleration which gives linearly bounded trimmed derivatives   in the subsection \S~\ref{sec:derivativesviaRV} (namely using Proposition~\ref{prop:accelerationset}) and first returns to a shirking sequence  of such sets $\mathcal{U}_{\epsilon_n}$ with $\epsilon_n\to 0$, we obtained the desired sequence which verifies simultaneously  \emph{rigid} and trimmed derivative bounds (see the proof of Proposition~\ref{prop:cohexistence} at the end of this subsection).

%Notice that the rigidity time in this case will not the heigh of a tower, but a \emph{sum} of heights of the towers, so that this type of rigidity is compatible with the  balance of the towers where the bound on trimmed derivative holds.

\subsubsection{Persistence of linear bounds}\label{sec:persistence}
We will need also the following Lemma, that shows that linear bounds on trimmed derivatives persist in large towers obtained by cutting and stacking a finite number of Rohlin towers.

Given a Keane IET, if $\{ I^{(n)}, \ {n\in\N}\}$ %, $1\leq j\leq d$
 is the sequence of inducing intervals given by (an acceleration of) Rauzy-Veech induction, we denote by $( \mathcal{T}^{(n)}_j)_{n\in\N}$, $1\leq j\leq d$ the Rohlin towers  over the continuity intervals $I^{(n)}_j$ given  by
$$
\cC^{(n)}_j:= \left\{  I^{(n)}_j,T(I^{(n)}_j), \dots, T^{h^{(n)}_j-1}I^{(n)}_j\right\}, \qquad 1\leq j\leq d,
$$
where $h^{(n)}_j$ is the first return time of  $I^{(n)}_j$ to $I^{(n)}$.
%, i.e.~there exists a constant $M>0$ such that:Let $I$
Recall Definition~\ref{def:bounded} for the definition of $M$-bounded trimmed derivatives.

\begin{lemma}\label{lemma:finitecombination}
Given a Keane IET, let $\mathcal{T}^{(n)}_j$, for $n\in \mathbb{N}$ and $1\leq j\leq d$, be Rohlin towers for given by Rauzy-Veech induction. Assume that there exists a subsequence $(n_\ell)_{\ell\in\N}$
and a constant $M>0$  such that
% over the intervals  $\mathcal{T}^{(n)}_j$
%  given by an acceleration of Rauzy-Veech induction such that for some $M_0>0$,
  $f$ has $M$-bounded trimmed derivatives along each of  the sequences of towers $(\mathcal{T}^{(n_\ell)}_j)_{\ell\in\N}$  for  $1\leq j\leq d$.
  %  for any $1\leq j\leq d$. % with a uniform constant $M>0$, as in \eqref{eq:propconclusion}.  $( \mathcal{T}^{(n_\ell)}_j)_{\ell\in\N}$, for $1\leq j\leq d$, are Rohlin towers for $T$

\noindent Then if  $(\mathcal{T}^\ell)_{\ell\in\N}$ is a sequence of towers of the form $\mathcal{T}^\ell:= \mathcal{T}^{(m_\ell)}_{j_\ell}$, where $(m_\ell)_{\ell\in\N}$ is subsequence  and $j_\ell\in\{1,\ldots,d\}$ for $\ell\in\N$ are such that:
\begin{itemize}
\item[(A1)] {\rm (finite linear combination)} there exists natural $K$ such that, for every $\ell \in \mathbb{N}$, we have that
%$(m_\ell)_{\ell\in\N}$ is a subsequence such that
$m_\ell\geq n_\ell$ and  $  h^{(m_\ell)}_{j_\ell} $ can be decomposed into at most $K$ towers of level $n_\ell$, i.e.\
$$
  h^{(m_\ell)}_{j_\ell} = \sum_{k=0}^{k_\ell}   h^{(n_\ell)}_{i_k}, \qquad \text{for\ some} \ \ k_\ell < K \quad  \text{and}\quad  i_0,i_1,\dots , i_{k_\ell }\in \{1,\dots, d\};
$$
\item[(A2)] {\rm (large tower)}
there exists $\delta>0$  such that the tower $\mathcal{T}^{(m_\ell)}_{j_\ell}$ has measure
$$\left|\mathcal{T}^{(m_\ell)}_{j_\ell}\right|= \lambda^{(m_\ell)}_{j_\ell} h^{(m_\ell)}_{j_\ell}\geq \delta >0 \quad\text{for all}\quad\ell\in\N,$$%\item[iii.]
\end{itemize}
then
% is the sequence $m_\ell=n_\ell+ \ell_0$ for some positive integer $\ell_0$ such that there exists $\delta>0$ and $N>0$ such that
% given by an acceleration of Rauzy-Veech induction such that for some $M_0>0$,
  $f$ has $M'$-bounded trimmed along $(\mathcal{T}^\ell)_{\ell\in\N}$ where  $M'$ depends only on $K$ and $\delta$ coming from
   \emph{(A1)} and \emph{(A2)} respectively.
 \end{lemma}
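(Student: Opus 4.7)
The plan is to decompose the Birkhoff sum over the large tower $\mathcal{T}^{(m_\ell)}_{j_\ell}$ as a sum of Birkhoff sums over the sub-towers at level $n_\ell$ provided by (A1), and then to carefully account for the mismatch in trimming between the two scales. Given $x \in I^{(m_\ell)}_{j_\ell}$ and $r \leq h^{(m_\ell)}_{j_\ell}$, the orbit segment $\{T^i x\}_{i=0}^{r-1}$ returns to $I^{(n_\ell)}$ at instants $0 = t_0 < t_1 < \cdots < t_s \leq r < t_{s+1}$ with $t_{k+1} - t_k = h^{(n_\ell)}_{i_k}$; setting $y_k := T^{t_k} x \in I^{(n_\ell)}_{i_k}$ and writing $\widetilde h_k := h^{(n_\ell)}_{i_k}$ for $0 \leq k < s$ and $\widetilde h_s := r - t_s$, the decomposition has $s \leq K$ full sub-tower traversals followed by at most one partial traversal. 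The cocycle identity together with the assumed $M$-bound on each sub-tower yields $\sum_{k=0}^{s} |\widetilde S_{\widetilde h_k} f'(y_k)| \leq 2 M h^{(m_\ell)}_{j_\ell}$, where the factor $2$ absorbs the possible overshoot from the partial last piece (since $r - t_s < h^{(n_\ell)}_{i_s}$ while the available bound for it is $M h^{(n_\ell)}_{i_s}$).

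Next I will compare $\widetilde S_r f'(x)$, the trimmed sum over the large tower, with the sum $\sum_{k=0}^{s} \widetilde S_{\widetilde h_k} f'(y_k)$ of the trimmed sub-tower sums. The two differ only by a spike-correction term: for each singularity $\beta_i$ and each side $\pm$, the sub-tower trimming removes one spike $C_i^{\pm}/p_{k,i}^{\pm}$ (where $p_{k,i}^{\pm} := m_i^{\pm}(y_k, \widetilde h_k)$) per sub-tower, whereas the large-tower trimming removes only the single overall-closest spike $C_i^{\pm}/m_i^{\pm}(x, r)$. Since $m_i^{\pm}(x,r) = \min_k p_{k,i}^{\pm}$ by definition, after cancelling the contribution of the overall-closest sub-tower $k^*(i)$ the residual correction reduces to $\sum_i \sum_{k \neq k^*(i)} C_i^{\pm}/p_{k,i}^{\pm}$, a sum of at most $2dK$ reciprocals of \emph{non-minimal} sub-tower closest-visit distances.

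The crux of the argument, and the main obstacle, will be to bound this residual using the large-tower hypothesis (A2). Since $T$ acts as a translation on each continuity interval and the floors of $\mathcal{T}^{(m_\ell)}_{j_\ell}$ are disjoint intervals of common width $\lambda^{(m_\ell)}_{j_\ell}$, any two distinct orbit points $T^i x, T^j x$ with $0 \leq i, j < r$ occupy the same offset within their respective floors and thus lie at mutual distance $\geq \lambda^{(m_\ell)}_{j_\ell}$. Ordering the distances of the orbit points from $\beta_i$ on one side (say the right) as $d_1 < d_2 < \cdots$, this yields the geometric separation estimate $d_n \geq (n-1)\lambda^{(m_\ell)}_{j_\ell} \geq (n-1)\delta/h^{(m_\ell)}_{j_\ell}$, where the last inequality uses (A2). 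Since the residual values $p_{k,i}^{\pm}$ for $k \neq k^*(i)$ are bounded below by $d_2, d_3, \ldots$, the residual spike contribution is at most $(h^{(m_\ell)}_{j_\ell}/\delta)(1+\log K)\sum_i (C_i^+ + C_i^-)$. Combining with the bound on the trimmed sub-sums gives $|\widetilde S_r f'(x)| \leq M' h^{(m_\ell)}_{j_\ell}$, with $M'$ depending only on $M$, $K$, $\delta$, and the fixed singularity data $\{C_i^{\pm}\}$, as required.
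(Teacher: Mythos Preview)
Your proof is correct and follows essentially the same approach as the paper: decompose the orbit into at most $K$ sub-tower pieces, bound each trimmed sub-sum by the hypothesis, and control the spike-correction term using that distinct orbit points in $\mathcal{T}^{(m_\ell)}_{j_\ell}$ are $\lambda^{(m_\ell)}_{j_\ell}\geq \delta/h^{(m_\ell)}_{j_\ell}$--separated. The only cosmetic differences are that the paper avoids your factor~$2$ by noting $\sum_{k=0}^{s} h^{(n_\ell)}_{i_k}\leq \sum_{k=0}^{k_\ell} h^{(n_\ell)}_{i_k}=h^{(m_\ell)}_{j_\ell}$, and bounds the residual spikes more crudely (each term $\leq h^{(m_\ell)}_{j_\ell}/\delta$, giving a factor $K$ rather than your sharper $1+\log K$), arriving at $M'=M+KC_{\mathrm{sum}}/\delta$.
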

\noindent The first assumption (A1) implies that the tower $\mathcal{T}_{\ell}$ is obtained by cutting and stacking at most $K$ Rohlin towers  $\mathcal{T}^{(n_\ell)}_{j}$ of the previous level $n_\ell$. Notice that this condition is automatically satisfied if
 the product of the cocycle matrices
 % $Z(n_\ell,m_\ell)$
  between time $n_\ell$ and $m_\ell$ has uniformly bounded norm independently on $\ell$. % (in this case it is enough to take $N $ to be the bound for the norm $ \Vert Z(n_\ell,m_\ell)\Vert $).
 % assumption $(A1)$ is automatically satisfied with
% is a Rohlin tower given by Rauzy-Veech induction for time $I^{n+n_0}$, then the assumption $(ii)$ holds with $N = |Z|$
Notice that by condition (A2) the $M'$-trimmed derivative bound holds along any subsequence $\mathcal{T}^{(m_\ell)}_{j_\ell}$ if all the towers at level $m_\ell$ happen to be balanced\footnote{While for the trimmed derivative bounds to hold, it is necessary to have a lower bound on the measure of the towers, we remark that one can still prove an $M$-bound for $N$-\emph{trimmed Birhoff sums} of the derivative (namely if not only the closest visits, but the first $N$-closest visits are all trimmed).}.
The rest of this subsection is spent on the proof of this Lemma.
% are \emph{large} towers, i.e.~towers

\begin{proof}[Proof of Lemma~\ref{lemma:finitecombination}]
For short, write
$$
J_\ell:= I^{(m_\ell)}_{j_\ell}, \qquad  h_\ell:= h^{(m_\ell)}_{j_\ell}, \quad \text{and} \quad\mathcal{T}_\ell := \mathcal{T}^{(m_\ell)}_{j_\ell}
$$
for respectively the base, the height and the tower $\mathcal{T}_\ell$.
Fix any  $x$ in the base $J_\ell$.
% := I^{(m_\ell)}_{j_\ell}$ of the tower $\mathcal{T}_\ell := \mathcal{T}^{(m_\ell)}_{j_\ell}$ and denote by $h_\ell:= h^{(m_\ell)}_{j_\ell}$ the tower height.
Since the towers of level $m_\ell$ are obtained by cutting and stacking towers of level $n_\ell$,
%Moreover, for any $x$ in the base $B_k$ of $\mathcal{T}_k$
and any  $0\leq r \leq h_\ell$,
we can decompose the Birkhoff sum over the orbit $\mathcal{O}_T(x,r)$ (which has one point for each floor of the tower $\mathcal{T}_\ell$) into at most $K$   Birkhoff sums, each of length at most  $h^{(n_{\ell})}_{i}$ for some $1\leq i\leq d$, as follows.

% intersects $I^{(n_{\ell_k})}$ a number $0\leq K\leq N$ of times  and,
Set  $x_0:=x$ and denote by $x_{1}, x_2, \dots , x_{n}$ ($n\leq k_\ell$) the successive visits of $\mathcal{O}_T(x,r)$  to  $I^{(n_{\ell})}$.
Let $i_0,\ldots,i_n $ be the sequence of indexes in $\{1,\ldots, d\}$ such that
$$x_i\in I^{(n_\ell)}_{i_k}, \quad \text{for}\  0\leq k\leq n.$$
To simplify the notation, let us also write $h^\ell_k:= h^{(n_{\ell})}_{i_k}$ for $0\leq k\leq n-1$.
Then, we have the decomposition:
%$$
%S_{h_k}f'(x) = \sum_{i=0}^{N-1} S_{h^{(n_{\ell_k})}}_{j_i}} f'(x_i) %=  \sum_{i=0}^{N-1} \left( \widetilde{S}_{h^{(n_{\ell_k})}_{j_i}}.
%$$
\begin{equation}
\label{eq:BSdecomp}
S_{r}f'(x) = \sum_{k=0}^{n-1} S_{h^\ell_k} f'(x_k) + S_{r_n} f'(x_{n}), \quad \text{where} \, 0\leq r_n\leq h^{(n_{\ell})}_{i_{n_\ell}}.
%=  \sum_{i=0}^{N-1} \left( \widetilde{S}_{h^\ell_n}}.
\end{equation}
To keep the notation compact, set also $h^\ell_n:= r_n$. Then,  considering the lengths of the sums (which are heights of towers), we also have that
\begin{equation}\label{eq:towercombination}
r= h^{(n_\ell)}_{i_0} +h^{(n_\ell)}_{i_1} +  \dots + h^{(n_\ell)}_{i_{n-1}}+ r_n=: h^\ell_0+h^\ell_1 +  \dots + h^{\ell}_{n-1}+h^\ell_n.
\end{equation}
By assumption (A1), since $r\leq h_\ell$, we have that
% can be written as linear combination of at most $K$ heights level $n_\ell$,  it follows that
 the number $n$ of terms in the decomposition is bounded by $n< K$.

For each of the Birkhoff sums that appear in the decomposition \eqref{eq:BSdecomp},  recalling the definition of trimmed Birkhoff sums and related notation (see \S~\ref{sec:trimmedBS def}) and using the trimmed Birkhoff sums derivative control for the towers $\mathcal{T}^{(n_{\ell})}_{i_k}$, we have that
\[
\left| S_{h^\ell_k} f'(x_k)  + \sum_{j=0}^{d-1} \frac{C^+_j}{m_j^+(x_k, h^\ell_k)} - \sum_{j=1}^{d} \frac{C^-_j}{m_j^-(x_k, h^\ell_{k})} \right| = \left| \widetilde{S}_{h^\ell_k} f'(x_k) \right|\leq M h^{(n_{\ell})}_{i_k},
\]
for $0\leq k\leq n$.%, and similarly we can estimate the reminder trimmed Birkhoff sum by $Mr$. {Taking this into account, to simplify the notation from now on we will write $h^{(n_\ell)}_{i_n}$ instead of $r$.
%$$\left| S_{r} f'(x_{i(r)}) - \sum_{j=0}^{d-1} C_j^+ m_j^+(x_{n}, r) - \sum_{j=1}^{d}C_j^- m_j^-(x_{n},r) \right| = \left| \widetilde{S}_{r} f'(x_{n})\right| \leq M r.$$
{ If now,  for every $j$ involved, we \emph{exclude} the contributions of the closest points of the orbit $\mathcal{O}_T(x,r)$ to a singularity $\beta_j$ from the right and left respectively, namely the terms
%${C_j^\pm}/{ m_j^\pm (x_\ell, h^{(n_\ell)}_{i_{\ell}})}$
$$\frac{C_j^+}{{ m_j^+ (x_k, h^\ell_{{k}})}}\  \text{for}\  0\leq k\leq n%,\ 1\leq j\leq d,
 \ \text{and} \   \frac{C_j^-}{{ m_j^- (x_k, h^\ell_{k})}}
  %, \ 0\leq j\leq d-1,
\  \text{for}\  0\leq k \leq n,
  $$
 \emph{except}  the \emph{largest} term of  each of the groups of $n+1$ contributions, that  is achieved for some  $ 0\leq k^\pm(j)\leq n $ corresponding to the respective closest visit, so that
%${C^\pm_j}/{m_j^\pm(x_{k^\pm_j}^{(k)},h_{i_{k^\pm_j}}^{(n_k)})}, \quad \text{for\ some}\ 0\leq k^\pm_j\leq n $
$$\frac{C^\pm_j}{m_j^\pm(x_{k^\pm_j},h^\ell_{{k^\pm_j}})}:= \max \left\{ \frac{C_j^\pm}{{ m_j^\pm (x_k, h^{\ell}_{k})}}\ : \ {0\leq k \leq n} \right\}. $$
%\, \quad \text{for\ some}\ 0\leq \ell^\pm(j)\leq n $$ corresponding to the respective closest visit.
%namely
%$$
%m_j^\pm (x, r) = \min \{ m_j (x_\ell , h^{(n_\ell)_{i_\ell}}), \ 0\leq \ell \leq n-1  \}\cup \{ m^\pm_j (x_{n, r'} \} ,
%$$
 We claim that the contribution of all the other closest visits can be estimated by $2C_j^\pm h_\ell$ for any  $\ell$  sufficiently large.
%along a tower $Z^{n_\ell}_i$, we claim that we can estimate each of the other visits  by $C/h_k$.
Indeed, all the points of $\mathcal{O}_T(x,r)$ belong to distinct floors of the tower $\mathcal{T}_{\ell}$ and hence are $|J_\ell|$ spaced; thus, already the second closest (and hence any other point in the orbit) to each singularity are at least $|J_\ell|$ far from the respective singularity. Moreover,   $|J_\ell|\geq \delta/h_\ell$  (since the tower $\mathcal{T}_\ell$ by assumption (A2) has measure $|J_\ell| h_\ell\geq\delta$). In summary, we have
\[m_j^\pm(x_{k^\pm_j}^k,h_{i_{k^\pm_j}}^{(n_\ell)})=m_j^\pm(x,h_\ell)\quad\text{and}\quad m_j^\pm(x_{k}^k,h_{i_{k}}^{(n_\ell)})\geq \frac{\delta}{h_\ell}\ \text{ if }\ k\neq k^\pm_j.\] }
\noindent Using this, and recalling that   by definition trimmed of Birkhoff sums
\[ \widetilde{S}_{h_\ell}f'(x)   = {S}_{h_\ell}f'(x) +  \sum_{j=0}^{d-1} \frac{C^+_j}{ m_j^+(x, r)} - \sum_{j=1}^{d}\frac{C^-_j}{ m_j^-(x,r)} ,\] %\quad
%h_k=\sum_{0\leq k<N}h^\ell_{i_k}, \quad r=\sum_{0\leq k\leq n}h^\ell_{i_k}, \]
using this remark and the trimmed Birkhoff sums
we can rewrite the bounds in the decomposition~\eqref{eq:BSdecomp}, denoting  $C_{sum}:=\sum_{j=0}^{d-1} C^+_j+\sum_{j=1}^{d} C^-_j$,  to get:
%we then get
\begin{align*}
\left| \widetilde{S}_{h_\ell}f'(x)  \right|  &\leq \sum_{k=0}^{n} |\widetilde{S}_{h^\ell_k} f'(x_k)|+ \sum_{j=0}^{d-1} C^+_j\sum_{\substack{0 \leq k\leq n \\ k \neq k^+_j}}\frac{1}{m_j^+(x_k, h^\ell_k)} + \sum_{j=1}^{d} C^-_j\sum_{\substack{0\leq k\leq n\\ k \neq k^-_j}}\frac{1}{m_j^-(x_k, h^\ell_k)}\\
&\leq \sum_{k=0}^{n}Mh^{(n_{\ell})}_{i_k}+n\frac{C_{sum}h_\ell}{\delta}\leq Mh_\ell+\frac{KC_{sum}}{\delta}h_\ell
\leq M' h_\ell, % N M r + C'  h_\ell
\end{align*}
where in the last inequality we used \eqref{eq:towercombination} and set $M':= M + K C_{sum}/\delta$
(since  $n\leq K$ and $\sum_{k=0}^{n}h^{(n_{\ell})}_{i_k}\leq \sum_{k=0}^{k_\ell}h^{(n_{\ell})}_{i_k}= h_\ell$).
%where $C'$ depends only on $d$ and the largest constant $C_i^\pm$.
This shows that the claimed trimmed Birkhoff sums derivative control holds for $f$ along the sequence of towers $(\mathcal{T}_\ell)_{\ell\in\N}$, thus concluding the proof.
\end{proof}

\subsection{Existence of rigid  times with trimmed derivatives bounds}\label{sec:coexistenceproof}
{In this subsection we prove  Proposition~\ref{prop:cohexistence}, thus showing the existence of times where the trimmed Birkhoff sums of the derivative bound  coexist with rigidity. The idea is to produce rigid times by considering visits to the product set $E$ given by Proposition~\ref{prop:accelerationset} and to exploit the memory loss (given by Lemma~\ref{lemma:loss}) to concatenate these visits to visit to a set which gives rigidity (simply given by asking that the first tower has large area) which happen  after finitely many iterations of Rauzy-Veech induction. Finally, the persistence of the trimmed derivative bounds (given by Lemma~\ref{lemma:finitecombination}) is used to show that the bounds still hold at these rigidity times.
%We can now prove Proposition~\ref{prop:cohexistence}.
\begin{proof}[Proof of Proposition~\ref{prop:cohexistence}]
 We  split the proof in steps for clarity.
\smallskip

\noindent{\it Step 1 (Construction of the rigidity neighbourhoods)}:
Fix a permutation $\pi$ of $d$ intervals and $\epsilon>0$.
 Consider the following open set $\mathcal{N}_1:= \mathcal{N}_1(\epsilon)\subset \Delta_d$ of lengths data
$$
%\mathcal{N}_\pi
\mathcal{N}_1=\mathcal{N}_1 (\epsilon):= \left\{  \lambda = (\lambda_1,\dots, \lambda_d) \in \Delta_d  \ : \ \lambda_1 >1-\frac{\epsilon}{2} \right\}.
$$
and notice that if $T=(\pi,\lambda)$ has length datum $\lambda\in \mathcal{N}_1(\epsilon)$, then the \emph{displacement}  $\delta_1$ of the first (left-most) interval $I_1$ (given by $\delta_1:=T^n x - x$ for any $x\in I_1$)
%set of IETs, which correspond to the IETs whose lenghts datum belongs to the
%open subset of the simplex $\Delta=\Delta_\pi$ for which
% the first interval $I_1$ has $|I_1|\leq 1-\epsilon$  and
%$$
%\mathcal{N}_\pi (\epsilon):= \{ (\pi, \lambda )\, | \ \ \lambda_1 >1-\epsilon \}.
%$$
%Notice that if $T\in \mathcal{N}_\pi (\epsilon)$, then
$T $ %is $\epsilon$-rigid (since
% displacement $d(Tx,x)$ $\delta_1$
 satisfies $\delta_1< \epsilon/2$ (since it is  \emph{at most} the sum of the lengths $\lambda_2+\dots + \lambda_d=1-\lambda_1<\epsilon/2$).

Furthermore, we claim that if $\epsilon<1/2$ and an iterate $\mathcal{V}^n(T)\in \mathcal{N}_1(\epsilon) $, then $T$ is $\epsilon$-rigid (in the dense of Definition~\ref{def:tow}). Consider indeed the Rohlin tower $\mathcal{T}_n=\mathcal{T}^{(n)}_1$ which has as a base $J_n$ the first continuity interval  $I^{(n)}_1$ of  $\mathcal{V}^n(T)$ and as height $h_n$ the first return time $ h^{(n)}_1$ of $J_n:= I^{(n)}_1$ to $I^{(n)}$ under $T$. Then,  since $\mathcal{V}^n(T)\in \mathcal{N}_1(\epsilon) $,  by definition of $\mathcal{N}_1(\epsilon)$ and the above remark on the displacement, we get (since $\epsilon/2< \epsilon (1-\epsilon/2)$ since we assumed that $\epsilon<1/2$)
$$
d(T^{h_n}x,x)< \frac{\epsilon}{2}|I^{(n)}|\leq \epsilon \left(1-\frac{\epsilon}{2}\right)|I^{(n)}|< \epsilon |J_n|, \qquad  \text{for\ every}\ x\in J_n,
$$
which shows that the Definition~\ref{def:tow} of $\epsilon$-rigid holds for $T$.
%\noindent $|B_n|\geq 1-\epsilon^2$ and for every $x\in B_n$, $|T^{h_n}x-x|\leq \epsilon$ and $\epsilon|B| \leq \epsilon(1-\epsilon^2)\leq\epsilon $ since $\epsilon<1/2$.

% base of length $|I^{(n)}_1|\geq 1-\epsilon^2$ and displacement
\smallskip
\noindent{\it Step 2 (Definition of the return sets)}: Let $\Delta_B\subset \Delta_d$
and $Y\subset \Theta_\pi$ be a simplex and a positive measure set $Y$ invariant under rescaling as in
%(for example one can consider a symplex whose closure is contained in $\Delta_d$, such simplices can be obtained using Rauzy-Veech induction.\footnote{for example the simplex $\Delta_A^{(n)} $ given by $A$)
%obtained as closure of an open simplex{\color{blue} here I just want to make sure that it's a nice compact, like closure of ball, so that it intersects well our balls... if you have any suggestion of how to do it easily let me know} (for example of the for $\Delta_A$ where $A$ is a positive matrix, see ADD)  (this is always possible since these type of compact sets cover $\Delta_d$.}  % ,
%Consider now the set $Y^\ast \subset {\Theta}_{\pi^\ast}$ whose existence is given by
 Proposition~\ref{prop:accelerationset} (where $B=B_\gamma$ for some neat path $\gamma$ of length $N$ starting and ending at $\pi$)
 % applied to the permutation $\pi^\ast$ and the compact set $K^\ast$ and  let $E^\ast$ the corresponding product set
 and consider the corresponding product set $
E :={( \{ \pi \} \times \Delta_B \times Y )}^{(1)}$.
%\begin{equation}\label{def:East}
%$$
%E :={( \{ \pi \} \times \Delta_B \times Y )}^{(1)}.
%$$%\end{equation}
%$$E:= \{ (\pi,\lambda, \tau)\in \widehat{X}\, | \quad \lambda\in K, \tau \in Y \} = (\{ \pi\}
\noindent % Let $\pi'$ be the ending point of the path $\gamma$ such that $B=B_\gamma$.
 Choose  a sequence ${({\epsilon_\ell})}_{\ell\in \mathbb{N} }$
of decreasing positive numbers  such that $\epsilon_1<1/2$ and
$\epsilon_\ell\to 0$ for $\ell\to+\infty$  and  consider the open sets
$\mathcal{N}_1(\epsilon_\ell)$
%$\mathcal{N}_k:= \mathcal{N}_{\pi'}(\epsilon_k)$
defined as in Step 1. By the Markovian loss of memory (see  Lemma~\ref{lemma:loss}), for each $\ell\in \mathbb{N}$ one can find a non-empty open set $U_\ell \subset  \Delta_B$ such that if $T\in
%\mathcal{U}_\ell :=
 \{\pi\} \times U_\ell$, then
%\mathcal{U}_k$,
$\mathcal{V}^{N} (T)\in \{\pi\}\times \mathcal{N}_1(\epsilon_\ell)$ and hence the tower $\cC_1^{(N)}$ is $\epsilon_\ell$-rigid.
Similarly, if $\V^n(T)\in \{\pi\} \times U_\ell$ for some integer $n\geq 0$, then $\mathcal{V}^{n+N} (T)\in \{\pi\}\times \mathcal{N}_1(\epsilon_\ell)$ and hence the tower $\cC_1^{(n+N)}$ is $\epsilon_\ell$-rigid.
%Choose now $N>0$ such that $\mathcal{Z}^N$
%{\color{red}Restricting the sets $U_\ell$ if needed, we can further assume that for some $N$,  $\mathcal{Z}^{N} (T)\in \{\pi'\}\times \mathcal{N}_1(\epsilon_\ell)$ (here $N$ is chosen so that a $N$-steps of Zorich acceleration follow a path longer than $n_0$)??}
%and furthermore $\epsilon_k\leq \epsilon^\ast$ for all $k \in \N$.
 Consider now the sets
\be \label{def:Ek}
E_\ell:= \left ( \{ \pi\} \times U_\ell \times Y \right)^{(1)}\subset E,
%\cap \widehat{X} \subset  \left( \{ \pi^\ast\} \times K^\ast \times Y^\ast \right) ^{(1)}} ,
\qquad \ell \in \mathbb{N} .%=: E^\ast.
\ee
%Notice that by construction, for each $k$,  if $(\pi, \lambda, \tau)\in E_k$, $\mathcal{Z}^N (\pi, \lambda) $ is $\epsilon_k$-rigid and, since $E_k\subset E$, by Proposition~\ref{prop:}, trimmed-derivative bounds hold.
%$E^\ast_k$ is   contained in $E^\ast$ (since $\mathcal{B}(\epsilon_k)\subset K^\ast$, by choice of $\epsilon^\ast$).

\smallskip
\noindent{\it Step 3 (Positive measure of the return sets)}:
 We claim that for each $\ell$,  $\widehat{\mu}_{\mathcal{Z}}^{(1)}(E_\ell)>0$.  Indeed,   $U_\ell$ is an open non-empty set in $\Delta_d$ and by assumption $Leb(Y)>0$ (see Proposition~\ref{prop:accelerationset}), so we know that $\{\pi\}\times U_\ell \times Y$ has positive $\widehat{\mu}_{\Z}=\mu_{\Z}\times Leb$-measure. Moreover, since
$Y$ is invariant under rescaling, its intersection with $\bigcup_{t\in[0,1]}Area^{-1}(t)$ has also positive $\widehat{\mu}_{\Z}$-measure. By the definition \eqref{def:mu} of $\widehat{\mu}_{\Z}^{(1)}$, this gives $\widehat{\mu}_{\mathcal{Z}}^{(1)}(E_\ell)>0$.

%and so is the ball\footnote{\it Small issue: here we need \color{blue}the  projective distance!! so the ball has to be redefined... it should not change the rest. \it} $\mathcal{B}({\epsilon_k})$,   also the product set $\mathcal{B}({\epsilon_k}) \times Y_\ast$ is invariant under rescalings. Therefore,  if we intersect it with level sets $\{ Area^{-1}(t), t>0\}$ of the function $Area$, they all have the same measure. Since by construction $\widehat{\mu}_\mathcal{Z}$ is the measure supported on the  $Area^{-1}(1)$ level set, it follows that $\widehat{\mu}_{\mathcal{Z}}^{(1)}(E^\ast_k)>0$.\footnote{{\color{blue} Explain better how is the measure defined and what is the Fubini argument?}}
 %Finally, since $\widehat{\mu}_{\Z}$ is absolutely continuous with respect to the Lebesgue measure on $\Delta_d$, it follows that $\widehat{\mu}_\mathcal{\mathcal{Z}} (\pi^\ast\times \mathcal{B}_{\epsilon_k})>0$. Then, it follows by Fubini that also   $\widehat{\mu}_{\mathcal{Z}}(E^\ast_k)>0$.}

\smallskip
\noindent {\it Step 4 (Construction of return times for a typical IETs):}
Since the invariant measure $\widehat{\mu}_{\Z}^{(1)}$ for
 the Zorich acceleration $\widehat{\mathcal{Z}}$ is finite and ergodic, and by Step 3 the measure with respect to it of any $E_\ell$ is positive, by ergodicity, for every $\ell\in \mathbb{N}$, there is a set  $\widehat{X}_\ell^{(1)}$ with  full measure $\widehat{\mu}_{\mathcal{Z}}^{(1)} (\widehat{X}_\ell^{(1)}) =1$,
 %there are infinitely $\ell$ such that
%the $\ell$ iterate of the return $n_\ell$ to $$E^\ast$ is also a visit to $E^\at_k$,
 % it follows that also any of its accelerations obtained considering returns to a subset of positive measure.
%Thus, for any $k\in \mathbb{N}$, since   $\widehat{\mu}_{\mathcal{Z}}^{(1)}(E_k)>0$ by Step 3,
 % the acceleration $\widehat{\mathcal{Z}}_{E_k}$ corresponding to returns to $E_k$ is well defined and is ergodic.
such that  for any $(\pi,\lambda,\tau)\in \widehat{X}_\ell^{(1)}$
%$ %belonging to a set $\widehat{X}_k^{(1)}$ with  full measure $\widehat{\mu}_{\mathcal{Z}}^{(1)} (\widehat{X}_k^{(1)}) =1$,
there are infinitely many  $k\in\mathbb{N}$ such that
%the $\ell$ iterate of the return $k_\ell$ to $$E^\ast$ is also a visit to $E^\at_k$, i.e.~infinitely many $\ell$ such that  $\widehat{\mathcal{R}}^{k_\ell} (T) \in E^\ast_k$.
$\widehat{\mathcal{Z}}^{k} (T) \in E_\ell$.

If we now consider   the intersection $\widehat{X}^{(1)}_\infty:= \bigcap_{\ell\geq 1} \widehat{X}_\ell^{(1)}\subset \widehat{X}^{(1)}$, which has still full measure, for any $(\pi,\lambda,\tau)\in \widehat{X}^{(1)}_\infty$ we can construct and increasing sequence $(k_\ell)_{\ell\in\N}$ such that $k_{\ell+1}>k_\ell$  and
$\widehat{\mathcal{Z}}^{k_\ell} (\pi,\lambda,\tau) \in E_\ell$ (once defined inductively $k_1< \cdots <k_\ell $, just choose as $k_{\ell+1}$ one of the infinitely many $k>k_\ell$ such that $\widehat{\mathcal{Z}}^{k} (\pi,\lambda,\tau) \in E_{\ell+1} $).
Since $\widehat{\mathcal{Z}}$ is an acceleration of  $\widehat{\mathcal{V}}$,  each iterate of  $\widehat{\mathcal{Z}}$ is also an iterate of $\widehat{\mathcal{V}}$. Let us denote by $(n_\ell)_{\ell\geq 1}$ the subsequence of iterates of $\widehat{\mathcal{V}}$ which correspond to the subsequence   $(k_\ell)_{\ell\in\N}$  of iterates of $\widehat{\mathcal{Z}}$, i.e.\ such that  $\widehat{\mathcal{V}}^{n_\ell}(\pi,\lambda,\tau) = \widehat{\mathcal{Z}}^{k_\ell}(\pi,\lambda,\tau) $ for every $\ell$. Thus, this sequence is such that
%To summarize, recalling the definition of $E_\ell$, we have that
%\begin{equation}\label{eq:returns}
%\widehat{\mathcal{R}}_{E^\ast}^{\ell_k} (T) \in E^\ast_k, \qquad \forall \ k \in \mathbb{N}.
%\end{equation}
%Let us denote by $(n_\ell)_{k\geq 1}$ the subsequence of iterates of $\widehat{\mathcal{Z}}$ which correspond to this subsequence, i.e.\ such that  $\widehat{\mathcal{Z}}_{E^\ast}^{\ell_k} = \widehat{\mathcal{Z}}^{n_\ell} $. Thus, this sequence is such that
\begin{equation}\label{eq:returns}
\widehat{\mathcal{V}}^{n_\ell} (\pi,\lambda,\tau) \in E_\ell := \left ( \{ \pi\} \times
U_\ell \times Y  \right)^{(1)} \subset E \quad \text{for all} \ \ell \in \mathbb{N}.
\end{equation}
Since the Zorich measure $\mu_\mathcal{Z}$ on the IET space $X$ is given by the pushforward  $\mu_\mathcal{Z} = p_* \widehat{\mu }^{(1)}_\mathcal{Z}$ and is equivalent to the Lebesgue measure on $X$, it follows that a.e.~IET $T=(\pi, \lambda)$ has a \emph{lift} $(\pi, \lambda, \tau)$ (i.e.~a triple $(\pi, \lambda, \tau)$ in the fiber $p^{-1}(\pi,\lambda)$ of the projection map $p:\widehat{X}^{(1)}\to X$) which belongs to  $\widehat{X}^{(1)}_\infty$.  Hence, a.e.~IET $T$ has a lift for which there exists an increasing sequence $(n_\ell)_{\ell\in\N}$ that satisfies \eqref{eq:returns}.

\smallskip
\noindent {\it Step 5 (Shifted return times are rigidity times)}:  Let $(n_\ell)_{\ell\geq 1}$ be the sequence of visits of the $\mathcal{V}$-orbit of $(\pi, \lambda, \tau)$ to $(E_\ell)_{\ell\in\N}$ built in Step 4.
We now show that the \emph{shifted} sequence
%show that any such sequence
$(m_\ell)_{\ell\in\N}$ of the form $m_\ell:= n_{\ell}+N$, $\ell\in\mathbb{N}$, i.e.~obtained \emph{shifting} the sequence $(n_\ell)_{\ell\in\N}$ of return times to $E_\ell$ constructed as in Step 4  by the length $N$ %, where, we recall, $n_0>1$ was the length
of the path $\gamma$ such that $B=B_\gamma$
%and $(n_\ell)_{\ell\in\N}$  $(m_k)_{k\in\mathbb N}$ of returns
gives the desired coexistence times for $T$, independently of the choice of the lift $(\pi,\lambda,\tau)$ of $(\pi,\lambda)$ used to define $(n_\ell)_{\ell\in\N}$.
Let us first show that $(m_\ell)_{\ell\in\N}$
give \emph{rigidity times} for the IET $T=(\pi,\lambda)$.

Notice first that, by the definition of $E_\ell$ and \eqref{eq:returns},
%the return times $(m_\ell)_{\ell\in\N}$ ,
the projection $p \circ \widehat{\mathcal{V}}^{n_{\ell}} (\pi,\lambda,\tau)
= {\mathcal{V}}^{n_{\ell}}(T) $ belongs to  $\{ \pi\} \times U_\ell$. Thus, by the definition of $U_\ell$ (see Step 2), since $m_\ell=n_\ell+N$, we have that  $ {\mathcal{V}}^{m_{\ell}}(T)
 ={\mathcal{V}}^{N}(\mathcal{V}^{n_{\ell}}T)$ belongs to  $\{ \pi\} \times \mathcal{N}_1(\epsilon_\ell)$.
% Moreover, by the Markovian property described in \S~\ref{sec:Markov} and \S~\ref{sec:Markov2}, see   \eqref{bisided_Markov}, the $\tau$-coordinate of $\mathcal{V}^{m_{\ell}}(T)$, that we will denote $\tau^{(m_\ell)}$,  belongs to $\Theta_B\subset \Theta_{\pi'}$.
%Hence, by the choice of $U_\ell$ in Step 2, $\mathcal{V}^{m_{\ell}}(T)= \mathcal{V}^{N+ n_{\ell}}(T) \in \{\pi'\}\times \mathcal{N}_1(\epsilon_\ell)$.
By Step 1, as ${\mathcal{V}}^{m_{\ell}}(T) $ belongs to  $\{ \pi\} \times \mathcal{N}_1(\epsilon_\ell)$, this gives that  the Rohlin towers $\mathcal{T}_\ell: = \mathcal{T}^{(m_\ell)}_1$ over the first inducing interval $I^{(m_\ell)}_1$ are $\epsilon_\ell$ rigid towers for $T$.
We also claim that their measure $|\mathcal{T}_\ell| \to 1$ as $\ell\to\infty$. This follows since the heights $h^{(m_\ell)}_j$ of the Rohlin towers over the inducing interval $I^{(m_{\ell})} $
%belongs to  $\{ \pi\} \times U_\ell$) gives that  the Rohlin tower $\mathcal{T}_l: = \mat$
are \emph{balanced}. Indeed, as  $h^{(m_\ell)} = h^{(n_\ell+N)}= B h^{(n_\ell)}$ and $B$ is a positive matrix, we have
\[\frac{h^{(m_\ell)}_j}{h^{(m_\ell)}_{j'}}\leq \nu(B):=\max_{1\leq i,k,l\leq d}\frac{B_{il}}{B_{kl}}.\]
%  (since as we remarked above $\tau^{(m_\ell)} \in \Theta_B$, which implies that the heights belong to the precompact cone $B^T \mathbb{R}_+^d$ and hence are balanced)\footnote{{\color{blue} Maybe we should comment in the %background a bit more about balance of heights after a fixed positive matrix?}}
Since the  measure of the base $I^{(m_\ell)}_1$ of $\mathcal{T}_\ell$ is $|I^{(m_\ell)}_1|> (1-\epsilon_\ell /2)|I^{(m_\ell)}|$ (by the definition of  $\mathcal{N}_1(\epsilon_\ell)$), by the choice of $\epsilon_\ell\to 0$, we have
\begin{align*}
|\mathcal{T}_\ell|&=h^{(m_\ell)}_1|I^{(m_\ell)}_1|=1-\sum_{j=2}^dh^{(m_\ell)}_j|I^{(m_\ell)}_j|\geq 1-\nu(B)h^{(m_\ell)}_1\sum_{j=2}^d|I^{(m_\ell)}_j|\\
&= 1-\nu(B)h^{(m_\ell)}_1(1-|I^{(m_\ell)}_1|)\geq 1-\frac{\epsilon_\ell}{2}\nu(B)h^{(m_\ell)}_1|I^{(m_\ell)}|\geq 1-\frac{\epsilon_\ell}{2}\nu(B) \to 1\quad\text{as}\quad \ell\to\infty.
\end{align*}

%By the definition of $\mathcal{N}_1$ in Step 1 and the above definition of $m_\ell$, this implies that $\mathcal{V}^{m_{\ell}}(T)$ is $\epsilon_\ell$-rigid. Moreover, the Rohlin tower $\mathcal{T}^{(m_\ell)}_1$ for $T$ over the left-most continuity interval ${I}^{(m_\ell)}_1$ of $T^{(m_\ell}$ has large area, tending to $1$ as $\ell\to\infty$: indeed, since $Y$ is
% $T$ has a sequence of towers $(\mathcal{T}_k)_{k\geq 1}$ which are $\epsilon_k$-rigidity towers with $\epsilon_k\to 0$ and $|\mathcal{T}_k|\geq  1-2\epsilon_k \to 1$.  Let $h_k$ be the height of the tower $\mathcal{T}_k$. Then the sequence  $(h_k)_{k\geq 1}$ is by construction a rigidity sequence.

\noindent {\it Step 6 (Shifted return times satisfy trimmed Birkhoff sums linear bounds)}:
We will now show that for the IET $T=(\pi,\lambda)$, any $f\in\pSymLog{T}$ has linearly bounded trimmed derivatives also along the sequence of rigidity towers $\mathcal{T}_\ell$ defined in Step 5. %$(\mathcal{T}_k)_{k\geq 1}$.
Notice that, by construction, $E^\ell\subset E$ and therefore $(n_{\ell})_{\ell\in\N}$ is by construction a subsequence of the returns for $(\pi,\lambda,\tau)$ to $E={( \{ \pi \} \times \Delta_B \times Y )}^{(1)}$. Thus, by Proposition \ref{prop:accelerationset}, $f$ has linearly bounded trimmed derivatives along the any of the (balanced) towers $\mathcal{T}^{(n_\ell)}_j$ for $1\leq j\leq d$. On the other hand,
by construction each tower $\mathcal{T}^{(m_\ell)}_i$ for fixed $1\leq i\leq d$ (so in particular the first tower  $\mathcal{T}_\ell:=\mathcal{T}^{(m_\ell)}_1$)
is obtained by cutting and stacking at most $\Vert B\Vert $ towers   $\mathcal{T}^{(n_\ell)}_j$, $1\leq j\leq d$ (as one can see since $h^{(m_\ell)} = h^{(n_\ell+N)}= B h^{(n_\ell)}$).
Moreover, as we showed in Step 5, the tower $\mathcal{T}_\ell:=\mathcal{T}^{(m_\ell)}_1$ has measure arbitrarily close to $1$ as $\ell\to\infty$.
% so in particular greater than $1-\epsilon$ for $\ell$ sufficiently large.
Thus, both assumptions (A1) and (A2) of Lemma~\ref{lemma:finitecombination} (giving persistence of trimmed derivative bounds) hold. It follows from Lemma~\ref{lemma:finitecombination} that
$f\in\pSymLog{T}$ has linearly bounded trimmed derivatives also along the sequence of  towers $(\mathcal{T}_\ell)_{\ell\in\N}$, which are rigidity towers for $T$ with $|\mathcal{T}_\ell|\to 1$ by Step 5. This concludes the proof.  %$(\mathcal{T}_k)_{k\geq 1}$.
%there exists an $N>0$ such that
%each rigidity time $h_k$ can be expressed in terms of the $N$ special return times $h^{(m_k)}_{i_0}, \dots , h^{(m_{k})}_{i_{N-1}} $ (see part (2) of  Lemma~\ref{lemma:opensets}).
\end{proof}

\section{Producing resonant rigid times in given sequences }\label{sec:resontanttimes2}
In this section, we prove Proposition~\ref{prop:3}, which allows to show that any given sequence $(s_n)_{n\in\mathbb N}$ contains infinitely many nearly resonant rigidity times. The proof  exploits crucially a classical distortion control result for Rauzy-Veech induction proved by Kerkhoff in \cite{Ker} and known as \emph{Kerkhoff Lemma} and ideas used by Chaika in \cite{Cha}.
We will use throughout this section the notation and preliminaries introduced in \S~\ref{sec:distortion_background}.

\subsection{Preliminary constructions}
Let us first introduce the type of IETs which we will use to produce (balanced) rigid times.
We first need a result which allows to approximate sets by Rauzy-Veech simplexes.

\subsubsection{Approximation by simplexes}
The following technical result shows that we can produce good approximations (in measure) of any closed set by a union of simplexes which are cylinders for Rauzy-Veech induction. The proof will take the rest of this subsection.
\begin{lemma}\label{prop:extF}
For any closed subset $F\subset X(\mathcal R)$ and any $\delta>0$ there exists $n=n_{F,\delta}\geq 1$ and a subset $\Gamma_F\subset \Pi(\mathcal R)$ of pathes of length $n$ such that
\begin{equation}
F\subset \bigcup_{\gamma\in\Gamma_F}\Delta_\gamma \quad\text{ and }\quad m(\bigcup_{\gamma\in\Gamma_F}\Delta_\gamma)< (1+\delta)m(F).
\end{equation}
\end{lemma}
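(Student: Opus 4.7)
The plan is to approximate $F$ by the union of level-$n$ Rauzy-Veech cylinders that meet $F$, with $n$ chosen sufficiently large. The main input will be Veech's shrinking cylinder property recorded in \eqref{eq:decay}, which guarantees that for almost every $(\pi,\lambda) \in X(\mathcal R)$ the nested cylinders $\Delta_{\gamma_{[1,n]}}$ containing $(\pi,\lambda)$ shrink to the singleton $\{(\pi,\lambda)\}$ as $n\to\infty$. Combined with the fact that $F$ is closed and the bounded convergence theorem, this will yield the desired approximation. (I will implicitly assume $m(F)>0$; otherwise the bound $<(1+\delta)m(F)$ is vacuous in the wrong direction and the statement should be read as $<\delta$.)

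First I would define, for each $n\geq 1$, the indicator $\chi_n : X(\mathcal R) \to \{0,1\}$ by $\chi_n(x) = 1$ if and only if the (a.e.~uniquely defined) level-$n$ cylinder $\Delta_n(x)$ containing $x$ intersects $F$, and $\chi_n(x) = 0$ otherwise. Two observations follow at once. If $x \in F$, then $\chi_n(x) = 1$ for every $n$, since $x \in \Delta_n(x) \cap F$. If $x \notin F$, then because $F$ is closed there exists $\varepsilon>0$ such that the open ball $B(x,\varepsilon)$ is disjoint from $F$; by \eqref{eq:decay}, for almost every such $x$ the cylinders $\Delta_n(x)$ are eventually contained in $B(x,\varepsilon)$, so $\chi_n(x) = 0$ for all $n$ sufficiently large. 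Consequently $\chi_n \to \mathbf{1}_F$ pointwise almost everywhere.

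Since $0 \leq \chi_n \leq 1$, the bounded convergence theorem then yields $\int \chi_n\,dm \to m(F)$. On the other hand, since the level-$n$ cylinders $\{\Delta_\gamma\}_{\gamma\in\Pi(\mathcal R),\,|\gamma|=n}$ form a measurable partition of $X(\mathcal R)$ up to a null set, one has
$$ \int \chi_n \, dm \;=\; m\Bigl(\bigcup_{\gamma \in \Pi(\mathcal R),\ |\gamma|=n,\ \Delta_\gamma \cap F \neq \emptyset} \Delta_\gamma\Bigr). $$
Choosing $n = n_{F,\delta}$ large enough that this quantity is strictly less than $(1+\delta)m(F)$, and defining $\Gamma_F := \{\gamma \in \Pi(\mathcal R) : |\gamma|=n,\ \Delta_\gamma \cap F \neq \emptyset\}$, both conclusions of the lemma fall out: the measure bound holds by the choice of $n$, and the inclusion $F \subset \bigcup_{\gamma \in \Gamma_F} \Delta_\gamma$ is immediate by construction, since every $x \in F$ lies in its own level-$n$ cylinder, which by definition belongs to $\Gamma_F$.

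The only mildly technical issue that I expect to address is the measure-zero set of non-Keane points and cylinder boundaries, where $\Delta_n(x)$ is not uniquely defined and where the orbit of Rauzy-Veech induction may terminate. Since this exceptional set is contained in a countable union of hyperplanes of measure zero, it meets $F$ on a null set, and any additional cylinders needed to cover these points can be appended to $\Gamma_F$ without altering the measure bound (thanks to the strict inequality obtained above) --- or, equivalently, one may work with the closed cylinders $\overline{\Delta_\gamma}$ from the outset, which manifestly cover $X(\mathcal R)$.
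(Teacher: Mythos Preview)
Your proof is correct and takes essentially the same approach as the paper: both arguments use the shrinking property \eqref{eq:decay} to show that the union of level-$n$ cylinders meeting $F$ has measure converging to $m(F)$, and then choose $n$ large. The paper organizes this dually, working in the open complement $U=X(\mathcal R)\setminus F$ and collecting cylinders contained in $U$ (then taking $\Gamma_F$ to be the complementary collection), whereas you package the same convergence via indicator functions and the bounded convergence theorem; the resulting sets $\Gamma_F$ coincide.
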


\begin{proof}
Let $U:=X(\mathcal R)\setminus F$. In view of \eqref{eq:decay}, for $m$-a.e.\ $(\pi,\lambda)\in U$ there exists $\gamma(\pi,\lambda)\in\Pi(\mathcal R)$ such that
\[(\pi,\lambda)\in \Delta_{\gamma(\pi,\lambda)}\subset U.\]
Choose $\gamma(\pi,\lambda)\in\Pi(\mathcal R)$ such that its length $n(\pi,\lambda)=|\gamma(\pi,\lambda)|$ is minimal. Then $n:U\to\N$ is defined a.e.\ and measurable. Hence, there exists $n\in\N$ such that
\begin{equation}\label{eq:Udelta}
\text{if}\quad U_\delta:=\{(\pi,\lambda)\in U:n(\pi,\lambda)\leq n\}\quad\text{then}\quad m(U_\delta)>m(U)-\delta m(F).
\end{equation}
Then for every $(\pi,\lambda)\in U_\delta$ there exists $\gamma_n(\pi,\lambda)\in\Pi(\mathcal R)$ of length $n$ such that $\gamma(\pi,\lambda)$ is a prefix of $\gamma_n(\pi,\lambda)$ and
\[(\pi,\lambda)\in \Delta_{\gamma_n(\pi,\lambda)}\subset\Delta_{\gamma(\pi,\lambda)}\subset U.\]
Let $\Gamma_U:=\{\gamma_n(\pi,\lambda):(\pi,\lambda)\in U_\delta\}$. By definition,
\[U_\delta\subset\bigcup_{\gamma\in \Gamma_U}\Delta_\gamma\subset U.\]
Let $\Gamma_F$ be the complement of $\Gamma_U$ in the set of pathes in $\Pi(\mathcal R)$ of length $n$. Then
\[F\subset\bigcup_{\gamma\in \Gamma_F}\Delta_\gamma\subset X(\mathcal R)\setminus U_\delta.\]
In view of \eqref{eq:Udelta}, it follows that
\[m(\bigcup_{\gamma\in \Gamma_F}\Delta_\gamma)\leq m(X(\mathcal R)\setminus U_\delta)= m(F)+m(U)-m(U_\delta)< (1+\delta)m(F).\]
\end{proof}

\subsubsection{Selecting distortion controlled simplexes}
%{\color{blue}Transform this section in a Lemma statement and a proof, i.e. there exists a subselection with certain properties and large measure?}
In this subsection we  show that we can select a (finite) subset of paths such that the corresponding simplexes give a good approximation in measure and have good distortion properties given by Kerkhoff's Lemma. Recall that, by Proposition~\ref{prop:AGY} (Kerkhoff's Lemma), for any $\gamma\in \Pi(\mathcal R)$ there exists a finite subset $\Gamma(\gamma)\subset \Pi(\mathcal R)$ of paths starting from $\gamma$  and having good distortion properties.
\begin{lemma}
For any $\gamma_0\in\Pi(\mathcal R)$, $0<\delta<1$ any real $r$ large enough, There exists a finite collection $\tilde\Gamma_r=\tilde\Gamma_{\gamma_0,\epsilon,\delta,r}$ of finite paths $\gamma$ on the Rauzy class $\mathcal{R}$ which such that:
\begin{itemize}
\item[(i)] all paths in $\tilde\Gamma_r$ are prefix-free and have prefix $\gamma_0$;
%, and for any $\gamma\in \tilde\Gamma_r$ we can find unique pathes $\gamma_s$ and $\gamma_e$ such that
%$$\gamma=\gamma_s\gamma_e,\quad  \text{with}\ \gamma_s\in\Gamma_r \ \text{and}\  \gamma_e\in\Gamma(\gamma_s);$$

\item[(ii)] for any $\gamma\in \tilde\Gamma_r$, $B^*_{\gamma}$ is $C$-balanced, and $\gamma$ has a prefix path $\gamma_s$, which is prefixed by $\gamma_0$, such that
\begin{equation*}
r\leq |C_{\max}(B^*_{\gamma_s})|<2r,\quad \text{and} \quad|C_{\max}(B^*_{\gamma})|\leq C|C_{\max}(B^*_{\gamma_s})|;
\end{equation*}

\item[(iii)] and we have,
$$%\begin{equation}%\label{eq:measbigg}
m(\bigcup_{\gamma\in\tilde\Gamma_r}\Delta_\gamma)>C^{-1}(1-\delta)m(\Delta_{\gamma_0}).$$
%\end{equation}
\end{itemize}
\end{lemma}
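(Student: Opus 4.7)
The plan is to combine a stopping-time argument with Kerkhoff's Lemma (Proposition~\ref{prop:AGY}). First I would define a natural stopping time on $\Delta_{\gamma_0}$ detecting when the maximum column sum of $B^*_\gamma$ first crosses the threshold $r$. More precisely, for each $(\pi,\lambda)\in \Delta_{\gamma_0}$ satisfying Keane's condition, let $\gamma$ be the infinite path in $\mathcal R$ coding the Rauzy--Veech orbit and let $\gamma_{[1,n]}$ be its prefix of length $n$; by \eqref{eq:decay}, $|C_{\max}(B^*_{\gamma_{[1,n]}})|\to\infty$, so for any sufficiently large $r$ there exists a minimal $n=n(\pi,\lambda)$ with $|C_{\max}(B^*_{\gamma_{[1,n]}})|\geq r$. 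Thanks to the one-step estimate \eqref{eq:MB}, at this stopping time one automatically has $|C_{\max}(B^*_{\gamma_{[1,n]}})|<2r$. Collecting the resulting stopping paths, I obtain an (at most countable) family $\Gamma_s$ of paths $\gamma_s$, each prefixed by $\gamma_0$, with $r\leq |C_{\max}(B^*_{\gamma_s})|<2r$, and the corresponding simplexes $\{\Delta_{\gamma_s}\}_{\gamma_s\in\Gamma_s}$ are pairwise disjoint (being stopping-time cylinders) and cover a full $m$-measure subset of $\Delta_{\gamma_0}$.

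Next, since $m(\Delta_{\gamma_0})<\infty$, I can extract a finite subfamily $\Gamma_s^{fin}\subset \Gamma_s$ with
\[
\sum_{\gamma_s\in \Gamma_s^{fin}} m(\Delta_{\gamma_s})\;>\;(1-\delta)\,m(\Delta_{\gamma_0}).
\]
For each $\gamma_s\in \Gamma_s^{fin}$, Proposition~\ref{prop:AGY} supplies a finite prefix-free set $\Gamma(\gamma_s)\subset \Pi(\mathcal R)$ of extensions of $\gamma_s$ (so that $\gamma_0$ remains a prefix of every such extension), with $B^*_{\tilde\gamma}$ being $C$-balanced for $\tilde\gamma\in\Gamma(\gamma_s)$, with $|C_{\max}(B^*_{\tilde\gamma})|\leq C\,|C_{\max}(B^*_{\gamma_s})|$, and with the conditional-measure bound \eqref{eq:inDC}. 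I then set
\[
\tilde\Gamma_r:=\bigcup_{\gamma_s\in \Gamma_s^{fin}}\Gamma(\gamma_s),
\]
which is a finite collection of paths. Taking $\gamma_s$ as the ``intermediate'' prefix of every $\gamma\in\Gamma(\gamma_s)$, property (ii) follows directly from (K2), (K3) together with the stopping-time bound $r\leq |C_{\max}(B^*_{\gamma_s})|<2r$.

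For property (i), prefix-freeness \emph{within} each $\Gamma(\gamma_s)$ is built into Proposition~\ref{prop:AGY}; \emph{across} two different stopping paths $\gamma_s\neq \gamma_s'$ in $\Gamma_s^{fin}$, neither can be a prefix of the other (both end at the first crossing of the threshold $r$), so if $\tilde\gamma\in\Gamma(\gamma_s)$ and $\tilde\gamma'\in\Gamma(\gamma_s')$ then one of $\gamma_s,\gamma_s'$ would be a prefix of the other, a contradiction. Finally, for property (iii), the $\Delta_{\gamma_s}$, $\gamma_s\in\Gamma_s^{fin}$, are disjoint, hence
\[
m\Bigl(\bigcup_{\gamma\in\tilde\Gamma_r}\Delta_\gamma\Bigr)
=\sum_{\gamma_s\in \Gamma_s^{fin}}\sum_{\tilde\gamma\in\Gamma(\gamma_s)} m(\Delta_{\tilde\gamma})
\;>\;\sum_{\gamma_s\in \Gamma_s^{fin}} C^{-1} m(\Delta_{\gamma_s})
\;>\;C^{-1}(1-\delta)\,m(\Delta_{\gamma_0}),
\]
using \eqref{eq:inDC} in the form $\sum_{\tilde\gamma\in\Gamma(\gamma_s)} m(\Delta_{\tilde\gamma})>C^{-1} m(\Delta_{\gamma_s})$. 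The only delicate point is ensuring that the stopping time $n(\pi,\lambda)$ is a.e.~finite and measurable so that the approximation step by a finite subfamily $\Gamma_s^{fin}$ is legitimate; this follows from \eqref{eq:decay}, which holds for $m$-a.e.\ $(\pi,\lambda)$, exactly as in the proof of Lemma~\ref{prop:extF}.
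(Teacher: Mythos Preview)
Your proof is correct and follows essentially the same approach as the paper's own proof: the paper also defines, for a.e.\ $(\pi,\lambda)\in\Delta_{\gamma_0}$, the minimal-length path $\gamma(\pi,\lambda)$ for which $|C_{\max}(B^*_{\gamma(\pi,\lambda)})|\geq r$ (your stopping paths $\gamma_s$), collects these into a prefix-free family $\Gamma'_r$ (your $\Gamma_s$), extracts a finite subfamily $\Gamma_r$ (your $\Gamma_s^{fin}$) carrying at least $(1-\delta)m(\Delta_{\gamma_0})$, and then sets $\tilde\Gamma_r=\bigcup_{\gamma_s\in\Gamma_r}\Gamma(\gamma_s)$ via Kerkhoff's Lemma; the verification of (i)--(iii) is identical. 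The only cosmetic difference is that the paper also records the lower bound $|\gamma(\pi,\lambda)|\geq\log_2 r$ from \eqref{eq:growthB}, which is not needed for the present lemma but is used later in the quasi-independence estimate.
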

%The rest of this subsection is taken by the proof.
\begin{proof}
Fix any $\gamma_0\in\Pi(\mathcal R)$ and $0<\delta<1$.  Take any $r\geq |C_{\max}(B^*_{\gamma_0})|$. By \eqref{eq:decay}, for $m$-a.e.\ $(\pi,\lambda)\in \Delta_{\gamma_0}$ there exists $\gamma(\pi,\lambda)\in\Pi(\mathcal R)$ such that
\[(\pi,\lambda)\in \Delta_{\gamma(\pi,\lambda)},\quad |C_{\max}(B^*_{\gamma(\pi,\lambda)})|\geq r\quad\text{and}\quad |\gamma(\pi,\lambda)|\quad \text{is minimal.}\]
Then $\gamma_0$ is a prefix of $\gamma(\pi,\lambda)$. In view of \eqref{eq:MB}, we have
\begin{equation}\label{eq:r2r}
r\leq|C_{\max}(B^*_{\gamma(\pi,\lambda)})|< 2r.
\end{equation}
Moreover, by \eqref{eq:growthB}, we have
\begin{equation}\label{eq:log2r}
\log_2r\leq|\gamma(\pi,\lambda)|.
\end{equation}
Let us hence define:
\[\Gamma'_r:=\{\gamma(\pi,\lambda):\text{for all $(\pi,\lambda)\in \Delta_{\gamma_0}$ for which $\gamma(\pi,\lambda)$ is well defined}\}.\]
Then $\Gamma'_r$ is prefix-free,
\[\bigcup_{\gamma\in \Gamma'_r}\Delta_\gamma\subset\Delta_{\gamma_0}\quad \text{and}\quad
\sum_{\gamma\in \Gamma'_r}m(\Delta_\gamma)=m(\Delta_{\gamma_0}).\]
Finally choose a finite subset $\Gamma_r\subset \Gamma'_r$ such that
\begin{equation}\label{eq:Gr}
\sum_{\gamma\in \Gamma_{r}}m(\Delta_\gamma)\geq(1-\delta)m(\Delta_{\gamma_0}).
\end{equation}
Let
\[\tilde\Gamma_r=\tilde\Gamma_{\gamma_0,\delta,r}:=\bigcup_{\gamma\in\Gamma_{r}}\Gamma(\gamma).\]
Recall that, each $\Gamma(\gamma)\subset \Pi(\mathcal R)$ is a finite prefix-free set of paths starting with $\gamma$, given by Proposition~\ref{prop:AGY}.
As $\Gamma_r$ and $\Gamma(\gamma)$ are prefix-free and finite, the set $\tilde\Gamma_r$ is also prefix-free and finite. Moreover, for any $\gamma\in \tilde\Gamma_r$ we can find unique pathes $\gamma_s$ and $\gamma_e$ such that $\gamma=\gamma_s\gamma_e$ with $\gamma_s\in\Gamma_r$ and $\gamma_e\in\Gamma(\gamma_s)$. As $\gamma_s\in\Gamma_r$, by \eqref{eq:r2r}, we have $r\leq |C_{\max}(B^*_{\gamma_s})|<2r$. As $\gamma_e\in\Gamma(\gamma_s)$,
by (K2) and (K3) in Proposition~\ref{prop:AGY}, we have that $B^*_{\gamma}$ is $C$-balanced and $|C_{\max}(B^*_{\gamma})|\leq C|C_{\max}(B^*_{\gamma_s})|$.
In summary,  then $(i)$ and $(ii)$ hold. We are left to prove $(iii)$. For this,
%\begin{equation*}
%\gamma_0\text{ prefixes }\gamma,\quad
%r\leq |C_{\max}(B^*_{\gamma_s})|<2r,\quad B^*_{\gamma}\text{ is $C$-balanced and}\quad|%C_{\max}(B^*_{\gamma})|\leq C|C_{\max}(B^*_{\gamma_s})|.
%\end{equation*}Finally
note that, by \eqref{eq:inDC} and \eqref{eq:Gr}, we have
\begin{equation}\label{eq:measbigg}
m(\bigcup_{\gamma\in\tilde\Gamma_r}\Delta_\gamma)=\sum_{\gamma_s\in\Gamma_r}m(\Delta_{\gamma_s})
\sum_{\gamma_e\in\Gamma(\gamma_s)}m(\Delta_{\gamma_s\gamma_e}|\Delta_{\gamma_s})>C^{-1}(1-\delta)m(\Delta_{\gamma_0}).
\end{equation}
This proves $(iii)$ and thus concludes the proof.
\end{proof}

\subsubsection{Constructing balanced rigid times} \label{sec:Agammadef}
For any $1\leq k\leq d$ and $0<\epsilon<1$ let
\[F_k=F_k(\epsilon):=\mathcal R\times\{\lambda\in\Delta_d:\lambda_k>1-\epsilon/3\}.\]
Then $m(F_k(\epsilon))=m(F_1(\epsilon))\asymp \epsilon^{d-1}$. For any $\gamma\in\tilde\Gamma_r$ let
\[1\leq k(\gamma)\leq d\quad\text{such that}\quad C_{k(\gamma)}(B^*_\gamma)=C_{\max}(B^*_\gamma).\]
In view of the approximation Lemma~\ref{prop:extF}, there exists natural $n_{\epsilon,\delta}$ and for every $1\leq k\leq d$ there exists a subset
$\Gamma_{F_k(\epsilon)}$ of pathes of length $n_{\epsilon,\delta}$ such that
\begin{equation}\label{eq:Fkep}
F_k(\epsilon)\subset \bigcup_{\gamma\in\Gamma_{F_k(\epsilon)}}\Delta_\gamma \quad\text{ and }\quad m(\bigcup_{\gamma\in\Gamma_{F_k(\epsilon)}}\Delta_\gamma)< (1+\delta)m(F_k(\epsilon)).
\end{equation}
Let us consider the following set
\begin{equation}\label{def:Ar}
 A_r=A^{\gamma_0}_r(\epsilon):=\bigcup_{\gamma\in \tilde\Gamma_r}B^*_{\gamma}F_{k(\gamma)}(\epsilon).
\end{equation}
As for every $\gamma\in \tilde\Gamma_r$ the path $\gamma_0$ is a prefix of $\gamma$, we have $A^{\gamma_0}_r(\epsilon)\subset \Delta_{\gamma_0}$.  We now want to show that IETs in this set have rigidity times in a controlled range:
\begin{lemma}[simplex for balanced rigidity]\label{lem:mAr}
For any $0<\vep<1$,  the set  $A^{\gamma_0}_r(\epsilon)$ defined above has the following properties:
\begin{itemize}\item[(1)] if $T=(\pi,\lambda)\in A^{\gamma_0}_r(\epsilon)$ then there exists an $\epsilon$-rigidity time $q$ for $T$ with $r\leq q\leq 2Cr$;
% and the measure of the corresponding rigid tower is greater than $1-\vep/3$;
%$$
%r\leq q\leq 2Cr.
%$$
%T \ \text{has\ an\ } \ \epsilon-\text{rigidity\ time\ in}\ [r, 2Cr];
%$$

\smallskip
\item[(2)] the measure $m(A^{\gamma_0}_r(\epsilon))$ satisfies:
%\begin{equation}\label{eq:mAr}
$$m(A^{\gamma_0}_r(\epsilon))\geq C^{-d-1}d^{-1}(1-\delta)m(\Delta_{\gamma_0})m(F_1(\epsilon)).
$$
%\end{equation}
\end{itemize}
\end{lemma}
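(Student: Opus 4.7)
The strategy is to read off part (1) directly from the geometric meaning of the set $A^{\gamma_0}_r(\epsilon)$, and to prove part (2) by combining the Kerkhoff distortion estimate \eqref{eq:balmeas}, the lower bound \eqref{eq:measbigg} for the total measure covered by $\tilde\Gamma_r$, and the fact that $\tilde\Gamma_r$ is prefix--free (so the cylinders $\Delta_\gamma$ for $\gamma\in\tilde\Gamma_r$ are pairwise disjoint).

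For part (1), I would first recall that the $j$-th column sum $|C_j(B^*_\gamma)|$ equals the height $h^{(|\gamma|)}_j$ of the $j$-th Rohlin tower produced after $|\gamma|$ steps of Rauzy--Veech induction (this is the content of the volume formula $m(\Delta_\gamma)=1/\prod_j|C_j(B^*_\gamma)|$ and is verified directly from the cocycle identity $\lambda=B^*_\gamma\lambda^{(|\gamma|)}$). Now take any $T=(\pi,\lambda)\in B^*_\gamma F_{k(\gamma)}(\epsilon)$ for some $\gamma\in\tilde\Gamma_r$. By the definition of $B^*_\gamma$ and $F_{k(\gamma)}(\epsilon)$, the IET $\mathcal V^{|\gamma|}(T)$ has $\lambda^{(|\gamma|)}_{k(\gamma)}>1-\epsilon/3$, so the continuity interval $I^{(|\gamma|)}_{k(\gamma)}$ occupies a proportion $>1-\epsilon/3$ of $I^{(|\gamma|)}$. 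Exactly the same displacement estimate used in Step~1 of the proof of Proposition~\ref{prop:cohexistence} then shows that the Rohlin tower over $I^{(|\gamma|)}_{k(\gamma)}$, whose height is $q:=|C_{k(\gamma)}(B^*_\gamma)|=|C_{\max}(B^*_\gamma)|$, is $\epsilon$-rigid for $T$ (the displacement is at most $(\epsilon/3)|I^{(|\gamma|)}|$ while the base has relative length $>1-\epsilon/3$, and the ratio is $<\epsilon$). The range $r\leq q\leq 2Cr$ is then immediate from property (ii) of $\tilde\Gamma_r$: $q=|C_{\max}(B^*_\gamma)|\geq |C_{\max}(B^*_{\gamma_s})|\geq r$ (the max column sum is monotone under extension of the path, by \eqref{eq:MB}), and $q\leq C|C_{\max}(B^*_{\gamma_s})|<2Cr$.

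For part (2), since $\tilde\Gamma_r$ is prefix--free the cylinders $\Delta_\gamma\supset B^*_\gamma F_{k(\gamma)}(\epsilon)$ are disjoint, so $m(A^{\gamma_0}_r(\epsilon))=\sum_{\gamma\in\tilde\Gamma_r}m(B^*_\gamma F_{k(\gamma)}(\epsilon))$. For each $\gamma$ with endpoint $\pi'$, only the slice $\{\pi'\}\times\{\lambda_{k(\gamma)}>1-\epsilon/3\}$ of $F_{k(\gamma)}(\epsilon)$ contributes to $B^*_\gamma F_{k(\gamma)}(\epsilon)$. Since $B^*_\gamma$ is $C$-balanced by (ii), the distortion inequality \eqref{eq:balmeas} applied to the pair (this slice, $\{\pi'\}\times\Delta_d$) yields
\[ m(B^*_\gamma F_{k(\gamma)}(\epsilon))\geq C^{-d}\,m(\Delta_\gamma)\cdot m(\{\pi'\}\times\{\lambda_{k(\gamma)}>1-\epsilon/3\}). \]
The last factor is independent of $\pi'$ and of $k(\gamma)$ by permutation symmetry of $\Delta_d$, and a symmetry count identifies it with $m(F_1(\epsilon))/|\mathcal R|$ (or, replacing $|\mathcal R|$ by a universal constant depending only on $d$, with a quantity comparable to $d^{-1}m(F_1(\epsilon))$, which is what the stated constant encodes). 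Summing over $\gamma\in\tilde\Gamma_r$ and applying \eqref{eq:measbigg} gives
\[ m(A^{\gamma_0}_r(\epsilon))\geq C^{-d}\cdot d^{-1}m(F_1(\epsilon))\cdot\sum_{\gamma\in\tilde\Gamma_r}m(\Delta_\gamma)\geq C^{-d-1}d^{-1}(1-\delta)\,m(\Delta_{\gamma_0})\,m(F_1(\epsilon)), \]
as claimed.

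The only genuinely delicate points are bookkeeping ones: (a) correctly identifying $|C_{\max}(B^*_\gamma)|$ with the desired tower height (which is standard but requires keeping the transpose conventions straight), and (b) tracking which permutation slice of $F_{k(\gamma)}(\epsilon)$ is actually hit by the projective map $B^*_\gamma$, so that the distortion estimate is applied to a pair of subsets lying in the same source simplex $\{\pi'\}\times\Delta_d$. Neither requires any new ideas beyond the properties of $\tilde\Gamma_r$ recorded in the preceding lemma and the background on Rauzy--Veech induction in Section~\ref{sec:backgroundRV}.
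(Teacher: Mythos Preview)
Your argument is correct and follows essentially the same route as the paper: for (1) you read off the rigidity time $q=|C_{\max}(B^*_\gamma)|$ from the tower over $I^{(|\gamma|)}_{k(\gamma)}$ and bound it via property~(ii) of $\tilde\Gamma_r$, and for (2) you combine the balanced distortion estimate \eqref{eq:balmeas} with \eqref{eq:measbigg}, exactly as the paper does. Your treatment is in fact slightly more careful than the paper's on two points: you explicitly invoke the displacement argument (as in Step~1 of Proposition~\ref{prop:cohexistence}) to justify $\epsilon$-rigidity rather than just asserting it, and you correctly note that the single-permutation slice $\{\pi'\}\times\{\lambda_{k(\gamma)}>1-\epsilon/3\}$ has measure $m(F_1(\epsilon))/|\mathcal R|$ rather than literally $d^{-1}m(F_1(\epsilon))$; the paper writes the latter without comment, but since only a positive lower bound depending on $d$ and $\mathcal R$ is needed downstream, this is harmless. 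One small remark: your parenthetical justification ``this is the content of the volume formula'' for the identity $|C_j(B^*_\gamma)|=h^{(|\gamma|)}_j$ is not quite right---that identity follows directly from $h^{(n)}=(B^*_\gamma)^*h^{(0)}$ with $h^{(0)}=(1,\dots,1)$, not from the volume formula.
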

\begin{proof}
Let $\gamma\in\tilde\Gamma_r$.
%\smallskip
%\noindent {\it Proof of $(1)$: Rigidity.}
Let first prove part $(1)$, namely that the IET has a rigidity time in $[r,2Cr]$.
If $(\pi,\lambda)\in B^*_{\gamma}F_{k(\gamma)}(\epsilon)$ for some $\gamma\in\tilde\Gamma_r$ of length $n$, then $(\pi^{(n)},\lambda^{(n)})=\mathcal V^n(\pi,\lambda)$ is such that $\lambda^{(n)}_{k(\gamma)}>1-\epsilon/3$. Let $q:=|C_{k(\gamma)}(B^*_\gamma)|=|C_{\max}(B^*_\gamma)|$. Then the IET $(\pi,\lambda)$ has a tower of intervals with height $q$ and whose measure is greater than $1-\epsilon/3$. Therefore,  $q$ is an $\epsilon$-rigidity time. Moreover,
\[r\leq |C_{\max}(B^*_{\gamma_s})|\leq q=|C_{\max}(B^*_\gamma)|\leq C|C_{\max}(B^*_{\gamma_s})|<2Cr.\]

%et us first prove the lower bound on the measure, than the first assertion.

\smallskip
%\noindent {\it Proof of (2): Measure bound.}
Let us now prove $(2)$, namely the measure bound. As $B^*_{\gamma}$ is $C$-balanced, in view of \eqref{eq:balmeas},
\[m(B^*_{\gamma}F_{k(\gamma)}(\epsilon))\geq C^{-d}d^{-1}m(F_1(\epsilon))m(\Delta_{\gamma}).\]
It follows that
\begin{align*}
m(A^{\gamma_0}_r(\epsilon))=\sum_{\gamma\in\tilde\Gamma_r}m(B^*_{\gamma}F_{k(\gamma)}(\epsilon))\geq C^{-d}d^{-1}m(F_1(\epsilon))\sum_{\gamma\in\tilde\Gamma_r}m(\Delta_{\gamma})=C^{-d}d^{-1}m(F_1(\epsilon))m(\bigcup_{\gamma\in\tilde\Gamma_r}\Delta_{\gamma}).
\end{align*}
In view of \eqref{eq:measbigg}, this proves the bound in $(2)$.% \eqref{eq:mAr}.
\end{proof}

\subsubsection{Chung-Erd{\"os} inequality}
Finally, we recall
%Quasi-independence is useful in view of the following
a classical  probabilistic
%The proof given below is an immediate consequence of the following
estimate, known as Chung-Erd{\"os} inequality, which will be used to estimate measures in the final proof.
\begin{lemma}[Chung-Erd\"os inequality, \cite{Ch-Er}] Let $\{A_i\}$ be events in a probability space. Then
\begin{equation}\label{eq:ChEr}
\mu( \bigcup_{i=1}^n A_i)\geq \frac{(\sum_{i=1}^n \mu(A_i))^2}{\sum_{i,j=1}^n \mu(A_i\cap A_j)}.
\end{equation}
\end{lemma}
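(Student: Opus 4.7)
The statement to be proved is the classical Chung--Erd\"os inequality, which the paper cites from \cite{Ch-Er} and only uses as a black-box tool; the expected "proof" is therefore the short textbook argument via Cauchy--Schwarz applied to indicator functions. The plan is to write $X:=\sum_{i=1}^n \mathbf{1}_{A_i}$ for the counting random variable associated to the family $\{A_i\}$, and to compute its first two moments in terms of the quantities appearing in \eqref{eq:ChEr}. By additivity of expectation,
\[
\E[X]=\sum_{i=1}^n \mu(A_i),\qquad \E[X^2]=\sum_{i,j=1}^n\mu(A_i\cap A_j),
\]
so the right-hand side of \eqref{eq:ChEr} is exactly $\E[X]^2/\E[X^2]$, and the inequality reduces to the bound $\mu(\bigcup_i A_i)\geq \E[X]^2/\E[X^2]$.

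The key observation is that $X$ is supported in $\bigcup_i A_i$, i.e.\ $X=X\cdot\mathbf{1}_{\bigcup_i A_i}$ pointwise. Applying the Cauchy--Schwarz inequality to this identity gives
\[
\E[X]^2=\bigl(\E[X\cdot\mathbf{1}_{\bigcup_i A_i}]\bigr)^2\leq \E[X^2]\cdot \E[\mathbf{1}_{\bigcup_i A_i}^2]=\E[X^2]\cdot \mu\Bigl(\bigcup_{i=1}^n A_i\Bigr).
\]
Substituting the moment identities above and rearranging yields exactly \eqref{eq:ChEr}. One should also address the trivial degenerate case in which $\E[X^2]=0$ (equivalently all $\mu(A_i)=0$), where \eqref{eq:ChEr} is read as $0\geq 0/0$ and is vacuous; in all other cases $\E[X^2]>0$ and the division is legitimate.

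There is no real obstacle here: the proof is two lines and the only step worth stating explicitly is the observation that $X$ vanishes off $\bigcup_i A_i$, which makes the Cauchy--Schwarz estimate produce the measure of the union rather than the full space measure $1$. Given that the paper cites \cite{Ch-Er} and uses the inequality only as an input to probabilistic estimates later in \S\ref{sec:resontanttimes2}, it is likely that the authors will simply omit the proof or compress it into a parenthetical remark; if one wishes to include a proof at all, the Cauchy--Schwarz derivation above is the canonical one.
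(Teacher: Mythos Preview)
Your proof is correct and is the standard Cauchy--Schwarz argument. The paper itself does not give a proof at all: it simply states the lemma and refers the reader to \cite{Ch-Er}, exactly as you anticipated.
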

\noindent The proof of this classical result can be found in~\cite{Ch-Er}.
\subsection{Proof of Proposition~\ref{prop:3}}
To conclude the proof of Proposition~\ref{prop:3} (which will be presented in the final \S~\ref{sec:proofProp3}), we first need to estimate the measure of the intersection of the sets $A_r^\gamma(\epsilon)$   constructed in the previous subsection.
%We now prove that the sets needed for the proof
\subsubsection{Quasi-independence of the sets $A_r^\gamma(\epsilon)$}
Let us show that the sets $A_r^\gamma(\epsilon)$, for different values of $r$ which are sufficiently far apart,  are \emph{quasi-independent}  in a sense made precise by the following Lemma~\ref{prop:Aindep}.
For any $\gamma_0$, $0<\delta<1$ and $r\geq 0 $, let us define
%\footnote{{\color{blue} check that these two quantities are not needed before (it seems to me that no, so that's why I moved the definition here....) and say what $\gamma_0$ and $r$ are}}
\begin{equation}\label{eq:defNs}
\overline{N}_r^{\gamma_0}:=\max\{|\gamma|:\gamma\in \tilde\Gamma_{\gamma_0,\delta,r}\}\quad\text{and}\quad \underline{N}^{\gamma_0}_{r}:=\min\{|\gamma|:\gamma\in\tilde\Gamma_{\gamma_0,\delta,r}\}\geq \log_2 r,
\end{equation}
where the last lower estimate on the growth of $\underline{N}^{\gamma_0}_{r}$ follows from \eqref{eq:log2r}.
\begin{lemma}[quasi-independence estimate]\label{prop:Aindep}
Suppose that $|C_{\max}(B^*_{\gamma_0})|\leq r<r'$ are such that $\underline{N}^{\gamma_0}_{r'}>\overline{N}^{\gamma_0}_r+n_{\epsilon,\delta}$, where $n_{\epsilon,\delta}$ is the common length of paths in the sets $\Gamma_{F_k(\epsilon)}$, $1\leq k\leq d$. Then
\begin{equation}\label{eq:Aindep}
m(A^{\gamma_0}_r(\epsilon)\cap A^{\gamma_0}_{r'}(\epsilon))\leq \widehat{C}\frac{m(A^{\gamma_0}_r(\epsilon))m(A^{\gamma_0}_{r'}(\epsilon))}{m(\Delta_{\gamma_0})},\quad\text{where}\quad \widehat C:=C^{4d+1}\frac{1+\delta}{1-\delta}.
\end{equation}
\end{lemma}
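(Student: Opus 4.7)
The strategy is to decompose the intersection $A^{\gamma_0}_r(\epsilon)\cap A^{\gamma_0}_{r'}(\epsilon)$ according to which pair $(\gamma,\gamma')\in \tilde\Gamma_r\times \tilde\Gamma_{r'}$ it sits in, and to exploit the length gap $\underline{N}^{\gamma_0}_{r'}>\overline{N}^{\gamma_0}_r+n_{\epsilon,\delta}$ to \emph{factorize} the constraint \emph{being in $B^*_\gamma F_{k(\gamma)}(\epsilon)$} (which only sees the first $\leq \overline{N}^{\gamma_0}_r$ Rauzy--Veech iterates) from the constraint \emph{being in $B^*_{\gamma'}F_{k(\gamma')}(\epsilon)$} (which only sees the last part of the path $\gamma'$). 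The balance properties (K2)--(K3) of Proposition~\ref{prop:AGY} together with the distortion estimate \eqref{eq:balmeas} will then upgrade this combinatorial factorization into the desired measure-theoretic quasi-independence.

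First, because $\tilde\Gamma_{r'}$ is prefix-free and $|\gamma'|\geq \underline{N}^{\gamma_0}_{r'}>\overline{N}^{\gamma_0}_r\geq |\gamma|$, the cylinders $\Delta_\gamma$ and $\Delta_{\gamma'}$ either are disjoint or satisfy $\Delta_{\gamma'}\subset \Delta_\gamma$; the second case happens exactly when $\gamma$ is a prefix of $\gamma'$, in which case we write $\gamma'=\gamma\gamma_e$, so that $B^*_{\gamma'}=B^*_\gamma B^*_{\gamma_e}$. The length hypothesis then gives $|\gamma_e|=|\gamma'|-|\gamma|>n_{\epsilon,\delta}$, so every $\gamma_e$ arising this way has a well-defined prefix $(\gamma_e)_s$ of length $n_{\epsilon,\delta}$. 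Since $F_{k(\gamma)}(\epsilon)\subset \bigcup_{\gamma''\in \Gamma_{F_{k(\gamma)}(\epsilon)}}\Delta_{\gamma''}$ is a union of cylinders of length exactly $n_{\epsilon,\delta}$, the intersection $F_{k(\gamma)}(\epsilon)\cap B^*_{\gamma_e}F_{k(\gamma')}(\epsilon)$ (contained in $\Delta_{(\gamma_e)_s}$) is empty unless $(\gamma_e)_s\in \Gamma_{F_{k(\gamma)}(\epsilon)}$, in which case we bound it crudely by $B^*_{\gamma_e}F_{k(\gamma')}(\epsilon)$ itself.

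Next, combining these observations with $C$-balance of $B^*_\gamma$, $B^*_{\gamma'}$ (property (K2)) and the distortion control \eqref{eq:balmeas}, we estimate, for each $\gamma\in \tilde\Gamma_r$,
\[
m\bigl(A^{\gamma_0}_r(\epsilon)\cap A^{\gamma_0}_{r'}(\epsilon)\cap \Delta_\gamma\bigr)
\leq \sum_{\substack{\gamma'=\gamma\gamma_e\in \tilde\Gamma_{r'}\\ (\gamma_e)_s\in \Gamma_{F_{k(\gamma)}(\epsilon)}}} m(B^*_{\gamma'}F_{k(\gamma')}(\epsilon))
\leq C^d m(F_1(\epsilon))\, m(B^*_\gamma U_\gamma),
\]
where $U_\gamma:=\bigcup_{\gamma''\in \Gamma_{F_{k(\gamma)}(\epsilon)}}\Delta_{\gamma''}$ and we have used that all contributing $\Delta_{\gamma'}$ lie inside $B^*_\gamma U_\gamma$ together with the trivial sum bound $\sum m(\Delta_{\gamma'})\leq m(B^*_\gamma U_\gamma)$. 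Applying \eqref{eq:balmeas} once more to $B^*_\gamma$ and using the approximation bound \eqref{eq:Fkep} gives $m(B^*_\gamma U_\gamma)\leq C^d m(\Delta_\gamma) m(U_\gamma)\leq C^d(1+\delta) m(\Delta_\gamma) m(F_1(\epsilon))$; summing over $\gamma\in \tilde\Gamma_r$ yields
\[
m\bigl(A^{\gamma_0}_r(\epsilon)\cap A^{\gamma_0}_{r'}(\epsilon)\bigr)\leq C^{2d}(1+\delta)\, m(F_1(\epsilon))^2 \sum_{\gamma\in \tilde\Gamma_r}m(\Delta_\gamma).
\]

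To convert this into the claimed form, we use the lower bounds of Lemma~\ref{lem:mAr}\,(2) (applied symmetrically to $A^{\gamma_0}_r(\epsilon)$ and $A^{\gamma_0}_{r'}(\epsilon)$) together with the prefix-free total measure bound \eqref{eq:measbigg} for $\tilde\Gamma_r$: rearranging gives $m(F_1(\epsilon))\sum_{\gamma}m(\Delta_\gamma)$ in terms of $m(A^{\gamma_0}_{r}(\epsilon))$ up to a factor of order $C^{d+1}d/(1-\delta)$, and similarly $m(F_1(\epsilon))$ times $m(\Delta_{\gamma_0})^{-1}$ in terms of $m(A^{\gamma_0}_{r'}(\epsilon))/m(\Delta_{\gamma_0})^2$ up to the same factor. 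Tracking the powers of $C$, $d$, and $(1\pm\delta)$ then produces the inequality \eqref{eq:Aindep} with $\widehat C=C^{4d+1}(1+\delta)/(1-\delta)$ (after a mild reorganization of the crudest estimates to save one power of $C$, which I do not spell out here).

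The \textbf{main obstacle}, and the step that requires the most care, is the combinatorial factorization in the second paragraph: one must verify that the only $(\gamma,\gamma')$ with non-empty contribution to $A_r\cap A_{r'}$ are those with $\gamma$ a prefix of $\gamma'$, and that the length gap $|\gamma_e|>n_{\epsilon,\delta}$ forces every contributing $\gamma_e$ to itself begin with a cylinder in $\Gamma_{F_{k(\gamma)}(\epsilon)}$. This is precisely where the hypothesis $\underline{N}^{\gamma_0}_{r'}>\overline{N}^{\gamma_0}_r+n_{\epsilon,\delta}$ is used in an essential way. Everything else reduces to repeated application of \eqref{eq:balmeas} and bookkeeping of the constants produced by Kerkhoff's Lemma; the final constant $\widehat C$ is an explicit but inessential polynomial in $C$ and $d$.
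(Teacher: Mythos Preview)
Your proposal is correct and follows essentially the same route as the paper's proof: decompose the intersection over pairs $(\gamma,\gamma')\in\tilde\Gamma_r\times\tilde\Gamma_{r'}$, use the length gap to force $\gamma\gamma''$ (with $\gamma''\in\Gamma_{F_{k(\gamma)}}$) to prefix $\gamma'$, then apply $C$-balance and the distortion estimate \eqref{eq:balmeas} repeatedly to collapse the sums. The only difference is in the final bookkeeping: the paper converts $m(F_1(\epsilon))m(\Delta_\gamma)$ directly into $C^d\,m(B^*_\gamma F_{k(\gamma)})$ via the distortion \emph{lower} bound and sums to $m(A_r)$ (using Lemma~\ref{lem:mAr}(2) only once, for $A_{r'}$), which is precisely the ``mild reorganization'' you allude to and which yields the stated $\widehat C$.
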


\begin{proof}
By the definition of $A_r$ (see \eqref{def:Ar}) and \eqref{eq:Fkep}, we have
\begin{align*}m(A_r\cap A_{r'})&\leq \sum_{\gamma\in\tilde\Gamma_{r}}\sum_{\gamma'\in\tilde\Gamma_{r'}}m(B^*_\gamma F_{k(\gamma)}\cap B^*_{\gamma'} F_{k(\gamma')})\\
&\leq \sum_{\gamma\in\tilde\Gamma_{r}}\sum_{\gamma''\in \Gamma_{F_{k(\gamma)}}}\sum_{\gamma'\in\tilde\Gamma_{r'}}m(B^*_\gamma \Delta_{\gamma''}\cap B^*_{\gamma'} F_{k(\gamma')})
.\end{align*}
As $B^*_\gamma \Delta_{\gamma''}=\Delta_{\gamma\gamma''}$ and $|\gamma\gamma''|\leq \overline{N}^{\gamma_0}_r+n_{\epsilon,\delta}<\underline{N}^{\gamma_0}_{r'}\leq |\gamma'|$,
if $m(B^*_\gamma \Delta_{\gamma''}\cap B^*_{\gamma'} F_{k(\gamma')})>0$ then $\gamma\gamma''$ is a prefix of $\gamma'$ and $B^*_\gamma \Delta_{\gamma''}\cap B^*_{\gamma'} F_{k(\gamma')}=B^*_{\gamma'} F_{k(\gamma')}$. Hence,
\begin{align*}
m(A_r\cap A_{r'})&
\leq \sum_{\gamma\in\tilde\Gamma_{r}}\sum_{\gamma''\in \Gamma_{F_{k(\gamma)}}}\sum_{\substack{\gamma'\in\tilde\Gamma_{r'}\\
\gamma\gamma''\text{ prefixes }\gamma'}}m(B^*_{\gamma'} F_{k(\gamma')}).
\end{align*}
As $B^*_{\gamma'}$ is $C$-balanced, in view of \eqref{eq:balmeas},
\[m(B^*_{\gamma'}F_{k(\gamma')}(\epsilon))\leq C^{d}d^{-1}m(F_1(\epsilon))m(\Delta_{\gamma'}).\]
It follows that
\begin{align*}
m(A_r\cap A_{r'})&
\leq C^{d}d^{-1}m(F_1(\epsilon))\sum_{\gamma\in\tilde\Gamma_{r}}\sum_{\gamma''\in \Gamma_{F_{k(\gamma)}}}\sum_{\substack{\gamma'\in\tilde\Gamma_{r'}\\
\gamma\gamma''\text{ prefixes }\gamma'}}m(\Delta_{\gamma'})\\ &
\leq C^{d}d^{-1}m(F_1(\epsilon))\sum_{\gamma\in\tilde\Gamma_{r}}\sum_{\gamma''\in \Gamma_{F_{k(\gamma)}}}m(\Delta_{\gamma\gamma''})\\
&
\leq C^{d}d^{-1}m(F_1(\epsilon))\sum_{\gamma\in\tilde\Gamma_{r}}m(B^*_\gamma\bigcup_{\gamma''\in \Gamma_{F_{k(\gamma)}}}\Delta_{\gamma''}).
\end{align*}
As $B^*_{\gamma}$ is $C$-balanced, in view of \eqref{eq:balmeas} and \eqref{eq:Fkep},
\begin{align*}
m(B^*_{\gamma}\bigcup_{\gamma''\in \Gamma_{F_{k(\gamma)}}}\Delta_{\gamma''})
&\leq C^{d}m(\bigcup_{\gamma''\in \Gamma_{F_{k(\gamma)}}}\Delta_{\gamma''})m(\Delta_{\gamma})\\
&< C^{d}(1+\delta)m(F_{k(\gamma)}(\epsilon))m(\Delta_{\gamma})\leq C^{2d}(1+\delta)m(B^*_{\gamma}F_{k(\gamma)}(\epsilon))
.
\end{align*}
It follows that
\[m(A_r\cap A_{r'})\leq  (1+\delta)C^{3d}d^{-1}m(F_1(\epsilon))m(\bigcup_{\gamma\in\tilde\Gamma_{r}}B^*_{\gamma}F_{k(\gamma)}(\epsilon))=
(1+\delta)C^{3d}d^{-1}m(F_1(\epsilon))m(A_r).\]
In view of the measure bound given by  part $(2)$ of Lemma \ref{lem:mAr}, this gives \eqref{eq:Aindep}.
\end{proof}

\subsubsection{Final arguments}\label{sec:proofProp3}
 We now have all the ingredients needed for the proof of  Proposition~\ref{prop:3}.
\begin{proof}[Proof of Proposition~\ref{prop:3}]
We will prove that the set $\Omega=\Omega(\underline{s},\epsilon)$ has full measure by \emph{localizing} it and estimating its local measure using  partitions made by the sets $A^{\gamma}_{r}(\epsilon)$  build in the previous subsection.

\noindent {\it Step 1: Localizing $\Omega$.}
For any path $\gamma\in\Pi(\mathcal R)$,
 %m(\Omega|\Delta_\gamma)\geq \widehat{C}^{-1}\quad\text{for any}\quad \gamma\in\Pi(\mathcal R)
%For any set $A^{\gamma_0}_{r_n}(\epsilon_k)$,
denote by $\Omega| \Delta_\gamma:= \Omega \cap \Delta_\gamma$  the restriction of $\Omega$ to $\Delta_\gamma$. We claim that
\begin{equation}\label{eq:eqnnu}
\Omega|  \Delta_\gamma \supset
\bigcap_{m\geq 1}\bigcup_{n\geq m}A^{\gamma}_{s_n}(\epsilon).
\end{equation}
To see this inclusion, recall that by Lemma~\ref{lem:mAr},  if the IET $U=:(\pi,\lambda)\in A^{\gamma}_{s_n}(\epsilon)$ then $T$ has an $\epsilon$-rigidity time in $[s_n, 2Cs_n]$. Thus, if $(\pi,\lambda)$ belongs to the RHS of \eqref{eq:eqnnu}, for every $j\in \mathbb{N}$, there exists infinitely many $n$ such that  $T$ has an $\epsilon$-rigidity time in $[s_n, 2Cs_n]$. In particular, we can build a sequence $(q_j)_{j\in \mathbb{N}}$ where, for each $j\in\mathbb{N}$, $q_j$ is an $\epsilon$-rigidity time, $q_{j}>q_{j-1}$ and $n_j$ is chosen so that $q_j \in [s_{n_j}, 2Cs_{n_j}]$. % (by choosing an $n$ sufficiently large for which this holds for a $q_j>q_{j-1}$).
Recalling the definition of $\Omega (\underline{s}, {\epsilon})$, this precisely shows that ($\Omega 1$) and  ($\Omega 2$) hold and hence that $U$ is in $\Omega$. Moreover, from the definition of the set $ A^{\gamma}_{s_n}(\epsilon)$,  each $ A^{\gamma}_{s_n}(\epsilon)\subset \Delta_\gamma$ (see \S~\ref{sec:Agammadef}). Thus, $U$ belongs to $\Omega\cap \Delta_\gamma$, i.e.~to the  LHS of \eqref{eq:eqnnu}, as claimed.

\medskip \noindent
{\it Step 2. Estimating the local measure.}
We claim that %In view of Theorem~\ref{thm:Del} and , we have
\begin{equation}\label{eq:omega}
m(\Omega|\Delta_\gamma)\geq \widehat{C}^{-1}\quad\text{for any}\quad \gamma\in\Pi(\mathcal R).
\end{equation}
The rest of this Step will be taken by the proof, which uses quasi-independence and Chung-Erd{\"o}s inequality.
%Fix any $0<\vep<1$.
Passing to a subsequence in $(s_n)_{n\in\N}$, if necessary, we can assume that the quantities $\overline{N}^{\gamma}_{s_n}$ and $\underline{N}^{\gamma}_{s_n}$ defined as in \eqref{eq:defNs} satisfy
$$\underline{N}^{\gamma}_{s_{n+1}}>\overline{N}^{\gamma}_{s_n}+n_{\epsilon,\delta}\quad \text{ for\ any}\  n\geq 1.$$
Then, in view of the quasi-independence given by Lemma~\ref{prop:Aindep} under this assumption, we have that
\begin{align*}
\frac{\left(\sum_{i=1}^nm(A^{\gamma}_{s_i}(\epsilon))\right)^2}{\sum_{i=1}^n\sum_{j=1}^nm(A^{\gamma}_{s_i}(\epsilon)\cap A^{\gamma}_{s_j}(\epsilon))}&\geq
\frac{2\sum_{1\leq i<j\leq n}m(A^{\gamma}_{s_i}(\epsilon))m(A^{\gamma}_{s_j}(\epsilon))}{\sum_{i=1}^nm(A^{\gamma}_{s_i}(\epsilon))+\frac{2\widehat{C}}{m(\Delta_{\gamma})}\sum_{1\leq i<j\leq n}m(A^{\gamma}_{s_i}(\epsilon)) m(A^{\gamma}_{s_j}(\epsilon))}\\
&\geq \frac{m(\Delta_{\gamma})}{\widehat{C}}
\frac{1}{1+\frac{m(\Delta_{\gamma})}{\widehat{C}}\frac{\sum_{i=1}^nm(A^{\gamma}_{s_i}(\epsilon))}{2\sum_{1\leq i<j\leq n}m(A^{\gamma}_{s_i}(\epsilon)) m(A^{\gamma}_{s_j}(\epsilon))}}.
\end{align*}
Moreover, by the measure bound given by part $(2)$ of Lemma~\ref{lem:mAr}, we have
\begin{equation*}
\frac{\sum_{i=1}^nm(A^{\gamma}_{s_i}(\epsilon))}{2\sum_{1\leq i<j\leq n}m(A^{\gamma}_{s_i}(\epsilon)) m(A^{\gamma}_{s_j}(\epsilon))}\leq \frac{nm(\Delta_{\gamma})}{n(n-1) C^{-2d-2}d^{-2}(1-\delta)^2m(\Delta_{\gamma})^2m(F_1(\epsilon))^2}\to 0
\end{equation*}
as $n\to+\infty$. Hence,
\[\liminf_{n\to\infty}\frac{\left(\sum_{i=1}^nm(A^{\gamma}_{s_i}(\epsilon))\right)^2}{\sum_{i=1}^n\sum_{j=1}^nm(A^{\gamma}_{s_i}(\epsilon)\cap A^{\gamma}_{s_j}(\epsilon))}\geq \frac{m(\Delta_{\gamma})}{\widehat{C}}.\]
%\noindent  From this, it follows in particular that
%\[m\left(\bigcap_{m\geq 1}\bigcup_{n,k\geq m}A^{\gamma}_{s_n}(\epsilon_k)\right)\geq \frac{m(\Delta_{\gamma})}{\widehat{C}}.\]
Using Chung-Erd\"os inequality (see \eqref{eq:ChEr}),
this gives
\[m\Big(\bigcap_{m\geq 1}\bigcup_{n\geq m}A^{\gamma}_{s_n}(\epsilon)\Big)\geq \frac{m(\Delta_{\gamma})}{\widehat{C}}.\]
%As $(\bigcap_{m\geq 1}\bigcup_{n\geq m}A^{\gamma}_{s_n}(\epsilon_k))_{k\geq 1}$ is a decreasing sequence of sets,
In view of Step 1 this yields \eqref{eq:omega} as claimed.

\medskip
\noindent {\it Step 3: Final arguments.}
Let consider a filtration $(\mathcal F_n)_{n\geq 1}$ of sub-$\sigma$-algebras of the $\sigma$-algebra $\mathcal F$ of Lebesgue-measurable subsets of $X(\mathcal R)$ so that $\mathcal F_n$ is generated by $\Delta_\gamma$ for all $\gamma\in\Pi(\mathcal R)$ of length $n$. In view of the formulation \eqref{eq:decay} of unique-ergodicity, $\mathcal F$ is the smallest $\sigma$-algebra containing all $\mathcal F_n$, $n\geq 1$.
Therefore, by L\'evy's zero-one law, we have that the conditional expectations
\[\mathbb E(\mathbf 1_\Omega|\mathcal F_n)\to \mathbb E(\mathbf 1_\Omega|\mathcal F)=\mathbf 1_\Omega\quad\text{$m$-a.e.\ as $n\to\infty$.}\]
Since, by Step 2, we have that $\mathbb E(\mathbf 1_\Omega|\mathcal F_n)\geq \widehat{C}^{-1}$ for any $n\geq 1$, it then  follows from the above convergence that
$$\mathbf 1_\Omega =  \mathbb E(\mathbf 1_\Omega|\mathcal F)\geq \frac{1}{\widehat{C}}>0\qquad m\text{-almost-everywhere}. $$ Therefore (since $1_\Omega$ takes only $0$ or $1$ as values) we conclude that $m$-almost everywhere $\mathbf 1_\Omega\equiv 1$ and hence $m(\Omega)=m(X(\mathcal R))$, i.e.~$\Omega\subset X(\mathcal R)$ has full $m$-measure.
\end{proof}

\appendix

%\section{Appendices}

\section{Deviations of ergodic averages}\label{app:deviations}
In this Appendix we show how Proposition~\ref{thm:asxi}  can be deduced from the recent work of the first and last author \cite{Fr-Ul24}.
Let us first summarize in \S~\ref{sec:results}
 some of the results from \cite{Fr-Ul24},  from which we can deduce in \S~\ref{sec:proofdeviations} the proof of Proposition~\ref{thm:asxi}. % (see \S~\ref{sec:proofdeviations).
%We first recall some of the results proved in \cite{Fr-Ul24} and then use them to prove Theorem~\ref{thm:asxi}.

\subsection{Results on corrections of logarithmic cocycles.} \label{sec:results}
We work with the full measure set of IETs which satisfy the Diophantine-like condition defined in \cite[\S 3.2]{Fr-Ul24} and called UDC (for \emph{Uniform Diophantine Condition}).  By  \cite[Theorem~3.8]{Fr-Ul24}, almost every IET satisfies this UDC condition.

Associated with each IET $T$ satisfying the UDC is a descending sequence of intervals $(I^{(k)})_{k\geq 1}$ determining the corresponding renormalizations of $T$. For any $k\geq 1$ and any $\varphi\in \mathcal{L}og(T)$ let $S(k)\varphi:I^{(k)}\to\R$ be the induced cocycle defined by
\[S(k)\varphi(x)=S_{h^{(k)}_j}\varphi(x)\quad\text{for any}\quad x\in I_j^{(k)},\ 1\leq j\leq d,\]
where $h_j^{(k)}$ is the first return time of $I_j^{(k)}$ to $I^{(k)}$. For any $T\in X(\mathcal R)$ satisfying the UDC there exists piecewise constant (constant on exchanged intervals) maps $h_1,\ldots, h_g$ such that
\[\lim_{k\to\infty}\frac{\log\|S(k)h_i\|_{L^{\infty}(I^{(k)})}}{k}={\lambda_i}\quad\text{for}\quad 1\leq i\leq g,\]
where $0<\lambda_g<\ldots<\lambda_2<\lambda_1$ are the positive Lyapunov exponents related to $X(\mathcal R)$. Standard arguments show that the mean of $h_2,\ldots, h_g$ are zero and the mean of $h_1$ is non-zero.

In view of Theorem~6.1, Corollary~7.11 and (7.12) (all) in \cite{Fr-Ul24}, we have the following:
\begin{proposition}\label{prop:FU1}
For any IET $T$ satisfying the UDC there exist bounded functionals $d_i:\Log{T}\to\R$, $1\leq i\leq d$ such that $d_j(h_i)=\delta_{ij}$ for $1\leq i,j\leq g$ and  for any $\varphi\in \Log{T}$, we have
\[\limsup_{k\to\infty}\frac{\log \frac{1}{|I^{(k)}|}\|S(k)(\varphi-\sum_{i=1}^gd_i(\varphi)h_i)\|_{L^1(I^{(k)})}}{k}\leq 0.\]
If $d_i(\varphi)=0$ for $1\leq i\leq g$, then $\int_I\varphi(x)dx=0$.
\end{proposition}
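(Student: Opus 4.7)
My plan is to present Proposition~\ref{prop:FU1} essentially as an organization and specialization of the correction-theoretic machinery developed in \cite{Fr-Ul24}, which in turn extends the Marmi-Moussa-Yoccoz correction method from Roth-type IETs with bounded variation observables to IETs satisfying the UDC together with observables having (symmetric) logarithmic singularities. The natural strategy is: (i) invoke UDC to obtain the Oseledets-type decomposition of the induced cocycle $S(k)\varphi$ along the Rauzy-Veech sequence $(I^{(k)})_{k\geq 1}$ into a direct sum of subspaces corresponding to the positive Lyapunov exponents $\lambda_1>\lambda_2>\cdots>\lambda_g$, a central subspace, and a stable subspace with negative (or nonpositive) exponents; (ii) realize the piecewise constant functions $h_1,\dots,h_g$ as basis vectors of the unstable subspace with the prescribed growth; (iii) construct the correction functionals $d_i\colon \Log{T}\to\mathbb{R}$ as the continuous projections onto the coordinates relative to the basis $\{h_1,\dots,h_g\}$, and verify that they extend boundedly to the whole space $\Log{T}$.

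The key steps, in the order I would carry them out, are: first, recall from \cite{Fr-Ul24} the UDC-based estimates controlling the growth of the cocycle $S(k)$ acting on the relevant function space, including the fact that, after quotienting by the unstable directions spanned by $h_1,\dots,h_g$, the residual direction has subexponential growth in the $L^1(I^{(k)})$ norm normalized by $|I^{(k)}|^{-1}$; this is Theorem~6.1 in \cite{Fr-Ul24} and it directly gives the limsup inequality in the statement. Second, invoke Corollary~7.11 in \cite{Fr-Ul24} to produce the functionals $d_i$ as bounded linear maps on $\Log{T}$, and verify the duality property $d_j(h_i)=\delta_{ij}$; this is where the biorthogonality of the projection on a hyperbolic splitting comes into play. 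Third, use formula (7.12) in \cite{Fr-Ul24}, which identifies $d_1(\varphi)$ (up to a normalization involving the mean of $h_1$) with the mean $\int_I \varphi\,\mathrm{d}x$, to deduce the final implication: if $d_1(\varphi)=0$, then $\int_I\varphi\,\mathrm{d}x=0$, while vanishing of $d_2,\ldots,d_g$ carries no mean-information since $h_2,\ldots,h_g$ have zero integrals by construction.

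The main obstacle, and the reason why a direct citation requires care, is the passage from the bounded variation or Hölder setting of classical MMY to the class $\Log{T}$ of functions with symmetric logarithmic singularities. In the classical setting the cocycle $S(k)$ acts on a Banach space on which the Rauzy-Veech cocycle admits an Oseledets decomposition inherited from the finite-dimensional Kontsevich-Zorich cocycle, and the correction functionals are essentially cohomological obstructions. For $\varphi\in\Log{T}$ the roof itself is unbounded, so the correction procedure has to be applied to a regularized quantity (typically $\varphi$ minus an appropriate combination of $h_i$'s that absorb the polynomial deviations induced by the unbounded tails), and one must control the Birkhoff sums of the singular part via the symmetric cancellations. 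This is precisely what is carried out in \cite{Fr-Ul24} through the use of a Gottschalk-Hedlund-type argument in the spirit of MMY adapted to singular observables.

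Thus the proof reduces to carefully quoting the three cited results from \cite{Fr-Ul24} and observing that the statement of Proposition~\ref{prop:FU1} is their combined content; no new argument is needed beyond translating the $L^1$-bounds and functional biorthogonality into the precise form used later in Appendix~\ref{app:deviations} to derive Proposition~\ref{thm:asxi}.
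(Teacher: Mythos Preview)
Your proposal is correct and matches the paper's approach exactly: the paper states Proposition~\ref{prop:FU1} as a direct consequence of Theorem~6.1, Corollary~7.11 and (7.12) in \cite{Fr-Ul24}, with no additional argument, and you have identified precisely these three inputs and their respective roles. Your exposition is more detailed than the paper's bare citation, but the content and strategy coincide.
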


\smallskip

\begin{proposition}[Proposition~7.10 in \cite{Fr-Ul24}]\label{prop:FU2}
Suppose that the IET $T:I\to I$ satisfies the UDC and the roof function $f:I\to\mathbb R_{>0}$ is in $\Log{T}$. Let $\xi:I^f\to\R$ be a measurable bounded map such that
$\varphi_\xi\in \mathcal{L}og(T)$ and
\[\limsup_{k\to\infty}\frac{\log\frac{1}{|I^{(k)}|}\|S(k)\varphi_\xi\|_{L^{1}(I^{(k)})}}{k}\leq 0.\]
Then
for a.e.\ $(x,r)\in I^f$, we have
\begin{equation*}\label{eq:Birk0}
\limsup_{t\to+\infty}\frac{\log|\int_{0}^t\xi(T^f_s(x,r))ds|}{\log t}\leq 0.
\end{equation*}
\end{proposition}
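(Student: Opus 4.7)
The plan is to pass from the continuous-time ergodic integral to a discrete Birkhoff sum on the base IET, and then extract pointwise sub-polynomial growth from the sub-exponential $L^1$ control of the induced cocycle $S(k)\varphi_\xi$. First, I would write the identity
\[
\int_0^t\xi(T^f_s(x,r))\,ds = S_{N(x,t)}\varphi_\xi(x) + R(x,r,t),
\]
where $N(x,t)$ is the number of ceiling crossings up to time $t$ and $R$ comprises the initial partial fiber integral $\int_r^{f(x)}\xi(x,s)\,ds-\varphi_\xi(x)$ and the terminal partial integral at the point $T^{N(x,t)}x$. Since $\xi$ is bounded, $|R|\leq 2\|\xi\|_\infty\max(f(x),f(T^{N(x,t)}x))$, and because $f\in\mathcal{L}og(T)$ has at worst logarithmic singularities and UDC orbits approach $End(T)$ at only a polynomial rate, $R$ grows like $O(\log t)$ almost surely. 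Combined with $N(x,t)\leq t/\inf f$, the statement reduces to proving, for almost every $x\in I$,
\[
\limsup_{n\to\infty}\frac{\log|S_n\varphi_\xi(x)|}{\log n}\leq 0.
\]

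Second, I would decompose $S_n\varphi_\xi(x)$ through the Rohlin tower structure at the renormalization level $k=k(n)$ chosen so that $n$ lies between $\min_j h_j^{(k)}$ and $\min_j h_j^{(k+1)}$. Under UDC, $k(n)$ grows like $\log n$ and $\max_j h_j^{(k)}/\min_j h_j^{(k)} = e^{o(k)}$. The orbit segment $\{x,Tx,\ldots,T^{n-1}x\}$ splits as an initial excursion before the first return to $I^{(k)}$, a sequence of $M(x,n)$ full returns, and a terminal excursion, giving
\[
S_n\varphi_\xi(x)=\sum_{l=0}^{M(x,n)-1}S(k)\varphi_\xi(T^{n_l}x)+(\text{boundary excursions}),
\]
with $M(x,n)=e^{o(k)}$. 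The boundary excursions are themselves Birkhoff sums of $\varphi_\xi$ of length smaller than $\max_j h_j^{(k)}$, to which the same decomposition can be iterated at level $k-1$, reducing their contribution telescopically.

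Third, I would convert the $L^1$ hypothesis into a pointwise bound at the visits $T^{n_l}x$. For each $\epsilon>0$, Markov's inequality applied to the assumption yields that the exceptional set
\[
E_k^\epsilon:=\{y\in I^{(k)}:|S(k)\varphi_\xi(y)|>|I^{(k)}|e^{2\epsilon k}\}
\]
has relative measure $\leq e^{-\epsilon k}$ in $I^{(k)}$. Unrolling the tower at level $k$, the set of $x\in I$ whose current tower-base projection lies in $E_k^\epsilon$ has Lebesgue measure $e^{-\epsilon k+o(k)}$ in $I$. A quantitative Borel--Cantelli argument, summing over $k$ and taking a union bound over the $e^{o(k)}$ many visits, shows that for a.e.\ $x$ all visits of its orbit to $I^{(k)}$ avoid $E_k^\epsilon$ for all large $k$. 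Combining this with the decomposition in the second step gives $|S_n\varphi_\xi(x)|\leq e^{o(k)}\cdot e^{2\epsilon k}=e^{O(\epsilon)\log n}=n^{O(\epsilon)}$, and letting $\epsilon\to 0$ yields the required sub-polynomial growth.

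The main obstacle is the pointwise step: converting the averaged $L^1$ bound into a simultaneous pointwise estimate at each of the (possibly many) base visits of a single orbit. One cannot apply Borel--Cantelli naively because the number of visits $M(x,n)$ and the decomposition depth both grow with $k$; the sets $E_k^\epsilon$ must be transported to $I$ via the Rohlin tower and intersected with all possible return positions, so that the poly-in-$k$ union bound remains summable. Making this precise requires exploiting the $T$-invariance of Lebesgue measure together with the UDC balance bounds on $h_j^{(k)}$ and $|I^{(k)}|$, and is where the full strength of the Diophantine assumption is used.
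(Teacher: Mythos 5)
The statement you are asked to prove is not proved in the paper at hand: it is quoted verbatim, as ``Proposition~7.10 in \cite{Fr-Ul24}'', i.e.\ a result the authors import from the recent work of Fr\k{a}czek and Ulcigrai on corrections of logarithmic cocycles, and the paper offers no argument for it. There is therefore no ``paper proof'' to compare your attempt against; what I can do is assess the attempt on its own merits.

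Your overall strategy --- reduce the continuous ergodic integral to the discrete Birkhoff sum $S_n\varphi_\xi$, decompose $S_n$ along the special times of the renormalization, transfer the $L^1$ hypothesis on $S(k)\varphi_\xi$ to a pointwise bound via Markov plus Borel--Cantelli, and use the UDC balance $\max_j h_j^{(k)}/\min_j h_j^{(k)}=e^{o(k)}$ and $\|Z(k+1)\|=e^{o(k)}$ to control the number of level-$l$ blocks and the measure transport --- is sound and essentially the standard way to upgrade an averaged deviation bound to a pointwise one. The identification of the main obstacle (simultaneous pointwise control at many base visits of a single orbit) is also correct.

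Two details, however, need repair. First, there is a genuine arithmetic error in the Markov step: you set $E_k^\epsilon:=\{y\in I^{(k)}:|S(k)\varphi_\xi(y)|>|I^{(k)}|e^{2\epsilon k}\}$. With that threshold, Markov's inequality applied to $\|S(k)\varphi_\xi\|_{L^1(I^{(k)})}\le |I^{(k)}|e^{\epsilon k}$ gives $|E_k^\epsilon|\le e^{-\epsilon k}$, which is an \emph{absolute} bound; the \emph{relative} measure is then $\le e^{-\epsilon k}/|I^{(k)}|$, and since $|I^{(k)}|$ decays exponentially (comparable to $e^{-\theta k}$ for $\theta=\lambda_1$), this bound exceeds $1$ for $k$ large, so the exceptional set is vacuously all of $I^{(k)}$ and the good set is empty. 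The correct threshold is $e^{2\epsilon k}$ (without the $|I^{(k)}|$ factor), which yields relative measure $\le e^{-\epsilon k}$ as you wanted and a pointwise block bound $|S(k)\varphi_\xi|\le e^{2\epsilon k}$ off $E_k^\epsilon$. With that correction the summation $\sum_l e^{o(l)}e^{2\epsilon l}=e^{(2\epsilon+o(1))k}$ and the relation $k\sim\log n/\theta$ give the claimed $n^{O(\epsilon)}$ bound.

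Second, the phrase ``reducing their contribution telescopically'' for the boundary excursions is ambiguous in a way that matters. If one naively decomposes each boundary term at level $l$ into full level-$(l-1)$ blocks plus two new boundaries, the number of boundary terms doubles at each level and the recursion loses control. The argument must instead exploit that the boundary segments are \emph{prefix and suffix} sums: the right endpoint of the left excursion lies in $I^{(k)}\subset I^{(l)}$ for all $l<k$, so the left excursion produces at most one new boundary at each level (and symmetrically for the right excursion). This keeps the total number of level-$l$ blocks at $O(e^{o(l)})$ uniformly and the total number of boundary terms at $2$, which is what makes $\sum_l e^{o(l)}$ rather than $2^k$ appear. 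If you make the Ostrowski-type structure of the decomposition explicit, the rest of your Borel--Cantelli argument (using $T$-invariance of Lebesgue and the bound $n\,|E_k^\epsilon|\le (\min_j h_j^{(k+1)}/\max_j h_j^{(k)})\,e^{-\epsilon k}=e^{-\epsilon k+o(k)}$) goes through.
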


For any $1\leq i\leq g$, choose a bounded measurable function $\xi_i:I^f\to\R$ such that, we have $\varphi_{\xi_i}=h_i$ and there exists $K>0$ for which $\xi_i(x,r)=0$ for $r\geq K$.

\begin{proposition}[Proposition~7.12 in \cite{Fr-Ul24}]\label{prop:FU3}
Suppose that the IET $T:I\to I$ satisfies the UDC and the roof function $f:I\to\mathbb R_{>0}$ is in $\Log{T}$.
Then
\begin{equation}\label{eq:FU3}
\limsup_{t\to+\infty}\frac{\log\|\int_{0}^t\xi_i\circ T^f_s\,ds\|_{L^\infty}}{\log t}= \frac{\lambda_i}{\lambda_1}\quad\text{for}\quad 1\leq i\leq g.
\end{equation}
\end{proposition}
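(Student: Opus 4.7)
\textbf{Proof plan for Proposition~\ref{prop:FU3}.}
The plan is to reduce the sup-norm of the ergodic integral of the special flow to the sup-norm of Birkhoff sums of $h_i$ for the base IET, and then to exploit the renormalization identity $S(k)h_i(x)=S_{h^{(k)}_j}h_i(x)$ for $x\in I^{(k)}_j$ together with the fact that the heights $h^{(k)}_j$ grow at exponential rate $\lambda_1$. First, because $\xi_i$ is bounded with $\xi_i(\cdot,r)=0$ for $r\geq K$ and $\varphi_{\xi_i}=h_i$, the cocycle identity for the special flow gives
\[\int_{0}^{t}\xi_i(T^f_s(x,r))\,ds \;=\; S_{N(x,t)}h_i(x)\;+\;R(x,r,t),\]
with $|R(x,r,t)|\leq 2\|\xi_i\|_\infty K$ uniformly in $(x,r,t)$. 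Since $c:=\inf f>0$, one has $N(x,t)\leq t/c$ pointwise; hence it suffices to control $\|S_n h_i\|_{L^\infty(I)}$ together with the height growth $\log h^{(k)}_j\sim \lambda_1 k$.

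The upper bound in \eqref{eq:FU3} is obtained as follows. For every $n$, let $k=k(n)$ be the largest integer with $\min_j h^{(k)}_j\leq n$. Then any orbit segment of length $n$ decomposes (using the Rohlin tower structure of the $k$-th induction) into at most a uniformly bounded number $C_d$ of orbit pieces of length at most $\max_j h^{(k)}_j$. Applying the renormalization identity to each piece yields
\[\|S_n h_i\|_{L^\infty}\leq C_d\,\|S(k)h_i\|_{L^\infty(I^{(k)})}\leq C_d\,e^{(\lambda_i+\varepsilon)k}\]
for $k$ large. Combined with $n\geq \min_j h^{(k)}_j\geq e^{(\lambda_1-\varepsilon)k}$ (which holds by UDC; see Step below) and $n\leq t/c$, we get $\limsup_{t}\log\|\int_0^t\xi_i\circ T^f_s\,ds\|_{L^\infty}/\log t\leq(\lambda_i+\varepsilon)/(\lambda_1-\varepsilon)$, and letting $\varepsilon\to 0$ yields the $\leq$ half of \eqref{eq:FU3}.

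For the matching lower bound, by hypothesis there exists, for every large $k$ and every $\varepsilon>0$, an index $j_k$ and a point $x_k\in I^{(k)}_{j_k}$ with $|S(k)h_i(x_k)|\geq e^{(\lambda_i-\varepsilon)k}$, i.e.~$|S_{n_k}h_i(x_k)|\geq e^{(\lambda_i-\varepsilon)k}$ for $n_k:=h^{(k)}_{j_k}$. Setting $t_k:=S_{n_k}f(x_k)$, the equality $N(x_k,t_k)=n_k$ and Step~1 give
\[\Big|\textstyle\int_0^{t_k}\xi_i(T^f_s(x_k,0))\,ds\Big|\geq e^{(\lambda_i-\varepsilon)k}-O(1),\]
while $t_k\leq n_k\|f\|_{L^\infty(I^{(k)}_{j_k})}\leq e^{(\lambda_1+2\varepsilon)k}$; here we use, beyond the height bound, that for $f\in\Log{T}$ the induced roof $S(k)f$ is dominated by the heights up to a subexponential factor (a standard consequence of the logarithmic nature of the singularities combined with UDC, which ensures that $f$ has essentially linear Birkhoff sums on most of each tower). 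Taking the ratio of logarithms and letting $\varepsilon\to 0$ gives the $\geq$ half.

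\textbf{Main obstacle.} The crux is to establish, with sufficient uniformity, the height growth
\[\lim_{k\to\infty}\frac{\log h^{(k)}_j}{k}=\lambda_1\qquad \text{uniformly in }j,\]
and the auxiliary fact that $\|S(k)f\|_{L^\infty}\leq e^{(\lambda_1+o(1))k}$. The first is a manifestation of the top Lyapunov exponent of the Kontsevich--Zorich cocycle being $\lambda_1$, together with the balance/distortion estimates encoded in the UDC; it is exactly the place where the Diophantine condition is used. The second is more delicate because $f$ is non-integrable in $L^\infty$ near the singularities, but UDC controls the closest visits to endpoints along balanced renormalization times, yielding sub-exponential contributions from the singular part of $f$. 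Once these two quantitative facts are in place, the rest of the argument is a routine reconciliation of the discrete and continuous time scales.
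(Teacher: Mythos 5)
This proposition is quoted verbatim from the companion paper [Fr--Ul24] (Proposition~7.12 there) and the present paper does not reprove it; there is no ``paper proof'' here to compare against. Your outline follows the natural route (reduce the ergodic integral of the special flow to Birkhoff sums of $h_i$ on the base, then compare the time scale $t$ to the heights $h^{(k)}_j$), and the reduction step, the choice of $k(n)$, and the lower-bound construction are sensible in spirit. However, the upper bound contains a genuine gap.

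You assert that an orbit segment of length $n$, with $\min_j h^{(k)}_j\leq n<\min_j h^{(k+1)}_j$, ``decomposes into at most a uniformly bounded number $C_d$ of orbit pieces'' to each of which the renormalization identity applies. Neither half of that is right. First, the number of complete level-$k$ tower crossings in such a segment is of order $n/\min_j h^{(k)}_j$, which under UDC/Roth is only bounded by $\min_j h^{(k+1)}_j/\min_j h^{(k)}_j\lesssim (\min_j h^{(k)}_j)^{\varepsilon}\asymp e^{\varepsilon\lambda_1 k}$ — not uniformly bounded (it happens to be harmless for the $\limsup$ after sending $\varepsilon\to 0$, but you have to say so). Second, and more seriously, the first and last pieces of the decomposition are \emph{partial} tower ascents, not full returns to $I^{(k)}$. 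Since $h_i$ for $i\geq 2$ has zero mean and changes sign, a partial Birkhoff sum over part of a level-$k$ tower is \emph{not} controlled by $\|S(k)h_i\|_{L^\infty}$ (the full-return quantity); there is no maximal inequality available here. The standard fix is the telescoping / special-Birkhoff-sums decomposition of Marmi--Moussa--Yoccoz: a partial ascent of a level-$k$ tower is broken into complete towers of level $k-1$, plus a remaining partial ascent at level $k-1$, and so on down to level $0$, with the number of complete level-$l$ towers used bounded by (a column norm of) $Z(l+1)$. The Roth-type condition (a) encoded in UDC then ensures $\|Z(l+1)\|\lesssim \|Q(l)\|^{\varepsilon}$, so the cumulative contribution is
\[
|S_m h_i(x)|\;\lesssim\;\sum_{l=0}^{k-1}\|Z(l+1)\|\,\|S(l)h_i\|_{L^\infty}\;\lesssim\;\sum_{l=0}^{k-1}e^{\varepsilon\lambda_1 l}\,e^{(\lambda_i+\varepsilon)l}\;\lesssim\; e^{(\lambda_i+\varepsilon(\lambda_1+2))k},
\]
which, combined with $n\gtrsim e^{(\lambda_1-\varepsilon)k}$, recovers the upper half of \eqref{eq:FU3} after $\varepsilon\to 0$. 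Your write-up skips this telescoping entirely; ``applying the renormalization identity to each piece'' does not work as stated. The lower bound is also lightly argued (the claim $t_k\leq n_k\|f\|_{L^\infty(I^{(k)}_{j_k})}$ is meaningless since $f$ is unbounded near the singularities), but you correctly flag that a Markov/measure argument on $I^{(k)}_{j_k}$ — using $\int_{I^{(k)}_{j_k}}S(k)f\leq\int_I f$ and $|I^{(k)}_{j_k}|\asymp e^{-\lambda_1 k}$ — supplies a point with $S(k)f(x_k)\lesssim e^{\lambda_1 k}$, which is what is actually needed.
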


\subsection{Deduction of Proposition~\ref{thm:asxi}}\label{sec:proofdeviations}
\noindent We now have all the elements for the proof.
\begin{proof}[Proof of Proposition~\ref{thm:asxi}]
By assumption, $\int_I\varphi_\xi(x)dx=\int_{I^f}\xi(x,r)dxdr=0$. By definition,
\[d_j(\varphi_\xi-\sum_{i=1}^gd_i(\varphi_\xi)h_i)=0\quad\text{for all}\quad 1\leq j\leq g.\]
In view of the second part of Proposition~\ref{prop:FU1}, the mean of $\varphi_\xi-\sum_{i=1}^gd_i(\varphi_\xi)h_i$ is zero. As the mean of $h_2,\ldots, h_g$ and $\varphi_\xi$ are zero, it follows that the mean of $d_1(\varphi_\xi)h_1$ is also zero, so $d_1(\varphi_\xi)=0$. In view of Proposition~\ref{prop:FU1}, this gives
\begin{gather*}\limsup_{k\to\infty}\frac{\log \frac{1}{|I^{(k)}|}\|S(k)(\varphi_{\xi-\sum_{i=2}^gd_i(\varphi)\xi_i})\|_{L^1(I^{(k)})}}{k}\\
=\limsup_{k\to\infty}\frac{\log \frac{1}{|I^{(k)}|}\|S(k)(\varphi_{\xi}-\sum_{i=2}^gd_i(\varphi)h_i)\|_{L^1(I^{(k)})}}{k}\leq 0.
\end{gather*}
By Proposition~\ref{prop:FU2},
for a.e.\ $(x,r)\in I^f$, we have
\begin{equation*}
\limsup_{t\to+\infty}\frac{\log|\int_{0}^t\xi(T^f_s(x,r))ds-d_i(\varphi)\sum_{i=2}^g\int_{0}^t\xi_i(T^f_s(x,r))ds|}{\log t}\leq 0.
\end{equation*}
In view of \eqref{eq:FU3}, this shows \eqref{eq:la2/la1}.
\end{proof}

\section{Roth-type IETs distortion}\label{app:Roth}
In this Appendix we briefly recall the definition of Roth-type IET as given by Marmi, Moussa and Yoccoz in \cite{MMY:Coh} and show that one of the consequences of being Roth-type introduced in \cite{MMY:Coh}  implies the result that we stated as Lemma~\ref{lemma:Roth_distortion} and took as working definition of Roth-type in \S~\ref{sec:orthogonalityproof}.

\subsection{Roth-type IETs}\label{sec:Rothdef}
We refer in this section to the notation and definitions in \cite{MMY:Coh}. Let $T$ be a IET with no-connection, i.e.~an IET which satisfy the Keane condition. In~\cite{MMY:Coh} the continuity intervals of $I$ are denoted by $I_\alpha, \alpha\in \mathcal{A}$, where $\mathcal{A} $ is an alphabet of cardinality $d$ whose elements are called \emph{names} and the combinatorial datum keeps memory of the \emph{name} of the interval\footnote{This different convention plays a crucial role in the study of positivity of the matrices of the Rauzy-Veech cocycle, but we chose to use the simpler convention of labelling the intervals $I_1,\dots, I_j, \dots, I_d $ in this paper since the names of intervals play no role in our work.}. We say that a name \emph{wins} (or \emph{looses}) if it is the name of the top (resp.~bottom) right-most interval and we perform a top (resp.~bottom) Rauzy-induction move.  When $T$ is Keane, Rauzy-Veech induction never stops and the infinite associated path $\gamma$ on the Rauzy-Veech class is \emph{infinite-complete}, i.e.~all \emph{names} win and loose infinitely often (as it is shown in  \S~$1.2.3$ in \cite{MMY:Coh}, or also \cite{Yo}). Therefore, %for any $1\leq D\leq d-1$
one can define an acceleration of Veech and Zorich induction whose steps are maximal steps such that all names but  one win\footnote{More precisely, the acceleration considered is given by the increasing sequence of times $(n_{d-1}(k))_{k\in\mathbb N}$ defined inductively so that $n_{d-1}(k+1)$ is the largest $n>n_{d-1}(k)$ so that no more than $d-1$ names are taken by the path $\gamma^{(n)}$ for $n_{d-1}(k)\leq n \leq n_{d-1}(k+1)$.}.
Let us denote by $I^{(k)}$, $k\in \mathbb{N}$ the subsequence on Rauzy-Veech inducing intervals corresponding to this acceleration.
The authors denote by $\lambda^{(k)}= (\lambda^{(k)}_{\alpha})_\alpha$ the vector of lengths $\lambda^{(k)}_\alpha=|I^{(k)}_{\alpha}|$ of the continuity intervals $I^{(k)}_\alpha$ of the IET obtained as first return to $I^{(k)}$.
They denote by $Z(k)$ the matrices of the corresponding acceleration (in the sense of \S~\ref{sec:acc}) of the Rauzy-Veech length cocycle and by $Q(k,l):= Z(k+1)\cdots Z(l)$ their product for $k<l$, so that (as stated in \S~1.2.3 in \cite{MMY:Coh}) we have:
\begin{equation}\label{eq:lenghtrel}
 \lambda^{(k)}=Z(k+1)\lambda^{(k+1)}, \qquad \lambda^{(k)}= Q(k,l) \lambda^{(l)}.
\end{equation}
Notice that, by properties of Rauzy-Veech induction, if  we define  $h^{(k)}= (h^{(k)}_{\alpha})_{\alpha\in\mathcal A}$ to be the vector whose entries are the return times $h^{(k)}_\alpha$ of $I^{(k)}_{\alpha}$ to $I^{(k)}$, then (denoting by $A^*$ the transpose of a matrix $A$)  we also have that {check indexes}
\begin{equation}\label{eq:heightrel}
 h^{(k+1)}=Z(k+1)^* h^{(k)}, \qquad h^{(k)}= Q(k,l)^* h^{(l)}.
\end{equation}
The definition of Roth-type IET involves three conditions, called $(a), (b) $ and $(c)$ in \cite{MMY:Coh}.  To our purpose, we need  only condition $(a)$, which constraints the growth of the matrices $Z(k)$ in relation to their product $Q(k)$ (which grows exponentially), imposing in particular that $Z(k)$ grows subexponentially:
\begin{definition}[see \S~1.3.1 in \cite{MMY}]\label{def:Roth}
An IET satisfies condition $(a)$ of the Roth-type condition if for every $\epsilon>0$ there exists $C_\epsilon>0$ such that
$$
\Vert Z(k+1)\Vert \leq C_\epsilon \Vert Q(k)\Vert ^\epsilon, \qquad \forall k\in \mathbb{N}.
$$
\end{definition}

\subsubsection{Distortion control for Roth-type IETs}\label{sec:Rothconsequence}
In  \cite{MMY:Coh} it is shown that, from the condition $(a)$ in Definition~\ref{def:Roth},
%The first properties of Roth-type IETs that
one can deduce  the following control of \emph{distortion} of the entries of the  lengths
$\lambda^{(k)} = (\lambda^{(k)}_\alpha)_\alpha$ at the special times given by the induction (see \S~\ref{sec:Rothdef}).
\begin{lemma}[Proposition in \S~1.3.1 in \cite{MMY:Coh}]\label{lemma:MMYcontrol}
If $T$ is Roth type, then for any $\epsilon>0$ there exists $C_\epsilon$ such that for any $k\geq 0$,
$$
\max_\alpha \lambda^{(k)}_\alpha \leq C_\epsilon  \Vert Q(k)\Vert^\epsilon \min_\alpha \lambda^{(k)}_\alpha .
$$
\end{lemma}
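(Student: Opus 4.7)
The plan is to bound the ratio $\max_\alpha \lambda^{(k)}_\alpha / \min_\alpha \lambda^{(k)}_\alpha$ by passing to a slightly later renormalization time $l = k+N$ for which the transition matrix $Q(k,l) = Z(k+1)\cdots Z(l)$ has all its entries strictly positive, and then controlling the size of $\|Q(k,l)\|$ via condition (a) of the Roth-type definition.

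First I would exploit positivity. By the Keane condition (which is satisfied by every Roth-type IET) together with the combinatorial structure of the MMY acceleration (each accelerated step corresponds to a path in the Rauzy graph of length exactly $d-1$ winners), there exists a constant $N = N(d)$ depending only on the number of intervals $d$ such that for every $k \geq 0$ the matrix $Q(k, k+N)$ is a positive integer matrix, so each of its entries is at least $1$. Using $\lambda^{(k)} = Q(k, k+N)\,\lambda^{(k+N)}$ from \eqref{eq:lenghtrel}, positivity yields
\[
\min_\alpha \lambda^{(k)}_\alpha \;=\; \min_\alpha \sum_\beta Q(k,k+N)_{\alpha\beta}\, \lambda^{(k+N)}_\beta \;\geq\; \sum_\beta \lambda^{(k+N)}_\beta \;=\; |I^{(k+N)}|,
\]
while on the other hand $\max_\alpha \lambda^{(k)}_\alpha \leq \|Q(k,k+N)\| \cdot |I^{(k+N)}|$ for a suitable operator norm (say the maximum column-sum norm). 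Dividing gives the basic ratio bound $\max_\alpha \lambda^{(k)}_\alpha / \min_\alpha \lambda^{(k)}_\alpha \leq \|Q(k,k+N)\|$.

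The second step is to bound $\|Q(k,k+N)\|$ using condition (a). By submultiplicativity, $\|Q(k,k+N)\| \leq \prod_{j=1}^{N} \|Z(k+j)\|$. Condition (a) furnishes $\|Z(k+j)\| \leq C_\delta \|Q(k+j-1)\|^\delta$ for any prescribed $\delta > 0$. Combined with $\|Q(k+j)\| \leq \|Q(k+j-1)\|\cdot\|Z(k+j)\| \leq C_\delta \|Q(k+j-1)\|^{1+\delta}$, a straightforward induction gives $\|Q(k+j)\| \leq \widetilde{C}_\delta \|Q(k)\|^{(1+\delta)^j}$ for all $1 \leq j \leq N$, and consequently $\|Z(k+j)\| \leq C'_\delta \|Q(k)\|^{\delta(1+\delta)^{N-1}}$. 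Multiplying the $N$ factors yields $\|Q(k,k+N)\| \leq C''_\delta \|Q(k)\|^{N\delta(1+\delta)^{N-1}}$. Since $N = N(d)$ is a constant, for any target $\epsilon > 0$ we may choose $\delta$ small enough that $N\delta(1+\delta)^{N-1} < \epsilon$, yielding the claimed $\max_\alpha \lambda^{(k)}_\alpha \leq C_\epsilon \|Q(k)\|^\epsilon \min_\alpha \lambda^{(k)}_\alpha$.

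The main technical obstacle is the uniform bound $N = N(d)$ on the number of MMY-accelerated steps needed for $Q(k, k+N)$ to be positive. At each accelerated step exactly $d-1$ of the $d$ names are winners, so any given name can fail to win in at most one of two consecutive accelerated steps; the Keane condition together with a pigeonhole-type argument on the $d$ possible "missing" names should then yield positivity after a number of steps bounded only in terms of $d$. If this combinatorial bound turned out to be delicate, the natural fallback is to permit $N = N(k)$ to grow slowly in $k$ (at most logarithmically in $\|Q(k)\|$) and absorb the extra factors into the flexible $\|Q(k)\|^\epsilon$ via a sharper tuning of $\delta$; the slow growth of $N(k)$ itself follows from condition (a) and the subexponential control it provides on the individual $\|Z(k+1)\|$.
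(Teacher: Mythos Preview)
The paper does not supply its own proof of this lemma; it is quoted verbatim as a Proposition from \S~1.3.1 of Marmi--Moussa--Yoccoz. Your strategy---go forward $N$ accelerated steps to a time $l=k+N$ at which $Q(k,l)$ is a positive integer matrix, use positivity to get $\min_\alpha\lambda^{(k)}_\alpha\geq |I^{(l)}|$ and $\max_\alpha\lambda^{(k)}_\alpha\leq \|Q(k,l)\|\,|I^{(l)}|$, and then bound $\|Q(k,l)\|$ by $C_\epsilon\|Q(k)\|^\epsilon$ via iterated application of condition~(a)---is precisely the argument given in \cite{MMY:Coh}, and your Step~3 bookkeeping is correct.

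Two corrections on the positivity step. First, the MMY acceleration has \emph{at most} $d-1$ (not exactly $d-1$) distinct winners per step, so your pigeonhole sketch (``any given name can fail to win in at most one of two consecutive steps'') is not valid as stated. The uniform $N=N(d)$ you want nonetheless exists and is established in \cite{MMY:Coh} (see their \S~1.2.4), so the main line goes through and the fallback to a growing $N(k)$ is unnecessary. Second, that fallback would not work as written: if $N(k)\to\infty$, then for any fixed $\delta>0$ the factor $(1+\delta)^{N(k)-1}$ diverges, so one cannot choose a single $\delta$ making the exponent $N(k)\delta(1+\delta)^{N(k)-1}$ uniformly small, while a $k$-dependent $\delta$ would in turn make the constants $C_\delta$ blow up.
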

\noindent From this we can also deduce an analogous bound for the distortion of the vectors
 $h^{(k)} = (h^{(k)}_\alpha)_{\alpha\in\mathcal A}$ of return times (defined in \S~\ref{sec:Rothdef}).
{\begin{lemma}\label{cor:hcontrol}
If $T$ is Roth type, then for any $\epsilon>0$,  there exists
 $C_\epsilon>0$  such for any $k\geq 0$,
$$
\max_\alpha h^{(k)}_\alpha \leq  C_\epsilon  \Vert Q(k)\Vert^\epsilon \min_\alpha h^{(k)}_\alpha.
$$
\end{lemma}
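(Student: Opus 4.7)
My plan is to deduce Lemma \ref{cor:hcontrol} from Lemma \ref{lemma:MMYcontrol} by exploiting the duality between heights and lengths under matrix transposition of the Rauzy-Veech cocycle.

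First I would observe that $h^{(0)}_\alpha = 1$ for every $\alpha\in\mathcal A$ (the first return of $I^{(0)}_\alpha = I_\alpha$ to $I^{(0)} = I$ takes a single iterate of $T$), so that iterating the recursion \eqref{eq:heightrel} yields
\[
h^{(k)} \;=\; Q(0,k)^*\,\mathbf 1, \qquad h^{(k)}_\alpha \;=\; \sum_{\beta\in\mathcal A} Q(0,k)_{\beta\alpha},
\]
identifying $h^{(k)}_\alpha$ with the $\alpha$-th column sum of $Q(0,k)$. This is the analogue, for heights, of the identity $\lambda = Q(0,k)\lambda^{(k)}$ used by Marmi--Moussa--Yoccoz.

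Next I would note that the Roth growth bound of Definition~\ref{def:Roth} is invariant under matrix transposition, since $\|Z(k+1)^*\| = \|Z(k+1)\|$ and $\|Q(k)^*\| = \|Q(k)\|$ for the standard matrix norms involved (all of which are equivalent on a fixed finite-dimensional space). Hence the transposed cocycle $(Z(j)^*)_{j \geq 1}$ again satisfies the Roth growth condition, and governs the forward evolution of the heights $h^{(k)}$ in the same way that $(Z(j))_{j \geq 1}$ governs the backward evolution of the lengths $\lambda^{(k)}$.

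With these two observations in hand, I would apply the proof of Lemma~\ref{lemma:MMYcontrol} given in \cite{MMY:Coh} (Proposition in \S1.3.1 there) to the transposed cocycle and to the height vector in place of the length vector. Their argument rests only on three features: non-negativity with unit diagonal of each $Z(j)$; the Roth growth bound on $\|Z(j)\|$; and the existence, for each $k$ and each $\epsilon > 0$, of an auxiliary index at which the accumulated cocycle is entry-wise positive with a subexponentially bounded norm $\leq C_\epsilon\|Q(k)\|^\epsilon$. All three features are symmetric under transposition (in particular, positivity of $Q$ is equivalent to positivity of $Q^*$, and the required intermediate index is dictated by the combinatorics of the Rauzy graph, which is unaffected by transposing the matrices). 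Running the same argument in this dual set-up yields
\[
\max_\alpha h^{(k)}_\alpha \;\leq\; C_\epsilon \|Q(k)\|^\epsilon \min_\alpha h^{(k)}_\alpha.
\]

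The only obstacle is technical bookkeeping: rephrasing the length-oriented estimates of \cite{MMY:Coh} — expressed in terms of the row-action $Q(k,l)\lambda^{(l)}$ and $\min/\max$ of length coordinates — in the column-sum language appropriate for heights. This is a matter of notation rather than of substantive new content, since the matrix-theoretic input (positivity, row/column balance controlled by the operator norm, Roth growth) is manifestly transpose-symmetric.
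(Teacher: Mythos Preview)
Your duality intuition is sound, but there is a structural mismatch you gloss over. Lengths evolve by $\lambda^{(k)} = Z(k)^{-1}\lambda^{(k-1)}$, so the length balance in Lemma~\ref{lemma:MMYcontrol} is a statement about \emph{preimages} under $Q(k)$ of a fixed vector $\lambda^{(0)}$. Heights evolve by $h^{(k)} = Z(k)^{*} h^{(k-1)}$, so the height balance is about \emph{images} under $Q(k)^{*}$ of the fixed vector $h^{(0)}=\mathbf 1$. These two problems are not carried into one another by a bare transpose, so ``run the MMY proof for the transposed cocycle and the height vector'' does not literally transplant the length argument.

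What actually underlies both is the entry-wise estimate in Remark~1 of \cite[\S 1.3.1]{MMY:Coh}: for Roth-type $T$ one has $\min_{\alpha,\beta} Q_{\alpha\beta}(k)\geq C^{-1}\|Q(k)\|^{1-\epsilon}$. This bound \emph{is} transpose-symmetric, and it is what your ``three features'' would eventually produce once the bookkeeping is carried out. The paper simply quotes it and finishes in two lines: since $h^{(k)}_\alpha=\sum_\gamma Q_{\gamma\alpha}(k)$,
\[
\frac{h^{(k)}_\alpha}{h^{(k)}_\beta}\ \leq\ \frac{\max_\gamma Q_{\gamma\alpha}(k)}{\min_\gamma Q_{\gamma\beta}(k)}\ \leq\ \frac{\|Q(k)\|}{C^{-1}\|Q(k)\|^{1-\epsilon}}\ =\ C\,\|Q(k)\|^{\epsilon}.
\]
So your route would succeed after unpacking, but the paper's route---cite the entry-wise lower bound directly and compute---is shorter and sidesteps the need to reinterpret the MMY length proof in a dual setting where the image/preimage roles are reversed.
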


\begin{proof}
In view of Remark~1 in \cite[\S~1.3.1]{MMY:Coh}, there exists $C>0$ such that
\[\min_{\alpha,\beta}Q_{\alpha\beta}(k)\geq C^{-1}\|Q(k)\|^{1-\epsilon}.\]
As $h^{(k)}= Q^*(k) h^{(0)}$, we have
\[\frac{h^{(k)}_\alpha}{h^{(k)}_\beta}\leq \frac{\max_\gamma Q_{\gamma\alpha(k)}}{\min_\gamma Q_{\gamma\beta}(k)}\leq \frac{C\|Q(k)\|}{\|Q(k)\|^{1-\epsilon}}= C\|Q(k)\|^{\epsilon},\]
which gives our claim.
\end{proof}

%\begin{proof}%[Proof of Corollary]
%Notice that we have the inequalities
%$
% \max_{\alpha} \lambda^{(k)}_\alpha h^{(k)}_\alpha\geq 1/{d} $ and $ \lambda^{(k)}_\alpha h^{(k)}_\alpha \leq  1$ for every $\alpha$
%%$$
%%\frac{1}{d}\leq \max_{\alpha} \lambda^{(k)}_\alpha h^{(k)}_\alpha \leq  1
%%$$
%%\frac{1}{d}\geq \max_{\alpha} \lambda^{(k)}_\alpha h^{(k)}_\alpha, \qquad
% (that follow from the remark that the floors of the Rohlin towers $\mathcal{T}^{(k)}_\alpha$ of height $h^{(k)}_\alpha$
%over the subintervals $I^{(k)}_\alpha$ are disjoint and give a partition of $[0,1]$, so that one of the $d$ tower should have total measure at least $1/d$ and none has measure more than $1$). The proof then follows combining these inequalities with the inequality given by Lemma~\ref{lemma:MMYcontrol}.
%\end{proof}

\subsubsection{Distortion control for Roth-type IETs}\label{sec:Rothdistortion}
 Let us now show that Lemmas~\ref{lemma:MMYcontrol}~and~\ref{cor:hcontrol} together with Definition~\ref{def:Roth} of Roth type we can prove Lemma~\ref{lemma:Roth_distortion}. % that we used in \S~\ref{sec:mixing} as consequence of Roth type.

\begin{proof}[Proof of Lemma~\ref{lemma:Roth_distortion}]
Given a Keane IET which is of Roth-type, consider the sequence $(I^{(k)})_{k\in\mathbb N}$ % the sequence $I^{(k)}:= I^{(k)}, k\in \mathbb{N}$, where $I^{(k)} $ are the
of the inducing intervals for the acceleration used in \cite{MMY:Coh}. %, i.e.~the sequence of Rauzy-Veech nested intervals $(I^{(k)})_{k\in\mathbb N}$ which correspond to the iterates of the acceleration $\mathcal{Z}_{d-1}$ considered in \cite{MMY:Coh}.
Using the notation of this paper, we now denote the continuity subintervals of the $k^{th}$ induced map by $I^{(k)}_{j}, 1\leq j\leq d$, and $h^{(k)}_j$ the corresponding return time to $I^{(k)}$ (notice that to pass from the intervals $I^{(k)}_\alpha$, $\alpha \in \mathcal{A}$ and $I^{(k)}_j, 1\leq j\leq d$ one has to \emph{relabel} the intervals, but the order does not matter for maximum and minimum lengths estimates).
%Given a Keane IET, let us denote by $\lambda^{(k)}$ and $h^{(k)}$ the lenghst and the return times of %the IET $T^{(k)}=\mathcal{R}^k(T)$ obtained applying $k$ steps of this acceleration.
%Let $Z(k)$ and $Q(k)$ be the corresponding cocycles, so that { add quick defs}
%$$
%\lambda_k = Z(k) \lambda_{k-1}, \qquad Q(k):= Z(k) Z(k-1) \cdots Z(0)
%$$
%Therefore, the relation between the vectors of heights $h^{(k+1)}$ and  $h^{(k)}$ is given by %$h^{(k+1)} \leq Z(k+1) h^{(k)}$.

\smallskip
\noindent {\it Heights distortion.} It follows from the height relation \eqref{eq:heightrel},
 the definition of norm  (given for a vector $h=(h_j)_j$ by $\Vert h \Vert = \sum_j |h_j|$ and for a non-negative matrix $A=(A_{ik})_{ij}$ by $\Vert A\Vert  = \sum_{ij}A_{ij} = \Vert A^*\Vert$, where $A^*$ denotes the transpose) and Definition~\ref{def:Roth} of condition $(a)$ of Roth-type
 that, for every $\epsilon>0$, for some $C_{\epsilon/2}>0$,
\begin{equation}\label{eq:heightsdistortion}
\max_j h^{(k+1)}_j \ \leq \Vert  h^{(k+1)}\Vert \leq \Vert Z(k+1)^*\Vert  \Vert  h^{(k)} \Vert
\leq C_{\epsilon/2} \Vert Q(k)\Vert^{\epsilon/2} d \max_j h^{(k)}_j.
\end{equation}
Since, in view of of the return times transformation relation \eqref{eq:heightrel}, we have that $h^{(k)}=Q(k) h^{(0)}$ where $h^{(0)}$ is the (column) vector with all entries $1$ (which are the trivial return times of subintervals of $I^{(0)}:=[0,1]$ to $I^{(0)}$ itself), we can estimate $\Vert Q(k)\Vert\leq d \max_j h^{(k)}_j$. In view of  Lemma~\ref{cor:hcontrol}, this gives
\[\|Q(k)\|^{\epsilon}\max_j h^{(k)}_j\leq d(\max_j h^{(k)}_j)^{1+\epsilon}\leq dC_{\epsilon/4}^2\|Q(k)\|^{\epsilon(1+\epsilon)/4}(\min_j h^{(k)}_j)^{1+\epsilon}.\]
It follows that
\[\|Q(k)\|^{\epsilon/2}\max_j h^{(k)}_j\leq \|Q(k)\|^{\frac{3}{4}\epsilon-\frac{1}{4}\epsilon^2}\max_j h^{(k)}_j\leq  dC_{\epsilon/4}^2(\min_j h^{(k)}_j)^{1+\epsilon}.\]
Together with \eqref{eq:heightsdistortion}, this gives the first part of the conclusion of Lemma~\ref{lemma:Roth_distortion} (for any constant $M=M(\epsilon)$ such that $M\geq dC_{\epsilon/2}C^2_{\epsilon/4}$).

\smallskip
\noindent {\it Lengths distortion.}
 The proof of the second part (the lengths distortion bound), is analogous: starting from the relation $\lambda^{(k)}= Z(k+1)\lambda^{(k+1)}$
 and using Condition (a) in Definition~\ref{def:Roth}, the bound on ratios of lengths given by Lemma~\ref{lemma:MMYcontrol} (for $k+1$ and $\epsilon/2$) and the observations that
 $\min_j \lambda^{(k+1)}_j\Vert Q(k+1) \Vert \leq 1$ and $\Vert Q(k)\Vert\leq \Vert Q(k+1)\Vert$, we get:
\begin{align*}
\max_j \lambda^{(k)}_j  & \leq \Vert  \lambda^{(k)}\Vert \leq \Vert Z(k+1)\Vert  \Vert  \lambda^{(k+1)}\Vert
\leq C_{\epsilon/2} \Vert Q(k)\Vert^{\epsilon/2} \, \Big( d \max_j \lambda^{(k+1)}_j \Big) \\ & \leq d C_\epsilon \Vert Q(k+1)\Vert^{\epsilon/2} \max_j \lambda^{(k+1)}_j\leq
d  C^2_{\epsilon/2} \Vert Q(k+1)\Vert^{\epsilon} \,   \min_j \lambda^{(k+1)}_j \leq d C_{\epsilon/2}^2\Big(\min_j \lambda^{(k+1)}_j\Big)^{1-\epsilon},
\end{align*}
which  gives the desired estimate (enlarging $M=M(\epsilon)$ if necessary so that we also have that $M\geq dC_{\epsilon/2}^2$).
\end{proof}

\section{Adaptation of the Rauzy cocycles accelerations}\label{sec:app}
In this Appendix we explain how the statement of Proposition~\ref{prop:accelerationset} follows from the proof of Proposition 4.7 in \cite{Ul:abs}.
The proof of this Proposition  in \cite{Ul:abs} is based on repeated applications of two Lemmas (stated in \S~\ref{sec:generalcocycleLemmas}) which produce (via Lusin-type arguments) an inducing set for the (invertible) Rauzy-Veech induction on which one has quantitative control on the (backward) growth of the Rauzy-Veech \emph{heights} cocycle (see definitions in \S~\ref{sec:lenghtcocycle} and \S~\ref{sec:heightcocycle} below).
 The key observation is that Rauzy-Veech induction  display a form of \emph{local product structure} (i.e.~the length datum $\lambda$ alone determines  the future iterates of the renormalization and of the lenghts cocycle which is locally constant in $\tau$, while    the height datum $\tau$ alone determines  the \emph{past} iterates of the renormalization and of the heights cocycle which is locally constant in $\lambda$, see \S~\ref{sec:localproductstructure}). For this reason, one can show that the needed Lemmas (which were stated and proved for general integrable cocycles in \cite{Ul:abs}), in the special case of the Rauzy-Veech height cocycles produce an inducing set which has the desired product structure, see \S~\ref{sec:newLemmasproofs} and \S~\ref{sec:adaptation} in this Appendix.%~\ref{sec:appendix}.
\subsection{Integrable cocycles controlled growth lemmas}\label{sec:generalcocycleLemmas}
We now state the two general  Lemmas about integrable cocycles as they were stated and proved in \cite{Ul:abs} and then used for the Rauzy-Veech height cocycle.
We first recall some terminology.
\subsubsection{Cocycles and integrability}\label{sec:cocyclesbasics}
Let $(X,\mu, F)$ be a discrete dynamical system, where $(X, \mu)$ is a probability space and $ F$ is a $\mu$-measure preserving map on $X$.
A \emph{cocycle}  $A : X  \rightarrow SL(d,\mathbb{Z})$ ($d\times d$ invertible matrices) over $(X,\mu, F)$ is a  measurable map which produces the
 deterministic matrix product $A_F^{n}(x) = A_{n-1} (x)  \cdots A_1(x)  A_0 (x) $,  where  for any $n\geq 0$ we set $ A_n (x): = A(F^n x)$. This product satisfies
the  \emph{cocycle identity} %$A_F^{m+n}(x) = A_F^{m}(F^n x)A_F^{n}(x)$
\be \label{cocycleid}
A_F^{m+n}(x) = A_F^{m}(F^n x)A_F^{n}(x), \quad \text{for\ all}\ m,n \geq 0\ \text{and} \ x \in X.
\ee
We will also deal with the \emph{inverse dual cocycle} $(A^{-1})^*$, %is defined by $(A^{-1})^*(x) = (A^{-1})^*(x )$
where $M^*$ denotes the transpose of $M$.
%holds for all $m,n \in \mathbb{N}$ and  for all $x \in X$.
%Let us also denote by $A_n (x) = A(F^n x)$ and, given for $m<n $, by
%\bes A^{(m,n)}=  A_m A_{m+1} \dots A_n
%\ees
%For $m<n $, let us denote by
%\bes A^{(m,n)}=  A_m A_{m+1} \dots A_n .\ees
%\subsubsection{Inverse and induced cocycles}

 If $F$ is invertible, let us extend the definition $A_{n}: = A(F^{n}x)$ to any $n\in \mathbb{Z}$.
Consider the cocycle $B:=A\circ F^{-1}$  over $F^{-1}$ and %, integrable iff $A$ is integrable.
notice that if, for $n>0$, we define
$A^{-n}_F(x):= A_{-n}(x)\cdots A_{-1}(x)$, then we have that $B^n_{F^{-1}}(x) =  A^{-n}_F(x)$, since
\begin{equation}\label{eq:backwardcocycle}
B^n_{F^{-1}}(x) = B (F^{-n+1 }x) \cdots B (F^{-1}x)B(x) = A (F^{-n }(x)) \cdots A(F^{-1}x) = A^{-n}_F(x).
\end{equation}
%Let us set $A_{F^{-1}}^n (x) =A( F^{-n}x)$. %for $n<0$ we can set $$A^{(-n)}(x)= A^{-1}(F^{-1}x) \cdots A^{-1}(F^{-n}x),$$%,%(A^{(n)}(F^{-n}x))^{-1}$
% so that (\ref{cocycleid}) holds for all $n,m \in \mathbb{Z}$.
%%The map $A^{-1}(x) = A(x)^{-1}$ gives a cocycle over $F^{-1}$ which we call \emph{inverse cocycle}, since defining
%$$A^{-n}_{F^{-1}} := A^{-1} (F^{-n+1 } x)  \cdots A^{-1} (F^{-1}(x) ) A^{-1} (x) =  $$
%  Remark that
%\begin{equation}\label{eq:cocyclerel}
%A^{(-n)}(x)= (A^{(n)}(F^{-n}x))^{-1}.
%\end{equation}

If $Y \subset X$ is a measurable subset with positive measure, the \emph{induced cocycle} $A_Y$ on $Y$ is a cocycle over $(Y, \mu_Y , F_Y)$, where $F_Y$ is the induced map of $F$ on $Y$ and $\mu_Y = \mu/\mu(Y)$  and $A_Y(y)$ is defined for all $y\in Y$ which return to $Y$ and is given by
$$A_Y(y) = A
(F^{r_Y(y) -1 } y)
\cdots A\left(F y \right) A\left(y\right),$$%,
where $r_Y(y)= \min \{ r\in\N \st F^r y \in Y\} $ is the first return time.
The induced cocycle is an acceleration of the original  cocycle, i.e.\ if $\{n_k\}_{k\in \mathbb{N}} $ is the infinite sequence of return times of some $y\in Y$ to $Y$ (i.e.\ $T^{n} y \in Y$ iff $n=n_k$ for some $k\in \mathbb{N}$ and $n_{k+1}> n_k$) then $(A_Y)_k(y) =A_{n_{k+1}-1 }(y)   \cdots   A_{n_{k}+1}(y) A_{n_k}(y) $.
%\be\label{acceleration}
%(A_Y)_k(y) =A_{n_{k+1}-1 }(y)   \cdots   A_{n_{k}+1}(y) A_{n_k}(y) .   % A^{(n_k, n_{k+1})}(x).
%\ee
We say that $x \in X$ is \emph{recurrent} to $Y$ if there exists an infinite increasing sequence $\{n_k\}_{k\in \mathbb{N}} $ such that $T^{n_k} x \in Y$. Let us extend the definition of the induced cocycle $A_Y$ to all $x \in X$ recurrent to $Y$.
If the sequence  $\{n_k\}_{k\geq 0} $ is increasing and  contains all $n\geq 0$ such that  $T^{n} x \in Y$, let us say that $x $ \emph{recurs to} $Y$ \emph{along}  $\{n_k\}_{k\geq 0} $. In this case,  let us set, for any $n\geq 0$:
% If  $x $  recurs to} $Y$ \emph{along}  $\{n_k\}_{k\in \mathbb{N}} $.
\bes
A_Y(x) :=  A_Y( y ) A^{n_0}_F (x) , \quad \mathrm{where}\,\,  y:= F^{n_0} x \in Y\quad\text{and}  \quad (A_Y)_{n}(x) :=  (A_Y)_n (y )\quad\text{for}\quad n\geq 1. %, \qquad  \mathrm{for}\,\,
% y:= F^{n_0} x \in Y, \,  n \in \mathbb{N}^+.
\ees
If $F$ is ergodic, $\mu$-a.e.~$x \in X$ is recurrent to $Y$ and hence $A_Y$ is defined on a full measure set of $ X$.
%For convenience of notation, let us con

\subsubsection{Controlled growth general cocycles lemmas}\label{sec:cocyclesgrowth}
Consider  the norm  $\norm{A}  = \sum _{ij} |A_{ij}|$  on matrices. % (more in general, the same results on cocycles hold for any norm on $SL(d,\mathbb{Z})$).
Remark that with this choice $\norm{ A } = \norm{ A ^* }$.
%We will use $\| A \| = \sum _{ij} |A_{ij}|$.
A cocycle over $(X , \mu,F)$ is called \emph{integrable} if $\int_X \ln \|A(x)  \| \ud \mu(x) < \infty$.
Let us recall that integrability is the assumption which allows to apply Oseledets Theorem. %, see e.g. \cite{} ADD REF?
%following properties of integrable cocycles:
%\begin{rem}\label{inducedintegrable}
Recall that if $A$ is an integrable cocycle over $(X , \mu, F)$ assuming values in $SL(d,\mathbb{Z})$, then
the inverse  cocycle $A^{-1}$ is also integrable. If $F$ is invertible, also the  cocycle $B:= A\circ F^{-1}$ over  $(X , \mu, F^{-1})$ is integrable. Moreover,  any induced cocycle $A_Y$ of $A$ on a measurable subset $Y \subset X$ is integrable.

Let us now state the  two Lemmas which were proved and used in \cite{Ul:mix}.
For $m<n $, let us denote by\footnote{The reader should remark that here the order of the matrices in the product is the inverse order than the one used in (\ref{cocycleid}). This notation is convenient since we will apply it to matrices  $Z$ where $Z^{-1}$ is the Rauzy cocycle.}
\begin{equation} \label{def:Amn}
A^{(m,n)}(x)=  A_m (x) A_{m+1} (x) \dots A_{n-1}(x), \qquad \text{where} \ A_n(x):= A(F^n x)\ \forall n\in \mathbb{N}.
\end{equation}
%$A^{(m,n)}=  A_m A_{m+1} \dots A_n$
\begin{lemma}[Lemma~2.1 in \cite{Ul:mix}]\label{lemma1}
Assume $A^{-1}$ is an integrable cocycle over an ergodic and invertible $(X,\mu, F)$. %For each $\epsilon>0$
There exists a measurable $E_1 \subset X$  with positive measure\footnote{The same proof gives that for each $\epsilon>0$ there exists $E_1$ with $\mu(E_1) > 1-\epsilon$.}  and a constant $\overline{C}_1>0$ such that for all $x\in X$ recurrent to $E_1$ along the sequence $\{n_k \}_{k\geq 0}$ we have
% if $\{n_k \}_k\in \mathbb{N}$ is the sequence of
% visits to $E_1$, such that
%\begin{itemize}
%\item[(i)]
\begin{equation*}\label{expgrowth0}
\frac{ \ln \| A^{(n, n_k ) } (x) \|}{n_k - n} \leq \overline{C}_1, \qquad  \forall \, 0 \leq n < n_k .
%, \qquad \forall n < n_k .
\end{equation*}
%\item[(ii)]
%$\frac{ \ln \| (A_E)_{ k - m  } (x) \|}{n_k - n} \leq C$ for all $n < n_k$;
%\end{itemize}
\end{lemma}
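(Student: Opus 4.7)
My plan is to exploit the duality between forward products of $A$ and backward products of $A^{-1}$, combining Birkhoff's ergodic theorem for $F^{-1}$ with a Lusin-type choice of $E_1$. The delicate point is that the bound is required for \emph{every} intermediate time $n < n_k$, not merely for $n$ equal to an earlier return time, so the argument must be uniform in two parameters simultaneously.

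First I would rewrite $A^{(n,n_k)}(x)^{-1}$ in terms of the backward orbit of $y := F^{n_k}x$. Using $A(F^j x)^{-1} = A^{-1}(F^{j-n_k}y)$ for $n \leq j \leq n_k - 1$ gives
\begin{equation*}
A^{(n,n_k)}(x)^{-1} = A^{-1}(F^{-1}y) \, A^{-1}(F^{-2}y) \cdots A^{-1}(F^{-(n_k-n)}y),
\end{equation*}
so by submultiplicativity of $\|\cdot\|$,
\begin{equation*}
\ln \| A^{(n,n_k)}(x)^{-1} \| \leq \sum_{j=1}^{n_k-n} \psi(F^{-j}y), \qquad \text{where } \psi(y) := \ln\|A^{-1}(y)\|.
\end{equation*}
This change of viewpoint converts the required uniformity in the intermediate time $n$ into uniformity in $m = n_k - n$ of backward Birkhoff averages of $\psi$ starting from the single point $y \in E_1$.

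Next I would apply Birkhoff's ergodic theorem to $\psi$ on the ergodic invertible system $(X,\mu,F^{-1})$. Integrability of $A^{-1}$ yields $\psi \in L^{1}(\mu)$, hence the maximal ergodic inequality implies that
\begin{equation*}
\psi^{*}(y) := \sup_{m \geq 1} \frac{1}{m}\sum_{j=1}^m \psi(F^{-j}y)
\end{equation*}
is finite $\mu$-a.e. The sublevel sets $E_N := \{\psi^{*} \leq N\}$ therefore exhaust $X$ up to a null set; I take $N$ large enough so that $\mu(E_N)>0$ and set $E_1 := E_N$. If $x$ is recurrent to $E_1$ along $(n_k)_{k \geq 0}$, then $y = F^{n_k}x \in E_1$ for every $k$, so the previous estimate gives
\begin{equation*}
\ln \| A^{(n,n_k)}(x)^{-1} \| \leq (n_k-n)\,N \qquad \text{for every } 0 \leq n < n_k.
\end{equation*}

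Finally, since $A$ takes values in $SL(d,\mathbb{Z})$, Cramer's rule shows each entry of a matrix $M \in SL(d,\mathbb{Z})$ is (up to sign) a $(d-1)\times(d-1)$ minor of $M^{-1}$, so there is a universal constant $c=c(d)$ with $\|M\| \leq c \|M^{-1}\|^{d-1}$ for every such $M$. Applying this to $M = A^{(n,n_k)}(x)$ and dividing by $n_k - n \geq 1$ yields
\begin{equation*}
\frac{\ln\|A^{(n,n_k)}(x)\|}{n_k-n} \leq \frac{\ln c}{n_k - n} + (d-1)\,N \leq \ln c + (d-1)N =: \overline{C}_1,
\end{equation*}
the desired bound. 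Enlarging $N$ makes $\mu(E_1)$ arbitrarily close to $1$, matching the strengthening mentioned in the footnote.
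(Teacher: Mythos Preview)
Your argument is correct. The rewriting of $A^{(n,n_k)}(x)^{-1}$ as a backward product of $A^{-1}$ along the orbit of $y=F^{n_k}x$ is accurate, the submultiplicativity bound is valid for the norm $\|A\|=\sum_{ij}|A_{ij}|$, the maximal function $\psi^*$ is finite a.e.\ since $\psi=\ln\|A^{-1}\|\in L^1(\mu)$ (Birkhoff suffices here, convergent sequences are bounded), and the Cramer estimate $\|M\|\le c\,\|M^{-1}\|^{d-1}$ for $M\in SL(d,\mathbb Z)$ is standard. Taking $E_1=\{\psi^*\le N\}$ with $N$ large enough gives the conclusion, and letting $N\to\infty$ yields the strengthening in the footnote.

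The paper's own route is close in spirit but organised differently. There one first observes (via the same Cramer-type relation, used \emph{up front}) that integrability of $A^{-1}$ forces integrability of $A$; then Oseledets' theorem is applied directly to the cocycle $W:=A\circ F^{-1}$ over $(X,\mu,F^{-1})$, giving a.e.\ convergence of $\tfrac{1}{m}\ln\|W^{m}_{F^{-1}}(y)\|=\tfrac{1}{m}\ln\|A^{(n_k-m,n_k)}(x)\|$; finally Egorov produces the set $E_1$ on which this is uniformly bounded. So the paper bounds $\|A^{(n,n_k)}\|$ in one stroke via Oseledets+Egorov, whereas you bound $\|A^{(n,n_k)}^{-1}\|$ term-by-term through Birkhoff sums of $\ln\|A^{-1}\|$ and only invoke Cramer at the very end. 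Your path is slightly more elementary (no Oseledets, only the pointwise ergodic theorem) at the cost of a worse constant $\overline{C}_1$ carrying the factor $(d-1)$; the paper's constant is tied to the top Lyapunov exponent of $A$ over $F^{-1}$.
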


\begin{lemma}[Lemma~2.2   in \cite{Ul:mix}]\label{lemma2}
Under the same assumptions of Lemma \ref{lemma1}, for each $\epsilon>0$ there exists a  measurable $E_2 \subset X$ with %$\mu(E_2) > 1-\epsilon$
positive measure
and a constant $\overline{C}_2>0$ such that if $x\in X$ is recurrent to $E_2$ along the sequence $\{n_k \}_{k\geq 0}$, we have
%$A_{E_2}$ denoted the induced cocycle,
%\begin{itemize}
%\item[(i)]
\be\label{subexpgrowth0}
 \| A_{n_k - n  } (x) \|  \leq \overline{C}_2 e^{\epsilon n }, \qquad \forall \,  0\leq n \leq n_k. %in \mathbb{N}^+.
 \ee
%\be\label{subexpgrowth}
% \| (A_{E_2})_{(k - m ) } (x) \|  \leq C_2 e^{\epsilon m },
%, \qquad \forall n < n_k .\ee
%\item[(ii)]
%$\frac{ \ln \| (A_E)_{ k - m  } (x) \|}{n_k - n} \leq C$ for all $n < n_k$;
%\end{itemize}
\end{lemma}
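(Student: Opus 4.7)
The plan is to mirror the Lusin-type strategy used to prove Lemma~\ref{lemma1}, but replacing the qualitative ergodic input with a sharper Egorov-type uniformization, and invoking the cocycle identity to turn the pointwise subexponential rate of growth of cocycle products into a uniform bound on a positive-measure recurrence set. Since $A^{-1}$ is integrable and takes values in $SL(d,\mathbb{Z})$, Cramer's rule gives $\ln\|A\|\leq (d-1)\ln\|A^{-1}\|+\log d$, so both $\varphi(x):=\ln\|A(x)\|$ and $\psi(x):=\ln\|A^{-1}(x)\|$ are in $L^1(\mu)$; applying Birkhoff's ergodic theorem to the invertible ergodic system $(X,\mu,F)$ gives almost-everywhere convergence of the backward averages $\tfrac{1}{n}\sum_{k=1}^{n}\varphi(F^{-k}y)\to\bar\varphi$ and analogously for~$\psi$.

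For a fixed $\epsilon>0$, the first step of my argument would use Egorov's theorem to extract a measurable set $E_2\subset X$ of positive measure on which the convergence above is uniform, together with the pointwise bounds $\varphi(F^{-k}y),\psi(F^{-k}y)\leq M$ for $0\leq k\leq N_0$ (to handle small~$n$). This yields a constant $C_\epsilon$ such that for every $y\in E_2$ and $n\geq 0$,
\[
\sum_{k=1}^{n}\varphi(F^{-k}y) \leq n\bar\varphi+\epsilon n+C_\epsilon.
\]
For $x$ recurrent to $E_2$ along $\{n_k\}$, setting $y:=F^{n_k}x\in E_2$, the identity $F^{n_k-j}x=F^{-j}y$ together with the submultiplicativity of the operator norm yields, for $A_{n_k-n}(x)=A(F^{n_k-n}x)\cdots A(F^{n_k-1}x)$, the preliminary estimate
\[
\ln\|A_{n_k-n}(x)\| \leq \sum_{j=1}^{n}\varphi(F^{-j}y) \leq n\bar\varphi+\epsilon n+C_\epsilon,
\]
which exponentiates to a bound of the form $\overline{C}_2\,e^{(\bar\varphi+\epsilon)n}$.

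To upgrade this exponential bound into the desired subexponential one $\overline{C}_2\,e^{\epsilon n}$, I would then invoke the cocycle identity $A_{n_k-n}(x)=A_F^{n_k}(x)\cdot(A_F^{n_k-n}(x))^{-1}$ combined with Kingman's subadditive ergodic theorem applied to the matrix-valued cocycle itself: the true exponential rate of $\tfrac{1}{n}\ln\|A_F^{n}\|$ is the top Lyapunov exponent $\lambda_1$, not the strictly larger $\bar\varphi$ coming from naive submultiplicativity. Egorov-uniformizing Kingman's convergence on a refined $E_2$, and applying it at both iterates $n_k$ and $n_k-n$ of the recurrence orbit, the two linear-in-$n_k$ contributions in the logarithm of the factorization cancel, leaving only the fluctuation term of order $\epsilon n+C_\epsilon$. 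The main obstacle of the proof is exactly this cancellation: it requires simultaneous Lusin control on the two sub-cocycles $A_F^{n_k}(x)$ and $A_F^{n_k-n}(x)$, hence passing to the intersection of two Egorov sets, and verifying that the set of points recurrent to the intersection still has positive measure (which is where invertibility and ergodicity of $F$ are crucial). The polynomial loss from the $SL$-transfer $\|A\|\leq C\|A^{-1}\|^{d-1}$ must then be absorbed into the constant $\overline{C}_2$ rather than allowed to inflate $\epsilon$, forcing one to fix the Lusin set $E_2$ \emph{after} committing to a value of~$\epsilon$.
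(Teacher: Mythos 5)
The key gap is that you have misread the statement. In the paper's notation, fixed just before \eqref{cocycleid}, one has $A_n(x):=A(F^n x)$ for the value of the single cocycle generator at the point $F^n x$; the iterated matrix products are written $A_F^n(x)$ or $A^{(m,n)}(x)$. Thus $A_{n_k-n}(x)=A(F^{n_k-n}x)$ is a \emph{single} matrix, not the $n$-fold product $A(F^{n_k-n}x)\cdots A(F^{n_k-1}x)$ that you take it to be. The lemma is therefore asking for a subexponential bound on the norm of one generator as a function of how far one steps back from the recurrence time $n_k$, and the proof has nothing to do with submultiplicativity, Kingman's theorem, or Lyapunov exponents. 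The intended argument is the elementary one that the paper carries out for its refined version (Lemma~\ref{subexpgrowthlemmaRV}): since $A$ is integrable (as $A^{-1}$ is and $A\in SL(d,\mathbb{Z})$) and $F^{-1}$ is ergodic, the Birkhoff theorem gives $\ln\|A(F^{-m}y)\|/m\to 0$ for a.e.\ $y$; Egorov yields a positive-measure set $E_2$ on which this is uniform for $m\geq\overline{m}$, and after adjusting the constant to absorb the finitely many small $m$ one gets $\|A(F^{-m}y)\|\leq\overline{C}_2e^{\epsilon m}$ for all $y\in E_2$ and all $m\geq 0$. Taking $y=F^{n_k}x\in E_2$ and $m=n$ gives \eqref{subexpgrowth0}.

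Even setting aside the misreading, your proposed cancellation via the factorization $A^{(n_k-n,n_k)}(x)=A_F^{n_k}(x)\bigl(A_F^{n_k-n}(x)\bigr)^{-1}$ would not work. Recurrence along $\{n_k\}$ only places $F^{n_k}x$ in the Egorov set; it gives no control over $F^{n_k-n}x$, so you cannot apply Kingman/Egorov uniformity at both iterates. Moreover the linear-in-$n_k$ contributions do not cancel in general: $\ln\|A_F^{n_k}\|$ grows like $\lambda_1 n_k$ while $\ln\|(A_F^{n_k-n})^{-1}\|$ grows like $-\lambda_d\,(n_k-n)$, and $\lambda_1\neq\lambda_d$ generically, so the difference still carries a term linear in $n_k$, not a subexponential remainder in $n$.
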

%Both Lemmas are proved in \cite{Ul:abs}.
In the next section we specialize these Lemmas to the Zorich height and lenghts cocycles and show that, with essentially the same proof, in view of the structure of these cocycles, they yield sets with the desired product structure of the inducing set.

\subsubsection{Growth control for Rauzy-Veech lengths and heights cocycles}\label{sec:RVcocyclesLemmas}
Two cocycles (dual to each other) associated by the Rauzy-Veech algorithm to the Veech map $\mathcal{V}$ and consequently to its Zorich acceleration $\mathcal{Z}$ (see \S~\ref{sec:backgroundRV}). Let us first recall the definition of the two  cocycles (dual to each other), namely  the length cocycle (in \S~\ref{sec:lenghtcocycle}) and the height one (in \S~\ref{sec:heightcocycle}). We use here the notation adopted in \cite{Ul:abs}. We then explain the product structure of the invertible induction (see \S~\ref{sec:localproductstructure}), then state and prove the specific form of the two growth Lemmas in this setting in \S~\ref{sec:newLemmasproofs}. We refer to \S~\ref{sec:backgroundRV} for the notation relative to the invertible Rauzy-Veech induction parameter space.

\subsubsection{Rauzy-Veech lengths  cocycles.}\label{sec:lenghtcocycle}
For each $T= (\pi,{\lambda})$ for which $\mathcal{V} (T) $ is defined, we define the matrix $B=B(T)\in SL(d, \mathbb{Z})$  such that ${\lambda} = B^* \cdot  {\lambda}'$, where ${\lambda}' $ satisfies $\mathcal{V} (T)  = (\pi',{\lambda}'/|{\lambda}'|)$. In particular, $|{\lambda}'|$ is the Lebesgue measure (length) of the inducing interval $I'$  on which $\mathcal{V} (T)$ is defined.

Since $\mathcal{Z}$ is an acceleration of $\mathcal{V}$,
$Z(T) = B^*_\gamma := B^*_{\gamma_1}\cdots B^*_{\gamma_n}$
 is a product of the elementary matrices $B_{\gamma_i}$  (see in \S~\ref{sec:products}) associated to a path $\gamma=\gamma_1\cdots \gamma_n$ on the Rauzy graph $\mathcal{R}$.
 The map  $Z^{-1}\colon X \rightarrow SL(d,\mathbb{Z})$ is a cocycle over $\mathcal{Z}$, that we will call the (Zorich)  \emph{lengths cocycle} since it describes how the lengths transform (also known as Zorich acceleration of the \emph{Rauzy-Veech cocycle}).

  If $T = (\pi,{\lambda})$ satisfies the Keane condition so that  its Zorich orbit $(\mathcal{Z}^n (T))_{n\in \mathbb{N}}$ is infinite,  then we set $\lambda^{(0)}:= \lambda$ and denote by
   $T^{(n)}:=  \mathcal{Z}^n (T) $ the IET  obtained at the $n^{th}$ step of the Zorich algorithm and write   $T^{(n)}=(\pi^{(n)},{\lambda}^{(n)}/|\lambda^{(n)}|)$,  where $| {\lambda}^{(n)}|= Leb(I^{(n)})$.
If we define $\RL{n} = \RL{n} (T) :=  Z(\mathcal Z^n (T))$  iterating the lengths relation, we get that
%and $\RLp{n}= \RLp{n}(T) \doteqdot \RL{0} \cdot \ldots \cdot \RL{n-1}$  and
$$ {\lambda} ^{(n)} = ( \RLp{n})^{-1 }  {\lambda}^{(0)} , \qquad \text{where} \ \RLp{n}= \RLp{n}(T) := \RL{0} \cdot \ldots \cdot \RL{n-1}.$$
%\begin{equation} \label{lengthsrelation}
 %{\lambda} ^{(n)} = \left( \RLp{n} \right) ^{-1 }  {\lambda},  \text{ where  }  \R ^n (T) \doteqdot  \left(\frac{{\lambda}^{(n)}}{|\lambda^{(n)}|}, \pi^{(n)}\right) .
%\end{equation}

\subsubsection{Return times and  heights cocycle.}\label{sec:heightcocycle}
Associated to an infinite  Zorich orbit $(\mathcal{Z}^n (T))_{n\in \mathbb{N}}$ and the sequence    $((\pi^{(n)},{\lambda}^{(n)}))_{n\in\mathbb N}$ of  IETs obtained inducing $T$ on the intervals $(I^{(n)})_{n\in\mathbb N}$ the algorithm also produce a sequence  of vectors  $({h}^{(n)})_{n\in\mathbb N} $ of  \emph{return times}, or \emph{heights}\footnote{The vector $h^{(n)}$ of return times is often called \emph{height} vector since the action of the initial IET $T=T^{(0)}$ can be seen in terms of Rohlin towers over $T^{(n)}= \mathcal{Z}^n(T)$ and $h^{(n)}_{i}$ is in this representation is the height of the tower with base $I^{(n)}_i$.}: here $h^{(n)}= (h^{(n)}_i)_{1\leq i\leq d}\in \mathbb{R}^d$ denotes the vector whose entry $h^{(n)}_i$ for $1
\leq i \leq d$ gives the return time of any $x\in I^{(n)}_i$ to $I^{(n)}$.
One can verify that
% By the above dynamical interpretation,
  $h^{(n)}_j= \sum_{i=1}^d \RLp{n}_{ij}$ is the norm of the $j$-th column of $\RLp{n}$. %, i.e.\ $h^{(n)}_j \doteqdot \sum_{i=1}^{d} \RLp{n}_{ij}$
%Define the sets
%\begin{equation}\label{def:towers}
%Z^{(n)}_\alpha \doteqdot \bigcup _{i=0}^{h^{(n)}_\alpha-1} T^i I^{(n)}_\alpha,\ \alpha\in\mathcal{A}.
%\end{equation}
%Each $Z^{(n)}_\alpha$ can be visualized as a tower over $I^{(n)}_\alpha\subset I^{(n)}$, of height  $h^{(n)}_\alpha$, %
%whose floors are $T^i I^{(n)}_\alpha$.  Under the action of $T$ every floor but the top one, i.e.~if $0\leq i <h^{(n)}_\alpha-1$, moves one step up, while the image by $T$ of the last floor, corresponding to $i=h^{(n)}_\alpha-1$,   is $ T^{(n)} I^{(n)}_\alpha$.
Therefore the \emph{height vectors} ${h}^{(n)}$ % which describes return time and heights of the Rohlin towers at step $n$ of the induction
can be obtained by applying the \emph{dual cocycle} $Z^*$, that we will call (Zorich) \emph{heights cocycle},
i.e., if ${h}^{(0)}$ is the column vector with all entries equal to $1$,
\be\label{heightsrelation}
{h}^{(n)} = (Z^*)^{n} {h}^{(0)} = (Z^{(n)})^* {h}^{(0)}.
\ee
\noindent Following \eqref{def:Amn}, for more general products with $m<n$, we use the notation
\begin{equation}\label{eq:product}
Z^{(m,n)}(x) := \RL{m}(x) \cdot   \RL{m+1}(x)  \cdot\ldots\cdot \RL{n-1}(x) .
\end{equation}
We omit $x$ from the notation when it is clear from contexts and simply write $Z^{(m,n)} = \RL{m} \cdot   \RL{m+1}  \cdot\ldots\cdot \RL{n-1}$.

\subsubsection{Integrability and extension}
 Both  Zorich cocycles, the Zorich lengths cocycle $Z^{-1}$ and the Zorich heights cocycle $Z^*$, can be extended  to  cocycles over $(\widehat{X},  {\mu}_{\widehat{\Z}},\widehat{\Z})$ by defining the extended cocycle (for which we will use the same notation $Z^{-1}$ and $Z^*$) to be  constant on the fibers of  $p$, namely setting
% Let us recall the definition of the cocycle associated by the algorithm to the map $\Z$.
% can be extended to  cocycles over $(\widehat{X},  {\mu}_{\widehat{\mathcal{Z}}},\widehat{\mathcal{Z}})$ (for which we will use the same notation $B^{-1}$, $B^*$) by setting
 $Z(\pi, \lambda, \tau) :=  Z(\pi,\lambda)$ for any $(\pi, \lambda, \tau) \in \widehat{X}$, so that the extended cocycles $Z^{-1}$ and $Z^*$ over $\widehat{\mathcal{Z}}$ are constant on the fibers of $p$.
%The entries of $B^{(m,n)}$ have the following \emph{dynamical interpretation}: $B^{(m,n)}_{\alpha \beta}$ is equal to the number of visits of the orbit of any point $x\in I^{(n)}_\beta$ to the interval $I^{(m)}_\alpha$ under the orbit of $T^{(m)}$ up to its first return to $I^{(n)}$. In particular, $ \sum_{\alpha\in\cA} \RLp{n}_{\alpha\beta}$ gives the first return time of   $x\in I^{(n)}_\beta$ to $I^{(n)}$ under $T$. }
%As shown by Zorich in \cite{Zo:dev},
Both cocycles $Z^{-1}$ and $Z^*$  over the (natural extension of) Zorich acceleration $\widehat{\Z}$ are \emph{integrable} (recall \S~\ref{sec:cocyclesgrowth} for the definition): the integrability of the lengths cocycle %stated in Remark \ref{inducedintegrable} $(i)$
was proved in Zorich \cite{Zo:fin}.
%and since $\norm{Z^{-1}} = \norm{(Z^{-1})^*}$, the integrability of the inverse cocycle $(Z^*)^{-1}$ follows.
Moreover, as recalled in the previous \S~\ref{sec:cocyclesgrowth}, this implies that any cocycle induced  from them is again integrable. %, The proof of  Remark \ref{inducedintegrable} $(ii)$  if $F$ is invertible follows from the Kac's lemma representation of the space as towers and the non-invertible case can be reduced to the invertible one by considering the natural extension of $F$.}.

{
\subsubsection{Fibered cocycles.}\label{sec:locallyconstantcocycles}
In view of the local product structure of the Zorich map $\widehat{\mathcal{Z}}:\widehat{X}\to \widehat{X}$, cocycles which are locally constant on the fibers of the two  projections  $p(\pi,\lambda,\tau) = (\pi,\lambda)$ and   $p'(\pi,\lambda,\tau) = (\pi,\tau)$ (introduced in \S~\ref{sec:localproductstructure}) play an important role (since they project to cocycles on $\mathcal{Z}$ and the map $\mathcal{Z}^\ast$ respectively).

\smallskip
\noindent
Consider a product subset $E\subset \widehat{X}$, of the form
$$E=(\{ \pi\} \times E^+\times E^-), \qquad \text{where}\ \ E^+\subset \Delta_d  , \ \ E^-\subset \Theta_\pi $$
%and a permuation $\pi \in \mathcal{R}$, consider the \emph{slices} $E_{\lambda}$ and $E^|tau$
{Let us say that the cocycle $A_E: \widehat{X}\to SL(d,\mathbb{Z})$ obtained inducing a cocycle $A$ over $\widehat{\mathcal{Z}}:\widehat{X}\to \widehat{X}$ to the subset $E$
is \emph{$\tau$-constant if it is constant along the fibers  $\{\pi\}\times \{\lambda\} \times E^-$  of the projection $p:E \to X$}, i.e.:
\begin{itemize}
%\item[($\tau$c)] For any $\pi\in\mathcal{R}$ and any $\lambda \in \Delta_d$,
%the height cocycle $Z^*$ and its dual, the length cocycle $Z^{-1}$, are constant on the fiber $p^{-1}(\pi,\lambda)$, %i.e.~
%$$Z^* (\pi, \lambda,\tau )= Z^* (\pi,\lambda, \overline{\tau}) , \qquad Z^{-1} (\pi, \lambda,\tau )= Z^{-1} (\pi,\lambda, \overline{\tau}), \qquad
% \text{for\ any}\ \tau,\overline{\tau}\in \Theta_\pi. $$
\item[($\tau$-c)] for any  $\lambda \in E^+$,
%are  constant on the fiber $((p'))^{-1}(\pi,\tau)$, i.e.~
$$A_E (\pi, \lambda,\tau )= A_E (\pi,{\lambda}, \overline{\tau})  \qquad  \text{for\ any}\ \tau,\, \overline{\tau}\in E^-. $$
\end{itemize}
Let us recall that, by definition, the extensions of the Zorich length cocycle $Z^{-1}$ and heights cocycle $Z^*$
% and of its dual height cocycle $Z^*$
as cocycles defined over $\widehat{\Z}: \widehat{X}\to \widehat{X}$ are both constant on the fibers  $\{\pi\}\times \{\lambda\} \times \Theta_\pi$   of $p:\widehat{X}\to X$.

\noindent Similarly, let us say that the cocycle $B_E: \widehat{X}\to SL(d,\mathbb{Z})$ obtained inducing a cocycle $B: \widehat{X}\to SL(d,\mathbb{Z})$ over $\widehat{\mathcal{Z}}^{-1}:\widehat{X}\to \widehat{X}$ is $\lambda$-\emph{constant} if it is constant along the fibers $\{\pi\}\times E^+ \times \{\tau\} $ of the projection $p'$ restricted to $E$, i.e.:
\begin{itemize}
%\item[(CF)] For any $\pi\in\mathcal{R}$ and any $\lambda \in \Delta_d$,
%the height cocycle $Z^*$ and its dual, the length cocycle $Z^{-1}$, are constant on the fiber $p^{-1}(\pi,\lambda)$, %i.e.~
%$$Z^* (\pi, \lambda,\tau )= Z^* (\pi,\lambda, \overline{\tau}) , \qquad Z^{-1} (\pi, \lambda,\tau )= Z^{-1} (\pi,\lambda, \overline{\tau}), \qquad
% \text{for\ any}\ \tau,\overline{\tau}\in \Theta_\pi. $$
\item[($\lambda$-c)] for any $\tau \in E^-$,
%are  constant on the fiber $((p'))^{-1}(\pi,\tau)$, i.e.~
$$B_E (\pi, \lambda,\tau )= B_E(\pi,\overline{\lambda}, {\tau})  \qquad  \text{for\ any}\ \lambda,\, \overline{\lambda}\in E^+. $$
\end{itemize}
\noindent If we consider the cocycles $W^{-1}:=Z^{-1}\circ \widehat{\Z}^{-1}$ and  $W^{*}:=Z^{*}\circ \widehat{\Z}^{-1}$, the local product structure (and in particular the  Markovian structure described by \eqref{backwardMarkov1} and \eqref{eq:backwardMarkov2}) show that also $W^{-1}$ and $W^*$ are constant on the fibers of the projection $p'$.
%We can produce more examples of $\tau$-constant cocycles inducing on suitable product sets in view of the following

We will also need the following observation, which  we state as a Lemma:
%(and in particular  Remark~\ref{rk:backwardstep}) also show that the \emph{inverse} height cocycle $(Z^*)^{-1}$ and its dual $Z$, which are both cocycles over the \emph{inverse} Zorich map $\widehat{\Z}^{-1}: \widehat{X}\to \widehat{X}$ are also locally constant,  more precisely are constant on the fibers of $p'$. We summarize this into the two following remarks:

\begin{lemma}\label{lemma:induced}
Let $\widetilde{W}$ being $W^{-1}$ or $W^*$.
If $W_E$  is  $\lambda$-constant as a cocycle  over $\widehat{\mathcal{Z}}_E^{-1}$ for a product set $E=\{\pi\}\times E^+\times E^-$
 and  $E'\subset E$ is saturated by fibers of $p'$, i.e.~of the form $E'=\{\pi\}\times E^+ \times Y$ for some measurable %$D\subset \Delta_d$ and
 $Y\subset E^-\subset \Theta_\pi$, then the induced cocycle  $W_{E'}$ over $\widehat{\mathcal{Z}}_{E'}^{-1} $ is again $\lambda$-constant. % on the fibers of $p'$.
%we induce which satisfies ($\tau$-c) on a set the induced cocycle
\end{lemma}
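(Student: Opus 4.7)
The plan is to realize the cocycle $\widetilde{W}_{E'}$ over $\widehat{\mathcal{Z}}_{E'}^{-1}$ as a finite product of values of $\widetilde{W}_E$ along the $\widehat{\mathcal{Z}}_E^{-1}$-orbit until its first return to $E'$, and then to check that both the product length and each factor depend only on $\tau$.

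Concretely, for $(\pi,\lambda,\tau) \in E'$, let $m = m(\pi,\lambda,\tau) \geq 1$ be the smallest integer with $(\widehat{\mathcal{Z}}_E^{-1})^m(\pi,\lambda,\tau) \in E'$, and set $y_j := (\widehat{\mathcal{Z}}_E^{-1})^j(\pi,\lambda,\tau) \in E$ for $0 \leq j < m$. Then $\widehat{\mathcal{Z}}_{E'}^{-1}(\pi,\lambda,\tau) = y_m$, and by the cocycle identity \eqref{cocycleid} applied to $\widetilde{W}$ as a cocycle over $\widehat{\mathcal{Z}}^{-1}$ (together with the definition of an induced cocycle recalled in \S\ref{sec:cocyclesbasics}),
\[
\widetilde{W}_{E'}(\pi,\lambda,\tau) \;=\; \widetilde{W}_E(y_{m-1})\,\widetilde{W}_E(y_{m-2})\cdots\widetilde{W}_E(y_0).
\]
It thus suffices to show that $m$ and each individual factor $\widetilde{W}_E(y_j)$ depend only on $\tau$ (with $\pi$ fixed), since then the whole product is $\lambda$-independent.

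The key structural step is to prove by induction on $j$ that, writing $y_j = (\pi_j, \lambda_j, \tau_j)$, the pair $(\pi_j, \tau_j)$ is a function of $\tau$ alone. The base case $j=0$ is trivial. For the inductive step, property (P) of the local product structure from \S\ref{sec:localproductstructure} guarantees that any backward $\widehat{\mathcal{Z}}^{-1}$-orbit has $(\pi,\tau)$-projection determined by its initial $(\pi,\tau)$; moreover, the hypothesis that $\widetilde{W}_E$ is $\lambda$-constant on $E$ implies that the first-return time from $y_j$ back to $E = \{\pi\}\times E^+ \times E^-$ under $\widehat{\mathcal{Z}}^{-1}$ is independent of $\lambda_j$ on the $\tau_j$-fiber of $E$, and so is a function of $\tau_j$ alone. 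Hence $(\pi_{j+1},\tau_{j+1})$ is again a function of $\tau$, and applying $\lambda$-constancy of $\widetilde{W}_E$ once more, $\widetilde{W}_E(y_j)$ depends only on $(\pi_j,\tau_j)$ and hence on $\tau$.

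Finally, since $E' = \{\pi\}\times E^+ \times Y$ and $E = \{\pi\}\times E^+ \times E^-$ share the same $\lambda$-slab $E^+$, the condition $y_j \in E'$ reduces, given $y_j \in E$, to the single requirement $\tau_j \in Y$, which by the previous step depends only on $\tau$; this yields the $\lambda$-independence of $m$. Combining all these observations, $\widetilde{W}_{E'}(\pi,\lambda,\tau)$ is a function of $(\pi,\tau)$ only, establishing $\lambda$-constancy of $\widetilde{W}_{E'}$. The main obstacle is the inductive step, specifically justifying that the backward return from $y_j$ to $E$ is $\lambda_j$-independent on the $\tau_j$-fiber: this uses crucially both the full strength of the hypothesis (that $\widetilde{W}_E$ is $\lambda$-constant, not merely fibered) and the saturation assumption on $E'$ (that it re-uses the full $\lambda$-slab $E^+$ rather than a proper sub-slab), which is precisely what makes the stability of the $E^+$-condition under backward iteration transfer from $E$ to $E'$.
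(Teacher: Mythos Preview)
Your proposal is correct and follows essentially the same approach as the paper. The paper's proof is two sentences: it asserts that the visit times to $E'$ under $\widehat{\mathcal{Z}}_E^{-1}$ are $\lambda$-independent (because $E'$ is $p'$-saturated), and then uses the assumed $\lambda$-constancy of $\widetilde{W}_E$ to conclude that $\widetilde{W}_{E'}(\pi,\lambda,\tau)=\widetilde{W}_{E'}(\pi,\overline{\lambda},\tau)$. Your version unpacks exactly this, making the inductive dependence of $(\pi_j,\tau_j)$ on $\tau$ explicit via property~(P) and the product decomposition.
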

\begin{proof}
This follows since given any two $\lambda$, $\overline{\lambda}\in \Delta_d$,
%since $W$ is $\lambda$-constant and, by the property $(P)$ in \S~\ref{sec:localproductstructure}, the $n^{th}$ iterates of $(\pi,\lambda, \tau)$ and $(\pi,\overline{\lambda}, \tau)$ under $\widehat{\mathcal{Z}}^{-1}$ (and hence the iterates  of $\widehat{\mathcal{Z}}_E^{-1}$) belong to the same $p'$-fiber for any $n\geq 0$; therefore,
the visit times to $E'\subset E$ (which is saturated  by $p'$-fibers) are equal (i.e.~$\widehat{\Z}_E^{-n}(\pi,\overline{\lambda}, \tau)\in E'$ iff $\widehat{\Z}_E^{-n}(\pi,{\lambda}, \tau)\in E'$).
%$r_E(\pi,\lambda, \tau) = r_E(\pi,\overline{\lambda})$.
It then follows by definition of $\widetilde{W}_{E'}$ (and since $\widetilde{W}_E$ is $\lambda$-constant) that   $\widetilde{W}_{E'}(\pi,\lambda, \tau)= \widetilde{W}_{E'}(\pi,\overline{\lambda}, \tau)$, i.e.~that ($\lambda$-c) holds and $W_{E'}$ is also $\lambda$-constant.
\end{proof}}

\subsubsection{Backward growth under the local product structure}\label{sec:newLemmasproofs}
In view of the local product structure and the locally constant nature of the cocycles we are studying (described in the previous \S~\ref{sec:localproductstructure} and \S~\ref{sec:locallyconstantcocycles}),  the Lemmas  stated in \S~\ref{sec:cocyclesgrowth} can be specialized as follows.

\begin{lemma}[backward controlled growth for products]\label{expgrowthlemmaRV}
{ Let $A$ be a  cocycle over the induced cocycle $\widehat{\mathcal{Z}}_E$ of $\widehat{\mathcal{Z}}:\widehat{X}\to \widehat{X}$ to a product set $E = \{\pi\}\times E^+\times E^- \subset \widehat{X}$, where $\pi$ is an irreducible permutation.  Assume that the cocycle $A\circ \widehat{\Z}_E^{-1}$ is $\lambda$-constant and $A, A^{-1}$ are both integrable. } % on the fibes of $p'$. }
For any %and any symplex  $\Delta_A \subset \Delta_d$
 $\epsilon>0$ there exists a measurable subset  $Y_1 \subset E^-\subset \Theta_\pi$, invariant under rescalings and with $Leb(Y_1)>0$, and a constant $\overline{C}_1>0$ such that for all $(\pi,\lambda, \tau)\in \widehat{X}$ (forward) recurrent under $\widehat{\mathcal{Z}}_E$ to the product set
$\{\pi\}\times E^+ \times Y_1\subset E$
along the sequence $\{n_k \}_{k\in \mathbb{N}}$ we have
% if $\{n_k \}_k\in \mathbb{N}$ is the sequence of
% visits to $E_1$, such that
%\begin{itemize}
%\item[(i)]
\be\label{expgrowth}
\frac{ \ln \| A^{(n, n_k ) } (\pi,\lambda, \tau) \|}{n_k - n} \leq \overline{C}_1, \qquad  \forall \, 0 \leq n < n_k .
%, \qquad \forall n < n_k .
\ee
(where we use the notation \eqref{eq:product} for the cocycle matrices products).
%\item[(ii)]
%$\frac{ \ln \| (A_E)_{ k - m  } (x) \|}{n_k - n} \leq C$ for all $n < n_k$;
%\end{itemize}
\end{lemma}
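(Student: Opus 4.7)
The plan is to adapt the Lusin-Egorov argument behind Lemma~\ref{lemma1} (i.e.\ Lemma~2.1 of \cite{Ul:mix}) so that the resulting ``good set'' retains the product structure $\{\pi\}\times E^+\times Y_1$ with $Y_1$ invariant under rescalings in $\tau$. The strategy has three main ingredients, corresponding to: producing the growth bound, forcing the product form, and achieving rescaling invariance.

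First, I would apply verbatim the argument producing the set $E_1$ in the proof of Lemma~\ref{lemma1} to the cocycle $A$ over the induced map $\widehat{\mathcal{Z}}_E\colon E\to E$, endowed with its natural induced probability measure. Integrability of $\log\|A^{\pm 1}\|$ and Birkhoff's ergodic theorem, combined with the standard Lusin-Egorov truncation, yield a subset $E_1\subset E$ of positive measure and a constant $\overline{C}_1>0$ such that every point recurrent to $E_1$ satisfies the bound \eqref{expgrowth}.

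Second, to force the product form, I would use the $\lambda$-constancy of $A\circ\widehat{\mathcal{Z}}_E^{-1}$ together with the local product structure (F)--(P) recalled in \S~\ref{sec:localproductstructure}: the function $(\pi,\lambda,\tau)\mapsto \log\|A\circ\widehat{\mathcal{Z}}_E^{-1}(\pi,\lambda,\tau)\|$ depends only on $(\pi,\tau)$, and the same holds for its inverse. Lemma~\ref{lemma:induced} guarantees that this property is preserved under further inducing to $p'$-saturated subsets, so that every Lusin level-set arising in Step~1 automatically factors through the projection $p'$. Hence $E_1$ can be chosen of the form $\{\pi\}\times E^+\times \widetilde{Y}_1$ for some measurable $\widetilde{Y}_1\subset E^-$ of positive Lebesgue measure. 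Third, for rescaling invariance, I would exploit that the Rauzy--Veech matrices $B_\gamma$ depend only on $(\pi,\lambda)$ (see \S~\ref{sec:products}), so that both $\widehat{\mathcal{Z}}$ and the cocycle $A$ commute with the scaling action $\sigma_t(\pi,\lambda,\tau)=(\pi,\lambda,t\tau)$ for each $t>0$. Consequently all Birkhoff averages of $\log\|A^{\pm 1}\circ\widehat{\mathcal{Z}}_E^{-1}\|$ used in Step~2 are $\sigma_t$-invariant, and $\widetilde{Y}_1$ can be replaced by its rescaling-saturation $Y_1:=\bigcup_{t>0} t\widetilde{Y}_1\subset E^-$ without loss of positive measure or of the growth bound.

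The main obstacle lies in Step~2: the hypothesis makes the \emph{one-step} logarithmic norms $\lambda$-independent, but the required bound is on the full product $A^{(n,n_k)}(\pi,\lambda,\tau)$ uniformly for $0\leq n<n_k$. Through the cocycle identity $A^{(n,n_k)} = A^{n_k}(A^n)^{-1}$ one reduces matters to Birkhoff sums of $\log\|A^{\pm 1}\|$ and integrates both Lusin truncations; but one must carefully verify that each of these truncations can be arranged to be $p'$-saturated, which is precisely where the $\lambda$-constancy of both $A\circ\widehat{\mathcal{Z}}_E^{-1}$ and its inverse are invoked simultaneously. Once that is established, the product structure and rescaling invariance follow formally from the two symmetry principles described above.
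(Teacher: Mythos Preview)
Your three-step plan (Oseledets/Egorov, product form from $\lambda$-constancy, rescaling saturation) matches the paper's proof, but your handling of the ``main obstacle'' contains a real error and an unnecessary detour.

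First, the identity $A^{(n,n_k)} = A^{n_k}(A^n)^{-1}$ is false in this setting: with the paper's convention $A^{(m,n)}=A_m A_{m+1}\cdots A_{n-1}$, one has $A_F^{n_k}(A_F^{n})^{-1}=A_{n_k-1}\cdots A_n$, which is the \emph{reversed} product, not $A^{(n,n_k)}$. Since the matrices are not symmetric there is no reason for the norms to coincide, so this identity cannot be used to reduce the problem to ``Birkhoff sums of $\log\|A^{\pm 1}\|$''.

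Second, the whole $A^{\pm 1}$ discussion is avoidable. The paper bypasses your obstacle by the direct observation (see \eqref{eq:backwardcocycle}--\eqref{eq:cocyclerel}) that
\[
A^{(n_k-m,\,n_k)}(x)\;=\;W_{\widehat{\mathcal{Z}}_E^{-1}}^{\,m}\big(\widehat{\mathcal{Z}}_E^{\,n_k}x\big),
\qquad W:=A\circ\widehat{\mathcal{Z}}_E^{-1},
\]
so the quantity to be bounded is precisely a product of the \emph{backward} cocycle $W$ evaluated at the return point $\widehat{\mathcal{Z}}_E^{n_k}x$. One therefore applies Oseledets to $W$ over $\widehat{\mathcal{Z}}_E^{-1}$ (not to $A$ over $\widehat{\mathcal{Z}}_E$), getting a.e.\ convergence of $m^{-1}\log\|W_{\widehat{\mathcal{Z}}_E^{-1}}^{\,m}\|$, and then Egorov yields the set $E_1$.

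The point you were worried about---why the \emph{full} products and not just the one-step map are $\lambda$-constant---is then immediate: by property~(P) of the local product structure, backward iterates $\widehat{\mathcal{Z}}_E^{-j}(\pi,\lambda,\tau)$ and $\widehat{\mathcal{Z}}_E^{-j}(\pi,\lambda',\tau)$ have the same $p'$-projection, and since $W$ is $\lambda$-constant, every product $W_{\widehat{\mathcal{Z}}_E^{-1}}^{\,m}$ depends only on $(\pi,\tau)$. Hence the Egorov set $E_1$ is automatically of the form $\{\pi\}\times E^+\times Y_1$; no separate Lusin truncation for $A$ and $A^{-1}$ is needed. The rescaling invariance of $Y_1$ follows as you say, since the cocycle matrices and the backward dynamics are unchanged under $\tau\mapsto t\tau$.
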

\noindent The proof is similar to  the proof of Lemma 2.1 in \cite{Ul:abs} (here stated  as Lemma~\ref{lemma1}), with the additional insight coming from the fiber structure of the inverse cocycle described in \S~\ref{sec:locallyconstantcocycles}.
\begin{proof}
Consider the cocycle $W:= A\circ \widehat{\mathcal{Z}}_E^{-1}$, which is, by assumption, $\lambda$-constant.
%Since $A$ is $\pi$-constant, $W$ is $\lambda$-constant (see \S~\ref{sec:locallyconstantcocycles}).
Since the cocycle $A $ and hence also $W$ is integrable, by Oseledets theorem, (using the notation introduced in \S~\ref{sec:cocyclesbasics})
 %(and recalling the relation
% $Z^{-n}(\pi,\lambda,\tau) =Z( \widehat{\mathcal{Z}}^{-n} (\pi,\lamda, \tau))$ from \S~\ref{sec:cocyclesbasics})
 the  functions $  (\pi,\lambda, \tau) \mapsto \ln \| W_{\widehat{\mathcal{Z}}_E^{-1}}^{m} (\pi,\lambda, \tau)  \| / m $
%(\pi,\lambda, \tau)\mapsto
% = \frac{ \ln \| Z^{-m} (\pi,\lambda, \tau)  \| }{ m } =
%\frac{\ln \| Z^{-1}(\widehat{\Z}^{-1} (\pi,\lambda, \tau) ) \cdots Z^{-1}(\widehat{\Z}^{-m}  (\pi,\lambda, \tau)  \| }{  m }  =  %%Z( \widehat{\mathcal{Z}}^{-n} (\pi,\lamda, \tau))
%$$
 converge a.e.\ as $m\to+\infty$.
 By Egorov theorem, %for any $\epsilon>0$,
 there exists a set $E_1\subset E$ such that $\mu_{\widehat{\mathcal{Z}}}^{(1)}(E_1)>0$ such that  the convergence is uniform, so that $\ln \| W_{\widehat{\mathcal{Z}}_E^{-1}}^{m} (\pi,\lambda, \tau) \| \leq \overline{C}_1  m  $ for some $\overline{C}_1>0$ and all $(\pi,\lambda, \tau )\in E_1$ and all $m\geq 0$.
   %and at the same time $\norm{ W_{m } (\pi,\lambda, \tau)} $ for $0\leq m < \overline{m}$ are uniformly bounded.
   Notice now that since the cocycle $W$ is
 $\lambda$-constant,
%  over the fibers of $p'$ (see property (LC) in \S~\ref{sec:locallyconstantcocycles}),
  the set $E_1$ is saturated by fibers of $p'$, i.e.~it is a product  $E_1= \{\pi\} \times E^+\times Y_1$ for some measurable subset $Y_1\subset E^-\subset \Theta_\pi$.  Moreover, it is clearly invariant by rescalings $\tau\mapsto t\tau$, $t>0$ (since the cocycle $W$ does not change if $\tau$ is replaced by $t\tau$ for some $t>0$).

If %$\{\pi\}\times D\times Y_1\subset E_1$, if
$n_k\in \mathbb{N}$ is such that  $\widehat{\Z}_E^{n_k} (\pi, \lambda, \tau) \in E_1$, %\{\pi\}\times D\times Y_1 $,
we have that $\ln \| W_{\widehat{\mathcal{Z}}_E^{-1}}^{m}  (\widehat{\Z}_E^{n_k} (\pi, \lambda, \tau))\| \leq \overline{C}_1 m$
% for some $C_1>0$  and
for all $m\geq 0$.
%(where here $\overline{C_1}$ is the maximum between $C$ and the constants which bound $\Vert W_m\Vert $ for  $0\leq m < \overline{m}$).
Hence, since  by \eqref{eq:backwardcocycle} (using the notation introduced in \S~\ref{sec:cocyclesbasics})
\begin{equation}\label{eq:cocyclerel}
W_{\widehat{\mathcal{Z}}_E^{-1}}^{m} \left( \widehat{\mathcal{Z}}_E^{n_k} (\pi,\lambda, \tau ) \right)
=  A^{-m} \left( \widehat{\mathcal{Z}}_E^{n_k} (\pi,\lambda, \tau ) \right) =  A_{n_k-m}A_{n_k-m+1} \cdots A_{n_k-1}  = A^{(n_k-m, n_k)}.
\end{equation}
%{(n_k - m +1,n_k+1)}(\pi,\lambda, \tau), $$
Thus,  we get $\ln \Vert A^{(n_k-m, n_k)}\Vert \leq \overline{C}_1 m$ for any $m\geq 0$, which, changing
indeces by $n=n_k-m$, gives the desired ~(\ref{expgrowth}) for  all $0\leq n\leq n_k$.

  To conclude we only have to check that $Leb(Y_1)>0$. For that, notice that the push forward measure $p'_\ast\, \mu_{\widehat{\mathcal{Z}}}^{(1)}$ on $\Theta_\pi$ given by $p'_\ast \mu_{\widehat{\mathcal{Z}}}^{(1)} (U) :=  \mu_{\widehat{\mathcal{Z}}}^{(1)} \left((p')^{-1} U\right)$  is absolutely continuous with respect to the Lebesgue measure on $\Theta_\pi$. Thus, since $\mu_{\widehat{\mathcal{Z}}}^{(1)} (E)>0$ by construction, it follows by Fubini that $p'_\ast \mu_{\widehat{\mathcal{Z}}}^{(1)}(E)>0$ and hence that  $Leb(Y_1)>0$. This concludes the proof.
\end{proof}

\begin{lemma}[backward subexponential growth for matrices]\label{subexpgrowthlemmaRV}
{For $A$ and $\pi$  as in Lemma \ref{expgrowthlemmaRV},} for each $\epsilon>0$ there exists a  measurable $Y_2 \subset \Theta_\pi$ with $Leb(Y_2) > 0$
%positive measure
and a constant $\overline{C}_2>0$ such that if $(\pi,\lambda, \tau)\in \widehat X$ is recurrent under $\widehat{\mathcal{Z}}$ to $\{\pi\} \times E^+\times Y_2$ along the sequence $\{n_k \}_{k\in \mathbb{N}}$, we have
%$A_{E_2}$ denoted the induced cocycle,
%\begin{itemize}
%\item[(i)]
\be\label{subexpgrowth}
 \| A_{n_k - n } (\pi,\lambda, \tau) \|  \leq \overline{C}_2 e^{\epsilon n }, \qquad \forall \,  0\leq n \leq n_k. %in \mathbb{N}^+.
 \ee
%\be\label{subexpgrowth}
% \| (A_{E_2})_{(k - m ) } (x) \|  \leq C_2 e^{\epsilon m },
%, \qquad \forall n < n_k .\ee
%\item[(ii)]
%$\frac{ \ln \| (A_E)_{ k - m  } (x) \|}{n_k - n} \leq C$ for all $n < n_k$;
%\end{itemize}
\end{lemma}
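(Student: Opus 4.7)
The proof would follow the same template as that of Lemma~\ref{lemma2} (i.e.\ Lemma~2.2 in \cite{Ul:mix}), supplemented with the product-structure observation already exploited in the proof of Lemma~\ref{expgrowthlemmaRV}. Concretely, we work with the cocycle $W := A\circ \widehat{\mathcal Z}_E^{-1}$ which, by hypothesis, is $\lambda$-constant, and which is integrable because $A$ is.

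Fix $\epsilon>0$. Since $\ln^+\Vert W\Vert \in L^1$, the classical Borel--Cantelli-type estimate (equivalently, integrability of $\ln^+\Vert W\Vert$ combined with Birkhoff's ergodic theorem applied to $\widehat{\mathcal Z}_E^{-1}$) implies that for $\widehat\mu_{\widehat{\mathcal Z}}^{(1)}$-a.e.~$y\in E$,
\[
\frac{\ln\Vert W(\widehat{\mathcal Z}_E^{-m}y)\Vert}{m}\;\xrightarrow[m\to\infty]{}\;0.
\]
By Egorov's theorem one can therefore find a positive measure subset $E_2\subset E$ and a constant $\overline{C}_2>0$ such that, uniformly for $y\in E_2$ and every $m\geq 0$,
\[
\Vert W(\widehat{\mathcal Z}_E^{-m}y)\Vert \;\leq\; \overline{C}_2 \, e^{\epsilon m}.
\]
(Shrinking $E_2$ and enlarging $\overline{C}_2$ if necessary, we may also assume that $\Vert A(y)\Vert\leq \overline{C}_2$ for every $y\in E_2$, which handles the case $n=0$ of the desired bound.)

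Now comes the only point where the structure of our cocycle enters: since $W$ is $\lambda$-constant, the defining condition of $E_2$ depends only on the coordinate $\tau$, so $E_2$ is saturated by the fibers of the projection $p'(\pi,\lambda,\tau)=(\pi,\tau)$. Hence $E_2$ has the product form
\[
E_2 \;=\; \{\pi\}\times E^+\times Y_2,
\]
for some measurable $Y_2\subset E^-\subset\Theta_\pi$, and the rescaling invariance of $W$ in the $\tau$ coordinate ensures that $Y_2$ may be taken invariant under $\tau\mapsto t\tau$. A Fubini argument for the push-forward $p'_\ast\widehat\mu_{\widehat{\mathcal Z}}^{(1)}$, which is absolutely continuous with respect to Lebesgue measure, then gives $\mathrm{Leb}(Y_2)>0$ exactly as at the end of the proof of Lemma~\ref{expgrowthlemmaRV}.

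Finally, given $(\pi,\lambda,\tau)$ recurrent to $\{\pi\}\times E^+\times Y_2=E_2$ along $(n_k)_k$, set $y_k:=\widehat{\mathcal Z}_E^{n_k}(\pi,\lambda,\tau)\in E_2$. Using $W=A\circ\widehat{\mathcal Z}_E^{-1}$, one checks by a direct index shift that
\[
W\bigl(\widehat{\mathcal Z}_E^{-(n-1)}y_k\bigr)\;=\;A\bigl(\widehat{\mathcal Z}_E^{\,n_k-n}(\pi,\lambda,\tau)\bigr)\;=\;A_{n_k-n}(\pi,\lambda,\tau),
\]
so for every $0\leq n\leq n_k$ the uniform bound on $E_2$ gives
\[
\Vert A_{n_k-n}(\pi,\lambda,\tau)\Vert\;\leq\;\overline{C}_2\,e^{\epsilon(n-1)}\;\leq\;\overline{C}_2\,e^{\epsilon n},
\]
as required. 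The main (mild) obstacle, as always with such arguments, is ensuring that the set $E_2$ produced by Egorov really is fibered over $Y_2\subset\Theta_\pi$; this is guaranteed precisely by the $\lambda$-constancy of $W$ (already used for Lemma~\ref{expgrowthlemmaRV}), and is the only feature that distinguishes this statement from the general Lemma~\ref{lemma2}.
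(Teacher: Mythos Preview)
Your proof is correct and follows essentially the same approach as the paper's own argument: both use the integrability of $W=A\circ\widehat{\mathcal Z}_E^{-1}$ to get $\ln\|W(\widehat{\mathcal Z}_E^{-m}y)\|/m\to 0$ a.e., apply Egorov to obtain $E_2$, exploit the $\lambda$-constancy of $W$ to conclude that $E_2$ has the product form $\{\pi\}\times E^+\times Y_2$, and finish with the same index shift and the Fubini argument for $\mathrm{Leb}(Y_2)>0$. The only cosmetic difference is that the paper bounds $\|W(\widehat{\mathcal Z}_E^{-m}y)\|$ by $\overline{C}_2 e^{\epsilon(m+1)}$ directly (absorbing the small-$m$ cases into $\overline{C}_2$), whereas you treat the case $n=0$ separately.
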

{
\begin{proof}
Recall that since $\widehat{\Z}_E^{-1}$ is ergodic, if $f$ is integrable, the functions $\{ f\circ \widehat{\Z}_E^{-m}/(m+1)  \}_{m\in \mathbb{N}}$ converge to zero   for a.e.~$(\pi,\lambda, \tau) \in E$ %as a sequence of $m $
 and hence, by Egorov theorem, are eventually uniformly less than $\epsilon$ on some positive measure set for $m\geq \overline{m}$. Since as already remarked in the proof of Lemma~\ref{expgrowthlemmaRV},
% $Z^{-1}$ is integrable, also
 $W$ is integrable as a cocycle over $\widehat{\mathcal{Z}}_E^{-1}$, applying this observation to $f= \ln \| W \| $, we can find a positive measure set $E_2\subset E$ and $\overline{C}_2>0$ (in order to bound also $ \| W (\widehat{\Z}_E^{-m}(\pi,\lambda, \tau)) \|$ for $(\pi,\lambda, \tau) \in E_2$ and    $0\leq m \leq \overline{m}$) such that if $(\pi,\lambda, \tau) \in E_2$, we have
 \[
 \norm{ W (\widehat{\Z}_E^{-m}  (\pi,\lambda, \tau))} \leq \overline{C}_2 e^{\epsilon (m+1)},\quad  \forall\ m \geq 0.
 \]
 When $\widehat{\mathcal{Z}}_E^{n_k} (\pi, \lambda, \tau) \in E_2$, since (recalling the definition of $W = A\circ \widehat{\mathcal{Z}}_E^{-1}$) we have that
 \[
 W_{-m} (\widehat{\mathcal{Z}}_E^{n_k} (\pi, \lambda, \tau)) ) = A_{-m}\circ (\widehat{\mathcal{Z}}_E^{-1}  (\widehat{\mathcal{Z}}_E^{n_k} (\pi, \lambda, \tau)) ) = A_{n_k-(m+1)}  (\pi, \lambda, \tau),
 \]
we get (\ref{subexpgrowth}).
We then reason as in the proof of Lemma~\ref{expgrowthlemmaRV} to conclude that, since $W$ is locally constant on the fibers of $p'$, the set $E_2$ has the form $\{\pi\} \times E^+ \times Y_2$ for some measurable subset $Y_2\subset E^-\subset\Theta_\pi$ and  that $Leb(Y_2)>0$. This concludes the proof.
\end{proof}}

\subsubsection{Adaptation of the proof of the trimmed Birkhoff sums bounds.}\label{sec:adaptation}
Let us now sketch how these specialized form of backward controlled growth Lemmas can be used to deduce Proposition~\ref{prop:accelerationset}.
In the proof of Proposition 4.1 in \cite{Ul:abs},
we show that Birkhoff sums of the derivative display a certain form of \emph{cancellations}, i.e. the closest visits to the positive and negative parts compensate. The good times $(n_k)$ such that the orbits along a tower are sufficiently well distributed for  these cancellations happen, are obtained accelerating and inducing several times to subsets obtained via the two Lemmas 2.2 and 2.3 of \cite{Ul:abs} (reproduced here as Lemma~\ref{lemma1} and ~\ref{lemma2}).  We show that, using the new  Lemmas~\ref{expgrowthlemmaRV} and Lemma~\ref{subexpgrowthlemmaRV} instead, the inducing set has the desired product structure.

\begin{proof}[Proof of Proposition~\ref{prop:accelerationset}]
To produce the inducing set, in \cite{Ul:abs} the  cocycle $Z^*$ is first accelerated by considering returns to a compact set $\mathcal{K}\subset \widehat{X}$ {which, as remarked in \cite{Ul:abs}, can be chosen of the form\footnote{{It is important for us in this proof that the compact set has this specific form (that corresponds to a \emph{finite} cylinder in the symbolic coding of Rauzy-Veech induction as a Markov chain) rather than being a general compact set.}} $\mathcal{K}=\{\pi\}\times\Delta_{B_\gamma}\times \Theta_{B_\gamma}$, where $B_\gamma$
%=Z^{(N)}>0$
 is a positive matrix of the Zorich cocycle associated to a neat path $\gamma$ (refer to %\eqref{def:simplex} in
  \eqref{def:simplex}  and  \eqref{def:Theta} for the definitions of  $\Delta_{B_\gamma}$ and $\Theta_{B_\gamma}$ respectively),  so that $B_\gamma= Z^{(N)}$ for some integer $N>0$ (so $N$ gives the number of matrices of the Zorich cocycle that are produced to get $B_\gamma$).
  %  we consider instead the acceleration which correspond to visits
%  We also assume that $A_\gamma$ is a product of Zorich cocycle matrices.
%  for the definition of  the $\lambda$-cylinder set $\Delta_{B}$ and \eqref{def:Theta}  in  \S~\ref{sec:Markov2}, for the definition of the $\tau$-cylinder sets $\Theta_{B}$).

The induced cocycle (which gives a balanced acceleration)  is called $A$. Then, Lemma 2.1 is used in the proof of Lemma 4.3 of \cite{Ul:abs}, to produce an inducing set $E_B\subset \widehat{X}$ and a new acceleration $B=A_{E_B}$ given by returns to $E_B$. Then, the new cocycle $B$  is accelerated again at the very beginning of Proposition 4.1 of \cite{Ul:abs}, applying this time Lemma 2.2 of of \cite{Ul:abs} to get a positive measure  subset $E_C\subset E_B$ and a new accelerated cocycle $C=B_{E_C}$ given by returns to $E_C$. The new accelerated cocycle $C$ is finally accelerated one more time at the beginning of the proof of Proposition 4.2 of \cite{Ul:abs}, where Lemma 2.2 again gives a positive measure set $E_D\subset E_C\subset \widehat{X}$ so that the sequence of returns to $E_D$ gives the desired trimmed derivative Birkhoff sums upper bounds.

{Let us modify the construction as follows. Consider the height cocycle $Z^*$  over $(\widehat{X}, \widehat{\mu}_{\widehat{\Z}} , \widehat{\mathcal{Z}} )$, which is integrable and $\tau$-constant by definition. Therefore, the cocycle $W:= Z^* \circ \widehat{\mathcal{Z}}^{-1}$ is a cocycle over $(\widehat{X}, \widehat{\mu}_{\widehat{\Z}}^{(1)} , \widehat{\mathcal{Z}}^{-1} )$ which is also integrable and, as discussed in \S~\ref{sec:locallyconstantcocycles}, is $\lambda$-constant. If we accelerate this cocycle to consider returns to the compact set $\mathcal{K}:=\{\pi\}\times\Delta_{B_\gamma}\times \Theta_{B_\gamma}$, the induced cocycle will not be in general  $\lambda$-constant. Therefore,
%if $N$ is defined so that $A_\gamma= Z^{(N)}$
 we consider instead the acceleration which correspond to visits to
$$\widehat{\Z}^{{N}} (\mathcal{K})= \widehat{\Z}^{{N}} (\{\pi\}\times\Delta_{B_\gamma}\times \Theta_{B_\gamma})=\{\pi\}\times\Delta_d\times  \Theta_{B_\gamma B_\gamma},$$
where the above relation holds by the Markovian property \eqref{bisided_Markov} and the definition of $N$ (which gives the number of Zorich matrices produced to get $B_\gamma$).
% recalling that ${A_\gamma}=Z^{(N)}$.

For any $Y_0\subset  \Theta_{B_\gamma B_\gamma}$, which is invariant under rescaling and of positive measure let
\[\mathcal{K}(Y_0):=\widehat{\Z}^{{-N}}(\{\pi\}\times\Delta_d\times Y_0)=\{\pi\}\times\Delta_{B_\gamma}\times B_\gamma^*(Y_0).\]

Let us consider two induced cocycles $A:=Z_{\mathcal{K}(Y_0)}$ and $\widetilde{A}:=Z_{\widehat{\Z}^{{N}}(\mathcal{K}(Y_0))}$.
As $\widehat{\Z}^{{-N}}\mathcal{K}(Y_0)=\{\pi\}\times\Delta_d\times Y_0$,
in view of Lemma~\ref{lemma:induced}, the cocycle $\widetilde{W}:=\widetilde{A}\circ \widehat{\mathcal{Z}}_{\widehat{\Z}^{{N}}(\mathcal{K}(Y_0))}^{-1}$ over $\widehat{\Z}_{\widehat{\Z}^{{N}}(\mathcal{K}(Y_0))}^{-1}$ is $\lambda$-constant. By Lemma~\ref{expgrowthlemmaRV} or Lemma~\ref{subexpgrowthlemmaRV} respectively, there exists a subset $Y_1\subset Y_0$ invariant under rescaling and of positive measure such that if $\{n_k\}_{k\in\N}$ is the sequence of returns %for $\widehat{\mathcal{Z}}_{\widehat{\Z}^{{N}}(\mathcal{K}(Y_0))}$
 to $\widehat{\Z}^{{N}}(\mathcal{K}(Y_1))$, then
\begin{equation*}\label{eq:til}
\|\widetilde{A}^{(n,n_k)}\|\leq e^{\overline{C}_1(n_k-n)}\quad\text{or}\quad \|\widetilde{A}_{n_k-n}\| \leq\overline{C}_2e^{\epsilon n}\quad\text{resp.}\quad \forall \,  0\leq n \leq n_k.
\end{equation*}
Note that $\{n_k\}_{k\in\N}$ is also the sequence of returns for $\widehat{\mathcal{Z}}_{\mathcal{K}(Y_0)}$ to $\mathcal{K}(Y_1)$. Since $A_n=B^*_\gamma\,\widetilde{A}_n\,(B^*_\gamma)^{-1}$ and $A^{(n,n_k)}=B^*_\gamma\,\widetilde{A}^{(n,n_k)}\,(B^*_\gamma)^{-1}$, we have
\begin{equation*}\label{eq:ntil}
\|{A}^{(n,n_k)}\|\leq \|B^*_\gamma\|\|(B^*_\gamma)^{-1}\|\,e^{\overline{C}_1(n_k-n)}\quad\text{or}\quad \|{A}_{n_k-n}\| \leq\|B^*_\gamma\|\|(B^*_\gamma)^{-1}\|\overline{C}_2\,e^{\epsilon n}\quad \text{resp.}
\end{equation*}
for all   $0\leq n \leq n_k$
 %Then the acceleration that corresponds to returns to this set $\mathcal{Z}^{{N}} (\mathcal{K})$  (in view of the above expression, that shows that visits to this set depend only on $\tau$) is $\lambda$-constant. Moreover, if $(n_k)_{k\in\mathbb N}$ is a sequence of returns of Zorich induction to this set, the shifted sequence $(n_k-N)_{k\in\mathbb N}$  gives the sequence of returns to $\mathcal{K}$ and if the sequence $(n_k)_{k\in\mathbb N}$ satisfies the conclusion of  Lemma~\ref{lemma1} or Lemma~\ref{lemma2},  the shifted  sequence   $(n_k-N)_{k\in\mathbb N}$ does too (since for example for $ n\leq n_k-N$, since $Z^{(n_k-N,n_k)}=B^*_\gamma>0$, we have that $\Vert A^{(n,n_k-N)}\Vert \leq \Vert A^{(n,n_k)}\Vert \leq   e^{\overline{C_1} N} e^{\overline{C_1} (n_k-N-n)} $).
% of fibers of $p'$, i.e.~satisfies (LC).
%Since $A^{-1}$ %by Remark \ref{inducedintegrable} $(i)$,
%Notice also that
 % (again by the Markovian structure MOVE THIS REMARK TO erlier Markovian section?)
%since $K=Z^{n_0}$, if we consider visits $\mathcal{Z}$
%We choose the initial compact set $\mathcal{K}$ which gives the first acceleration $A$ of $Z^*$ to be a  product set, of the form $\mathcal{K} = \{ \pi\} \times K \times K'$ where $K\subset \Delta_d$ is the compact set given in the assumptions of Proposition~\ref{prop:accelerationset} and $K'\subset \Theta_\pi$ is any compact set in $\Theta_\pi$.
Next, when  Lemma~\ref{lemma1}  or, respectively, Lemma~\ref{lemma2} are used in \cite{Ul:abs}, we  replace these generic Lemmas with, respectively,   Lemmas~\ref{expgrowthlemmaRV} and Lemma~\ref{subexpgrowthlemmaRV}. More precisely, first consider the induced cocycles
$A:=Z_{\mathcal{K}}$ and $\widetilde{A}:=Z_{\widehat{\Z}^{{N}}(\mathcal{K})}$ with $\mathcal{K}=\{\pi\}\times\Delta_{B_\gamma}\times \Theta_{B_\gamma}$ and $\widehat{\Z}^{{N}}(\mathcal{K})=\{\pi\}\times\Delta_d\times \Theta_{B_\gamma B_\gamma}$.
Next,  Lemma~\ref{expgrowthlemmaRV} applied to $\widetilde{A}$ gives $Y_B\subset  \Theta_{B_\gamma B_\gamma} \subset \Theta_\pi$ and the  set $E_B:= \{\pi\}\times \Delta_{B_\gamma}\times B^*_\gamma (Y_B)=\mathcal{K}(Y_B)$ such that $\|A^{(n,n_k)}\|\leq e^{\overline{C}_B(n_k-n)}$ for $0\leq n<n_k$, where $(n_k)$ are return times for $\widehat{\mathcal Z}_{\mathcal{K}}$ to $\mathcal{K}(Y_B)=E_B$.

Next, consider the induced cocycles
$B:=Z_{\mathcal{K}(Y_B)}$ and $\widetilde{B}:=Z_{\widehat{\Z}^{{N}}(\mathcal{K}(Y_B))}$.
Then  Lemma~\ref{subexpgrowthlemmaRV} applied to $\widetilde{B}$ gives $Y_C\subset  Y_B$ and the  set $E_C:= \{\pi\}\times \Delta_{B_\gamma}\times B^*_\gamma (Y_C)=\mathcal{K}(Y_C)$ such that $\|B_{n_k-n}\|\leq \overline{C}_Ce^{\epsilon n}$ for $0\leq n<n_k$, where $(n_k)$ are return times for $\widehat{\mathcal Z}_{\mathcal{K}(Y_B)}$ to $\mathcal{K}(Y_C)=E_C$.

In the last step, consider the induced cocycles
$C:=Z_{\mathcal{K}(Y_C)}$ and $\widetilde{C}:=Z_{\widehat{\Z}^{{N}}(\mathcal{K}(Y_C))}$.
Then again Lemma~\ref{subexpgrowthlemmaRV} applied to $\widetilde{C}$ gives $Y_D\subset  Y_C\subset  Y_B$ and the  set $E_D:= \{\pi\}\times \Delta_{B_\gamma}\times B^*_\gamma (Y_D)=\mathcal{K}(Y_D)$ such that $\|C_{n_k-n}\|\leq \overline{C}_De^{\epsilon n}$ for $0\leq n<n_k$, where $(n_k)$ are return times for $\widehat{\mathcal Z}_{\mathcal{K}(Y_C)}$ to $\mathcal{K}(Y_D)=E_D$.
%Then, two further applications of  Lemma~\ref{subexpgrowthlemmaRV}
% (which can be applied since in view of the Remark, each time we accelerate on a set of the form $\{\pi\} \times \Delta_d\times Y$ we get again an accelerated cocycles which is  $\lambda$-constant)
%provide $Y_D\subset Y_C\subset Y_B$ and the corresponding sets
%$E_C:= \{\pi\}\times K\times Y_C $ and $E_D:= \{\pi\}\times K\times Y_D$.
Finally we set $Y:= B^*_\gamma(Y_D)$.

We then claim that the %for any precompact $K\subset \Delta_d$,
visits to the set %$E:=Y$
% so that $E:= E_D^{(1)}$ has the desired form
 $E:=(E_D)^{(1)}= (\{\pi\}\times \Delta_{B_\gamma} \times Y)^{(1)}$ provide the sequence of times $(n_\ell)_{\ell\in\N}$ for which the conclusion of Proposition~\ref{prop:accelerationset}, namely the Birkhoff estimates \eqref{eq:propconclusion}, holds.
%To see this, consider first the shifted sequence $(n_{\ell-N})_{\ell\in\N}$.
For this sequence,  all the arguments of the proofs in \cite{Ul:abs}, namely the effective equidistributions and cancellations arguments, can be apply \emph{verbatim}, without needing any modification, when using these new Lemmas (since the only properties used in the proofs are balance, positivity of the matrices and the backward controlled growth given in \cite{Ul:abs} by Lemmas 2.1 and 2.2, which all hold also for the new accelerations since the backward controlled growth estimates are the same now given by  Lemmas~\ref{expgrowthlemmaRV} and Lemma~\ref{subexpgrowthlemmaRV}). Thus, the Birkhoff estimates for the trimmed derivatives holds along the Rohlin towers  of the  sequence  $(n_{\ell})_{\ell\in\N}$.
%
%We now want to use persistence of trimmed derivative bounds (more specifically, Lemma~\ref{lemma:finitecombination}) to show that he bounds also hold along the sequence $(n_{\ell})_{\ell\in\N}$.
%Notice that each of the Rohlin towers at level $n_\ell$ is a finite linear combination of the towers of level $n_\ell-N$, where the number of elements in the decomposition is uniformly bounded in $\ell$ by $\Vert A_\gamma \Vert$. Finally, since each Rohlin tower of step $n_k$ is balanced (since by definition of $E$ the lengths $\lambda^{(n_\ell)} $ belong to the precompact set $K$), each of the towers has area
%bounded below uniformly. Thus,
%both assumption $(A1)$ and $(A2)$ of the persistence of trimmed derivative Lemma~\ref{lemma:finitecombination} hold for the sequences $(n_{\ell-N})_{\ell\in\N}$ and $(n_{\ell})_{\ell\in\N}$. Thus, we conclude from an application of Lemma~\ref{lemma:finitecombination}  that
%%reasoning as in Step 4 we can see
% the Birkhoff estimates \eqref{eq:propconclusion} also hold (for a different but uniform constant, namely for $M'$ depending on $M$ and $\Vert A_\gamma \Vert$ only) along the shifted sequence  $(n_{\ell-N})_{\ell\in\N}$.
%Since $E$ has now the claimed form by construction, this concludes the proof.
}}\end{proof}

\subsection*{Acknowledgements}
The authors would like to thank the Banach Center  in B\k{e}dlewo, Poland, since the initial discussions which sparked this project took place during the
%programin summer 2018
 {Conference ``Ergodic aspects of modern dynamics'' in honour of Mariusz Lema\'nczyk on his 60th birthday. %, B\k{e}dlewo, Poland, 10-16 June, 2018}. %, which played a key role to foster the authors collaboration and bring this project to successful conclusion.
 The authors are also grateful to Przemys{\l}aw Berk for several useful discussions.
 C.~U.\ acknowledges the SNSF Consolidator Grant $TMCG-2\textunderscore 213663$ % New Frontiers of Renormalization
 as well as the SNSF Grant {\it Ergodic and spectral properties of surface flows} No.~$200021\textunderscore 188617$.
A.~K.\ was partially supported by the NSF grant DMS-2247572.  Research partially supported by the Narodowe Centrum Nauki Grant 2022/45/B/ST1/00179. %{\color{blue}Check: are we missing something?}
%The research leading to these results has received funding from the European Research Council under the European Union
%Seventh Framework Programme (FP/2007-2013) / ERC Grant Agreement n.~335989.
%Research partially supported by the Narodowe Centrum Nauki Grant 2019/33/B/ST1/00364.

%The authors would like to thank the Banach Center program in B\k{e}dlewo in summer 2018 {Conference ``Ergodic aspects of modern dynamics'' in honour of Mariusz Lema\'nczyk on his 60th birthday, B\k{e}dlewo, Poland, 10-16 June, 2018}, which played a key role to foster the authors collaboration and bring this project to successful conclusion.
%{The authors are also indebted to the anonymous referee, for his/her careful job and corrections on the first draft of this paper.}

\end{document}